\documentclass{article}

\usepackage{amsmath}
\usepackage{amsthm}
\usepackage{amsfonts}
\usepackage{mathrsfs}
\usepackage{amssymb}
\usepackage{bbm}
\usepackage{tikz-cd}
\usepackage{tikz}
\usetikzlibrary{cd,matrix,arrows,decorations.pathmorphing}

\usepackage{enumerate}

\usepackage{hyperref}

\theoremstyle{definition}
\newtheorem{theorem}{Theorem}[section]
\newtheorem{definition}[theorem]{Definition}
\newtheorem{lemma}[theorem]{Lemma}
\newtheorem{proposition}[theorem]{Proposition}
\newtheorem{corollary}[theorem]{Corollary}
\newtheorem{remark}[theorem]{Remark}

\def\calA{\mathcal{A}}

\def\calC{\mathcal{C}}
\def\calD{\mathcal{D}}

\def\calF{\mathcal{F}}
\def\calG{\mathcal{G}}
\def\calH{\mathcal{H}}

\def\calM{\mathcal{M}}

\def\calO{\mathcal{O}}

\def\calS{\mathcal{S}}

\def\calZ{\mathcal{Z}}
\def\cG{\mathcal{G}}


\def\frb{\mathfrak{b}}

\def\frg{\mathfrak{g}}
\def\frh{\mathfrak{h}}

\def\frm{\mathfrak{m}}
\def\frn{\mathfrak{n}}

\def\frz{\mathfrak{z}}

\def\ffrb{\mathfrak{B}}

\def\sC{{\mathscr{C}}}

\def\sF{{\mathscr{F}}}
\def\sG{{\mathscr{G}}}
\def\sH{{\mathscr{H}}}
\def\sI{{\mathscr{I}}}

\def\bbA{\mathbb{A}}
\def\bbB{\mathbb{B}}
\def\bbC{\mathbb{C}}
\def\bbD{\mathbb{D}}

\def\bbF{\mathbb{F}}
\def\bbG{\mathbb{G}}
\def\bbH{\mathbb{H}}

\def\bbN{\mathbb{N}}

\def\bbP{\mathbb{P}}
\def\bbQ{\mathbb{Q}}
\def\bbR{\mathbb{R}}
\def\bbS{\mathbb{S}}
\def\bbT{\mathbb{T}}

\def\bbZ{\mathbb{Z}}
\def\bZ{{\mathbb Z}}


\DeclareFontFamily{U}{wncy}{}
\DeclareFontShape{U}{wncy}{m}{n}{<->wncyr10}{}
\DeclareSymbolFont{mcy}{U}{wncy}{m}{n}
\DeclareMathSymbol{\Sha}{\mathord}{mcy}{"58} 



\def\bs{\backslash}

\DeclareMathOperator{\ox}{\otimes}


\def\comp{\circ}
\def\inj{\hookrightarrow}
\def\surj{\twoheadrightarrow}
\def\isom{\cong}
\def\aisom{\xrightarrow{\sim}}

\DeclareMathOperator{\coker}{coker}
\DeclareMathOperator{\colim}{colim}

\DeclareMathOperator{\op}{op}
\DeclareMathOperator{\pr}{pr}

\newcommand{\dlim}{\varinjlim}
\newcommand{\ilim}{\varprojlim}

\DeclareMathOperator{\Fil}{Fil}

\DeclareMathOperator{\Hom}{Hom}

\DeclareMathOperator{\Ext}{Ext}

\DeclareMathOperator{\Tor}{Tor}


\DeclareMathOperator{\Ind}{Ind}

\DeclareMathOperator{\Gal}{Gal}

\DeclareMathOperator{\GL}{GL}

\DeclareMathOperator{\End}{End}

\DeclareMathOperator{\Mat}{Mat}

\DeclareMathOperator{\Lie}{Lie}

\DeclareMathOperator{\ad}{ad}

\DeclareMathOperator{\gl}{\mathfrak{gl}}

\DeclareMathOperator{\cInd}{\text{c-}\Ind}

\def\bb1{\mathbbm{1}}

\DeclareMathOperator{\DR}{DR}

\DeclareMathOperator{\gr}{gr}
\DeclareMathOperator{\ur}{ur}

\DeclareMathOperator{\cont}{cont}

\DeclareMathOperator{\HT}{HT}
\DeclareMathOperator{\dR}{dR}

\DeclareMathOperator{\Cont}{Cont}

\DeclareMathOperator{\Spf}{Spf}

\def\LT{\mathrm{LT}}

\def\et{\text{\'et}}

\def\Cont{\mathrm{Cont}}

\def\an{\mathrm{an}}
\def\lan{\mathrm{la}}
\def\sm{\mathrm{sm}}

\def\lalg{\mathrm{lalg}}

\def\hat{\widehat}
\def\bar{\overline}
\def\tilde{\widetilde}

\def\St{\mathrm{St}}

\def\-{\text{-}}

\def\pro{\mathrm{pro}}

\def\-{\text{-}}

\def\fl{\mathscr{F}\ell}

\def\Sen{\mathrm{Sen}}

\def\Ab{\mathrm{Ab}}

\def\GM{\mathrm{GM}}

\def\Dr{\mathrm{Dr}}

\DeclareMathOperator{\proet}{\text{pro\'et}}

\DeclareMathOperator{\solid}{Solid}
\DeclareMathOperator{\cond}{Cond}

\DeclareMathOperator{\supp}{supp}
\DeclareMathOperator{\open}{Open}
\DeclareMathOperator{\Func}{Func}
\DeclareMathOperator{\ExtDisc}{ExtDisc}

\def\ov{\stackrel}

\usepackage{geometry}
\usepackage{indentfirst}
\usepackage{changes}
\usepackage{ulem}
\geometry{a4paper, scale=0.8}
\title{Locally analytic vectors in the completed cohomology of quaternionic Shimura curves}
\author{Zhenghui Li\footnote{IMJ-PRG, Sorbonne Universit\'e, 4 place Jussieu, 75005 Paris, France}\\
\and 
Benchao Su\footnote{School of Mathematical Sciences, Peking University, No.5 Yiheyuan Road, Haidian District,Beijing 100871, P.R.China}\\
\and
Zhixiang Wu\footnote{School of Mathematical Sciences, University of Science and Technology of China, 96 Jinzhai Road, 230026 Hefei, China}}
\date{}
\begin{document}

\maketitle

\begin{abstract}
We use the methods introduced by Lue Pan to study the locally analytic vectors of the completed cohomology of Shimura curves associated to an indefinite quaternion algebra $D$ which is ramified at a prime number $p$. Let $D_p^{\times}$ be the group of units of $D$ at $p$. Using $p$-adic uniformization of the quaternionic Shimura curves, we compute the Hecke eigenspace of the completed cohomology with the Hecke eigenvalues associated to a classical automorphic form on another quaternion algebra $\bar D$ (switching invariants of $D$ at $p,\infty$). We present this locally analytic $D_p^\times$-representation using the de Rham complex of the Lubin--Tate tower of dimension $1$. This is analogous to the Breuil--Strauch conjecture for the group $\GL_2(\bbQ_p)$. We show that the locally analytic $D_p^{\times}$-representation does not detect the Hodge filtration of the local de Rham Galois representation at $p$ in the crystalline case, and also give applications for the locally analytic Jacquet--Langlands correspondence for $\GL_2(\bbQ_p)$ and $D_p^\times$.
\end{abstract}
\setcounter{tocdepth}{2}

\tableofcontents

\section{Introduction}
Let $p$ be a prime number. Let $E$ be a sufficiently large finite extension of $\bbQ_p$. The $p$-adic local Langlands correspondence for $\GL_2(\bbQ_p)$ constructs a correspondence between certain $2$-dimensional $p$-adic representations of $\Gal(\overline{\bbQ_p}/\bbQ_p)$ and certain $p$-adic representations of $\GL_2(\bbQ_p)$ over $E$ \cite{breuil2010emerging,colmez2010representations,MR3272011}. There have been recent developments of the interactions between $p$-adic or mod-$p$ representations of $\GL_2(\bbQ_p)$ and $D_p^\times$, with $D_p$ a non-split quaternion algebra over $\bbQ_p$, known as the $p$-adic Jacquet--Langlands correspondence. For a non-exhaustive list of relevant works, see, e.g., \cite{Scholze2018LubinTate, ludwig2017quotient, pavskunas2022some,hansen2022padicsheavesclassifyingstacks,HuWang,hu2024some,dospinescu2024jacquetlanglandsfunctorpadiclocally}. While the $\GL_2(\bbQ_p)$ side is more or less well understood, the representations of $D_p^\times$ arising from the $p$-adic correspondences remain largely mysterious.

This paper gives a first complete description of the internal structure of the locally analytic representations of $D_p^{\times}$ associated to de Rham representations of $\Gal(\overline{\bbQ_p}/\bbQ_p)$ with global origins, in terms of the (compactly supported) coherent cohomology groups of the $1$-dimensional Lubin--Tate tower. This establishes the quaternionic analogue of the Breuil--Strauch conjecture firstly proved by Dospinescu--Le Bras \cite{DLB17}, which describes  $\GL_2(\bbQ_p)$-representations using the de Rham complexes of the $1$-dimensional Drinfeld tower. Our description for the $D_p^{\times}$-representations exhibits several features of the $p$-adic Jacquet--Langlands correspondance.

Our work is based on the study of the locally analytic vectors in the completed cohomology of quaternionic Shimura curves using the methods introduced by Lue Pan \cite{Pan22,PanII}. Let $D$ be an indefinite quaternion algebra over $\bbQ$ which is ramified at $p$, and let $G$ be the group of units of $D$, considered as an algebraic group over $\bbQ$. For $K\subset G(\bbA_f)$ a neat open compact subgroup, the compact Riemann surface $S_{K}(\bbC):=G(\bbQ)\bs ((\bbC\bs\bbR)\times G(\bbA_f)/K)$
admits a canonical model $S_K$, which is a proper smooth algebraic curve over $\bbQ$. Fix a neat open compact subgroup $K^p\subset G(\bbA_f^p)$, where $\bbA_f^p$ denotes the ring of finite adeles away from $p$. As $K_p$ varies in open compact subgroups of $D_p^\times=G(\bbQ_p)$, we consider Emerton's completed cohomology group \cite{Emerton2006interpolation}
\begin{align*}
    \tilde{H}^1(K^p,E):=E\ox_{\bbZ_p}(\ilim_n\dlim_{K_p} H^1(S_{K^pK_p}(\bbC),\bbZ/p^n\bbZ)),
\end{align*}
which is a Banach space over $E$, carrying a continuous $E$-linear action of $\Gal(\bar\bbQ/\bbQ)\times D_p^\times$. It is expected, as the local-global compatibility of the $p$-adic Langlands correspondence in the $\GL_2(\bbQ_p)$-case \cite{emerton2011local,PanII}, that if $\rho:\Gal(\bar\bbQ/\bbQ)\rightarrow \GL_2(E)$ is an absolutely irreducible Galois representation such that the $D_p^{\times}$-representation 
\begin{align*}
    \check{\Pi}(\rho):=\Hom_{\Gal(\overline{\bbQ}/\bbQ)}(\rho,\tilde{H}^1(K^p,E))
\end{align*}
is non-zero, then its subspace of locally analytic vectors $\check{\Pi}(\rho)^{\lan}$ should be a finite copy of the conjectural locally analytic $D_p^{\times}$-representation $\tau(\rho_p)$ attached to $\rho_p=\rho|_{\Gal(\overline{\bbQ_p}/\bbQ_p)}$ in the $p$-adic Langlands program.

On the other hand, we can give a direct construction of a locally analytic $D_p^{\times}$-representation $\tau(\rho_p)$ for a $2$-dimensional de Rham representation $\rho_p$ of $\Gal(\overline{\bbQ_p}/\bbQ_p)$ using the de Rham complexes of Lubin--Tate spaces. Let $\{\calM_{\LT,n}\}_{n\ge 0}$ be the Lubin--Tate tower arising from a $1$-dimensional formal $p$-divisible group over $\bar\bbF_p$ of height $2$. They are $1$-dimensional rigid analytic Stein spaces over $\bbC_p:=\hat{\overline{\bbQ_p}}$, and the whole tower carries an action of $\GL_2(\bbQ_p)\times D_p^\times$ \cite{GH94,RZ96}. For each fixed level $n\ge 0$, the compactly supported cohomology groups $H^1_c(\calM_{\LT,n},\calO_{\calM_{\LT,n}})$ and $H^1_c(\calM_{\LT,n},\Omega^1_{\calM_{\LT,n}})$ (with $\Omega^1_{\calM_{\LT,n}}$ the sheaf of differentials) carry locally analytic $D_p^\times$-actions in the sense of Schneider--Teitelbaum \cite{MR1887640} (see also the work \cite{sheth2020locally}). The colimits $\dlim_n H^1_c(\calM_{\LT,n},\calO_{\calM_{\LT,n}})$ and $\dlim_n H^1_c(\calM_{\LT,n},\Omega_{\calM_{\LT,n}}^1)$ are also smooth representations of $\GL_2(\bbQ_p)$, thus are naturally modules over the smooth Hecke algebra $\calH(G)$ (the convolution algebra of compactly supported smooth functions on $G$ after fixing a Haar measure on $G$) with $G=\GL_2(\bbQ_p)$. 

We fix $\rho_p:\Gal(\overline{\bbQ_p}/\bbQ_p)\rightarrow\GL_2(E)$ to be de Rham with Hodge--Tate weight $0,1$. By Fontaine's theory, we can associate to $\rho_p$ a $2$-dimensional filtered $E$-vector space $D_{\dR}(\rho_p)$, with the filtration given by the Hodge filtration, and a Weil--Deligne representation $r_p$ over $E$. We can then associate to $r_p$ a smooth representation $\pi_p$ of $\GL_2(\bbQ_p)$ and a smooth representation $\tau_p$ of $D_p^\times$ (with coefficients in $\bbC_p$) by the local Langlands correspondence\footnote{We use the version in \cite[\S 4.2]{emerton2011local} and we only consider $\rho_p$ such that $\pi_p$ is irreducible and infinite-dimensional.} and the Jacquet--Langlands correspondence. Here, we set $\tau_p=0$ if $\pi_p$ is an irreducible principal series representation. From $\pi_p$, we can define the following locally analytic representations of $D_p^{\times}$ using the Lubin--Tate tower:
\begin{align*}
    \widetilde{\tau}:=(\dlim_n H^1_c(\calM_{\LT,n},\calO_{\calM_{\LT,n}}))\ox_{\calH(G)}\pi_p,\\
    \tau_c:=(\dlim_n H^1_c(\calM_{\LT,n},\Omega_{\calM_{\LT,n}}^1))\ox_{\calH(G)}\pi_p.
\end{align*}
We will show that the de Rham complexes of the Lubin--Tate spaces and the classical Jacquet--Langlands correspondence induce a short exact sequence of locally analytic $D_p^\times$-representations (see \S\ref{subsec:repofDptimes}):
\begin{align*}\label{equationintroductionexactseuqnece}
    0\to\tau_p^{\oplus 2} \rightarrow \widetilde{\tau}\to \tau_c\to 0
\end{align*}
which depends only on $\pi_p$ and equivalently $r_p$.

We define the locally analytic $D_p^{\times}$-representation $\tau(\rho_p)$ associated to $\rho_p$ as the quotient representation
\[\tau(\rho_p):=\widetilde{\tau}/i_{\rho_p}(\tau_p),\]
where the map $i_{\rho_p}:\tau_p\hookrightarrow \tau_p^{\oplus 2}\subset \widetilde{\tau}$ is induced by a canonical isomorphism $\tau_p^{\oplus 2}\simeq \tau_p\otimes D_{\dR}(\rho_p)$ and the $1$-dimensional subspace of $D_{\dR}(\rho_p)$ given by the Hodge filtration. Then $\tau(\rho_p)$ sits in an exact sequence
\begin{align}\label{equationintroextension}
    0\rightarrow \tau_p\rightarrow \tau(\rho_p)\rightarrow \tau_c\rightarrow 0.
\end{align}
The following main theorem establishes the local-global compatibility for the $D_p^{\times}$-representation $\tau(\rho_p)$.
\begin{theorem}[Theorem \ref{thm:classicality}, Corollary \ref{corollarydescription}]\label{thm:mainintro}
Let $\rho_p$ be a $2$-dimensional de Rham representation of $\Gal(\overline{\bbQ_p}/\bbQ_p)$ over $E$ with Hodge--Tate weight $0,1$. Suppose that there exists an absolutely irreducible Galois representation $\rho$ of $\Gal(\bar\bbQ/\bbQ)$ over $E$ such that $\rho|_{\Gal(\overline{\bbQ_p}/\bbQ_p)}\simeq \rho_p$. 
\begin{enumerate}[(1)]
    \item Assume that $\rho$ appears in $\widetilde{H}^1(K^p,E)$, i.e. $\check{\Pi}(\rho)=\Hom_{\Gal(\overline{\bbQ}/\bbQ)}(\rho,\tilde{H}^1(K^p,E))\neq 0$. Then there exists some $m\ge 1$, such that there is a $D_p^\times$-equivariant and topological isomorphism\footnote{Here we fix an embedding $E\to \bbC_p$.}
    \begin{align*}
        \check{\Pi}(\rho)^{\lan}\widehat{\ox}_E\bbC_p\isom \tau(\rho_p)^{\oplus m}.
    \end{align*}
    Moreover, the representation $\tau(\rho_p)$ and the above isomorphism can be defined over $E$. And with the $E$-structure $\tau(\rho_p)$ is an infinite-dimensional admissible locally analytic representation of $D_p^{\times}$ in the sense of Schneider--Teitelbaum \cite{ST03}.
    \item The representation $\rho$ appears in $\widetilde{H}^1(K^p,E)$ if and only if $\rho$ is the Galois representation associated to some classical automorphic form of the tame level $K^p$ on the group $\overline{G}$, the algebraic group of the units of the definite quaternion algebra $\overline{D}$ over $\bbQ$ obtained from $D$ by switching the Hasse invariants at $p,\infty$. 
\end{enumerate}
\end{theorem}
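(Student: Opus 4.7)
The plan is to combine the $p$-adic uniformization of the quaternionic Shimura curve $S_K$ with Lue Pan's locally analytic vector machinery applied to the Drinfeld/Lubin--Tate tower. By the Cerednik--Drinfeld theorem, after base change to $\bbC_p$ the rigid analytification $S_{K^pK_p}^{\rig}\otimes \bbC_p$ is uniformized using the definite quaternion algebra $\bar D$ and the Drinfeld upper half plane; going up in level at $p$ amounts to taking quotients of the Drinfeld tower $\calM_{\Dr,n}$. Via the Faltings--Fargues identification of the Drinfeld and Lubin--Tate towers (switching the roles of $\GL_2(\bbQ_p)$ and $D_p^\times$), the completed cohomology $\tilde H^1(K^p,E)\widehat{\ox}_E\bbC_p$ becomes accessible in terms of spaces of automorphic functions on $\bar G$ together with (compactly supported) coherent cohomology of $\calM_{\LT,n}$.

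Next, I would isolate the $\rho$-isotypic component and pass to locally analytic vectors. Here Pan's framework --- tracking the Sen operator and the arithmetic Sen operator on locally analytic cohomology --- forces the answer to be built from the coherent de Rham complex of the Lubin--Tate tower applied to the smooth $\GL_2(\bbQ_p)$-representation $\pi_p$ attached to $r_p$. This produces a copy of $\widetilde\tau$ tensored with a finite-dimensional multiplicity space (whose dimension will be the integer $m$) coming from the $\rho$-isotypic piece of $\bar G$-automorphic functions of tame level $K^p$. The further cut from $\widetilde\tau$ down to the quotient $\tau(\rho_p) = \widetilde\tau/i_{\rho_p}(\tau_p)$ should be forced by the compatibility between the Hodge filtration on $D_{\dR}(\rho_p)$ and the de Rham comparison isomorphism on $S_K$: concretely, the global de Rham comparison pins down a copy of $\tau_p$ inside $\tau_p\otimes D_{\dR}(\rho_p)\simeq \tau_p^{\oplus 2}$, and this is precisely $i_{\rho_p}(\tau_p)$. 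Admissibility over $E$ in the sense of Schneider--Teitelbaum then follows from admissibility of completed cohomology, and descent of the isomorphism from $\bbC_p$ to $E$ is obtained once both sides are exhibited as finitely generated modules over an appropriate locally analytic distribution algebra.

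For part (2), the ``if'' direction is geometric: given a classical automorphic form $f$ on $\bar G$ of tame level $K^p$ with associated Galois representation $\rho_f$, the $p$-adic uniformization exhibits the $f$-isotypic summand of $\bar G$-automorphic functions as a non-trivial contribution to the $\rho_f$-isotypic piece of $\tilde H^1(K^p,E)$ via the non-vanishing of the fundamental coherent cohomology class of the Drinfeld tower. Conversely, if $\rho$ appears in $\tilde H^1(K^p,E)$, then the analysis of part (1) produces a non-vanishing $\rho$-isotypic summand in the automorphic functions on $\bar G$; classicality at $p$ (since $\rho_p$ is de Rham with Hodge--Tate weights $0,1$, hence the infinitesimal character/weight is small enough to force classicality by standard $p$-adic interpolation arguments) upgrades the resulting $p$-adic automorphic form to a classical one, whose associated Galois representation matches $\rho$ by Jacquet--Langlands compatibility.

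The main obstacle will be establishing the precise identification of the Hodge filtration in the second step: showing that the geometric subspace $i_{\rho_p}(\tau_p)\subset \widetilde\tau$ arising from the de Rham comparison is exactly the subspace predicted abstractly by $\Fil^1 D_{\dR}(\rho_p)$. This is the quaternionic analogue of the Breuil--Strauch/Dospinescu--Le Bras calculation for $\GL_2(\bbQ_p)$, and requires a careful interplay between the $p$-adic Hodge theory of $\rho_p$ and the Hodge--Tate period map on the Drinfeld/Lubin--Tate tower, transported through the $p$-adic uniformization of $S_K$. All other inputs --- Pan's Sen-theoretic machinery, the Cerednik--Drinfeld isomorphism, and classical Jacquet--Langlands --- are in principle known, so this compatibility is the genuinely new calculation.
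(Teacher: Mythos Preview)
Your overall strategy for part (1) matches the paper's: combine the $p$-adic uniformization with Pan's Sen-theoretic machinery to express $\check{\Pi}(\rho)^{\lan}\widehat\otimes_E C$ as the $\lambda$-eigenspace in the kernel of the intertwining/Fontaine operator $I^1$, then use a product formula (Proposition~\ref{prop:productformula}) to rewrite this in terms of compactly supported coherent cohomology of the Lubin--Tate tower tensored over $\calH(G)$ with classical automorphic forms on $\bar G$. The Hodge filtration identification you flag as the ``main obstacle'' is in fact relatively formal once the product formula is in place (see Remark~\ref{remarkHodgefiltration}); it follows from the $p$-adic de Rham comparison applied to the Shimura curve.

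However, you have misidentified the genuine new difficulty, and this creates a real gap in your ``if'' direction for part (2). You write that non-vanishing of the $\rho_f$-isotypic piece follows from ``the non-vanishing of the fundamental coherent cohomology class of the Drinfeld tower.'' But when $\pi_p$ is an irreducible principal series, the Jacquet--Langlands transfer $\tau_p$ is \emph{zero}, so there is no classical automorphic form on the indefinite group $G$ with Galois representation $\rho$, and hence no obvious cohomology class at all. What must be shown is that the purely locally analytic piece
\[
\tau_c = \Big(\varinjlim_n H^1_c(\calM_{\LT,n},\Omega^1_{\calM_{\LT,n}})\Big)\otimes_{\calH(G)}\pi_p
\]
is infinite-dimensional even in this case. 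The paper proves this (Theorem~\ref{thm:nonvanishingoftildetau}) by an explicit analysis of the action of a specific Lie algebra element $t_{00}\in\check{\frg}$ on $H^1_c(\calM_{\LT,\infty},\Omega^{1,\sm}_{\calM_{\LT,\infty}})$, using the Gross--Hopkins formulae for the period map to compute the kernel and cokernel of $t_{00}$ as $\GL_2(\bbQ_p)$-modules. This non-vanishing is the central technical innovation; without it, the ``if'' direction fails precisely in the crystalline/principal-series case, which is the case where the theorem is most interesting (and is the case previously handled only under genericity hypotheses by Pa\v{s}k\=unas).

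A smaller point on the ``only if'' direction: your sketch invokes ``classicality by standard $p$-adic interpolation arguments'' on $\bar G$, but no such step is needed or used. The product formula already expresses the cohomology of $\ker d_{K^p}$ --- and hence $\ker I^1$ --- in terms of the space $\calA_{\bar G,k}^{K^p}$ of \emph{classical} algebraic automorphic forms of fixed weight (Theorem~\ref{thm:kerI1classical}), so any Hecke eigensystem appearing in $\ker I^1$ is automatically classical.
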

\begin{remark}
\begin{enumerate}[(i)]
    \item Theorem \ref{thm:mainintro} also holds when $\rho_p$ is de Rham of arbitrary regular Hodge--Tate weight. Moreover, following the methods of \cite{QS24}, similar results for locally $L$-analytic representations of $D_L^\times$ also hold, with $L$ a finite extension of $\bbQ_p$, $D_L$ a non-split quaternion algebra over $L$, by using some Shimura curves with $D_L^\times$ being a local factor of the group at $p$.
    \item The second part of Theorem \ref{thm:mainintro} proves the classicality for the de Rham Galois representations that appear in $\widetilde{H}^1(K^p,E)$. This is essentially known by the proof of the Fontaine-Mazur conjecture for $\GL_{2/\bbQ}$ \cite{emerton2011local,PanII} and the global Jacquet--Langlands correspondence. The more interesting part is the non-vanishing of $\check{\Pi}(\rho)$ when $\tau_p=0$. In this case there is no classical automorphic form on $G$ associated to $\rho$ while $\tau(\rho_p)\neq 0$. Such non-vanishing result in the $p$-adic Jacquet--Langlands correspondence was previously known by Pa{\v{s}}k{\=u}nas \cite[Theorem 1.4]{pavskunas2022some} (see also \cite[\S 1.3]{Howe2022SpectralPadicJacquetLanglands}) using very different methods.
\end{enumerate}
\end{remark}
The structure of $\tau(\rho_p)$ illustrates similarities and differences between the groups $D_p^{\times}$ and $\GL_2(\bbQ_p)$ under the $p$-adic local Langlands correspondences:
\begin{itemize}
    \item When the associated Weil--Deligne representation $r_p$ is absolutely irreducible, the presentation of $\tau(\rho_p)$ is similar to that of the locally analytic representation $\pi(\rho_p)$ of $\GL_2(\bbQ_p)$ attached to $\rho_p$ via the $p$-adic local Langlands correspondence. In that case, by the Breuil--Strauch conjecture proved by Dospinescu-Le Bras \cite{DLB17} and reproved by Lue Pan \cite[Theorem 7.3.2]{PanII}, $\pi(\rho_p)$ admits a presentation
    \begin{align}\label{equationintrorepdrinfeld}
        \pi(\rho_p)\isom (\dlim_nH^1_c(\calM_{\Dr,n},\calO_{\calM_{\Dr,n}})\ox_{\calH(\check G)}\tau_p)/i_{\rho_p}(\pi_p)
    \end{align}
    where $\{\calM_{\Dr,n}\}_{n\ge 0}$ is the Drinfeld tower \cite{Dr76}, with $\check{G}=D_p^\times$, and the map $i_{\rho_p}$ is determined by the Hodge filtration of $D_{\dR}(\rho_p)$. However, if the Weil--Deligne representation $r_p$ is reducible, for example if $\rho_p$ is crystalline, then the $\GL_2(\bbQ_p)$-representation $\pi(\rho_p)$ can not be described using the Drinfeld tower in a similar way. 
    \item In contrast, the $D_p^\times$-representation $\tau(\rho_p)$ is always (only) related to the Lubin--Tate tower, regardless of whether the Weil--Deligne representation $r_p$ is irreducible or not. Moreover, unlike the classical non-abelian Lubin-Tate theory and the Jacquet--Langlands correspondence, e.g. \cite{Carayol1990_nonabelianLT,dat2007_LT_nonabelian}, we will show directly in Theorem \ref{thmintornonvanishing} that $(\dlim_n H^1_c(\calM_{\LT,n},\Omega_{\calM_{\LT,n}}^1))\ox_{\calH(G)}\pi_p\neq 0$ and consequently $\tau(\rho_p)$ is never zero even if the Jacquet--Langlands transfer $\tau_p$ of $\pi_p$ is $0$. This will enable us to prove the appearance of $\rho$ in $\widetilde{H}^1(K^p,E)$ when $\rho_p$ is crystalline in (2) of Theorem \ref{thm:mainintro}. 
    \item Suppose that $\rho_p$ is crystalline where $\tau_p=0$. Then $\tau(\rho_p)$, as well as $\check{\Pi}(\rho)^{\lan}$ up to a multiplicity, is independent of the Hodge filtration of $D_{\dR}(\rho_p)$ (thus depends only on the Weil--Deligne representation $r_p$ attached to $\rho_p$). Hence the representation $\tau(\rho_p)$ can lose some information about $\rho_p$ (see Remark \ref{remarkordinary} for examples). This is a new phenomenon in the $p$-adic Langlands program for the group $D_p^{\times}$, which is different from the $\GL_2(\bbQ_p)$-case where $\pi(\rho_p)$ always determines $\rho_p$. Note that in the mod $p$ setting, a similar result was obtained by Hu--Wang \cite[Theorem 1.3(ii)(a)]{HuWang}.
\end{itemize}

Our proof of Theorem \ref{thm:mainintro} is based on the machinery developed by Lue Pan \cite{Pan22,PanII} on the description of the locally analytic vectors in the completed cohomology. Let $\calS_{K}$ be the adic space associated to $S_{K}\times_{\bbQ}\bbC_p$. Let $\calS_{K^p}\sim\ilim_{K_p}\calS_{K^pK_p}$ be the perfectoid quaternionic Shimura curve, together with the Hodge--Tate period map $\pi_{\HT}:\calS_{K^p}\to \check{\fl}$ (cf. \cite{Sch15}). Here, $\check{\fl}$ is (the base change to $\bbC_p$ of) the Brauer--Severi variety associated to the quaternion algebra $D_p$. Recall that $\tilde{H}^1(K^p,E)$ is the $E$-valued completed cohomology group of the quaternionic Shimura curve. Let $\tilde{H}^1(K^p,E)^{\lan}$ be the subspace of locally analytic vectors in $\tilde{H}^1(K^p,E)$ for the $D_p^{\times}$-actions. By the primitive comparison theorem \cite{scholze_p-adic_2013} and the geometric Sen theory \cite{Pan22,camargo2022geometric}, we have the following natural isomorphism  
\begin{align*}
    \tilde{H}^1(K^p,E)^{\lan}\widehat{\ox}_{\bbQ_p}\bbC_p\isom H^1(\check{\fl},\calO_{K^p}^{\lan})\ox_{\bbQ_p}E,
\end{align*}
where $\calO_{K^p}^{\lan}:=(\pi_{\HT,*}\calO_{\calS_{K^p}})^{\lan}$ and $\calO_{\calS_{K^p}}$ is the completed structure sheaf of $\calS_{K^p}$. Then we can study the completed cohomology group by studying the equivariant sheaf $\calO_{K^p}^{\lan}$ on $\check{\fl}$. As in \cite{PanII}, the ``de Rham part'' of the completed cohomology is killed by the Fontaine operator, which can be representation-theoretically expressed as some differential operators along the tower of Shimura curves and the flag variety. For example, there is a differential operator $d_{K^p}$ on the weight $(0,0)$-part (for the horizontal action of the Cartan subalgebra) of $\calO_{K^p}^{\lan}$ which induces a ``de Rham'' complex (see \S\ref{sec:passageShimura} for more details)
\begin{align*}
   d_{K^p}:\calO_{K^p}^{\lan,(0,0)}\stackrel{}{\rightarrow} \calO_{K^p}^{\lan,(0,0)}\otimes_{\calO_{K^p}^{\sm}}\Omega^{1,\sm}_{K^p}.
\end{align*}    
The above complex extends the classical de Rham complexes of Shimura curves of finite levels and its cohomology contributes to the ``de Rham part'' of the completed cohomology. 

The cohomology of these differential operators, such as $d_{K^p}$ above, can be analysed using the $p$-adic uniformization of the quaternionic Shimura curves, which expresses the tower of quaternionic Shimura curve as a finite disjoint union of quotients of the Drinfeld tower by some discrete cocompact subgroups of $\GL_2(\bbQ_p)$ (see \S\ref{sec:uniformization}). Let $\bar D$ be the quaternion algebra over $\bbQ$ obtained from $D$ by switching invariants at $p$ and $\infty$, and let $\bar G$ be the associated algebraic group over $\bbQ$ as in Theorem \ref{thm:mainintro}. We view $K^p$ as an open compact subgroup of $\bar G(\bbA_f^p)$ under the isomorphism $G(\bbA_f^p)\isom \bar G(\bbA_f^p)$. Let $\calA^{K^p}_{\bar G,0}$ be the space of smooth functions on the Shimura set $\bar G(\bbQ)\bs\bar G(\bbA_f)/K^p$ (of infinite level at $p$). Then $\calA^{K^p}_{\bar G,0}$ can be viewed as the space of classical algebraic automorphic forms on $\bar G$ with the trivial weight and the tame level $K^p$, and is naturally a smooth representation for the action of $\bar G(\bbQ_p)\simeq \GL_2(\bbQ_p)$. Both $\tilde{H}^1(K^p,E)$ and $\calA_{\bar G,0}^{K^p}$ carry an action of the spherical Hecke algebra $\bbT^S$ away from a finite set $S$ of prime numbers including $p$. We have the following typical product formula obtained from the $p$-adic uniformization. 
\begin{proposition}[Proposition \ref{prop:productformula}]\label{introproductformula}
    There exists a $\bbT^S$-equivariant isomorphism of $D_p^{\times}$-representations
    \begin{align*}
        R\Gamma(\check{\fl},\ker d_{K^p})[1]&\isom (\dlim_nH^1_c(\calM_{\LT,n},\calO_{\calM_{\LT,n}}))\ox_{\calH(G)}^L\calA^{K^p}_{\bar G,0}.
    \end{align*}
\end{proposition}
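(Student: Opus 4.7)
I would combine three inputs: (a) the Cerednik--Drinfeld $p$-adic uniformization of the Shimura tower $\calS_{K^p}$, (b) a local identification of $\ker d_{K^p}$ along the Gross--Hopkins period map, and (c) a standard reformulation of discrete-group invariants as derived tensor products over the smooth Hecke algebra $\calH(G)$.

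First, I would invoke the Cerednik--Drinfeld uniformization (Section \ref{sec:uniformization}) together with the Scholze--Weinstein isomorphism $\calM_{\Dr,\infty}\isom \calM_{\LT,\infty}$. This gives a $(D_p^\times\times \bbT^S)$-equivariant isomorphism of adic spaces over $\bbC_p$,
\[
    \calS_{K^p}\;\isom\;\bar G(\bbQ)\big\backslash\big(\calM_{\LT,\infty}\times \bar G(\bbA_f^p)/K^p\big),
\]
in which $\bar G(\bbQ)\hookrightarrow \bar G(\bbA_f)$ acts diagonally and its $p$-component acts via $\bar G(\bbQ_p)\isom \GL_2(\bbQ_p)$ on $\calM_{\LT,\infty}$. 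A direct comparison of Hodge--Tate periods (cf. \cite{Scholze2018LubinTate}) then shows that, after pulling back along this cover, $\pi_{\HT}$ coincides with the composition of the first projection with the Gross--Hopkins period map $\pi_{\GH}:\calM_{\LT,\infty}\to\check{\fl}$.

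Second, I would transport $\ker d_{K^p}$ through this uniformization. The operator $d_{K^p}$ is the ``de Rham'' part of the geometric Sen operator on $(\pi_{\HT,*}\calO_{\calS_{K^p}})^{\lan,(0,0)}$, so on the Lubin--Tate side it corresponds to the infinitesimal action of $\GL_2(\bbQ_p)$ along the fibers of $\pi_{\GH}$ restricted to the weight $(0,0)$-piece of $(\pi_{\GH,*}\calO_{\calM_{\LT,\infty}})^{\lan}$. Following the strategy of Pan's reproof of the Breuil--Strauch conjecture in \cite[\S 7]{PanII}, one shows on each finite level $\calM_{\LT,n}$ that the corresponding horizontal-kernel sheaf is identified with $\pi_{\GH,n,*}\calO_{\calM_{\LT,n}}$, whose cohomology on the proper curve $\check{\fl}_{\bbC_p}\isom \bbP^1_{\bbC_p}$ is, by a Serre-duality/residue computation on the Stein space $\calM_{\LT,n}$, $\GL_2(\bbQ_p)$-equivariantly identified with $H^1_c(\calM_{\LT,n},\calO_{\calM_{\LT,n}})[-1]$. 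Combining these with the Hochschild--Serre spectral sequence for the quotient by the discrete subgroup $\bar G(\bbQ)$ and passing to the colimit in $n$ produces
\[
    R\Gamma(\check{\fl},\ker d_{K^p})[1]\;\isom\; R\Gamma\!\left(\bar G(\bbQ),\;\big(\dlim_n H^1_c(\calM_{\LT,n},\calO_{\calM_{\LT,n}})\big)\otimes_{\bbC_p} C^\infty\!\big(\bar G(\bbA_f^p)/K^p,\bbC_p\big)\right).
\]
Presenting $\calA^{K^p}_{\bar G,0}$ as the $\bar G(\bbQ)$-invariants of $C^\infty(\bar G(\bbA_f^p)/K^p,\bbC_p)$ and computing derived invariants via a standard compactly-induced projective resolution in the smooth representation theory of $\GL_2(\bbQ_p)$ then rewrites $R\Gamma(\bar G(\bbQ),V\otimes C^\infty(\bar G(\bbA_f^p)/K^p,\bbC_p))$ as $V\otimes^L_{\calH(G)}\calA^{K^p}_{\bar G,0}$ for any smooth $\GL_2(\bbQ_p)$-representation $V$, yielding the stated isomorphism. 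The $\bbT^S$-equivariance is built in throughout.

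The main obstacle I expect is the second step: precisely identifying $\ker d_{K^p}$ with $\pi_{\GH,n,*}\calO_{\calM_{\LT,n}}$ on each finite level and extracting both the compactly supported coherent cohomology and the shift by $[1]$. This demands a careful translation of the globally-defined geometric Sen operator on $\calS_{K^p}$ into a concrete differential along the Gross--Hopkins tower, and a clean Serre-duality/residue argument on the Stein spaces $\calM_{\LT,n}$ to pass from ordinary to compactly supported coherent cohomology.
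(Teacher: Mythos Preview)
Your overall architecture (uniformization + identify the kernel sheaf on the cover + descend) matches the paper, and the identification of $\ker d_{\Dr}$ with the smooth Lubin--Tate structure sheaf is exactly Corollary~\ref{cor:kerdDr}. The gap is in how you descend.

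The claim that the cohomology of $\pi_{\GH,n,*}\calO_{\calM_{\LT,n}}$ on $\check{\fl}$ is ``by a Serre-duality/residue computation $\GL_2(\bbQ_p)$-equivariantly identified with $H^1_c(\calM_{\LT,n},\calO_{\calM_{\LT,n}})[-1]$'' is false: by Leray, $R\Gamma(\check{\fl},R\pi_{\GH,n,*}\calO_{\calM_{\LT,n}})=R\Gamma(\calM_{\LT,n},\calO_{\calM_{\LT,n}})=H^0(\calM_{\LT,n},\calO)[0]$, a Fr\'echet space in degree~$0$. Serre duality pairs this with $H^1_c(\calM_{\LT,n},\Omega^1)$, not $H^1_c(\calM_{\LT,n},\calO)$, and gives a contragredient, not an identification. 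So no residue argument will convert your ordinary cohomology into the compactly supported one you want. Relatedly, invoking Hochschild--Serre for the $\bar G(\bbQ)$-quotient produces $R\Gamma(\Gamma,-)$ of \emph{ordinary} cohomology on the infinite cover, whereas the target of the formula involves $(-)\otimes^L_{\bbZ[\Gamma]}\bbZ$ applied to \emph{compactly supported} cohomology; these are genuinely different objects and you cannot pass between them without further input.

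The paper's fix is to observe that $\calS_\Gamma=\Gamma\backslash\calM_{\Dr,\infty}$ is proper, so its ordinary cohomology \emph{is} compactly supported, and then to prove a descent result for $R\Gamma_c$ along the analytic $\Gamma$-torsor $\calM_{\Dr,\infty}\to\calS_\Gamma$: namely $R\Gamma_c(\calS_\Gamma,\sF)\simeq R\Gamma_c(\calM_{\LT,\infty},f^{-1}\sF)\otimes^L_{\bbZ[\Gamma]}\bbZ$ (Lemma~\ref{Lem:RGammacLTsmGamma}, proved via the proper-pushforward formalism of Appendix~\ref{section:properpushforward}). This is precisely the step you flagged as the main obstacle, and it does not reduce to standard Hochschild--Serre plus duality; it needs an $f_!$-type theory for huge (solid) sheaves along infinite-sheet coverings. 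Once you have it, the passage to $\otimes^L_{\calH(G)}\calA^{K^p}_{\bar G,0}$ is the algebraic identity $\calA^{K^p}_{\bar G,0}\simeq\bigoplus_x\calH(G)\otimes_{C[\Gamma_x]}C$ (using $\calC^\sm_c(G,C)\simeq\bbZ[\Gamma]\otimes\calC^\sm_c(\Gamma\backslash G,C)$), rather than your proposed rewriting via $\bar G(\bbQ)$-invariants of $C^\infty(\bar G(\bbA_f^p)/K^p)$, which drops the $p$-component and conflates invariants with coinvariants.
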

To obtain such a product formula, we first do the computation on the local Shimura variety (as in \cite[\S 5.2, 5.3]{PanII}) and then descend the results along the quotients by discrete cocompact subgroups in $\GL_2(\bbQ_p)$. Here, the key point is that we need do descent for some huge sheaves (e.g. sheaves related to $\ker d_{K^p}$) along some infinite-sheet coverings. For this reason, in Appendix \ref{section:properpushforward}, we developed a general theory of the compactly supported cohomology group with coefficients in some (possibly solid) abelian sheaves (like $\calO_{\calS_{K^p}}^{\lan}$) on partially proper adic spaces, and we also developed some relevant descent theory. While the theory is not general enough (we do not establish a full $6$-functor formalism), it suffices for our purpose.
\begin{remark}
    Using the $p$-adic uniformization, we can also establish similar product formulae relating the coherent cohomology of Shimura curves with that of the Drinfeld tower of dimension $1$ and the completed (co)homology of Shimura sets, see Remark \ref{remarkproductformula}. Such uniformization results for coherent cohomology of (infinite or finite level) Shimura varieties seem to be new and very useful. For example, formula (\ref{equationproductformulafinitelevel}) will allow us to calculate some higher extension groups for certain locally analytic representations of $\GL_2(\bbQ_p)$ arising from the coherent cohomology of Drinfeld spaces in \cite{DLB17} (e.g. (\ref{equationintrorepdrinfeld})), which will be treated in future work.
\end{remark}

Another key ingredient towards Theorem \ref{thm:mainintro} is the following non-vanishing result of the $D_p^{\times}$-representation $\tau_c,\widetilde{\tau}$ defined before Theorem \ref{thm:mainintro}. 
\begin{theorem}[Theorem \ref{thm:nonvanishingoftildetau}]\label{thmintornonvanishing}
    The $D_p^{\times}$-representation 
    \[\tau_c=(\dlim_n H^1_c(\calM_{\LT,n},\Omega_{\calM_{\LT,n}}^1))\ox_{\calH(G)}\pi_p\] 
    is infinite-dimensional. In particular, it is non-zero.
\end{theorem}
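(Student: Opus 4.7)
My plan is to establish the non-vanishing (and then infinite-dimensionality) of $\tau_c$ by a direct local analysis of the coherent cohomology of the Lubin--Tate tower, reducing via Serre duality and the Scholze--Weinstein isomorphism to an explicit computation on the Drinfeld side.

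First, Serre duality on the one-dimensional smooth Stein spaces $\calM_{\LT,n}$ gives a $\GL_2(\bbQ_p)\times D_p^\times$-equivariant topological isomorphism
\[
H^1_c(\calM_{\LT,n}, \Omega^1_{\calM_{\LT,n}}) \;\cong\; H^0(\calM_{\LT,n}, \calO_{\calM_{\LT,n}})^\vee
\]
up to a reduced-norm character twist. Taking the colimit in $n$ and applying Scholze--Weinstein ($\calM_{\LT,\infty}\cong \calM_{\Dr,\infty}$, which swaps the $\GL_2(\bbQ_p)$- and $D_p^\times$-actions) reduces the analysis of $\tau_c$ to identifying the smooth $\GL_2(\bbQ_p)$-module structure of the strong dual of $\ilim_n H^0(\calM_{\Dr,n},\calO)$ and detecting when $\pi_p$ arises as a subquotient, while keeping track of the residual $D_p^\times$-action coming from the isomorphism.

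Second, I would use the Drinfeld-side techniques developed in \cite{DLB17} and \cite{PanII} to describe this smooth $\GL_2(\bbQ_p)$-representation explicitly. The key point --- and the novelty compared with the $\ell$-adic étale picture of Carayol and Harris--Taylor --- is that coherent cohomology with $\Omega^1$-coefficients on the Lubin--Tate tower detects \emph{every} smooth irreducible admissible infinite-dimensional $\pi_p$, including the irreducible principal series for which the classical Jacquet--Langlands transfer $\tau_p$ vanishes. Concretely, via the Serre-dual description, the appearance of $\pi_p$ in $\dlim_n H^1_c(\calM_{\LT,n},\Omega^1)$ translates into the construction of a suitable continuous linear functional on $\ilim_n H^0(\calM_{\LT,n},\calO)$ transforming under $\GL_2(\bbQ_p)$ by $\pi_p$, which can be produced using the explicit Drinfeld-side geometry at finite level. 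Once $\tau_c$ is non-zero, infinite-dimensionality follows from the locally analytic $D_p^\times$-action it carries: although $D_p^\times$ is compact modulo center so that its smooth admissible irreducible representations are finite-dimensional, the locally analytic admissible category is not so constrained, and the explicit description above produces a nontrivial Lie-algebra action that forces infinite dimension.

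The main obstacle will be the principal-series case: showing that $\pi_p$ contributes nontrivially to $\dlim_n H^1_c(\calM_{\LT,n},\Omega^1)$ even when $\tau_p$ vanishes. This is precisely the novel phenomenon the paper emphasizes about the $D_p^\times$-side of the $p$-adic Jacquet--Langlands correspondence; it traces back to $\Omega^1$-cohomology being strictly richer than $\calO$-cohomology on the Lubin--Tate tower, a fact invisible to the classical $\ell$-adic non-abelian Lubin--Tate theory, and the needed existence of the functional in the Serre-dual picture must be verified by a genuinely new construction rather than by appeal to established correspondences.
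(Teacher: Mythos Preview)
Your proposal is not a proof: it is a plan whose central step is explicitly left open. You correctly identify that the crux is the principal-series case, but ``can be produced using the explicit Drinfeld-side geometry at finite level'' is an assertion, not an argument. Nothing in the Serre-duality reduction you describe tells you \emph{how} to exhibit a nonzero $\GL_2(\bbQ_p)$-equivariant functional on $H^0(\calM_{\LT,\infty},\calO^{\sm})$ transforming by a given principal series $\pi_p$, and the existing Drinfeld-side results in \cite{DLB17,PanII} do not furnish one. Your infinite-dimensionality sketch is similarly circular: ``a nontrivial Lie-algebra action forces infinite dimension'' presupposes you can show the Lie-algebra action is nontrivial, which is essentially the same problem.

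The paper's proof is quite different and supplies exactly the mechanism you are missing. It argues by contradiction: if $\tau_c=0$, then (using the global Tor-vanishing of Proposition~\ref{prop:Torvanishing}) in fact $H^1_c(\calM_{\LT,\infty},\Omega^{1,\sm})\otimes^L_{\calH(G)}\pi_p=0$. The key local input is an explicit Lie-algebra element $t_{00}\in\check\frg/\check\frz$, coming from the Gross--Hopkins formulas \cite[\S 25]{GH94}, whose action on $H^1_c(\calM_{\LT,\infty},\Omega^{1,\sm})$ factors as $d_{\LT}$ followed by an $\calO$-linear map $\Phi$ (Lemma~\ref{lem:t00}). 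Analyzing $\Phi$ as multiplication by $\phi_0\phi_1/\epsilon$ shows (Proposition~\ref{prop:t00}) that $\ker t_{00}$ sits in a $\GL_2(\bbQ_p)$-equivariant extension
\[
0\to \calC^{\sm}_c(\GL_2(\bbQ_p),C)^{\oplus 2}\to \ker t_{00}\to H^1_{\dR,c}(\calM_{\LT,\infty})\to 0,
\]
while $\coker t_{00}\cong H^2_{\dR,c}(\calM_{\LT,\infty})$. Since $\calC^{\sm}_c(G,C)\otimes^L_{\calH(G)}\pi_p=\pi_p$ is infinite-dimensional and the de Rham pieces yield finite-dimensional contributions after $\otimes^L_{\calH(G)}\pi_p$ (via the product formula identifying them with Shimura-curve de Rham cohomology), applying $-\otimes^L_{\calH(G)}\pi_p$ to the four-term exact sequence for $t_{00}$ gives a contradiction. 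Infinite-dimensionality of $\tau_c$ falls out of the same argument.

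Two further points. First, your Scholze--Weinstein reduction is a bit muddled: the isomorphism $\calM_{\LT,\infty}\cong\calM_{\Dr,\infty}$ is at infinite level, and descending to Lubin--Tate versus Drinfeld finite levels corresponds to taking invariants under different groups, so $H^0(\calM_{\LT,n},\calO)$ is not the same object as any Drinfeld-side $H^0$. Second, note that the paper's argument is not purely local: the Tor-vanishing and the finite-dimensionality of the de Rham contributions both invoke the product formula with the Shimura curve.
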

The proof for the theorem is mostly local (with some global input to understand the de Rham cohomology of the Lubin--Tate tower). We use some explicit formula for the action of the Lie algebra of $D_p^{\times}$ on equivariant sheaves on the Lubin--Tate spaces (deduced from the Lie algebra action formula on $\check{\fl}$ via the Gross--Hopkins period map) in \cite[\S 25]{GH94}. We refer the reader to \S\ref{subsectionnonvanishing} for more details. 
\begin{remark}
    \begin{enumerate}[(1)]
        \item If $\pi_p$ is a local component of a regular algebraic cuspidal automorphic representation of $\bar G$, in Proposition \ref{prop:Torvanishing}, we also prove that for all $i>0$, 
        \[\Tor_i^{\calH(G)}(\dlim_nH^1_c(\calM_{\LT,n},\calO_{\calM_{\LT,n}}),\pi_p)=0.\]
        \item We can also establish some representation-theoretic properties of $\tau_c$ when $\pi_p$ has global origins as in (1). The representation $\tau_c$ admits a natural $E$-structure denoted by $\tau_{c,E}$. It is an admissible locally analytic $D_p^\times$-representation over $E$ (by Theorem \ref{thm:mainintro}). Then we show that $\tau_{c,E}$ has Gelfand--Kirillov dimension $1$ (see Theorem \ref{thm:GKD}, which is based on the work \cite{dospinescu2023gelfand}), and it is pure (see Lemma \ref{lem:no-smooth-quotient}).
    \end{enumerate}  
\end{remark}
\begin{remark}
    We expect that when the Weil--Deligne representation $r_p$ is absolutely irreducible, the representation $\tau(\rho_p)$ we constructed should still determine $\rho_p$ as in the $\GL_2(\bbQ_p)$ case, see for example \cite{Din22}. Using the methods in \cite{QS24,su2025locallyanalytictextext1conjecturetextgl2l} and our geometric description of $\tau_{c}$ using the Lubin--Tate spaces, we can show that $\dim \mathrm{Ext}^1_{D_p^{\times}}(\tau_c,\tau_p)=2$. Hence the extension class (\ref{equationintroextension}) given by $\tau(\rho_p)$ should determine the Hodge filtration in the $2$-dimensional space $D_{\dR}(\rho_p)$. However, to show that $\tau(\rho_p)$ is non-isomorphic for distinct $\rho_p$ associated with the same $r_p$, further input is required (e.g. $\dim \Hom_{D_p^{\times}}(\tau_c,\tau_c)=1$). This will be left to future work.
\end{remark}
\subsection*{Overview} We briefly describe the structure of this article. 

In \S\ref{sec:quaternionicshimuracurve}, we introduce the perfectoid quaternionic Shimura curve and its $p$-adic uniformization. In \S\ref{subsec:locallyanalyticvectors}, we recall and adapt various constructions and results of Lue Pan to our quaternion setting. In \S\ref{sec:productformula}, we apply the $p$-adic uniformization and the descent results developed in Appendix \ref{section:properpushforward} to establish the product formula for compactly supported cohomology (Proposition \ref{introproductformula}). In \S\ref{sec:cohomologyI} and \S\ref{sec:cohomologyII}, we prove the main results on the  ``de Rham'' part of the locally analytic vectors of the completed cohomology of the Shimura curves (e.g. the spectral decomposition in Theorem \ref{thm:kerI1classical}).

In \S\ref{sec:padicLLD}, we study the locally analytic representations of $D_p^{\times}$ that appear in the completed cohomology attached to de Rham Galois representations. In \S\ref{subsec:repofDptimes} and \S\ref{sec:representationproperties}, we define these representations in terms of the Lubin--Tate tower (e.g. $\tau_c$ and $\widetilde{\tau}$) and study various properties of them, including the non-vanishing result (Theorem \ref{thmintornonvanishing}), the admissibility (in Theorem \ref{thm:mainintro}) and the Gelfand-Kirillov dimension (Theorem \ref{thm:GKD}). Then we apply the results to the completed cohomology and the $p$-adic Langlands correspondence for $D_p^{\times}$ in \S\ref{sec:completedcohoclassicality}. Finally, in \S\ref{sec:application}, we discuss the $p$-adic Jacquet--Langlands correspondence for $\GL_2(\bbQ_p)$ (i.e. Scholze's functor).

\subsection*{Notation and conventions}
In the whole paper, the field $E$ will be a finite extension of $\bbQ_p$. Let $\bbQ_p^{\ur}$ be the maximal unramified extension of $\bbQ_p$ in an algebraic closure $\overline{\bbQ_p}$ and let $\breve\bbQ_p$ be its $p$-adic completion with the ring of integers $\breve\bbZ_p=W(\overline{\bbF_p})$. We write $C=\bbC_p:=\widehat{\overline{\bbQ_p}}$.

For $F=E$ or $C$ a $p$-adic field and $H$ a $p$-adic manifold, write $\calC_{(c)}^{\sm}(H,F),\calC_{(c)}^{\lan}(H,F)$ and $\calC_{(c)}^{\rm cont}(H,F)$ respectively for the space of (compactly supported) $F$-valued locally constant, locally analytic, and continuous functions on $H$. If $H$ is a $p$-adic Lie group, we write $\Lie H$ for its Lie algebra over $\bbQ_p$.

If $g\in\GL_n$ is a matrix, we write $g^t$ for its transpose.

For $(a,b)\in\bbZ$, we write $V^{(a,b)}=\mathrm{Sym}^{a-b}V^{(1,0)}\otimes\det^{b}$ or $W^{(a,b)}$ for the algebraic representation of $\GL_2$ or its Lie algebra $\mathfrak{gl}_2$ (over a field) with highest weight $(a,b)$ (with respect to the Borel subgroup of upper triangular matrices), where $V^{(1,0)},W^{(1,0)}$ denote the standard $2$-dimensional representation of $\GL_2$. Let $\frh$ be the subalgebra of diagonal matrices in $\mathfrak{gl}_2$. Then we view $(a,b)$ as a character of $\frh$ by sending $\begin{pmatrix}x&0\\0&y
\end{pmatrix}$ to $ax+by$.

If $X$ is a space (scheme, adic space, linear space, etc.) with a left (resp. right) action of a group $H$, then we also view $X$ as a right (resp. left) $H$-space via the involution $g\mapsto g^{-1}$ on $H$. If $\calF$ is an equivariant sheaf on a left $H$-space $X$, then the global sections (or cohomology groups) of $\calF$ naturally form a left $H$-representation via the usual dual actions.

If $M$ is a vector space over a field $F$ with a linear action of a commutative ring $\bbT$, and $\lambda:\bbT\rightarrow F$ is a ring homomorphism, then we write $M[\lambda]$ for the subspace $\{m\in M\mid tm=0,\forall t\in \mathrm{Ker}(\lambda) \}\subset M$. 

Our convention is that the Hodge--Tate weight of the $p$-adic cyclotomic character of $\Gal(\overline{\bbQ_p}/\bbQ_p)$ is $-1$.

We normalize the local reciprocity map $\bbQ_{\ell}^{\times}\rightarrow \Gal(\overline{\bbQ_{\ell}}/\bbQ_{\ell})^{\rm ab}$ by sending $\ell$ to a geometric Frobenius element for any prime number $\ell$.

As in \cite{Pan22,PanII}, the sheaves on adic spaces considered in this paper are equipped with natural topologies (e.g. $\calO_{K^p}^{\lan}$ is a sheaf of LB-spaces, i.e. countable locally convex inductive limits of Banach spaces) and the isomorphisms between cohomology groups are topological isomorphisms. The topology and functional analysis issues have been discussed carefully in \textit{loc. cit.} and can also be treated using condensed mathematics as in \cite{camargo2024locallyanalyticcompletedcohomology,boxer2025modularity}. Since our setting is similar to these previous works and the different settings are not essential to the main results, we will generally omit the discussions on topology in this paper. We note that all completed tensor products $-\widehat{\ox}-$ over $p$-adic fields in this paper can be replaced by solid tensor products.
\subsection*{Acknowledgements}
We thank Yiwen Ding, Gabriel Dospinescu, Eugen Hellmann, Yongquan Hu, Yuanyang Jiang, Lue Pan, Haoran Wang and Liang Xiao for helpful discussions or for their interests in this work. Part of this work was done when the first author was visiting the Morningside Center of Mathematics (MCM) and when the second author was visiting the third author at Universität Münster in the summer of 2025. The first author would like to thank MCM and the second author would like to thank Universität Münster for the hospitalities during their visits.

The first author received funding from the European Union's Horizon 2020 research and innovation programme under the Marie Skłodowska-Curie grant agreement No 945332 and partially funded by the project “Group schemes, root systems, and related representations” founded by the European Union - NextGenerationEU through Romania’s National Recovery and Resilience Plan (PNRR) call no. PNRR-III-C9-2023- I8, Project CF159/31.07.2023. The second author was partially supported by the National Natural Science Foundation of China under agreement No. NSFC-12231001 and No. NSFC-12321001. The third author was funded by the Deutsche Forschungsgemeinschaft (DFG, German Research Foundation) – Project-ID 427320536 – SFB 1442, as well as under Germany's Excellence Strategy EXC 2044 390685587, Mathematics Münster: Dynamics–Geometry–Structure.
\section{Locally analytic cohomologies of Shimura curves}
\subsection{Quaternionic Shimura curve}\label{sec:quaternionicshimuracurve} First, we introduce our perfectoid quaternionic Shimura curve and describe its $p$-adic uniformization, cf. \cite[\S 5, \S 6]{Scholze2018LubinTate}. We also recall the Drinfeld and Lubin--Tate spaces (of dimension $1$) and the period maps on them.
\subsubsection{The Shimura curve}\label{subsec:quaternionicshimuracurve}
Let $D$ be a quaternion algebra over $\bbQ$, which defines a reductive group $G$ over $\bbQ$ by $G(R):=(D\ox_{\bbQ}R)^\times$ for any $\bbQ$-algebra $R$. Assume that $D$ splits over $\bbR$. We assume that $D_p:=D\ox_{\bbQ} \bbQ_p$ is a division algebra (the unique non-split quaternion algebra over $\bbQ_p$), and then $G(\bbQ_p)=D_p^{\times}$. 

Let $K\subset G(\bbA_f)$ be an open compact subgroup of the finite adelic points of $G$. Assume that there is a decomposition $K=K^pK_p$, where $K_p\subset D_p^\times,K^p\subset G(\bbA_f^p)$ are open compact subgroups. We will usually fix $K^p$ throughout this article and will always assume $K^p$ to be sufficiently small (so that to avoid problems of non-representability and to get the desired torsion freeness of some cocompact subgroups of $\GL_2(\bbQ_p)$ defined by $K^p$, see for example \cite[1.5]{MR1141456} and \S\ref{sec:uniformization}). 

Let $X=\bbC\bs\bbR$ be the union of the usual Poincar\'e upper and lower half planes, which can be seen as the $G(\bbR)$-conjugacy class of $h:\bbS=\mathrm{Res}_{\bbC/\bbR}\bbG_m\rightarrow G_{\bbR}=(D\otimes_{\bbQ}\bbR)^{\times}$ sending $a+bi\in \bbC^{\times}=\bbS(\bbR)$ to $\begin{pmatrix}
a&b\\-b&a
\end{pmatrix}\in \GL_2(\bbR)\cong (D\otimes_{\bbQ}\bbR)^{\times}$. Hence $G(\bbR)$ acts on $X$ naturally. The compact Riemann surface 
\[S_K(\bbC):=G(\bbQ)\bs (X\times G(\bbA_f)/K)\]
is known to be canonically defined over $\bbQ$. Let $S_K$ be the (projective smooth) Shimura curve over the reflex field $\bbQ$ associated with the Shimura datum $(G,X)$ and of level $K$.

Fix an algebraic closure $\overline{\bbQ_p}$ of $\bbQ_p$ and denote $C$ to be the completion of $\overline{\bbQ_p}$ for the $p$-adic norm. Let $\calS_K$ be the adic space over $C$ associated with $S_K\times_{\bbQ}C$. As $K_p$ varies in all open compact subgroups of $D_p^\times$, we get a perfectoid space over $C$:
\[
    \calS_{K^p}\sim\ilim_{K_p}\calS_{K^pK_p}
\]
together with the Hodge--Tate period map 
\[\pi_{K^p, \HT}:\calS_{K^p}\to \check\fl,\] 
where $\check\fl$ is the adic space associated with $\bbP^1_{C}$. See \cite{Sch15,caraiani2017generic,She17,Scholze2018LubinTate} for more details. Here $\check{\fl}$ carries the natural $D_p^\times$-action by viewing $\bbP^1_{C}$ as the base-change of the Brauer-Severi variety over $\bbQ_p$ associated to $D_p$ via $\bbQ_p\rightarrow C$, and (roughly speaking) the Hodge--Tate period map is defined by considering the Hodge filtration in the linear dual of the (trivialized) universal Tate module. The Hodge--Tate period map $\pi_{K^p, \HT}$ is $D_p^\times$-equivariant, compatible with the $\Gal(\overline{\bbQ_p}/\bbQ_p)$-action. It is also compatible with the following Hecke actions. Let $S$ be a finite set of prime numbers containing $p$, such that $G_{\bbQ_{\ell}}$ is unramified for $\ell\not\in S$ and $K^p=K^p_S K^S$ where $K^S\subset G(\bbA^S_f)$ is a product of hyperspecial compact open subgroups. Let 
\[\bbT^S=\bbZ[K^S\bs G(\bbA^S_f)/K^S]\]
be the abstract Hecke algebra. Then $\pi_{K^p, \HT}$ is equivariant for the Hecke action of $\bbT^S$ (with the trivial $\bbT^S$-action on $\check{\fl}$). 

\subsubsection{Shimura sets, Drinfeld tower and the $p$-adic uniformization}\label{sec:uniformization}
The $p$-adic uniformization theorem expresses the Shimura curve $\calS_{K^p}$ in terms of quotient of the Drinfeld tower of dimension $1$ at infinite level by some discrete cocompact subgroups of $\GL_2(\bbQ_p)$ (see (\ref{uniformization}) below) which will be important in our study of the completed cohomology of $\calS_{K^p}$. 

First, we introduce the quaternionic Shimura set. Let $\bar D$ be the quaternion algebra over $\bbQ$ obtained from $D$ by changing the local invariants exactly at the archimedean place and $p$. Let $\bar G$ be the reductive group over $\bbQ$ associated to $\bar D$. Hence $\bar G(\bbR)$ is compact modulo the center and the $p$-component $\bar G_{p}:=\bar G(\bbQ_p)$ of $\bar G(\bbA)$ is isomorphic to $\GL_2(\bbQ_p)$ induced by the splitting of $\bar D$ at $p$. We can and we do fix an isomorphism 
\[G(\bbA_{f}^p)\isom \bar G(\bbA_f^{p})\]
so that $K^p$ can be viewed as an open subgroup of $\bar G(\bbA_f^p)$.  The Shimura set
\begin{align}\label{eq:defofZKp}
    Z_{K^p}:= \bar G (\bbQ)\bs \bar G (\bbA_f)/K^p
\end{align}
is a profinite set (by finiteness of class numbers \cite[Theorem 5.1]{borel1963some}) with commuting actions of $\bbT^S$ and $\bar G_p=\GL_2(\bbQ_p)$.

Next, we introduce the Drinfeld tower. The Drinfeld space of dimension $1$ over $C$ of level $0$ is non-canonically given by $\calM_{\Dr,0}\simeq \sqcup_{h\in\bbZ}\bbP^{1,\mathrm{an}}_{C}\setminus \bbP^1(\bbQ_p)$, the base change to $C$ (via $\breve{\bbQ}_p\hookrightarrow C$) of the adic generic fiber of a formal scheme over $\Spf \calO_{\breve{\bbQ}_p}$, which is certain moduli space (Rapoport--Zink space) of $p$-divisible special formal $\calO_{D_p}$-modules, see \cite{Dr76} or \cite[Theorem. 3.72]{RZ96}. Let $V_{\Dr}$ be the \textit{dual} of the $p$-adic Tate module of the universal $p$-divisible group on $\calM_{\Dr,0}$. The Drinfeld tower $(\calM_{\Dr,n})_{n\geq 0}$ consists of \'etale Galois coverings $\calM_{\Dr,n}$ of $\calM_{\Dr,0}$ with the Galois group $(\calO_{D_p}/p^n\calO_{D_p})^\times=\calO_{D_p}^\times/(1+p^n\calO_{D_p})$ trivializing $V_{\Dr}/p^n V_{\Dr}$. By \cite[Theorem 6.5.4]{SW13}, there exists a perfectoid space $\calM_{\Dr,\infty}$ over $C$ such that 
\begin{align*}
    \calM_{\Dr,\infty}\sim\ilim_n\calM_{\Dr,n}
\end{align*}
and $\calM_{\Dr,\infty}$ is equipped with a natural continuous left action of $D_p^{\times}\times \GL_2(\bbQ_p)$. The group $\GL_2(\bbQ_p)$ acts also continuously on each layer $\calM_{\Dr,n}$, $n\ge 0$. The Hodge filtration of the covariant Dieudonn\'e module of the universal $p$-divisible group defines the Gross--Hopkins/Grothendieck--Messing period map \cite[Prop. 6.5.5]{SW13}
\begin{align*}
    \pi_{\Dr,\GM}:\calM_{\Dr,\infty}\to {\fl}
\end{align*}
where $\fl$ means the adic space associated to $\bbP^1_C$ equipped with the natural $\GL_2(\bbQ_p)$-action and the trivial action of $D_p^{\times}$. The Hodge filtration of the dual of the rational Tate module of the universal $p$-divisible group over $\calM_{\Dr,\infty}$ defines the Hodge--Tate period map \cite[Prop. 7.1.1]{SW13}
\begin{align*}
    \pi_{\Dr,\HT}:\calM_{\Dr,\infty}\to \check{\fl}.
\end{align*}
Note that our normalization for the $D_p^{\times}$-action is different from \textit{loc. cit.} by the involution of $D_p$ and follows that of \cite{PanII}, since here the tower $\calM_{\Dr,\infty}$ trivializes the dual of the Tate modules (rather than the Tate modules). See \S\ref{sec:vectorbundles} for relevant discussions. The map $\pi_{\Dr,\HT}$ is equivariant for the $\GL_2(\bbQ_p)\times D_p^\times$-action where $\GL_2(\bbQ_p)$ acts trivially on $\check{\fl}$.

The following theorem gives the $p$-adic uniformization of the perfectoid Shimura curve $\calS_{K^p}$.
\begin{theorem}[\v{C}erednik, Drinfeld]\label{thm:padicuniformization}
We have a $D_p^\times$-equivariant isomorphism of adic spaces over $C$:
\begin{align}\label{eq:padicuniformization}
    \calS_{K^p}\isom \bar G(\bbQ)\bs (\calM_{\Dr,\infty}\times \bar G(\bbA_f^p)/K^p),
\end{align}
which is compatible with the $\breve{\bbQ}_p$-structure, the (effective) Weil descent data to $\bbQ_p$ on both sides and the action of $\bbT^S$.
\end{theorem}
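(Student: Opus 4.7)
The plan is to deduce the statement from the classical Čerednik--Drinfeld theorem at each finite level and then pass to the inverse limit in the perfectoid framework of \cite{Sch15, Scholze2018LubinTate}. Recall that the classical theorem (Drinfeld \cite{Dr76}, Rapoport--Zink \cite[Ch.~6]{RZ96}) gives, for every sufficiently small open compact $K_p \subset \calO_{D_p}^\times$, a canonical isomorphism of adic spaces over $\breve{\bbQ}_p$
\begin{align*}
\calS_{K^p K_p,\breve{\bbQ}_p} \isom \bar G(\bbQ) \bs \bigl(\calM_{\Dr,K_p} \times \bar G(\bbA_f^p)/K^p\bigr),
\end{align*}
where $\calM_{\Dr,K_p}$ is the quotient of $\calM_{\Dr,n}$ by the image of $K_p$ in $(\calO_{D_p}/p^n\calO_{D_p})^\times$ for $n$ large enough that $1+p^n\calO_{D_p}\subset K_p$, and $\bar G(\bbQ)$ acts diagonally via $\bar G(\bbQ) \inj \bar G(\bbQ_p) \times \bar G(\bbA_f^p) \isom \GL_2(\bbQ_p) \times \bar G(\bbA_f^p)$: the first factor acts through the continuous $\GL_2(\bbQ_p)$-action on the Drinfeld tower, and the second by right translation. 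Neatness of $K^p$ ensures trivial stabilizers on each connected component, so the quotient is a well-defined adic space. This classical result is $\bbT^S$-equivariant (the Hecke action factoring through $\bar G(\bbA_f^p)/K^p$) and compatible with the natural Weil descent data from $\breve{\bbQ}_p$ to $\bbQ_p$; on the right-hand side this is Morita's twisted descent, combining the quasi-isogeny action of a Frobenius element in $D_p^\times$ on $\calM_{\Dr,\infty}$ with the intrinsic Frobenius on the Drinfeld tower, which gives rise to an effective descent to $\bbQ_p$ matching the canonical model of $S_K$.

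Second, I would pass to the inverse limit over $K_p$. On the left-hand side, $\calS_{K^p} \sim \ilim_{K_p} \calS_{K^p K_p}$ is perfectoid by construction. On the right, $\calM_{\Dr,\infty} \sim \ilim_n \calM_{\Dr,n}$ is perfectoid by Scholze--Weinstein \cite[Thm.~6.5.4]{SW13}. The action of $\bar G(\bbQ)$ on $\calM_{\Dr,\infty} \times \bar G(\bbA_f^p)/K^p$ is properly discontinuous with trivial stabilizers (by neatness of $K^p$), so the quotient inherits a perfectoid structure, and it is tautologically identified with $\ilim_{K_p} \bar G(\bbQ) \bs (\calM_{\Dr,K_p} \times \bar G(\bbA_f^p)/K^p)$. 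Combining this with the finite-level uniformization yields the claimed isomorphism.

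Finally, the additional equivariances follow from the construction. The $D_p^\times$-action, which is invisible at finite level, emerges in the limit: the quasi-isogeny action of $D_p^\times$ on $\calM_{\Dr,\infty}$ matches the Hecke correspondences at $p$ on the tower $(\calS_{K^p K_p})_{K_p}$ under the finite-level isomorphisms, and the compatibility of those isomorphisms with varying $K_p$ transports the action to the left-hand side in the limit. Compatibility with $\bbT^S$ and with the Weil descent data passes to the limit tautologically from the finite-level statement. The main obstacle will be justifying that the quotient by the infinite discrete group $\bar G(\bbQ)$ commutes with the inverse limit over $K_p$ in a category where both sides make sense as adic spaces; this is handled by first working in the category of diamonds, where limits and quotients by locally profinite groups are well-behaved, and then lifting the diamond-level isomorphism to an isomorphism of perfectoid spaces via the equivalence between perfectoid spaces and their associated diamonds. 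This program is essentially executed in \cite[\S\S 5--6]{Scholze2018LubinTate} for precisely this quaternionic setting, to which we defer for the detailed verification.
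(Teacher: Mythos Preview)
Your proposal is correct and follows essentially the same approach as the paper: invoke the classical finite-level \v{C}erednik--Drinfeld isomorphism (citing Drinfeld, Rapoport--Zink, Boutot--Zink, and Scholze), note the effectivity of the Weil descent datum, and then pass to the inverse limit over $K_p$. The paper's own proof is more terse---it simply records the finite-level statement $\calS_{K^p,n}\isom \bar G(\bbQ)\bs(\calM_{\Dr,n}\times \bar G(\bbA_f^p)/K^p)$ and then says ``take inverse limits on $n$''---whereas you spell out more of the perfectoid mechanics, but the substance is the same.
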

\begin{proof}
See for example \cite{Dr76}, \cite[Theorem III]{RZ96}, \cite{BZ00,boutot2022uniformization} or \cite[\S 6]{Scholze2018LubinTate} for more details. More precisely, for each $n\ge 0$ there exists an isomorphism 
\[ \calS_{K^p,n}\isom \bar G(\bbQ)\bs (\calM_{\Dr,n}\times \bar G(\bbA_f^p)/K^p)\]
where $\calS_{K^p,n}$ is the Shimura curve of level $K^p(1+p^n\calO_{D_p})$. Moreover, after we replace $\calM_{\Dr,\infty}$ by its quotient of a discrete cyclic subgroup of the center of $\bar G_p$, the Weil descent datum on the right hand side is effective and the isomorphism is $\bbQ_p$-rational. See \cite[Proposition 6.16]{RZ96}, \cite[(A.11)]{harris1997supercuspidal} for example. Then we take inverse limits on $n$.
\end{proof}

\begin{remark}\label{remarkuniformizationtwist}
    Here, in (\ref{eq:padicuniformization}), we note that the action of $\bar G(\bbQ)$ on $\calM_{\Dr,\infty}$ is induced from the left action of $\GL_2(\bbQ_p)$ on $\calM_{\Dr,\infty}$ via the map $\bar G(\bbQ)\inj \bar G(\bbQ_p)=\GL_2(\bbQ_p)$ twisted by the Cartan involution of $\GL_2(\bbQ_p)$ (the inverse transpose $g\mapsto (g^{t})^{-1}$) as in \cite[Theorem 5.4.2]{PanII}. The action of $\bar G(\bbQ)$ on $\bar G(\bbA_f^p)$ is given by multiplication from the left. The group $K^p$ acts on $\calM_{\Dr,\infty}$ trivially, and acts on $\bar G(\bbA_f^p)$ by multiplication from the right. 
\end{remark}
We can rewrite the isomorphism (\ref{eq:padicuniformization}) in terms of the Shimura set $Z_{K^p}=\bar G(\bbQ)\bs  \bar G(\bbA_f)/K^p$. There is a canonical isomorphism (cf. \cite[(A.10)]{harris1997supercuspidal} or \cite[4.5.2.1]{fargues2004cohomologie})
\begin{align*}
    \bar G(\bbQ)\bs (\calM_{\Dr,n}\times \bar G(\bbA_f^p)/K^p)&\isom (\calM_{\Dr,n}\times \bar G(\bbQ)\bs\bar G(\bbA_f)/K^p  )/\bar G_p,[x,yK^p]\mapsto [x,\bar G(\bbQ)y K^p]
\end{align*}
where the right $\bar G_p$-action on $\calM_{\Dr,n}\times\bar G(\bbQ)\bs\bar G(\bbA_f)/K^p$ is given by $(x,y^p,y_p)g=(g^{t}x,y^p,y_pg)$ for $x\in \calM_{\Dr,n}$, $y^p \in \bar G(\bbA_f^p)$, $y_p\in \bar G_p\isom \GL_2(\bbQ_p)$, and $g\in \bar G_p$. Under the above isomorphism, we rewrite (\ref{eq:padicuniformization}) as follows: 
\begin{align}\label{eq:realpadicuniformization}
    \calS_{K^p}\isom (\calM_{\Dr,\infty}\times( \bar G(\bbQ)\bs  \bar G(\bbA_f)/K^p))/\bar G_p.
\end{align}
This isomorphism is compatible with the $G_p$-actions on $\calS_{K^p}$ and $\calM_{\Dr,\infty}$, the actions of $\bbT^S$ on $\calS_{K^p}$ and $\bar G(\bbQ)\bs  \bar G(\bbA_f)/K^p$. 
\begin{remark} Under the isomorphism (\ref{eq:realpadicuniformization}), the $G_p=D_p^\times$-equivariant Hodge--Tate period map  $\pi_{\Dr,\HT}:\calM_{\Dr,\infty}\to \check \fl$ induces a $D_p^\times$-equivariant map on $\calS_{K^p}$, which is exactly the Hodge--Tate period map on $\calS_{K^p}$. See \cite[Claim 3.4.2]{She17} for more examples and the discussions below.
\end{remark}
For further applications, it is convenient to rewrite the $D_p^\times$-equivariant isomorphism (\ref{eq:realpadicuniformization}) as
\begin{align}\label{uniformization}
    \calS_{K^p}\isom \sqcup_{x\in  Z_{K^p}/\GL_2(\bbQ_p) }\Gamma_x\bs \calM_{\Dr,\infty}.
\end{align} 
Here, $Z_{K^p}$ has finitely many $\GL_2(\bbQ_p)$-orbits. For every $x\in Z_{K^p}$ represented by $x\in \bar G(\bbA_f)$, we put $\Gamma_x=x^{-1}\bar G(\bbQ)x\cap K^p$ as the stabilizer of $x$ for the right action of $\GL_2(\bbQ_p)$ on $Z_{K^p}$. The groups $\Gamma_x$ are discrete cocompact subgroups of $\GL_2(\bbQ_p)$ since $\bar G(\bbQ)$ is discrete in $\bar G(\bbA_f)$ by \cite[Proposition 1.4]{gross1999algebraic}, and after shrinking $K^p$ if necessary, we may assume all these stabilizer groups $\Gamma_x$ are torsion-free (cf. \cite[5.4]{MR1141456} or take $\ell\neq p$ such that the $\ell$-component $K_{\ell}$ of $K^p$ is torsion-free).

Let $\Gamma$ be one of the above cocompact subgroups $\Gamma_x$ for some $x$. Then $\calS_{\Gamma}:=\Gamma\bs\calM_{\Dr,\infty}$ is an adic space over $C$ (as it is a component of $\calS_{K^p}$). The projection map $\pr_{\Gamma,\infty}:\calM_{\Dr,\infty}\to\Gamma\bs\calM_{\Dr,\infty}$ is an analytic $\Gamma$-covering of adic spaces by the following proposition. 
\begin{proposition}\label{prop:prGamma}
Let $\Gamma$ be a closed discrete cocompact subgroup of $\GL_2(\bbQ_p)$. If $\Gamma$ is torsion-free, then the projection map $\pr_{\Gamma,\infty}:\calM_{\Dr,\infty}\to \Gamma\bs \calM_{\Dr,\infty}$ is a local isomorphism in the following sense: for any $x\in \Gamma\bs \calM_{\Dr,\infty}$, there exists an open affinoid perfectoid open subset $U$ which contains $x$, such that $\pr_{\Gamma,\infty}^{-1}(U)$ is a disjoint union $\sqcup_{\gamma\in\Gamma}\gamma(V)$, with $V\subset \calM_{\Dr,\infty}$ an affinoid perfectoid open subset, such that $\pr_{\Gamma,\infty}|_{V}:V\to U$ is an isomorphism.
\end{proposition}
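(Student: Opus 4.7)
The plan is to reduce the statement to the level-$0$ rigid space $\calM_{\Dr,0}$, where proper discontinuity of $\Gamma$ is classical, and then lift the resulting separation property to infinite level using the perfectoid structure of the Drinfeld tower.

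First I would establish that $\Gamma$ acts freely and properly discontinuously on $\calM_{\Dr,0}$. Recall that $\calM_{\Dr,0}\simeq \sqcup_{h\in\bbZ}\Omega^1$, where $\Omega^1 = \bbP^{1,\mathrm{an}}_{C}\setminus\bbP^1(\bbQ_p)$ is the Drinfeld upper half plane, and that $\GL_2(\bbQ_p)$ acts on each $\Omega^1$ by M\"obius transformations combined with a shift of the $\bbZ$-index by $\ord_p\det$. Via the reduction map from $\Omega^1$ to the Bruhat--Tits tree of $\PGL_2(\bbQ_p)$ together with this component shift, one checks that the $\GL_2(\bbQ_p)$-action on $\calM_{\Dr,0}$ is proper. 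Since $\Gamma$ is discrete in $\GL_2(\bbQ_p)$ and torsion-free, the induced $\Gamma$-action is then free and properly discontinuous; this is the classical content of the work of Mumford, \v{C}erednik and Drinfeld underlying the $p$-adic uniformization. Combining proper discontinuity with the Hausdorff property of $\calM_{\Dr,0}$, for any $\tilde x_0 \in \calM_{\Dr,0}$ we may find an affinoid open $W_0 \ni \tilde x_0$ with $\gamma W_0 \cap W_0 = \emptyset$ for every $\gamma \in \Gamma\setminus\{1\}$.

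Next I would lift this separation property to infinite level. Given $x\in\Gamma\bs\calM_{\Dr,\infty}$, pick a lift $\tilde x\in\calM_{\Dr,\infty}$ with image $\tilde x_0\in\calM_{\Dr,0}$. By \cite[Theorem 6.5.4]{SW13}, affinoid perfectoid opens $V\sim\ilim_n V_n$, with each $V_n$ an affinoid open of $\calM_{\Dr,n}$ whose image $V_0$ in $\calM_{\Dr,0}$ is affinoid, form a neighborhood basis of $\tilde x$. Since the transition map $\calM_{\Dr,\infty}\to\calM_{\Dr,0}$ is continuous and $\Gamma$-equivariant, after shrinking $V$ within this basis we may assume $V_0\subseteq W_0$. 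Then for every $\gamma\in\Gamma\setminus\{1\}$, the image of $\gamma V\cap V$ in $\calM_{\Dr,0}$ lies in $\gamma W_0\cap W_0=\emptyset$, so $\gamma V\cap V=\emptyset$.

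Finally, set $U:=\pr_{\Gamma,\infty}(V)$. By the previous step, $\pr_{\Gamma,\infty}^{-1}(U)=\bigsqcup_{\gamma\in\Gamma}\gamma V$ as a disjoint union. The map $\pr_{\Gamma,\infty}$ is open, so $U$ is open in $\Gamma\bs\calM_{\Dr,\infty}$; since the preimage is a disjoint union of $\Gamma$-translates of $V$, the $\Gamma$-invariants of its structure sheaf are canonically identified with $\calO(V)$, a perfectoid Tate algebra. Consequently $U$ inherits an affinoid perfectoid structure for which $\pr_{\Gamma,\infty}|_V:V\to U$ is an isomorphism of adic spaces; running this construction for each $x$ also endows $\Gamma\bs\calM_{\Dr,\infty}$ itself with its perfectoid adic-space structure, compatibly with the one coming from $\calS_{K^p}$ via the $p$-adic uniformization of Theorem \ref{thm:padicuniformization}. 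The main technical obstacle is the shrinking step: one must verify that the affinoid perfectoid basis at infinite level provided by \cite{SW13} is fine enough to simultaneously keep $\tilde x\in V$ and force $V_0\subseteq W_0$, which is where continuity of the transition map and the basis property are crucial.
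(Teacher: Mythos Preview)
Your proposal is correct and follows essentially the same strategy as the paper: reduce to $\calM_{\Dr,0}$, use the Bruhat--Tits tree together with torsion-freeness of $\Gamma$ to produce separating affinoid neighborhoods, and then lift to infinite level. The paper carries out the level-$0$ step more explicitly via Drinfeld's affinoid cover $\{U_\sigma^a\}$ indexed by simplices of $\mathcal{B}\times\bbZ$ (following \cite[\S5, Theorem~2]{SS91}) and handles the lift by citing \cite[Proposition~4.2(ii)]{CDN20}, whereas you phrase the same argument in terms of proper discontinuity and the perfectoid neighborhood basis from \cite{SW13}.
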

\begin{proof}
   We only need to prove similar statements for $\calM_{\Dr,0}$ (using \cite[Proposition 4.2(ii)]{CDN20}). The result was proved for subgroups of $\mathrm{PGL}_2(\bbQ_p)$ acting on $\Omega=\bbP^{1,\mathrm{an}}_{C}\setminus \bbP^1(\bbQ_p)$ in \cite[\S 5, Theorem 2]{SS91}. Let $\mathcal{B}$ be the Bruhat-Tits tree for $\mathrm{PGL}_2(\bbQ_p)$. In the proof of \textit{loc. cit.}, there exists an open covering of $\Omega$ by open affinoids $U_{\sigma}^a$ constructed by Drinfeld where $0<a<1$ and $\sigma$ vary in all simplices of $\mathcal{B}$. Moreover, $g(U_{\sigma}^{a})=U_{g(\sigma)}^a$ and $U_{g(\sigma)}\cap U_{\sigma}=\emptyset$ if $g(\sigma)\neq \sigma$. Let $\GL_2(\bbQ_p)$ act on $\mathcal{B}\times \bbZ$ via $g.(x,m)=(g.x,m+v_{p}(\det(g)))$ as in \cite[\S 3.68]{RZ96}. By Drinfeld, see \cite[\S 3.71]{RZ96}, there exists a $\GL_2(\bbQ_p)$-equivariant isomorphism $\calM_{\Dr,0}\simeq \Omega\times\bbZ$ (defined over $\breve{\bbQ}_p$). For a simplex $\sigma=(\sigma',m)$ of $\mathcal{B}\times \bbZ$, let $U_{\sigma}^a$ be the open affinoid $U_{\sigma'}^a\times\{m\}\subset \calM_{\Dr,0}$ via the previous isomorphism. Then $(U_{\sigma}^a)$ form an open covering of $\calM_{\Dr,\infty}$ when $\sigma$ vary in all simplices of $\mathcal{B}\times\bbZ$. We still have $g(U_{\sigma}^{a})=U_{g(\sigma)}^a$ for $g\in\GL_2(\bbQ_p)$ and $U_{g(\sigma)}^a\cap U_{\sigma}^a=\emptyset$ if $g(\sigma)\neq \sigma$. The stabilizer $\Gamma_{\sigma}\subset \Gamma$ of each simplex $\sigma$ in $\mathcal{B}$ is an open compact subgroup of $\GL_2(\bbQ_p)$. Since $\Gamma$ is discrete, we see that each $\Gamma_{\sigma}$ is a finite group, and is furthermore trivial since $\Gamma$ is torsion-free. Hence $U_{\gamma'(\sigma)}^a\cap U_{\gamma(\sigma)}^a=\emptyset$ for all $\gamma\neq\gamma'\in\Gamma$. We can then conclude as in the proof of \cite[\S 5, Theorem 2]{SS91}.
\end{proof}

Moreover, as $\Gamma$ acts trivially on $\check\fl$, the Hodge--Tate period map $\pi_{\Dr,\HT}:\calM_{\Dr,\infty}\to \check{\fl}$ descends to a map of adic spaces 
\[\pi_{\Gamma,\HT}:\calS_{\Gamma}\to \check{\fl}\] which coincides with the restrictions of $\pi_{K^p, \HT}$ via (\ref{uniformization}). 
\begin{lemma}\label{lem:prGammaHT}
There exists a basis $\ffrb_{\Gamma}$ for the analytic topology of $\check{\fl}$, such that each $U\in\ffrb_{\Gamma}$ is an affinoid open subset, $V_\infty=\pi_{\Gamma,\HT}^{-1}(U)$ is an affinoid perfectoid open subset inside $\calS_{\Gamma}$, and there exists a sufficiently large $n$ such that $V_\infty=\pi_{\Gamma,n}^{-1}(V_n)$ for some $V_n\subset \Gamma\bs\calM_{\Dr,n}$ affinoid open subset. Here $\pi_{\Gamma,n}:\Gamma\bs\calM_{\Dr,\infty}\to \Gamma\bs\calM_{\Dr,n}$ is the projection map. 
\end{lemma}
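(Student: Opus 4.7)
The plan is to deduce the lemma from the corresponding affinoid basis on the full perfectoid Shimura curve $\calS_{K^p}$ together with the $p$-adic uniformization. By the general construction of perfectoid Shimura curves and their Hodge--Tate period maps (see \cite{Sch15, Scholze2018LubinTate, She17}), there exists a basis $\ffrb$ of the analytic topology on $\check{\fl}$ consisting of affinoid open subsets $U$ such that $\pi_{K^p, \HT}^{-1}(U) \subset \calS_{K^p}$ is an affinoid perfectoid open subset; moreover, for $n$ sufficiently large depending on $U$, there is an affinoid open $U_n \subset \calS_{K^p, n}$ with $\pi_{K^p, \HT}^{-1}(U) = \pi_{K^p,n}^{-1}(U_n)$, where $\pi_{K^p,n}: \calS_{K^p} \to \calS_{K^p, n}$ denotes the projection to the finite level $K^p(1 + p^n\calO_{D_p})$.

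Next I would use the $p$-adic uniformization to identify $\calS_\Gamma$ as a clopen subspace of $\calS_{K^p}$ which, at sufficiently deep finite levels, is cut out by a union of clopen components of $\calS_{K^p, n}$. Indeed, the Shimura set $Z_{K^p}$ has only finitely many $\GL_2(\bbQ_p)$-orbits, each of which is clopen in $Z_{K^p}$, and each contributes a clopen component $\Gamma_x \backslash \calM_{\Dr,\infty}$ of $\calS_{K^p}$ by (\ref{uniformization}); refining the level by taking $n$ large enough eventually separates these orbits at finite level, via the uniformization $\calS_{K^p, n} \simeq \bar G(\bbQ)\backslash (\calM_{\Dr,n} \times \bar G(\bbA_f^p)/K^p)$ appearing in the proof of Theorem \ref{thm:padicuniformization}. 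Now, given $U \in \ffrb$, set $V_\infty := \pi_{\Gamma,\HT}^{-1}(U) = \pi_{K^p,\HT}^{-1}(U) \cap \calS_\Gamma$; this is a clopen subspace of the affinoid perfectoid $\pi_{K^p,\HT}^{-1}(U)$, and clopen subspaces of affinoid perfectoid spaces are themselves affinoid perfectoid, since they correspond to idempotents in the structure ring. Enlarging $n$ if necessary so that both $\calS_\Gamma$ and $U_n$ descend to level $n$, one may set $V_n := U_n \cap (\Gamma \backslash \calM_{\Dr,n})$, which is an affinoid open in $\Gamma\backslash\calM_{\Dr,n}$, and then $V_\infty = \pi_{\Gamma,n}^{-1}(V_n)$ by construction. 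The required basis $\ffrb_\Gamma$ is then the family of such $U \in \ffrb$, whose basis property in $\check{\fl}$ is inherited from that of $\ffrb$.

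The main technical input is the existence of the basis $\ffrb$ on $\check{\fl}$ with the finite-level pullback property for $\calS_{K^p}$, which is essentially Scholze's construction of the perfectoid Shimura variety adapted to the quaternionic setting. While Scholze's original arguments in \cite{Sch15} were formulated for Siegel-type Shimura varieties, the necessary extension to the case where $D_p$ is ramified is contained in \cite{She17, Scholze2018LubinTate}, either via the abelian-type framework or directly through the $p$-adic uniformization together with the affinoid perfectoid covering of the Drinfeld tower at infinite level (which is already used in the proof of Proposition \ref{prop:prGamma}). Once this input is granted, the remainder of the argument reduces to a routine clopen decomposition and a pullback verification.
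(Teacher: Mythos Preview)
Your proposal is correct and follows essentially the same approach as the paper: deduce the lemma from the analogous affinoid basis property for the full perfectoid Shimura curve $\calS_{K^p}$ via the $p$-adic uniformization, then restrict to the clopen component $\calS_\Gamma$. The paper's own proof is a one-line reference to this strategy, citing \cite[Proposition 4.4.53]{boxer2021highercolemantheory} for the input on $\calS_{K^p}$, whereas you spell out the clopen intersection argument explicitly and cite \cite{Sch15,She17,Scholze2018LubinTate} instead; note also that the component decomposition $\calS_{K^p,n}\simeq\sqcup_x \Gamma_x\backslash\calM_{\Dr,n}$ already holds at every finite level $n\geq 0$, so no additional refinement of $n$ is needed to separate the orbits.
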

\begin{proof} 
This follows from the $p$-adic uniformization (\ref{uniformization}) and the similar properties of $\pi_{K^p, \HT}:\calS_{K^p}\to \check\fl$, see \cite[Proposition 4.4.53]{boxer2021highercolemantheory}. 
\end{proof}

\subsubsection{The two towers}\label{subsection:twotowers}
We will also need the perfectoid Lubin--Tate space (cf. \cite[\S 6.3]{SW13})
\begin{align*}
    \calM_{\LT,\infty}\sim\ilim_n\calM_{\LT,n}.
\end{align*}
The $0$-th layer $\calM_{\LT,0}$ is the base change to $C$ of the adic generic fiber of of the formal scheme over $\mathrm{Spf}(\breve{\bbZ}_p)$ parametrizing deformations (up to quasi-isogeny) of $1$-dimensional formal $p$-divisible groups of height $2$ over $\bar\bbF_q$, isomorphic to $\bbZ$-copies of the open unit disk (see \cite{GH94} or \cite[\S 3.78]{RZ96}). Each $\calM_{\LT,n}\rightarrow \calM_{\LT,0}$ is an \'etale Galois covering with  Galois group $\GL_{2}(\bbZ_p/p^n\bbZ_p)$. There is a natural continuous (left) action of $\GL_{2}(\bbQ_p)$ on $\calM_{\LT,\infty}$. Also $D_p^\times$ acts on each layer $\calM_{\LT,n}$, $n\ge 0$ (using the moduli description). Similarly as for $\calM_{\Dr,\infty}$, we have the Gross--Hopkins period map $\pi_{\LT,\GM}:\calM_{\LT,\infty}\to {\fl}$ \cite{GH94} and the Hodge--Tate period map $\pi_{\LT,\HT}:\calM_{\LT,\infty}\to \check{\fl}$ \cite{SW13}. Following \cite{PanII}, we twist the usual $\GL_2(\bbQ_p)$-action (in \cite{SW13}) on $\calM_{\LT,\infty}$ by the Cartan involution $g\mapsto (g^{-1})^t$, which corresponds to the usual $\GL_2(\bbQ_p)$-action on $\calM_{\Dr,\infty}$ under the following isomorphism. As a consequence of our normalization, the actions of the centers $\bbQ_p^{\times}$ of $D_p^{\times}$ and $\GL_2(\bbQ_p)$ coincide on $\calM_{\LT,\infty}$. 
\begin{theorem}[{\cite[Theorem 7.2.3]{SW13}}]\label{thm:twotowers}
There is a $\GL_2(\bbQ_p)\times D_p^\times$-equivariant isomorphism 
\begin{align}\label{eq:isomorphismtwotowers}
    \calM_{\LT,\infty}\isom \calM_{\Dr,\infty}
\end{align}
such that under this isomorphism, $\pi_{\LT,\GM}$ is identified with $\pi_{\Dr,\HT}$, and $\pi_{\LT,\HT}$ is identified with $\pi_{\Dr,\GM}$. 
\end{theorem}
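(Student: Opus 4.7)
The strategy is to follow the approach of Scholze--Weinstein in \cite{SW13}, which establishes a uniform moduli-theoretic description of both towers at infinite level. The core input is their classification theorem: over a perfectoid algebra, $p$-divisible groups are equivalent to pairs $(T, W)$ consisting of a finite free $\bbZ_p$-module $T$ (the Tate module) together with a Hodge--Tate filtration $W \subset T \otimes_{\bbZ_p} B_{\dR}^+/\xi$. At infinite level, the Tate module is trivialized, so the moduli datum reduces essentially to the Hodge--Tate filtration.

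First, I would recall the moduli interpretations: $\calM_{\LT,\infty}$ parametrizes triples $(G,\iota,\alpha)$ where $G$ is a $p$-divisible group, $\iota$ a quasi-isogeny from the fixed $1$-dimensional formal group of height $2$ over $\bar\bbF_p$, and $\alpha:\bbZ_p^2 \aisom T_p(G)$ a full level structure; while $\calM_{\Dr,\infty}$ parametrizes analogous triples $(H,\iota_H,\beta)$ with $H$ a special formal $\calO_{D_p}$-module and $\beta$ a trivialization of $T_p(H)$ as an $\calO_{D_p}$-module. The key step is then to invoke Faltings' isomorphism, which associates to $(G,\iota,\alpha)$ the Serre dual datum: the dual $p$-divisible group $G^{\vee}$ (viewed with its natural $\calO_{D_p}$-action coming from the identification $\End(G)\otimes\bbQ \cong D_p^{\op}$ via rigidity) provides the triple on the Drinfeld side, with the dual level structure $\alpha^{\vee}$ playing the role of $\beta$.

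Having built the map, I would verify that it is an isomorphism of perfectoid spaces by showing it induces isomorphisms at every finite level (or by appealing directly to the Scholze--Weinstein classification to check it is a bijection on rank-one perfectoid test objects and is \'etale). Then I would track the equivariance: the left $\GL_2(\bbQ_p)$-action via automorphisms of the $\bbZ_p^2$-lattice on $\calM_{\LT,\infty}$ corresponds, after dualization, to the action on $\calM_{\Dr,\infty}$ arising from quasi-isogenies of $H$ (and symmetrically for $D_p^{\times}$). The Cartan involution twist in our conventions is exactly what is needed so that Serre duality, which contravariantly intertwines the two actions, produces genuine left actions that match.

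Finally, for the exchange of period maps: $\pi_{\LT,\GM}$ records the Hodge filtration in the covariant (rational) Dieudonn\'e module of $G$, which under Serre duality becomes the Hodge--Tate filtration of $G^{\vee}$, i.e.\ $\pi_{\Dr,\HT}$; the symmetric statement gives $\pi_{\LT,\HT} = \pi_{\Dr,\GM}$. The main obstacle in this plan is the construction of the isomorphism at infinite level on the nose --- over perfectoid test objects one has the clean reformulation via $(T,W)$, but transferring this into an actual morphism of perfectoid spaces and checking it is compatible with the towers and with Faltings' duality on the level of actual $p$-divisible groups requires the full force of the Scholze--Weinstein machinery; the bookkeeping of normalizations (Cartan involutions, signs on the center, and the $\breve\bbQ_p$-structures implicit in the Rapoport--Zink description) is the other delicate point.
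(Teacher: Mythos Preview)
The paper does not prove this theorem at all: it is stated with a citation to \cite[Theorem 7.2.3]{SW13} and used as a black box. Your proposal is a reasonable sketch of the Scholze--Weinstein argument from that reference, so in substance it matches what the paper invokes; there is nothing to compare on the paper's side beyond the citation.
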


The period maps $\pi_{\LT,\GM}=\pi_{\Dr,\HT}$ (resp. $\pi_{\LT,\HT}=\pi_{\Dr,\GM}$) realize $\calM_{\LT,\infty}$ as a pro\'etale $\GL_2(\bbQ_p)$-torsor (resp. $D_p^{\times}$-torsor) over $\check{\fl}$ (resp. over $\bbP^{1,\an}_C\setminus\bbP^1(\bbQ_p)\subset \fl$) \cite[Corollary 23.5.3]{scholze2020berkeley}. Besides, the map $\pi_{\LT,\GM}:\calM_{\LT,\infty}\to \check{\fl}$ factors through 
\begin{align*}
    \calM_{\LT,\infty}\ov{\pi_{\LT,0}}\to\calM_{\LT,0}\ov{\pi_{\LT,\GM,0}}\to  \check{\fl}.
\end{align*}
where $\pi_{\LT,0}$ denotes the projection map $\calM_{\LT,\infty}\surj \calM_{\LT,0}$. The map $\pi_{\LT,0}$ is a $\GL_2(\bbZ_p)$-torsor, and there exists a basis $\ffrb$ of $\calM_{\LT,0}$ consisting of affinoid open subsets, such that for each $U\in\ffrb$, $\pi^{-1}_{\LT,0}(U)$ is affinoid perfectoid (cf. \cite{Wei16}, \cite[Proposition 4.2(i)]{CDN20}). The Gross--Hopkins period map $\pi_{\LT,\GM,0}:\calM_{\LT,0}\to \check{\fl}$ is \'etale, and there exists an open cover $\{U_i\}$ of $\check{\fl}$, such that each $U_i$ is partially proper, and $\pi_{\LT,\GM,0}^{-1}(U_i)$ is a disjoint union of finite \'etale coverings of $U_i$ (cf. \cite[Proposition 7.2]{deJong}). Moreover, the map $\pi_{\LT,\GM,0}$ is surjective with local sections \cite[Lemma 6.1.4]{SW13}. We note that there are infinitely many components of $\pi_{\LT,\GM,0}^{-1}(U_i)$, as the geometric fibers of $\pi_{\LT,\GM,0}$ are $\GL_2(\bbQ_p)/\GL_2(\bbZ_p)$. Besides, the Hodge--Tate period map $\pi_{\Dr,\HT}$ decomposes as the composition $\pi_{\Gamma,\HT}\comp \pr_{\Gamma,\infty}$, where $\pr_{\Gamma,\infty}:\calM_{\Dr,\infty}\to \calS_{\Gamma}$ is a $\Gamma$-torsor for the analytic topology and a local isomorphism in the notation and in the sense of Proposition \ref{prop:prGamma}, and $\pi_{\Gamma,\HT}:\calS_{\Gamma}\to \check{\fl}$ is ``affinoid'' in the sense of Lemma \ref{lem:prGammaHT}. We summarize the picture as follows
$$
\begin{tikzcd}
& {\calM_{\LT,\infty}\isom \calM_{\Dr,\infty}} \arrow[ld, "\pi_{\LT,0}"'] \arrow[rd, "\pr_{\Gamma,\infty}"] &        \\
{\calM_{\LT,0}} \arrow[rd, "{\pi_{\LT,\GM,0}}"'] &  & { \calS_{\Gamma}} \arrow[ld, "{\pi_{\Gamma,\HT}}"] \\& \check{\fl} &                                                             
\end{tikzcd}.
$$
\subsection{Locally analytic vectors à la Pan}\label{subsec:locallyanalyticvectors}
Let $\calO_{\calM_{\Dr,\infty}}\simeq \calO_{\calM_{\LT,\infty}}$ be the completed structure sheaf on the infinite level perfectoid Drinfeld/Lubin--Tate space under the isomorphism in Theorem \ref{thm:twotowers}. We recall some basic properties and constructions about the sheaf of locally analytic sections in $\calO_{\calM_{\Dr,\infty}}$ and $\calO_{\calM_{\LT,\infty}}$ as in \cite{Pan22,PanII}.

Let $\check\frg:=\Lie D_p^\times\ox_{\bbQ_p}C$. We fix a splitting throughout this article 
\begin{align}\label{equationsplittingD}
    D_p\ox_{\bbQ_p}C\isom \Mat_2(C).
\end{align} 
Using this splitting of $D_p$, we get a Lie algebra isomorphism $\check\frg\isom \frg:=\gl_2(C)$, where $\frg$ is the Lie algebra of $\GL_2(C)$. We will identify them via this isomorphism. 
\subsubsection{Equivariant line bundles on the flag varieties}\label{sec:vectorbundles}
The splitting (\ref{equationsplittingD}) induces an isomorphism $\check\fl=\bbP^1_C\simeq (\GL_2/\bar B)_{C}$ where $\overline{B}$ is the opposite of the Borel subgroup of upper triangular matrices $B\subset \GL_2$ and we will also identify $\fl$ with $(\GL_{2}/\bar B)_C$. For any weight $(a,b)\in \bbZ^2$ of the Cartan subalgebra $\check{\frh}\simeq \frh$ of diagonal matrices, we write $\omega_{\check{\fl}}^{(a,b)}$ for the $\GL_2$-equivariant line bundle on $\check{\fl}$ associated to the character $(a,b):\bar B\to C, \left( \begin{matrix} x&0\\y&z \end{matrix} \right)\mapsto x^az^b$. And we similarly write $\omega_{\fl}^{(a,b)}$ for the line bundle on $\fl$ associated to $(a,b)$ (also using the Borel subgroup consisting of lower triangular matrices). Hence the dualizing sheaves are given by $\Omega^1_{\check{\fl}}=\omega_{\check{\fl}}^{-2\rho},\Omega^1_{\fl}=\omega_{\fl}^{-2\rho}$ where $2\rho=(1,-1)$. In this normalization, we have (see Remark \ref{remarknormalizationLT} below)
\begin{equation}\label{equationpullbackHTGM}
    \pi_{\Dr,\HT}^*\omega_{\check\fl}^{(a,b)}\simeq \pi_{\Dr,\GM}^*\omega_{\fl}^{(-a,-b)}\otimes_{\calO_{\calM_{\Dr,\infty}}}\calO_{\calM_{\Dr,\infty}}(-a)
\end{equation}
for $(a,b)\in\bbZ^2$, where $(-a)$ denotes the $-a$th Tate twist.

\begin{remark}\label{remarknormalizationLT}
    Let $V^{(1,0)}$ be the standard representation over $C$ of $\GL_2$. The pullback along $\pi_{\LT,\HT}$ of the the short exact sequence of equivariant bundles on $\fl$ induced from the $\bar B$-sequence $0\rightarrow (0,1)\rightarrow V^{(1,0)}|_{\bar B}\rightarrow (1,0)\rightarrow 0$ 
    \begin{align}\label{equationHTseqLT}
        0\to \pi_{\LT,\HT}^*\omega_{\fl}^{(0,1)}\to V^{(1,0)}\ox_C\calO_{\calM_{\LT,\infty}}\to \pi_{\LT,\HT}^*\omega_{\fl}^{(1,0)}\to 0
    \end{align}
    gives the (dual) Hodge--Tate sequence (cf. (\ref{equationpullbackHTGM})):
    \[0\rightarrow \Lie(\calG^{\vee})\rightarrow V_p(\calG)^{\vee}\otimes_{\bbQ_p}\calO_{\LT,\infty}\rightarrow \Lie(\calG)^{-1}(-1)\rightarrow 0\]
    on $\calM_{\LT,\infty}$, where we denote by $\calG$ the univeral $p$-divisible group on $\calM_{\LT,\infty}$, $V_p(\calG)$ the rational Tate module and $\Lie(\calG)$ the Lie algebra. 
    
    Let $W^{(1,0)}$ denote the standard representation over $C$ of $\GL_2$ on which on which $D_p^{\times}$ acts via (\ref{equationsplittingD}). The $\bar B$-filtration on $W^{(1,0)}|_{\bar B}$ together with $\pi_{\LT,\GM}$ induces the Grothendieck--Messing sequence on the infinite level $\calM_{\LT,\infty}$, which is a $D_p^{\times}$-equivariant exact sequence 
    \begin{align}\label{equationHTseqGM}
         0\to \pi_{\LT,\GM}^*\omega_{\check\fl}^{(0,1)}\to W^{(1,0)}\ox_C\calO_{\calM_{\LT,\infty}}\to \pi_{\LT,\GM}^*\omega_{\check\fl}^{(1,0)}\to 0.
    \end{align}
    The above exact sequence is identified with the pullback of the sequence
    \begin{align*}
         0\to \Lie(\calG^{\vee})^{-1}\to M(\calG)\to \Lie(\calG) \to 0
    \end{align*}
    where $M (\calG)$ denotes the covariant Dieudonn\'e crystal of $\calG$ on $\calM_{\LT,0}$.

    Let $\check{\calG}$ be the univeral $p$-divisible special formal $\calO_{D_p}$-module over $\calM_{\Dr,\infty}$. Let $N$ be the isocrystal over $\breve{\bbQ}_p$ of slope $-\frac{1}{2}$ so that $D_p=\End(N)$. Under the isomorphism (\ref{eq:isomorphismtwotowers}), we have (see proof of \cite[Theorem 7.2.3]{SW13} or \cite[\S 8.1.2, \S 8.3.2]{fargues2019courbes})  $D_p$-equivariant isomorphisms \[M(\check{\calG})\simeq \Hom_{\bbQ_p}(V_p(\calG),N\otimes_{\breve{\bbQ}_p}\calO_{\calM_{\Dr,\infty}}), V_p(\check{\calG})\otimes_{\bbQ_p}\calO_{\calM_{\Dr,\infty}}\simeq \Hom(M(\calG),N\otimes_{\breve{\bbQ}_p}\calO_{\calM_{\Dr,\infty}})\] and 
    \[\Lie(\check{\calG})=\Hom(\Lie(\calG)(1),N\otimes_{\breve{\bbQ}_p}\calO_{\calM_{\Dr,\infty}}).\] There are similar Hodge--Tate and Grothendieck--Messing sequences on $\calM_{\Dr,\infty}$ for $V_p(\check{\calG})^{\vee}$ and $M(\check{\calG})$ corresponding to (\ref{equationHTseqGM}) and (\ref{equationHTseqLT}) respectively. 
\end{remark}

Let 
\[\omega_{\calM_{\Dr,\infty}}^{(a,b)}:=\pi_{\Dr,\GM}^*\omega_{\fl}^{(a,b)}=\pi_{\LT,\HT}^*\omega_{\fl}^{(a,b)}\] and 
\[\omega_{\calM_{\LT,\infty}}^{(a,b)}:=\pi_{\LT,\GM}^*\omega_{\check\fl}^{(a,b)}=\pi_{\Dr,\HT}^*\omega_{\check\fl}^{(a,b)}\]
be the pullback of the line bundles on $\calM_{\Dr,\infty}\simeq \calM_{\LT,\infty}$ for any $(a,b)\in\bbZ^2$. Then there is a $\GL_2(\bbQ_p)\times D_p^\times$-equivariant and Galois equivariant isomorphism \[\omega_{\calM_{\Dr,\infty}}^{(a,b)}=\omega_{\calM_{\LT,\infty}}^{(-a,-b)}(-a)\] 
by (\ref{equationpullbackHTGM}). The Grothendieck--Messing maps factor through finite levels: $\calM_{\Dr,\infty}\stackrel{\pi_{\Dr,n}}{\rightarrow}\calM_{\Dr,n}\stackrel{\pi_{\Dr, \GM,n}}{\rightarrow} \fl$ and $\calM_{\LT,\infty}\stackrel{\pi_{\LT,n}}{\rightarrow}\calM_{\LT,n}\stackrel{\pi_{\LT, \GM,n}}{\rightarrow} \check\fl$. Let 
\begin{align*}
    &\omega_{\calM_{\Dr,n}}^{(a,b)}:=\pi_{\Dr,\GM,n}^{*}\omega_{\fl}^{(a,b)},\\
    &\omega_{\calM_{\LT,n}}^{(a,b)}:=\pi_{\Dr,\GM,n}^{*}\omega_{\check\fl}^{(a,b)}
\end{align*} 
and set 
\begin{align*}
&\omega_{\calM_{\Dr,\infty}}^{(a,b),\sm}:=\colim_n\pi_{\Dr,n}^{-1}\omega_{\calM_{\Dr,n}}^{(a,b)},\\
&\omega_{\calM_{\LT,\infty}}^{(a,b),\sm}:=\colim_n\pi_{\LT,n}^{-1}\omega_{\calM_{\LT,n}}^{(a,b)}.
\end{align*}

\subsubsection{Locally analytic vectors and geometric Sen thoery}\label{sec:locallyanalyticvectors}
Recall that $\calO_{\calM_{\Dr,\infty}}=\calO_{\calM_{\LT,\infty}}$ denotes the completed structure sheaf of $\calM_{\Dr,\infty}=\calM_{\LT,\infty}$. Let $\calO_{\calM_{\Dr,\infty}}^{\lan}$ (reps. $ \calO^{\lan}_{\calM_{\LT,\infty}}$) be the subsheaf of $\calO_{\calM_{\Dr,\infty}}$ consisting of locally analytic sections for the $D_p^\times$-action (resp. $\GL_2(\bbQ_p)$-action). By \cite[Corollary 5.3.9]{PanII}, \cite[Proposition 4.3.6]{QS24} or \cite[Corollary 5.1.9]{dospinescu2024jacquetlanglandsfunctorpadiclocally}, we have
\begin{align*}
    \calO_{\calM_{\Dr,\infty}}^{\lan}=\calO_{\calM_{\LT,\infty}}^{\lan}.
\end{align*}

Similarly we define $\calO_{\calM_{\Dr,\infty}}^{\sm}$ (resp. $\calO_{\calM_{\LT,\infty}}^{\sm}$) as the subsheaf consisting of $D_p^{\times
}$-smooth (resp. $\GL_2(\bbQ_p)$-smooth) sections. Also let $\calO_{\calM_{\Dr,\infty}}^{\lalg}\subset \calO_{\calM_{\Dr,\infty}}^{\lan}$ (resp. $ \calO_{\calM_{\LT,\infty}}^{\lalg}\subset \calO_{\calM_{\LT,\infty}}^{\lan}$) be the subsheaf consisting of locally algebraic sections for the $D_p^\times$-action (resp. $\GL_2(\bbQ_p)$-action). By \cite[Corollary 4.2.7]{Eme17}, we know 
\begin{align*}
   \calO_{\calM_{\Dr,\infty}}^{\lalg}\isom \bigoplus_{W} \calO_{\calM_{\Dr,\infty}}^{W\-\lalg},
\end{align*}
where $W$ varies in the set of irreducible algebraic representations of $D_p^\times$ over $C$. Similar decomposition result also holds for $\calO_{\calM_{\LT,\infty}}^{\lalg}$.

For $(a,b)\in\bbZ^2$, we also have subsheaves $\omega_{\calM_{\Dr,\infty}}^{(a,b),\lalg}
\subset \omega_{\calM_{\Dr,\infty}}^{(a,b),\lan}$ 
of the $\GL_2(\bbQ_p)\times D_p^{\times}$-equivariant sheaf $\omega_{\calM_{\Dr,\infty}}^{(a,b)}=\calO_{\calM_{\Dr,\infty}}\otimes_{\calO_{\calM_{\Dr,\infty}}^{\sm}}\omega_{\calM_{\Dr,\infty}}^{(a,b),\sm}$ defined similarly as in \S\ref{sec:vectorbundles}. We have $\omega_{\calM_{\Dr,\infty}}^{(a,b),\lan}=\calO_{\calM_{\Dr,\infty}}^{\lan}\otimes_{\calO_{\calM_{\Dr,\infty}}^{\sm}}\omega_{\calM_{\Dr,\infty}}^{(a,b),\sm}$. Again, similar results hold on the Lubin--Tate side.

We recall below results from the geometric Sen theory, developed in \cite{Pan22,PanII,camargo2022geometric,dospinescu2024jacquetlanglandsfunctorpadiclocally}. As in \cite[\S 4.2.6]{Pan22} and following Beilinson–Bernstein, let $\check \frg^0:=\check \frg\ox_{C}\calO_{\check\fl},\frg^0:= \frg\ox_{C}\calO_{\fl}$, which are Lie algebroids over $\calO_{\check\fl}$ or $\calO_{\fl}$. Let $\frh\simeq  \check{\frh}\subset\check{\frg}\simeq \frg=\gl_2$ be the Cartan subalgebra consisting of diagonal matrices. Let $\check\frn^0\subset \check\frb^0\subset  \check\frg^0$ be the universal nilpotent subalgebra and the univeral Borel subalgebra over $\check{\fl}$, with $(\check\frn^0)^{\vee}$ the dual of $\check\frn^0$. Similarly, let $\frn^0\subset\frb^0 \subset \frg^0$ be the univeral nilpotent subalgebra and the universal Borel subalgebra over $\fl$. The preimage of these Lie algebroids, e.g. $\pi_{\Dr,\HT}^{-1}\check\frg^0$, acts on the locally analytic vectors $\calO_{\calM_{\Dr,\infty}}^{\lan}$ by derivations.

Let $\calZ(\check{\frg})$ be the center of the universal enveloping algebra of $\check\frg$ which is isomorphic to $S(\check{\frh})^W$ via the Harish-Chandra isomorphism, where $W$ is the Weyl group for $\check{\frg}$ and $W$ acts on $S(\check{\frh})$ via the dot action. Recall that $\calZ(\check{\frg})$ is a $C$-algebra generated by $z=\left( \begin{matrix}1&0\\0&1\end{matrix} \right)$ and the Casimir operator $\Omega=u^+u^-+u^-u^++\frac{1}{2}h^2\in Z(U(\check{\frg}))$, where $u^+=\left( \begin{matrix}0&1\\0&0\end{matrix} \right)$, $u^-=\left( \begin{matrix}0&0\\1&0\end{matrix} \right)$ and $h=\left( \begin{matrix}1&0\\0&-1\end{matrix} \right)$. Here we used our fixed isomorphism $\check{\frg}\isom \frg$ to define these elements $z$ and $\Omega$. 

As in \cite[\S 5.1.4]{Pan22} or \cite[Theorem 6.3.6]{camargo2024locallyanalyticcompletedcohomology}, there is an action of the arithmetic Sen operator $\Theta_{\Sen}$ on $\calO_{K^p}^{\lan}$. We also have the following relation between the arithmetic Sen operator, the horizontal action $\theta_{\check\frh}$ of the Cartan subalgebra $\check\frh$ (defined below), and the action of the center $\calZ(\check{\frg})$.
\begin{theorem}\label{thm:Senaction}
The following statements hold true:
\begin{enumerate}[(i)]
    \item The locally analytic vectors $\calO_{\calM_{\Dr,\infty}}^{\lan}\simeq \calO_{\calM_{\LT,\infty}}^{\lan}$ is killed by the ``geometric Sen operators'' $\pi_{\Dr,\HT}^{-1}\check{\frn}^0$ and $\pi_{\LT,\HT}^{-1}\frn^0$. 
    \item Let $\theta_{\check\frh}$ (resp. $\theta_{\check\frh}$) be the horizontal action of $\check\frh\simeq \frh$ on $\calO_{\calM_{\Dr,\infty}}^{\lan}$ (resp. $\calO_{\calM_{\LT,\infty}}^{\lan}$) via $\check\frb^0/\check\frn^0\simeq \check\frh\otimes_C\calO_{\check\fl}$ (resp. $\frb^0/\frn^0\simeq \frh\otimes_C\calO_{\fl}$). Then $\theta_{\check\frh}=\theta_{\frh}$.  
    \item The action of $z=\left( \begin{matrix}1&0\\0&1\end{matrix} \right)$ and $\theta_{\check{\frh}}(z)$ on $\calO_{\calM_{\Dr,\infty}}^{\lan}$ are the same.
    \item We have $\Theta_{\Sen}=\theta_{\check\frh}\left( \begin{matrix} 1&0\\0&0 \end{matrix} \right)$ and $(2\Theta_{\Sen}-z+1)^2-1=2\Omega$ as operators on $\calO_{\calM_{\Dr,\infty}}^{\lan}$.
\end{enumerate}
\begin{proof}
The proof is similar to \cite[Theorem 5.1.8]{Pan22}, \cite[Proposition 6.5.4]{PanII}, and the results also follow by \cite[Theorem 5.2.5, Theorem 6.3.6]{camargo2024locallyanalyticcompletedcohomology}. 
Here, we note that in \cite{Pan22} Pan uses the upper triangular matrices to define the flag variety, and we use the lower triangular matrices. So that there is a small difference about the description of results on horizontal actions of $\check{\frh}$.
\end{proof}
\end{theorem}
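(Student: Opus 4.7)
The plan is to adapt the proofs of \cite[Theorem 5.1.8]{Pan22} and \cite[Proposition 6.5.4]{PanII}, which address the perfectoid modular curve, to the present local setting on the Drinfeld/Lubin--Tate towers. The additional input compared to \emph{loc.~cit.} is Theorem \ref{thm:twotowers}, which identifies the two towers in a way that exchanges the Hodge--Tate and Grothendieck--Messing period maps; this lets one translate every statement on the $D_p^\times$-side to a statement on the $\GL_2(\bbQ_p)$-side and vice versa. Alternatively, one may invoke the abstract formalism of \cite{camargo2024locallyanalyticcompletedcohomology} applied to the pro-\'etale $D_p^\times$-torsor $\pi_{\Dr,\HT}$ and the pro-\'etale $\GL_2(\bbQ_p)$-torsor $\pi_{\LT,\HT}$, under which (i) is essentially \cite[Theorem 5.2.5]{camargo2024locallyanalyticcompletedcohomology} and the Sen-operator identification in (iv) is \cite[Theorem 6.3.6]{camargo2024locallyanalyticcompletedcohomology}.

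For part (i), I would work pro-\'etale locally on $\check\fl$, trivializing $\pi_{\Dr,\HT}$ to identify $\calO_{\calM_{\Dr,\infty}}^{\lan}$ with $\calO_{\check\fl}$-valued locally analytic functions on $D_p^\times$. The fibre of the universal nilpotent $\check\frn^0$ at a point $x\in\check\fl$ is the nilpotent radical of the Borel stabilizing $x$, and its action on $\calO_{\calM_{\Dr,\infty}}$ is by vector fields tangent to the fibres of the torsor --- these are pro-\'etale directions, not analytic directions on the base. A locally analytic section, being analytic in the base, is therefore annihilated by such vector fields. The same argument with $\pi_{\LT,\HT}$ and the $\GL_2(\bbQ_p)$-action shows that $\frn^0$ annihilates $\calO_{\calM_{\LT,\infty}}^{\lan}$.

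Once (i) is established, parts (ii) and (iii) are formal. The horizontal action $\theta_{\check\frh}(X)$ for $X\in\check\frh$ is defined by picking any lift $\tilde X\in\check\frb^0$ of the constant section $X\otimes 1$ of $\check\frh\otimes\calO_{\check\fl}$ under $\check\frb^0/\check\frn^0\simeq\check\frh\otimes\calO_{\check\fl}$, and letting it act; the choice is irrelevant modulo $\check\frn^0$, which acts as zero. Under the identification of Theorem \ref{thm:twotowers} and the splitting (\ref{equationsplittingD}), the construction on the Lubin--Tate side produces the same operator up to a Tate twist, which is central and therefore matches on both sides, yielding $\theta_{\check\frh}=\theta_{\frh}$. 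Part (iii) is immediate: the central element $z$ lifts to itself in $\check\frb^0$, so $\theta_{\check\frh}(z)$ equals the original action of $z$. The first equation in (iv), namely $\Theta_{\Sen}=\theta_{\check\frh}\left(\begin{smallmatrix}1&0\\0&0\end{smallmatrix}\right)$, follows from the explicit dual Hodge--Tate sequences (\ref{equationHTseqGM})--(\ref{equationHTseqLT}) together with the cohomological computation of the arithmetic Sen operator in \cite[Theorem 6.3.6]{camargo2024locallyanalyticcompletedcohomology}.

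The Casimir identity in (iv) is then a purely algebraic manipulation modulo $\check\frn^0$. Using $[u^+,u^-]=h$ one writes $2\Omega=(h+1)^2-1+4u^-u^+$, so it suffices to show that $4u^-u^+\cdot s=((\theta_{\check\frh}(h)+1)^2-(h+1)^2)\cdot s$ for every locally analytic section $s$. In an affine chart of $\check\fl$ around $[1:0]$ parametrized by $t$, one computes $\check\frn^0=\calO_{\check\fl}\cdot(u^-+th-t^2u^+)$ and takes $h-2tu^+$ as a natural lift of $h\otimes 1$ to $\check\frb^0$; substituting $u^-\cdot s=(t^2u^+-th)\cdot s$ from (i) and using $[h,u^+]=2u^+$ gives the required identity. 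Combining with (iii) and $h=2\left(\begin{smallmatrix}1&0\\0&0\end{smallmatrix}\right)-z$ yields $(2\Theta_{\Sen}-z+1)^2-1=2\Omega$. The main obstacle is (i): rigorously controlling how locally analytic vectors behave under the pro-\'etale torsor $\pi_{\Dr,\HT}$ so as to conclude that the geometric Sen operators act trivially. This is the technical heart of the method; everything else reduces to bookkeeping once (i) is in place.
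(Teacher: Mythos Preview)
Your proposal is correct and follows essentially the same approach as the paper: both defer to \cite[Theorem 5.1.8]{Pan22}, \cite[Proposition 6.5.4]{PanII}, and \cite[Theorems 5.2.5, 6.3.6]{camargo2024locallyanalyticcompletedcohomology}, with the two-towers isomorphism (Theorem \ref{thm:twotowers}) as the bridge between the Drinfeld and Lubin--Tate sides. You supply more detail than the paper does---in particular the explicit chart computation of the Casimir identity via $\check\frn^0=\calO_{\check\fl}\cdot(u^-+th-t^2u^+)$---but this is precisely the calculation in \cite{Pan22}, adjusted for the lower-triangular Borel convention used here (the one point the paper itself flags).
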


Some of the above results on various operators on $\calO_{\calM_{\Dr,\infty}}^{\lan}\simeq \calO_{\calM_{\LT,\infty}}^{\lan}$ also generalize to certain twists of these sheaves. As the nilpotent Lie algebra $\pi_{\Dr,\HT}^{-1}\check\frn^0$ kills $\omega_{\calM_{\Dr,\infty}}^{(a,b),\lan}=\calO_{\calM_{\Dr,\infty}}^{\lan}\otimes_{\calO_{\calM_{\Dr,\infty}}^{\sm}}\omega_{\calM_{\Dr,\infty}}^{(a,b),\sm}$, there is also a horizontal action $\theta_{\check\frh}$ of $\check{\frh}$ on $\omega_{\calM_{\Dr,\infty}}^{(a,b),\lan}$. For $(a,b),(a',b')\in\bbZ^2$, let $\calO_{\calM_{\Dr,\infty}}^{\lan,(a',b')}$ (resp. $\omega_{\calM_{\Dr,\infty}}^{(a,b),\lan,(a',b')}$) be the subsheaf of $ \calO_{\calM_{\Dr,\infty}}^{\lan}$ (resp. $\omega_{\calM_{\Dr,\infty}}^{(a,b),\lan}$) such that the action of $\theta_{\check \frh}$ is given by the weight $(a',b')$. Using the fact that $\theta_{\check{\frh}}$ acts on $\omega_{\check{\fl}}^{(a',b')}$ via $(a',b')$ and (\ref{equationpullbackHTGM}), we have the following formula relating these twists:
\begin{align}\label{lemmatwistomegaDr1}
    \calO_{\calM_{\Dr,\infty}}^{\lan,(a,b)}\ox_{\calO_{\calM_{\Dr,\infty}}^{\sm}}\omega_{\calM_{\Dr,\infty}}^{(a',b'),\sm}&\isom \omega_{\calM_{\Dr,\infty}}^{(a',b'),\lan,(a,b)},\\\label{lemmatwistomegaDr2}
    \calO_{\calM_{\Dr,\infty}}^{\lan,(a,b)}\ox_{\pi_{\Dr,\HT}^{-1}\calO_{\check{\fl}}}\pi_{\Dr,\HT}^{-1}\omega_{\check{\fl}}^{(a',b')}&\isom \omega_{\calM_{\Dr,\infty}}^{(-a',-b'),\lan,(a+a',b+b')}(-a').
\end{align}
Using the above formulae, one can check that
\begin{align*}
    \pi_{\Dr,\HT}^{-1}\Omega_{\check{\fl}}^{1}\ox_{\pi_{\Dr,\HT}^{-1}\calO_{\check{\fl}}}\calO_{\calM_{\Dr,\infty}}^{\lan,(0,0)}\ox_{\calO_{\calM_{\Dr,\infty}}^{\sm}}\Omega_{\calM_{\Dr,\infty}}^{1,\sm}\isom \calO_{\calM_{\Dr,\infty}}^{\lan,(-1,1)}(1).
\end{align*}

We can calculate some $W$-locally algebraic vectors explicitly for some algebraic representation $W$ of $D_p^\times$ over $C$.
\begin{proposition}\label{prop:Wlalg}
For $(a,b)\in\bbZ^2,a\geq b$, let $W^{(a,b)}$ be the algebraic representation of $D_p^{\times}\subset \GL_2(C)$ over $C$ of highest weight $(a,b)$ (with respect to the Borel subgroup consisting of upper triangular matrices). Then 
\[\calO_{\calM_{\Dr,\infty}}^{\lalg,(a,b)}=\calO_{\calM_{\Dr,\infty}}^{W^{(a,b)}\-\lalg}\simeq W^{(a,b)}\otimes_C \omega_{\calM_{\Dr,\infty}}^{(a,b),\sm}(a).\]
\begin{proof}
See \cite[\S 3.4]{QS24} for example. We briefly illustrate the case when $(a,b)=(0,-1)$. The subsheaf $\calO_{\calM_{\Dr,\infty}}^{\lalg,(a,b)}\subset \calO_{\calM_{\Dr,\infty}}^{\lalg}$ is the subsheaf of $\calO_{\calM_{\Dr,\infty}}^{\lan,(a,b)}$ consisting of sections that are locally algebraic for the $D_p^\times$-action.  The first equality can be obtained by explicit descriptions of the sheaf $\calO_{\calM_{\Dr,\infty}}^{\lan,(a,b)}$ in terms of certain power series expansions as in \cite[Theorem 3.2.2]{PanII}. Next, using (\ref{equationHTseqGM}) together with (\ref{equationpullbackHTGM}), we deduce an exact sequence 
\[
    0\to \omega_{\calM_{\Dr,\infty}}^{(0,-1)}\to W^{(1,0)}\otimes_C \calO_{\calM_{\Dr,\infty}}\to \omega_{\calM_{\Dr,\infty}}^{(-1,0)}(-1)\to 0.
\]
Then the term $\omega_{\calM_{\Dr,\infty}}^{(0,-1),\sm}$ will contribute to the locally $(W^{(1,0)})^*$-algebraic part of $\calO_{\calM_{\Dr,\infty}}$. The general cases follow similarly.
\end{proof}
\end{proposition}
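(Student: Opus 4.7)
The proposition asserts two things: (i) the $(a,b)$-horizontal-weight part of $\calO_{\calM_{\Dr,\infty}}^{\lalg}$ coincides with the $W^{(a,b)}$-locally algebraic subsheaf, and (ii) this subsheaf is isomorphic to $W^{(a,b)}\otimes_C\omega_{\calM_{\Dr,\infty}}^{(a,b),\sm}(a)$. The plan is to establish (i) via an explicit power series description and (ii) by constructing the isomorphism from the Grothendieck--Messing sequence (\ref{equationHTseqGM}) together with the twist identity (\ref{equationpullbackHTGM}).

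For (i), I would adapt the explicit power series description of $\calO^{\lan}$ from \cite[Theorem 3.2.2]{PanII} to our Drinfeld/Lubin--Tate setting. In suitable local coordinates on $\check{\fl}$, locally analytic sections admit power series expansions in which the horizontal $\check{\frh}$-action is diagonal on the coefficients, and locally algebraic sections correspond to those with only finitely many non-zero terms. Since every locally analytic section is killed by $\check{\frn}^0$ by Theorem \ref{thm:Senaction}(i), a locally algebraic section of horizontal weight $(a,b)$ represents a $\bar B$-lowest-weight vector of some algebraic $\GL_2$-subrepresentation; when $a\geq b$, the irreducible representation with lowest $\bar B$-weight $(a,b)$ is precisely $W^{(a,b)}$, which yields (i).

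For (ii), the case $(a,b)=(0,-1)$ is sketched in the excerpt, and the general case follows by a similar construction using the realization $W^{(a,b),\vee}\cong \Sym^{a-b}(W^{(1,0)})\otimes\det^{-a}$. Applying $\Sym^{a-b}$ to (\ref{equationHTseqGM}) yields an inclusion $\omega_{\calM_{\LT,\infty}}^{(0,a-b)}\hookrightarrow \Sym^{a-b}(W^{(1,0)})\otimes\calO_{\calM_{\LT,\infty}}$. Twisting by $\det^{-a}$, and using $\det(W^{(1,0)})\otimes\calO_{\calM_{\LT,\infty}}\cong \omega_{\calM_{\LT,\infty}}^{(1,1)}$ obtained from $\wedge^2$ of (\ref{equationHTseqGM}), gives $\omega_{\calM_{\LT,\infty}}^{(-a,-b)}\hookrightarrow W^{(a,b),\vee}\otimes_C\calO_{\calM_{\LT,\infty}}$. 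Applying (\ref{equationpullbackHTGM}) identifies $\omega_{\calM_{\LT,\infty}}^{(-a,-b)}$ with $\omega_{\calM_{\Dr,\infty}}^{(a,b)}(a)$; restricting to smooth sections and applying adjunction then produces the desired $D_p^\times$-equivariant morphism
\[
W^{(a,b)}\otimes_C\omega_{\calM_{\Dr,\infty}}^{(a,b),\sm}(a)\to \calO_{\calM_{\Dr,\infty}}
\]
whose image lies in $\calO_{\calM_{\Dr,\infty}}^{W^{(a,b)}\-\lalg}$.

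To conclude, I would verify that this morphism is an isomorphism onto the $W^{(a,b)}$-lalg subsheaf by a local comparison with the power series description of step (i), noting that both source and target are locally parameterized by a choice of a smooth section of $\omega^{(a,b),\sm}$ and a vector in $W^{(a,b)}$. The main obstacle is the careful bookkeeping of horizontal weights and Tate twists throughout the construction: in particular, one must verify via Theorem \ref{thm:Senaction}(iii)--(iv) that the Tate twist $(a)$, combined with the $\check{\frh}$-weight of $\omega^{(a,b),\sm}$ and the natural $\check{\frg}$-action on $W^{(a,b)}$, produces a uniform horizontal weight $(a,b)$ on the source that matches the left-hand side $\calO^{\lalg,(a,b)}$.
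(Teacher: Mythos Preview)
Your proposal is correct and follows essentially the same route as the paper: both arguments obtain the first equality from the explicit power-series description of $\calO^{\lan,(a,b)}$ as in \cite[Theorem 3.2.2]{PanII}, and both deduce the second isomorphism from the Grothendieck--Messing sequence (\ref{equationHTseqGM}) combined with the twist formula (\ref{equationpullbackHTGM}). Your write-up is in fact more detailed than the paper's sketch---in particular, your explicit treatment of the general case via $\Sym^{a-b}$ and the $\det^{-a}$-twist is exactly what the paper's phrase ``the general cases follow similarly'' is pointing to.
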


\subsubsection{The differential operators}\label{sec:differentialoperators}
There are various differential operators on 
\[\calO_{\calM_{\Dr,\infty}}^{\lan,(0,-k)}=\calO_{\calM_{\LT,\infty}}^{\lan,(0,-k)}, k\geq 0\] 
that are constructed in \cite[\S 4]{PanII}, which extend the connections on the corresponding algebraic parts.

The differential operator (see \cite[Remark 5.3.18]{PanII} or \cite[Proposition 4.3.10]{QS24})
\[d_{\Dr}:\calO_{\calM_{\Dr,\infty}}^{\lan,(0,-k)}\rightarrow \calO_{\calM_{\Dr,\infty}}^{\lan,(0,-k)}\otimes (\Omega^{1,\sm}_{\calM_{\Dr,\infty}})^{\otimes k+1}\] 
extends the connection on $\calO_{\calM_{\Dr,\infty}}^{\lalg,(0,-k)}\simeq W^{(0,-k)}\otimes_C \omega_{\calM_{\Dr,\infty}}^{(0,-k),\sm}$, or more classically the Gauss--Manin connections  
\begin{equation}\label{equationGMconnectionDr}
    \omega_{\calM_{\Dr,n}}^{(0,-k)}\rightarrow \omega_{\calM_{\Dr,n}}^{(0,-k)}\otimes (\Omega_{\calM_{\Dr,n}}^{1})^{\otimes(k+1)}\simeq \omega_{\calM_{\Dr,n}}^{(-k-1,1)}.    
\end{equation}

\begin{remark}\label{remarktranslation}
    These connections are pullbacks of the connections on $\omega_{\fl}^{(0,-k)}$ via the \'etale map $\calM_{\Dr,n}\to \fl$. We note that we use the Borel subgroup consisting of lower triangular matrices to define the flag variety $\fl$ (in particular $\omega_{\fl}^{(1,0)}$ is ample). When $k=0$, it's clear that $d_{\Dr}$ is the pullback of the de Rham complex on $\fl$. For general $k$, the connection can also be constructed by translations of the de Rham complex as in \cite[Lemma 5.6.6]{PanII} or as in the usual BGG construction \cite{faltings2006cohomology}. We use the notation of Remark \ref{remarknormalizationLT}. Since we have trivialized the univeral Dieudonn\'e module $M(\check{\calG})$ on $\calM_{\Dr,0}$ \cite[Proposition 5.15]{RZ96} which coincides with $V^{(1,0)}\ox_C \calO_{\calM_{\Dr,n}}$ up to a multiplicity space given by a finite-dimensional $D_p^\times$-space, the connection on $V^{(1,0)}\ox_C \calO_{\calM_{\Dr,n}}$ is trivial on $V^{(1,0)}$ (cf. \cite[Lemma 3.11]{vanhaecke2024cohomologie}). Taking a symmetric power of the dual of the Dieudonn\'e module, we see the connection 
    \begin{align}\label{equationconnectionVk}
        V^{(0,-k)}\ox \calO_{\calM_{\Dr,n}}\to V^{(0,-k)}\ox \Omega^1_{\calM_{\Dr,n}}    
    \end{align}
    is trivial on $V^{(0,-k)}$ and is given by the usual differential map on the structure sheaf $\calO_{\calM_{\Dr,n}}$. Besides, the map (\ref{equationconnectionVk}) is equivariant for the $\GL_2(L)$-action. Taking the part of (\ref{equationconnectionVk}) with infinitesimal character equaling to the infinitesimal character of $V^{(0,-k)}$ (i.e. the translation of the usual differential map $\calO_{\calM_{\Dr,n}}\to \Omega^1_{\calM_{\Dr,n}}$ from the trivial weight to the weight $(0,-k)$), we obtain the desired map (\ref{equationGMconnectionDr}), which is quasi-isomorphic to (\ref{equationconnectionVk}) as complexes by \cite[Lemma 5.6.6]{PanII}. On the other hand, there is a Hodge filtration on $V^{(0,-k)}\ox \calO_{\calM_{\Dr,n}}$ (given by the dual of the Hodge-de Rham sequence (\ref{equationHTseqLT}) if $k=1$) so that the map (\ref{equationconnectionVk}) satisfies the Griffiths transversality. The induced map given by (\ref{equationconnectionVk}) is an isomorphism on all graded pieces of the Hodge filtration except the top one $\omega_{\calM_{\Dr,n}}^{(0,-k)}$ and induces (\ref{equationGMconnectionDr}) (cf. \cite[Lemma 9.8]{vanhaecke2024cohomologie}). 
\end{remark} 
The differential operator (\cite[\S 5.3.16]{PanII})
\[\bar d_{\Dr}:\calO_{\calM_{\Dr,\infty}}^{\lan,(0,-k)}\rightarrow \calO_{\calM_{\Dr,\infty}}^{\lan,(0,-k)}\ox_{\pi_{\Dr,\HT}^{-1}\calO_{\check \fl}}(\pi_{\Dr,\HT}^{-1}\Omega_{\check \fl}^1)^{\ox k+1}\] 
arises from the action of the differential operators on the flag variety $\check\fl$, extending a twist of the pullback of the connection
\begin{equation}\label{equationconnectioncheckfl}
    \omega_{\check\fl}^{(0,-k)}\rightarrow \omega_{\check\fl}^{(0,-k)}\otimes (\Omega_{\check{\fl}}^{1})^{\otimes(k+1)}\simeq \omega_{\check{\fl}}^{(-k-1,1)}.    
\end{equation}
There are also twists of these differential operators, which we denote them by $d_{\Dr}',\bar d_{\Dr}'$. We summarize the results as follows.
\begin{theorem}\label{theoremdifferentialoperatorsDr}
Let $k\ge 0$ be an integer. There is a commutative diagram 
\[
\begin{tikzcd}
    \calO_{\calM_{\Dr,\infty}}^{\lan,(0,-k)} \arrow[r, "d_{\Dr}"] \arrow[d, "\bar d_{\Dr}"] & \calO_{\calM_{\Dr,\infty}}^{\lan,(0,-k)}\ox_{\calO_{\calM_{\Dr,\infty}}^{\sm}}(\Omega^{1,\sm}_{\calM_{\Dr,\infty}})^{\ox k+1} \arrow[d, "\bar d_{\Dr}'"] \\
    \calO_{\calM_{\Dr,\infty}}^{\lan,(0,-k)}\ox_{\pi_{\Dr,\HT}^{-1}\calO_{\check \fl}}(\pi_{\Dr,\HT}^{-1}\Omega_{\check \fl}^1)^{\ox k+1} \arrow[r, "d_{\Dr}'"]           &  \calO_{\calM_{\Dr,\infty}}^{\lan,(-k-1,1)}(k+1)       
\end{tikzcd}
\]
such that:
\begin{enumerate}[(i)]
    \item The operator $d_{\Dr}$ is $\pi_{\Dr,\HT}^{-1}\calO_{\check{\fl}}$-linear, and is given by the Gauss--Manin connection (\ref{equationGMconnectionDr}) when restricted to $\calO_{\calM_{\Dr,\infty}}^{\lalg,(0,-k)}$.
    \item The operator $\bar d_{\Dr}$ is $\calO_{\calM_{\Dr,\infty}}^{\sm}$-linear. If we twist $\bar d_{\Dr}$ by $\omega_{\calM_{\Dr,\infty}}^{(0,-k),\sm}$, its restriction on $\pi_{\Dr,\HT}^{-1}\omega_{\check\fl}^{(0,-k)}$ is induced by the differential (\ref{equationconnectioncheckfl}). 
    \item The map $\bar d_{\Dr}$ is surjective, with kernel $\calO_{\calM_{\Dr,\infty}}^{\lalg,(0,-k)}$.
    \item $d'_{\Dr}$ is a twist of $d_{\Dr}$ by $\pi_{\Dr,\HT}^{-1}(\Omega_{\check\fl}^1)^{\ox k+1}$, and $\bar d'_{\Dr}$ is a twist of $\bar d_{\Dr}$ by $(\Omega_{\calM_{\Dr,\infty}}^{1,\sm})^{\ox k+1}$.
\end{enumerate}
\end{theorem}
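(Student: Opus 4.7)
The construction follows \cite[\S 4, \S 5.3]{PanII} adapted to the Lubin--Tate/Drinfeld tower. The key inputs are that both $\GL_2(\bbQ_p)$ and $D_p^\times$ act on $\calO_{\calM_{\Dr,\infty}}^{\lan}$ with commuting Lie algebra actions of $\frg$ and $\check\frg$ respectively, and that by Theorem \ref{thm:Senaction}(i) these upgrade to Lie algebroid actions of $\pi_{\Dr,\GM}^{-1}\frg^0$ and $\pi_{\Dr,\HT}^{-1}\check\frg^0$ which factor through the quotients by the universal nilpotent subalgebras $\pi_{\Dr,\GM}^{-1}\frn^0$ and $\pi_{\Dr,\HT}^{-1}\check\frn^0$. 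Using the Cartan decomposition, what remains is the horizontal Cartan action (giving the weight decomposition of Theorem \ref{thm:Senaction}(ii)) together with the opposite nilpotent actions by $\bar\frn^0$ and $\overline{\check\frn}^0$, which are the sources of $d_{\Dr}$ and $\bar d_{\Dr}$ respectively.

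First, I would build $d_{\Dr}$ as the $(k+1)$-fold iteration of the $\bar\frn^0$-action on the weight $(0,-k)$ component, via the BGG translation principle: this is essentially the unique composition mapping a $\theta_{\check\frh}$-weight $(0,-k)$ vector into a weight $(-k-1,1)$ vector, which matches the target after invoking (\ref{lemmatwistomegaDr1}). Since $\bar\frn^0$ is pulled back via $\pi_{\Dr,\GM}$, the operator is automatically $\pi_{\Dr,\HT}^{-1}\calO_{\check\fl}$-linear, yielding (i). Its coincidence with the Gauss--Manin connection on $\calO_{\calM_{\Dr,\infty}}^{\lalg,(0,-k)}\simeq W^{(0,-k)}\ox \omega_{\calM_{\Dr,\infty}}^{(0,-k),\sm}$ (via Proposition \ref{prop:Wlalg}) is exactly the translation-principle description of Gauss--Manin recalled in Remark \ref{remarktranslation}, combined with \cite[Lemma 5.6.6]{PanII}. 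Symmetrically, $\bar d_{\Dr}$ is the $(k+1)$-fold $\overline{\check\frn}^0$-action coming from the $\check\fl$-side; since $\overline{\check\frn}^0$ differentiates only in the $\pi_{\Dr,\HT}^{-1}\calO_{\check\fl}$-direction, $\bar d_{\Dr}$ is $\calO_{\calM_{\Dr,\infty}}^{\sm}$-linear, and the identification of its restriction to $\pi_{\Dr,\HT}^{-1}\omega_{\check\fl}^{(0,-k)}$ with the classical differential on $\check\fl$ is direct on the flag variety, giving (ii). The twisted operators $d'_{\Dr}$ and $\bar d'_{\Dr}$ in (iv) are then defined by tensoring along (\ref{lemmatwistomegaDr1}) and (\ref{lemmatwistomegaDr2}). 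Commutativity of the diagram follows because the $\GL_2(\bbQ_p)$- and $D_p^\times$-actions commute on $\calO_{\calM_{\Dr,\infty}}$, hence so do their derived Lie algebra operators.

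The main obstacle is (iii): surjectivity and kernel identification for $\bar d_{\Dr}$. I would follow the strategy of \cite[Theorem 3.2.2 \& \S 5.3.16]{PanII}: on a sufficiently small affinoid open subset $U\subset\check\fl$ trivializing the opposite unipotent $\overline{\check N}$, establish a local power-series description of $\calO_{\calM_{\Dr,\infty}}^{\lan,(0,-k)}$ as convergent series in a coordinate on $\overline{\check N}$ with coefficients in $\calO_{\calM_{\Dr,\infty}}^{\sm}$. Under this identification, $\bar d_{\Dr}$ corresponds (up to a unit of the structure sheaf) to the $(k+1)$-fold formal derivative in that coordinate, from which surjectivity follows by formal integration of convergent series, and the kernel is identified with the polynomial part of degree $\leq k$. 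By the translation principle, this polynomial part is precisely $\calO_{\calM_{\Dr,\infty}}^{\lalg,(0,-k)}$ via the identification of Proposition \ref{prop:Wlalg}. The delicate technical point is producing these local expansions on the Drinfeld/Lubin--Tate side with convergence radii controlled uniformly on affinoid neighborhoods of $\check\fl$; this can be carried out either directly by using the pro-étale $D_p^\times$-torsor structure of $\pi_{\Dr,\HT}$ away from $\bbP^1(\bbQ_p)$, or by descent from the Shimura curve case (where Pan's analysis applies) via the $p$-adic uniformization (\ref{uniformization}) together with Proposition \ref{prop:prGamma} and Lemma \ref{lem:prGammaHT}.
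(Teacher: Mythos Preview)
Your proposal is correct and follows the same approach as the paper, which simply cites \cite[Theorems 5.2.15, 5.2.16, 5.3.17]{PanII} (stated there after pushforward to the flag variety, whereas here one works directly on $\calM_{\Dr,\infty}$) together with the local power-series description of $\calO_{\calM_{\Dr,\infty}}^{\lan,(0,-k)}$ from \cite[Theorem 3.2.2]{PanII}. Two minor slips in your final paragraph: $\pi_{\Dr,\HT}$ is a pro-\'etale $\GL_2(\bbQ_p)$-torsor over all of $\check\fl$, not a $D_p^\times$-torsor away from $\bbP^1(\bbQ_p)$ (you are thinking of $\pi_{\Dr,\GM}=\pi_{\LT,\HT}$); and the proposed descent from the Shimura curve is in the wrong direction, since in this paper the Shimura-curve statements are deduced \emph{from} the Drinfeld/Lubin--Tate analysis via $p$-adic uniformization, not conversely. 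Neither point affects your main argument, since the power-series expansion is carried out directly on $\calM_{\Dr,\infty}\simeq\calM_{\LT,\infty}$ exactly as in \cite{PanII}.
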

\begin{proof}
This is \cite[Theorem 5.2.15, Theorem 5.2.16, Theorem 5.3.17]{PanII} except that we don't take the direct image along the period maps. We note that by (\ref{lemmatwistomegaDr1}) and (\ref{lemmatwistomegaDr2}) there is an isomorphism 
\[ (\pi_{\Dr,\HT}^{-1}\Omega_{\check \fl}^1)^{\ox k+1}\ox_{\pi_{\Dr,\HT}^{-1}\calO_{\check \fl}}\calO_{\calM_{\Dr,\infty}}^{\lan,(0,-k)}\ox_{\calO_{\calM_{\Dr,\infty}}^{\sm}}(\Omega^{1,\sm}_{\calM_{\Dr,\infty}})^{\ox k+1}\simeq \calO_{\calM_{\Dr,\infty}}^{\lan,(-k-1,1)}(k+1).\] 
The construction of these differential operators follow by the explicit description of $\calO_{\calM_{\Dr,\infty}}^{\lan,(0,-k)}$ in terms of power series expansions as in \cite[Theorem 3.2.2]{PanII}. 
\end{proof}
The above theorem also holds \textit{mutatis mutandis} for the differential operators $d_{\LT},\bar d_{\LT}$ on $\calO_{\calM_{\LT,\infty}}^{\lan,(0,-k)}$ (and their twists) in \cite[\S 5.2.14]{PanII}, replacing everywhere ``$\Dr$'' by ``$\LT$'' and ``$\check\fl$'' by ``$\fl$''. In particular, we also have a commutative diagram
\[
\begin{tikzcd}
    \calO_{\calM_{\LT,\infty}}^{\lan,(0,-k)} \arrow[r, "d_{\LT}"] \arrow[d, "\bar d_{\LT}"] & \calO_{\calM_{\LT,\infty}}^{\lan,(0,-k)}\ox_{\calO_{\calM_{\LT,\infty}}^{\sm}}(\Omega^{1,\sm}_{\calM_{\LT,\infty}})^{\ox k+1} \arrow[d, "\bar d_{\LT}'"] \\
    \calO_{\calM_{\LT,\infty}}^{\lan,(0,-k)}\ox_{\pi_{\LT,\HT}^{-1}\calO_{ \fl}}(\pi_{\LT,\HT}^{-1}\Omega_{ \fl}^1)^{\ox k+1} \arrow[r, "d_{\LT}'"]           &  \calO_{\calM_{\LT,\infty}}^{\lan,(-k-1,1)}(k+1)
\end{tikzcd}
\]
where $\bar d_{\LT}$ is surjective with kernel $\calO_{\calM_{\LT,\infty}}^{\lalg,(0,-k)}$. It turns out that (up to a non-zero constant) $d_{\LT}$ coincides with $\bar d_{\Dr}$ and $\bar d_{\LT}$ coincides with $d_{\Dr}$, and the above diagram is essentially the same as the one in Theorem \ref{theoremdifferentialoperatorsDr}.
\begin{theorem}\label{theoremdifferentialoperatorsDrLT}
    Under the isomorphism $\calO_{\calM_{\Dr,\infty}}^{\lan,(0,-k)}=\calO_{\calM_{\LT,\infty}}^{\lan,(0,-k)}$, we have $d_{\Dr}=c\bar d_{\LT},\bar d_{\Dr}=c'd_{\LT}$ for some invertible constants $c,c'$.
\end{theorem}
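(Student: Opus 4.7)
My plan is to reduce the identity to an explicit local computation on the common space $\calM_{\Dr,\infty}=\calM_{\LT,\infty}$, by matching source and target via the two-tower isomorphism (Theorem \ref{thm:twotowers}) and then invoking the power series parametrization of $\calO^{\lan,(0,-k)}$ from \cite[Theorem 3.2.2]{PanII}.

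As a first step, I would identify the targets of the operators. The sources are literally equal, because $\calO_{\calM_{\Dr,\infty}}^{\lan,(0,-k)}=\calO_{\calM_{\LT,\infty}}^{\lan,(0,-k)}$. For the targets, since each map $\pi_{\Dr,\GM,n}:\calM_{\Dr,n}\to\fl$ is \'etale, we have $\Omega^{1,\sm}_{\calM_{\Dr,\infty}}\simeq \calO^{\sm}_{\calM_{\Dr,\infty}}\otimes_{\pi_{\Dr,\GM}^{-1}\calO_{\fl}}\pi_{\Dr,\GM}^{-1}\Omega^1_{\fl}$, so the target of $d_{\Dr}$ becomes $\calO^{\lan,(0,-k)}\otimes_{\pi_{\Dr,\GM}^{-1}\calO_{\fl}}(\pi_{\Dr,\GM}^{-1}\Omega^1_{\fl})^{\otimes k+1}$. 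Under the tower isomorphism $\pi_{\Dr,\GM}=\pi_{\LT,\HT}$, this coincides with the target of $\bar d_{\LT}$. The pair $(\bar d_{\Dr},d_{\LT})$ is handled symmetrically using $\pi_{\Dr,\HT}=\pi_{\LT,\GM}$.

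The conceptual point driving the theorem is that $d_{\Dr}$ and $\bar d_{\LT}$ both realize the ``derivative in the $\pi_{\Dr,\GM}=\pi_{\LT,\HT}$ direction'': $d_{\Dr}$ extends the Gauss--Manin connection on the Drinfeld side, which operates along $\pi_{\Dr,\GM}$, while $\bar d_{\LT}$ (via the Lubin--Tate analogue of Theorem \ref{theoremdifferentialoperatorsDr}(ii)) extends the pulled back BGG-type differential $\omega_{\fl}^{(0,-k)}\to\omega_{\fl}^{(-k-1,1)}$ on $\fl$, which is along $\pi_{\LT,\HT}$. To make this precise, I would pick a point of $\calM_{\Dr,\infty}$ and local coordinates $(x,y)$ adapted to Pan's power series parametrization, where $x$ corresponds to the $\pi_{\Dr,\GM}$-direction and $y$ to the $\pi_{\Dr,\HT}$-direction. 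In these coordinates, both $d_{\Dr}$ and $\bar d_{\LT}$ are given, up to an explicit non-zero scalar in $k$, by the $x$-derivative, while both $\bar d_{\Dr}$ and $d_{\LT}$ are given by the $y$-derivative; these computations are essentially contained in \cite[\S 5.2, \S 5.3]{PanII}. The invertibility of the constants follows because both maps are non-zero: for instance $\bar d_{\LT}$ is surjective while $d_{\Dr}$ is non-zero on $\calO^{\lalg,(0,-k)}_{\calM_{\Dr,\infty}}$ (where it agrees with the non-zero Gauss--Manin).

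The main obstacle I foresee is the bookkeeping in the explicit coordinate comparison: one must simultaneously unwind (a) the power series description of $\calO^{\lan,(0,-k)}$ under both the $D_p^\times$ and $\GL_2(\bbQ_p)$ analytic structures, (b) the tower isomorphism of Theorem \ref{thm:twotowers} in these coordinates, and (c) the four definitions from Theorem \ref{theoremdifferentialoperatorsDr} and its Lubin--Tate analogue. A cleaner structural alternative would be to prove a uniqueness result: up to scalar, there is a unique nonzero differential operator $\calO^{\lan,(0,-k)}\to\calO^{\lan,(-k-1,1)}(k+1)$ extending the pulled back differential $\omega_\fl^{(0,-k)}\to\omega_\fl^{(-k-1,1)}$ along $\pi_{\Dr,\GM}=\pi_{\LT,\HT}$; then both $d_{\Dr}$ and $\bar d_{\LT}$ satisfy the characterization and must be proportional. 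Proving such a uniqueness statement, however, ultimately reduces to the same power series description, so I would expect the explicit local computation to be the shortest route.
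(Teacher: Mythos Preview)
Your proposal is correct and matches the approach taken in the references the paper cites: the paper's own proof is simply a citation to \cite[Theorem 5.3.20]{PanII} and \cite[Theorem 4.3.12]{QS24}, and your outline (identify targets via $\pi_{\Dr,\GM}=\pi_{\LT,\HT}$ and $\pi_{\Dr,\HT}=\pi_{\LT,\GM}$, then carry out the explicit power-series computation of \cite[\S 5.2--5.3]{PanII}) is precisely how those cited results are established. Your remark that the bookkeeping is the main difficulty is accurate; there is no cleaner structural shortcut in the literature beyond the local coordinate calculation you describe.
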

\begin{proof}
    This is \cite[Theorem 5.3.20]{PanII} and \cite[Theorem 4.3.12]{QS24}.
\end{proof}
The identification of these operators allows us to compute the cohomology of the differential operator $d_{\Dr}$.
\begin{corollary}\label{cor:kerdDr}
The differential operator $d_{\Dr}:\calO_{\calM_{\Dr,\infty}}^{\lan,(0,-k)} \to  \calO_{\calM_{\Dr,\infty}}^{\lan,(0,-k)}\ox_{\calO_{\calM_{\Dr,\infty}}^{\sm}}(\Omega^{1,\sm}_{\calM_{\Dr,\infty}})^{\ox k+1}$ is surjective, with kernel $\calO_{\calM_{\LT,\infty}}^{\lalg,(0,-k)}$. Similar results hold for $d_{\Dr}'$.
\end{corollary}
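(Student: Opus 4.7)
The proof of this corollary should be essentially a direct combination of two earlier results in the excerpt. My plan is as follows.

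First, I invoke Theorem \ref{theoremdifferentialoperatorsDrLT}, which states that under the identification $\calO_{\calM_{\Dr,\infty}}^{\lan,(0,-k)}=\calO_{\calM_{\LT,\infty}}^{\lan,(0,-k)}$, the Drinfeld differential operator $d_{\Dr}$ coincides with $c\bar{d}_{\LT}$ for some invertible constant $c$. Since multiplication by an invertible constant preserves both surjectivity and kernel, it suffices to establish the claim for $\bar{d}_{\LT}$.

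Next, I apply the Lubin--Tate analogue of Theorem \ref{theoremdifferentialoperatorsDr}(iii), which is stated immediately after Theorem \ref{theoremdifferentialoperatorsDr} in the excerpt: the operator $\bar{d}_{\LT}$ is surjective with kernel equal to $\calO_{\calM_{\LT,\infty}}^{\lalg,(0,-k)}$. Combining with the previous step, this immediately yields surjectivity of $d_{\Dr}$ together with the identification
\[\ker(d_{\Dr})=\ker(\bar{d}_{\LT})=\calO_{\calM_{\LT,\infty}}^{\lalg,(0,-k)}.\]

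For the analogous statement about $d'_{\Dr}$, I use Theorem \ref{theoremdifferentialoperatorsDr}(iv), which says that $d'_{\Dr}$ is obtained from $d_{\Dr}$ by tensoring with the invertible $\pi_{\Dr,\HT}^{-1}\calO_{\check\fl}$-module $\pi_{\Dr,\HT}^{-1}(\Omega^1_{\check\fl})^{\otimes k+1}$. Since this operation is exact and preserves both surjectivity and kernels, the surjectivity of $d'_{\Dr}$ and the identification of $\ker(d'_{\Dr})$ with the corresponding twist of $\calO_{\calM_{\LT,\infty}}^{\lalg,(0,-k)}$ follow at once. There is no serious obstacle here, since all the hard work (constructing the operators, proving surjectivity of $\bar{d}_{\LT}$, identifying the kernel with the locally algebraic vectors, and comparing the Drinfeld and Lubin--Tate differentials) has already been done in Theorems \ref{theoremdifferentialoperatorsDr} and \ref{theoremdifferentialoperatorsDrLT}.
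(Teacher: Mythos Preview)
Your proposal is correct and takes essentially the same approach as the paper: the corollary is stated immediately after Theorem \ref{theoremdifferentialoperatorsDrLT} with the remark that ``the identification of these operators allows us to compute the cohomology of the differential operator $d_{\Dr}$,'' and no separate proof is given. Your argument via $d_{\Dr}=c\bar d_{\LT}$ together with the Lubin--Tate analogue of Theorem \ref{theoremdifferentialoperatorsDr}(iii), and the twist for $d_{\Dr}'$ via (iv), is exactly what is intended.
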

From the above corollary we obtain a short exact sequence
\begin{align}\label{equationdDr}
    0\rightarrow \calO_{\calM_{\LT,\infty}}^{\lalg,(0,-k)}\rightarrow \calO_{\calM_{\Dr,\infty}}^{\lan,(0,-k)} \stackrel{d_{\Dr}}{\rightarrow}  \calO_{\calM_{\Dr,\infty}}^{\lan,(0,-k)}\ox_{\calO_{\calM_{\Dr,\infty}}^{\sm}}(\Omega^{1,\sm}_{\calM_{\Dr,\infty}})^{\ox k+1}\rightarrow 0
\end{align}
which is $\GL_2(\bbQ_p)\times D_p^{\times}$-equivariant. This sequence will be crucial for our study of the locally analytic representations of $D_p^{\times}$ from the compactly supported cohomology groups of these sheaves and the completed cohomology of the quaternionic Shimura curve.
\subsubsection{Passage to the Shimura curve and the flag variety}\label{sec:passageShimura}
So far we have discussed some equivariant line bundles on $\calM_{\Dr,\infty}$ in \S\ref{sec:vectorbundles}, subsheaves of $\calO_{\calM_{\Dr,\infty}}^{\lan}$ \S\ref{sec:locallyanalyticvectors} and the differential operators in \S\ref{sec:differentialoperators}. These objects and constructions are $\GL_2(\bbQ_p)$-equivariant, hence descend to the analytic quotients $\calS_{\Gamma}=\Gamma \bs\calM_{\Dr,\infty}$ where $\Gamma\subset \GL_2(\bbQ_p)$ is a closed cocompact subgroup satisfying the assumption for Proposition \ref{prop:prGamma}.  Via the $p$-adic uniformization $\calS_{K^p}=\sqcup_{x\in  Z_{K^p}/\GL_2(\bbQ_p) }\calS_{\Gamma_x}$ in the notation of (\ref{uniformization}), we can obtain corresponding objects on the Shimura curve $\calS_{K^p}$. Note that we can make the same constructions directly on $\calS_{K^p}$ as is done in \cite{PanII} or \cite{QS24}. 

Recall that $\pr_{\Gamma,\infty}:\calM_{\Dr,\infty}\rightarrow \calS_{\Gamma}$ is the quotient map and $\pi_{K^p,\HT}:\calS_{K^p}\to \check\fl$ is the Hodge--Tate map for $\calS_{K^p}$. Let $(a,b)\in\bbZ^{\oplus 2}$ and $k\geq 0$. In summary, there are
\begin{itemize}
    \item automorphic line bundles $\omega_{\calS_{K^p}}^{(a,b)}=\pi_{K^p,\HT}^*\omega_{\check\fl}^{(-a,-b)}(-a)$ and the smooth vectors $\omega_{\calS_{K^p}}^{(a,b),\sm}$ for $(a,b)\in\bbZ^2$ such that 
    \[\pr_{\Gamma,\infty}^{-1}(\omega_{\calS_{K^p}}^{(a,b)}|_{\calS_{\Gamma}})=\omega_{\calM_{\Dr,\infty}}^{(a,b)},\quad\pr_{\Gamma,\infty}^{-1}(\omega_{\calS_{K^p}}^{(a,b),\sm}|_{\calS_{\Gamma}})=\omega_{\calM_{\Dr,\infty}}^{(a,b),\sm}.\]
    In particular, $\Omega^{1,\sm}_{\calS_{K^p}}$ is naturally identified with $\omega_{\calS_{K^p}}^{(-1,1),\sm}$.
    \item subsheaves of \[\calO_{\calS_{K^p}}^{W\-\lalg}\subset\calO_{\calS_{K^p}}^{\lalg}
    \subset \calO_{\calS_{K^p}}^{\lan}\subset \calO_{\calS_{K^p}},\]
    and similarly subsheaves of $\omega_{\calS_{K^p}}^{(a,b), \lan}$, whose pullbacks to $\calM_{\Dr,\infty}$ via $\pr_{\Gamma,\infty}$ coincides with the corresponding sheaves on $\calM_{\Dr,\infty}$ in \S\ref{sec:locallyanalyticvectors}. These sheaves are killed by the nilpotent Lie algebra $\pi_{K^p,\HT}^{-1}\check\frn^0$ and also admit horizontal actions of $\check{\frh}$ still denoted by $\theta_{\check\frh}$.
    \item the subsheaves $\calO_{\calS_{K^p}}^{\lan,(a',b')},\omega_{\calS_{K^p}}^{(a,b),\lan,(a',b')}$, etc. where $\check{\frh}$ acts via the weight $(a',b')\in\bbZ^2$ whose pullback to $\calM_{\Dr,\infty}$ coincide with $\calO_{\calM_{\Dr,\infty}}^{\lan,(a',b')},\omega_{\calM_{\Dr,\infty}}^{(a,b),\lan,(a',b')}$, etc.
    \item the differential operators $d_{\calS_{K^p}},\bar d_{\calS_{K^p}},d_{\calS_{K^p}}',\bar d_{\calS_{K^p}}'$ on $\calO_{\calS_{K^p}}^{\lan,(0,-k)}$ for $k\geq 0$, which satisfy the results of Theorem \ref{theoremdifferentialoperatorsDr} replacing everywhere ``$\calM_{\Dr,\infty}$'' by ``$\calS_{K^p}$'' and ``$\Dr$'' by ``$\calS_{K^p}$''. All these differential operators are surjection of sheaves as in (iii) of Theorem \ref{theoremdifferentialoperatorsDr} and Corollary \ref{cor:kerdDr}, which will be discussed in Proposition \ref{prop:dKvsurj} below.
\end{itemize}
\begin{remark}
    The sheaves $\omega_{\calS_{K^p}}^{(a,b),\sm}$ we constructed above coincide with the usual automorphic vector bundles on the Shimura varieties constructed using the Grothendieck--Messing period maps, see \cite[Remark 5.2.11, Corollary 5.3.3]{DLLZ}. 
\end{remark}
Finally we can pushforward the above sheaves along the ``affinoid'' map $\pi_{K^p,\HT}:\calS_{K^p}\to \check\fl$ to obtain sheaves on $\check\fl$ and the corresponding differential operators. We set, exactly as in \cite{Pan22,PanII} or \cite{QS24},
\begin{itemize}
    \item $\calO_{K^p}:=\pi_{K^p,\HT,*}\calO_{\calS_{K^p}},\calO_{K^p}^{\sm}:=\pi_{K^p,\HT,*}\calO_{\calS_{K^p}}^{\sm},\omega_{K^p}^{(a,b)}:=\pi_{K^p,\HT,*}\omega_{\calS_{K^p}}^{(a,b)}$, etc.
    \item the subsheaf of locally analytic or locally algebraic vectors 
    \[ \calO_{K^p}^{W\-\lalg}\subset\calO_{K^p}^{\lalg}
    \subset \calO_{K^p}^{\lan}\]
    of the $D_p^{\times}$-equivariant sheaf $\calO_{K^p}$. There are similar subsheaves of $\omega_{K^p}^{(a,b),\lan}$. These sheaves are killed by the nilpotent Lie algebra $\check\frn^0$ and also admit horizontal actions of $\check{\frh}$ still denoted by $\theta_{\check\frh}$, hence are equivariant twisted $\calD$-modules in a suitable sense (e.g. \cite[\S 3]{boxer2025modularity}). 
    \item the subsheaves $\calO_{K^p}^{\lan,(a',b')}\subset \calO_{K^p}^{\lan},\omega_{K^p}^{(a,b),\lan,(a',b')}\subset \omega_{K^p}^{(a,b),\lan}$, etc. where $\check{\frh}$ acts via the weight $(a',b')\in\bbZ^2$.
    \item there are differential operators as in Theorem \ref{theoremdifferentialoperatorsDr}
    \begin{equation}\label{equationdKv}
     \begin{tikzcd}
    \calO_{K^p}^{\lan,(0,-k)} \arrow[r, "d_{K^p}"] \arrow[d, "\bar d_{K^p}"] & \calO_{K^p}^{\lan,(0,-k)}\ox_{\calO_{K^p}^{\sm}}(\Omega^{1,\sm}_{K^p})^{\ox k+1} \arrow[d, "\bar d_{K^p}'"] \\
    \calO_{K^p}^{\lan,(0,-k)}\ox_{\calO_{\check\fl}}(\Omega^1_{\check\fl})^{\ox k+1} \arrow[r, "d'_{K^p}"]           &  \calO_{K^p}^{\lan,(-k-1,1)}(k+1)       
    \end{tikzcd}
   \end{equation}
   for $k\geq 0$ integers.
\end{itemize}

\begin{proposition}\label{prop:dKvsurj}
    Let $d_{K^p},\bar d_{K^p},d'_{K^p},\bar d'_{K^p}$ be the differential operators on $\calO_{K^p}^{\lan,(0,-k)}$ in (\ref{equationdKv}). Then the following statements hold:
\begin{enumerate}[(1)]
    \item The map $\bar d_{K^p}:\calO_{K^p}^{\lan,(0,-k)}\to \calO_{K^p}^{\lan,(0,-k)}\ox_{\calO_{\check\fl}}(\Omega^1_{\check\fl})^{\ox k+1}$ is surjective, with kernel 
    \begin{align}\label{equationOKplalg}
        \calO_{K^p}^{\lalg,(0,-k)}\isom W^{(0,-k)}\ox_C \omega_{K^p}^{(0,-k),\sm}.
    \end{align} 
    Similarly, the map $\bar d'_{K^p}:\calO_{K^p}^{\lan,(0,-k)}\ox_{\calO_{K^p}^{\sm}}(\Omega_{K^p}^{1,\sm})^{\ox k+1}\to \calO_{K^p}^{\lan,(-k-1,1)}(k+1)$ is surjective, with kernel $\calO_{K^p}^{\lalg,(0,-k)}\ox_{\calO_{K^p}^{\sm}}(\Omega_{K^p}^{1,\sm})^{\ox k+1}\isom W^{(0,-k)}\ox_C \omega_{K^p}^{(-k-1,1),\sm}$.
    \item The maps $d_{K^p}:\calO_{K^p}^{\lan,(0,-k)}\to \calO_{K^p}^{\lan,(0,-k)}\ox_{\calO_{K^p}^{\sm}}(\Omega_{K^p}^{1,\sm})^{\ox k+1}$ and $d'_{K^p}:\calO_{K^p}^{\lan,(0,-k)}\ox_{\calO_{\check\fl}}(\Omega^1_{\check\fl})^{\ox k+1} \to  \calO_{K^p}^{\lan,(-k-1,1)}(k+1)$ are surjective.
\end{enumerate}
\end{proposition}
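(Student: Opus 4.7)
The strategy is to reduce both (1) and (2) to the corresponding statements on $\calM_{\Dr,\infty}$ already proved in Theorem \ref{theoremdifferentialoperatorsDr}, Corollary \ref{cor:kerdDr}, and Theorem \ref{theoremdifferentialoperatorsDrLT}, via two successive descents: first from $\calM_{\Dr,\infty}$ to $\calS_{K^p}$ using the $p$-adic uniformization, and then from $\calS_{K^p}$ down to $\check\fl$ along $\pi_{K^p,\HT}$. The identification of the kernel in (1) will come from Proposition \ref{prop:Wlalg} (applied to $\calS_{K^p}$) together with the exactness of the locally algebraic decomposition. The surjectivity of $d_{K^p}$ and $d'_{K^p}$ in (2) will be deduced from the surjectivity of $\bar d_{\LT}$ and $\bar d'_{\LT}$ on the Lubin--Tate side via Theorem \ref{theoremdifferentialoperatorsDrLT}, which identifies $d_{\Dr}$ with $\bar d_{\LT}$ up to a nonzero scalar (and similarly for the primed versions); the same identification descends to $\calS_{K^p}$ and then to $\check\fl$ by naturality.

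First I would verify the analogous surjectivity and kernel identifications on $\calS_{K^p}$ itself. Since $\pr_{\Gamma,\infty}:\calM_{\Dr,\infty}\to \calS_\Gamma$ is a local isomorphism by Proposition \ref{prop:prGamma} and the sheaves $\calO_{\calS_{K^p}}^{\lan,(0,-k)}$, $\omega_{\calS_{K^p}}^{(a,b),\sm}$ are defined so that their pullbacks along $\pr_{\Gamma,\infty}$ recover the analogous sheaves on $\calM_{\Dr,\infty}$, and since the differential operators $d_{\calS_{K^p}}, \bar d_{\calS_{K^p}}, d'_{\calS_{K^p}}, \bar d'_{\calS_{K^p}}$ are $\GL_2(\bbQ_p)$-equivariant and descend from $d_{\Dr}, \bar d_{\Dr}, d'_{\Dr}, \bar d'_{\Dr}$, the surjectivity of a sheaf map being a local (stalkwise) property transfers through $\pr_{\Gamma,\infty}$. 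The same local nature identifies the kernel of $\bar d_{\calS_{K^p}}$ with the locally algebraic subsheaf, which by the analog of Proposition \ref{prop:Wlalg} is isomorphic to $W^{(0,-k)}\otimes_C\omega_{\calS_{K^p}}^{(0,-k),\sm}$.

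Second I would push forward along $\pi_{K^p,\HT}:\calS_{K^p}\to\check\fl$. By Lemma \ref{lem:prGammaHT}, there is a basis $\ffrb$ of $\check\fl$ consisting of affinoid open subsets $U$ such that $V_\infty:=\pi_{K^p,\HT}^{-1}(U)$ is a disjoint union of affinoid perfectoid opens of the form $\pi_{K^p,n}^{-1}(V_n)$ for some affinoid $V_n$ at finite level $n\ge 0$. On sections over such $U$, surjectivity of $\bar d_{K^p}$ reduces to the vanishing of $H^1(V_\infty, \ker \bar d_{\calS_{K^p}})$; since $\ker\bar d_{\calS_{K^p}}\isom W^{(0,-k)}\otimes_C\omega_{\calS_{K^p}}^{(0,-k),\sm}$ is a filtered colimit of pullbacks of the coherent automorphic bundles $\omega_{\calS_{K^p,n}}^{(0,-k)}$ on the affinoids $V_n$, classical Tate acyclicity at each finite level together with the interchange of $H^1$ and filtered colimits (standard in this setting, cf. \cite{Pan22,PanII} and Appendix \ref{section:properpushforward}) supplies the required vanishing. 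The same argument handles $\bar d'_{K^p}$. Finally, combining this with Theorem \ref{theoremdifferentialoperatorsDrLT} (which makes $d_{K^p}$ and $d'_{K^p}$ scalar multiples of the analogues of $\bar d_{\LT}$ and $\bar d'_{\LT}$ on $\check\fl$) yields (2).

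\textbf{Main obstacle.} The only nontrivial point is the cohomological vanishing needed to pass the surjectivity through $\pi_{K^p,\HT,*}$, since the kernel sheaves are not themselves coherent but only filtered colimits of coherent sheaves pulled back from finite levels. Once one grants the interchange of $H^1$ with filtered colimits on affinoid perfectoid opens of the form $\pi_{K^p,n}^{-1}(V_n)$, the rest of the argument is a routine combination of the results already established in the paper.
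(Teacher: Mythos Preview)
Your approach to (1) is essentially the same as the paper's and is correct: the paper simply refers to the argument of \cite[Proposition 4.2.9]{PanII}, which is exactly the Tate-acyclicity argument you outline using the affinoids of Lemma \ref{lem:prGammaHT} and the fact that $\ker\bar d_{\calS_{K^p}}=W^{(0,-k)}\otimes_C\omega_{\calS_{K^p}}^{(0,-k),\sm}$ is a filtered colimit of coherent sheaves pulled back from finite $D_p^\times$-levels $\calS_{K^pK_p}$.

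For (2), however, your proposal has a genuine gap. You suggest that the identification $d_{\Dr}=c\,\bar d_{\LT}$ from Theorem \ref{theoremdifferentialoperatorsDrLT} lets you run ``the same argument'' as in (1). But the kernel of $d_{\calS_{K^p}}$ is (the descent of) $\calO_{\calM_{\LT,\infty}}^{\lalg,(0,-k)}=V^{(0,-k)}\otimes_C\omega_{\calM_{\LT,\infty}}^{(0,-k),\sm}$, which is smooth in the $\GL_2(\bbQ_p)$-tower $\{\calM_{\LT,n}\}$, \emph{not} in the $D_p^\times$-tower $\{\calS_{K^pK_p}\}$. The affinoid perfectoids $V_\infty=\pi_{K^p,\HT}^{-1}(U)$ produced by Lemma \ref{lem:prGammaHT} come from finite $D_p^\times$-level, and there is no reason to expect $\ker d_{\calS_{K^p}}$ to be a colimit of coherent sheaves on those affinoids; so your Tate-acyclicity step for $H^1(V_\infty,\ker d_{\calS_{K^p}})$ does not go through.

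The paper resolves this by choosing a \emph{different} covering of $\check\fl$: it takes affinoid opens $V\subset\check\fl$ over which the Gross--Hopkins map $\pi_{\LT,\GM,0}:\calM_{\LT,0}\to\check\fl$ trivializes as $\pi_{\LT,\GM,0}^{-1}(V)=\sqcup_{g\in\GL_2(\bbQ_p)/\GL_2(\bbZ_p)}gU$ with $U\to V$ finite \'etale. Pulling back to infinite level and using that $\Gamma\subset\GL_2(\bbQ_p)$ is discrete torsion-free (so $\Gamma\cap c\GL_2(\bbZ_p)c^{-1}=\{1\}$), one gets that $\pi_{\Gamma,\HT}^{-1}(V)\subset\calS_\Gamma$ is a \emph{finite} disjoint union of copies of $U_\infty$, the preimage of $U$ in $\calM_{\LT,\infty}$. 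On such $U_\infty$ the kernel $\calO_{\calM_{\LT,\infty}}^{\lalg,(0,-k)}$ \emph{is} a colimit of coherent sheaves from $\calM_{\LT,n}$, and Tate acyclicity gives the needed $H^1$-vanishing, hence surjectivity of $d_{\Dr}$ (equivalently $d_{\calS_\Gamma}$) on sections over $\pi_{\Gamma,\HT}^{-1}(V)$. This is the step your outline is missing.
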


\begin{proof}
(1) The statement for $\bar d_{K^p}$ can be proved using similar arguments as in \cite[Proposition 4.2.9]{PanII} and $\bar d'_{K^p}$ is a twist of $\bar d_{K^p}$.

(2) Since $\calS_{K^p}=\sqcup_{x\in  Z_{K^p}/\GL_2(\bbQ_p) }\calS_{\Gamma_x}$ and each map $\calM_{\Dr,\infty}\rightarrow \calS_{\Gamma_x}$ is a local isomorphism (Proposition \ref{prop:prGamma}), the differential operator $d_{\calS_{K^p}}:\calO_{\calS_{K^p}}^{\lan,(0,-k)}\to \calO_{\calS_{K^p}}^{\lan,(0,-k)}\ox_{\calO_{\calS_{K^p}}^{\sm}}(\Omega_{K^p}^{1,\sm})^{\ox k+1}$ is surjective. To show that $d_{K^p}$ is surjective, we show that there exists a covering of $\check\fl$ by affinoid open subset $V\subset \check{\fl}$ such that the map
\begin{equation}\label{equationdGammaV}
    d_{\calS_{\Gamma}}:\calO_{\calS_{\Gamma}}^{\lan,(0,-k)}(\pi_{\Gamma,\HT}^{-1}(V))\rightarrow (\calO_{\calS_{\Gamma}}^{\lan,(0,-k)}\ox_{\calO_{\calS_{\Gamma}}^{\sm}}(\Omega_{K^p}^{1,\sm}|_{\calS_{\Gamma}})^{\ox k+1})(\pi_{\Gamma,\HT}^{-1}(V))    
\end{equation}
is surjective for any $\Gamma$ satisfying the assumption of Proposition \ref{prop:prGamma}. Let $V\subset \check{\fl}$ be an affinoid open subset such that 
\[\pi_{\LT,\GM,0}^{-1}(V)=\sqcup_{g\in \GL_2(\bbQ_p)/\GL_2(\bbZ_p)}gU,\] 
where $\pi_{\LT,\GM,0}:\calM_{\LT,0}\to \check{\fl}$ is the Gross--Hopkins map discussed in the end of \S\ref{subsection:twotowers} and $U\to \calM_{\LT,0}$ is an affinoid open subset such that 
$\pi_{\LT,\GM,0}|_{U}$ is finite \'etale. We claim that for such open subset $V$ the map (\ref{equationdGammaV}) is surjective.

By Corollary \ref{cor:kerdDr}, the differential operator $d_{\rm Dr}$ is surjective on $\calM_{\Dr,\infty}$, with kernel $\calO_{\calM_{\LT,\infty}}^{\lalg,(0,-k)}$. For an affinoid open $U_\infty\subset \calM_{\Dr,\infty}\simeq \calM_{\LT,\infty}$ that is the pullback of an open affinoid subset of $\calM_{\LT,0}$, we have $H^1(U_\infty,\calO_{\calM_{\LT,\infty}}^{\lalg,(0,-k)})=0$ using Tate's acyclicity (see the proof of Lemma \ref{lem:RpOLTsm} below). We see that the differential
\begin{align}\label{equationdDrUinfty}
 d_{\Dr}:\calO_{\calM_{\Dr,\infty}}^{\lan,(0,-k)}(U_\infty)\to (\calO_{\calM_{\Dr,\infty}}^{\lan,(0,-k)}\ox_{\calO_{\calM_{\Dr,\infty}}^{\sm}}(\Omega_{\calM_{\Dr,\infty}}^{1,\sm})^{\ox k+1})(U_\infty)
\end{align}
is still surjective after taking sections on such affinoid open subsets $U_{\infty}$. 

Let $U_\infty$ be the pre-image of $U$ inside $\calM_{\LT,\infty}$ via $\calM_{\LT,\infty}\to \calM_{\LT,0}$. Then $\GL_2(\bbQ_p)$-equivariantly
\[\pi_{\LT,\GM}^{-1}(V)=\sqcup_{g\in \GL_2(\bbQ_p)/\GL_2(\bbZ_p)}gU_\infty.\] Let $\Gamma\subset \GL_2(\bbQ_p)$ be a discrete cocompact torsion-free subgroup. The assumption on $\Gamma$ implies that $\Gamma\cap c\GL_2(\bbZ_p)c^{-1}=\{1\}$ for any $c\in \GL_2(\bbQ_p)$. Hence 
\begin{align*}
    \pi_{\LT,\GM}^{-1}(V)=\sqcup_{g\in \GL_2(\bbQ_p)/\GL_2(\bbZ_p)}gU_\infty=\sqcup_{\gamma\in\Gamma} \sqcup_{c\in \Gamma\backslash\GL_2(\bbQ_p)/\GL_2(\bbZ_p)}\gamma c U_\infty.
\end{align*}
Therefore, $\pi_{\Gamma,\HT}^{-1}(V)=\pr_{\Gamma,\infty}\pi_{\LT,\GM}^{-1}(V)\isom  \sqcup_{c\in \Gamma\backslash \GL_2(\bbQ_p)/\GL_2(\bbZ_p)}c\cdot  U_\infty$ is isomorphic to a finite copy of $U_{\infty}$. As a consequence, the map (\ref{equationdGammaV}) can be identified with a finite direct sum of copies of the map (\ref{equationdDrUinfty}). In particular, the map (\ref{equationdGammaV}) is surjective, equivalently, the map 
\begin{align*}
    d_{K^p}:\calO_{K^p}^{\lan,(0,-k)}(V)\to (\calO_{K^p}^{\lan,(0,-k)}\ox_{\calO_{K^p}^{\sm}}(\Omega_{{K^p}}^{1,\sm})^{\ox k+1})(V)
\end{align*}
is surjective.

Since $\check{\fl}$ can be covered by $V$ such that (\ref{equationdGammaV}) is surjective and $\calS_{K^p}$ is a disjoint union of finitely many components of the form $\calS_{\Gamma}$, we conclude that $d_{K^p}$ is a surjective map of sheaves. Finally, as $d'_{K^p}$ is a twist of $d_{K^p}$, we deduce that $d'_{K^p}$ is also surjective.
\end{proof}

\subsubsection{The intertwining operator on $\calO_{K^p}^{\lan,(0,-k)}$}
Let $k\geq 0$. We define the intertwining operator on $\calO_{K^p}^{\lan,(0,-k)}$ as \cite[Definition 4.3.1]{PanII}.

\begin{definition}\label{def:intertwiningoperator}
We denote $I:=d'_{K^p}\comp\bar d_{K^p}=\bar d'_{K^p}\comp d_{K^p}$ to be the composition of these operators in (\ref{equationdKv}), and we call $I$ the intertwining operator.
\end{definition}

In the same way as \cite[\S 6]{PanII}, we can interpret the intertwining operator Galois-theoretically as the Fontaine operator \cite{fontaine2004arithmetique}. We sketch the construction of the Fontaine operator. Let $\bbB_{\dR,\calS_{K^p}}^+$ be the positive de Rham period sheaf on $\calS_{K^p}$ (we mean the slice of the pro-\'etale sheaf on the analytic site of $\calS_{K^p}$). Define $\bbB_{\dR}^+:=\pi_{K^p,\HT,*}\bbB_{\dR,\calS_{K^p}}$, which has a decreasing filtration given by $\Fil^i\bbB_{\dR}^+=\pi_{K^p,\HT,*}\Fil^i\bbB_{\dR,\calS_{K^p}}$, with graded pieces $\gr^i\bbB_{\dR}^+\isom \calO_{K^p}(i)$. Here we use $R^j\pi_{K^p,\HT,*}\calO_{\calS_{K^p}}=0$ for $j\ge 1$. For $k\ge 0$ an integer, put $\bbB_{\dR,k}^+:=\bbB_{\dR}^+/\Fil^k\bbB_{\dR}^+$. For $(a,b)\in\bbZ^2$, let $\tilde\chi_{(a,b)}$ be the character of $Z(\check\frg)$ such that $Z(\check\frg)$ acts on the Verma module $U(\check\frg)\ox_{U(\check \frb)}(a,b)$ via $\tilde\chi_{(a,b)}$. There is an induced filtration on $\bbB_{\dR,k+2}^{+,\lan,\tilde\chi_{(0,-k)}}$ from $\bbB_{\dR}^{+}$ and one can show that $\bbB_{\dR,k+2}^{+,\lan,\tilde\chi_{(0,-k)}}$ has $i$-th graded pieces given by $\calO_{K^p}^{\lan,\tilde\chi_{(0,-k)}}(i)$, where $i=0,1,...,k+1$. Besides, the arithmetic Sen operator $\Theta_{\Sen}$ acts on $\calO_{K^p}^{\lan,\tilde\chi_{(0,-k)}}(i)$ via $i,-k-1+i$. In \cite[\S 6]{PanII}, Pan developed a Sen theory for colimits of Banach spaces which are also $B_{\dR}^+/\Fil^k B_{\dR}^+$-modules. For an open affinioid $U\subset \check\fl$ as in Lemma \ref{lem:prGammaHT}, we may assume there exists a finite extension $K$ of $\bbQ_p$ such that $U$ is defined over $K$. Let $\zeta_{p^n}$ be a primitive $p^n$-th root of unity, and let $K_\infty\subset \dlim_nK(\zeta_{p^n})$ be the maximal $\bbZ_p$-extension. For $W$ an LB $B_{\dR}^+/\Fil^k B_{\dR}^+$-module, let $W^K\subset W$ be the subspace of $\Gal(\bar K/K_\infty)$-invariant, $\Gal(K_\infty/K)$-analytic vectors in $W$. It turns out $\bbB_{\dR,k+2}^{+,\lan,\tilde\chi_{(0,-k)}}(U)$ satisfies the assumption in \cite[6.1.17]{PanII}, and $\bbB_{\dR,k+2}^{+,\lan,\tilde\chi_{(0,-k)}}(U)^K$ has $i$-th graded pieces given by $\calO_{K^p}^{\lan,\tilde\chi_{(0,-k)}}(U)^K(i)$. Then the action of $\Lie\Gamma=\Lie\Gal(K_\infty/K)$ induces a map 
\begin{align}
    \calO_{K^p}^{\lan,(0,-k)}(U)^K\to \calO_{K^p}^{\lan,(-k-1,1)}(U)^K(k+1).
\end{align}
This map is functorial, and after we base change to $C$ it induces a natural sheaf morphism 
\begin{align}\label{eq:fontaineoperator}
    N:\calO_{K^p}^{\lan,(0,-k)}\to \calO_{K^p}^{\lan,(-k-1,1)}(k+1)
\end{align}
which we call it the Fontaine operator. Roughly speaking, the Fontaine operator measures the nilpotency of the arithmetic Sen action on $\bbB_{\dR,k+2}^{+,\lan,\tilde{\chi}_{(0,-k)}}$. Using the machinery developed in \cite[\S 6]{PanII}, similar arguments as for \cite[Theorem 6.2.6]{PanII} give the following result.
\begin{theorem}\label{thm:NequalsI}
Up to an invertible constant, the Fontaine operator $N$ equals to the intertwining operator $I$.
\end{theorem}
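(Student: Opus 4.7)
The strategy is to adapt Pan's argument for \cite[Theorem 6.2.6]{PanII}, which proves the analogous statement in the modular curve / $\GL_2(\bbQ_p)$ setting. Both $N$ and $I$ are $D_p^{\times}$-equivariant morphisms of sheaves on $\check\fl$ with source $\calO_{K^p}^{\lan,(0,-k)}$ and target $\calO_{K^p}^{\lan,(-k-1,1)}(k+1)$, and both are constructed locally on $\check\fl$. The first step is therefore to fix an affinoid open $U\subset\check\fl$ as in Lemma \ref{lem:prGammaHT} (defined over a finite extension $K/\bbQ_p$), so that $\pi_{K^p,\HT}^{-1}(U)$ is a disjoint union of affinoid perfectoid opens, pulled back from some finite level, and work with sections over $U$.

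Next, invoke the $p$-adic uniformization (\ref{uniformization}) and Theorem \ref{theoremdifferentialoperatorsDrLT} to transfer the problem to $\calM_{\Dr,\infty}\isom\calM_{\LT,\infty}$: the differential operators $d_{K^p},\bar d_{K^p}$ (and their primed versions) that define $I$ are pullbacks of $d_{\Dr},\bar d_{\Dr}$ along $\pr_{\Gamma,\infty}$, while the positive de Rham period sheaf $\bbB_{\dR,\calS_{K^p}}^+$ pulls back to the corresponding sheaf on $\calM_{\Dr,\infty}$ and $N$ is manufactured from it by the Sen-theoretic recipe of \cite[\S 6]{PanII}. Since both $N$ and $I$ are pro\'etale-local and $\GL_2(\bbQ_p)$-equivariant, it suffices to check the identity on sections over $\pi_{\LT,\GM,0}^{-1}(V)$ for a suitable cover of $V\subset\check\fl$ on which $\pi_{\LT,\GM,0}$ is a disjoint union of finite \'etale pieces.

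Now perform the local computation using the explicit coordinate descriptions from \cite[\S 3.2]{PanII} and the quaternionic analogues used throughout Section \ref{subsec:locallyanalyticvectors}: sections of $\calO_{\calM_{\Dr,\infty}}^{\lan,(0,-k)}(U)$ are certain convergent series in an analytic coordinate on the flag variety together with the ``horizontal'' Hodge--Tate coordinate, while sections of $\bbB_{\dR,k+2}^{+,\lan,\tilde\chi_{(0,-k)}}(U)^K$ acquire a further expansion in the Fontaine period $t=\log[\varepsilon]$. The filtration on $\bbB_{\dR,k+2}^+$ has graded pieces $\calO_{K^p}^{\lan,\tilde\chi_{(0,-k)}}(i)$ on which $\Theta_{\Sen}$ acts by $i$ or $-k-1+i$, so the failure of $\Theta_{\Sen}$ to be diagonalizable is concentrated in the jump from weight $0$ to weight $k+1$, producing the morphism $N$. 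On the other hand, $I=\bar d_{K^p}'\circ d_{K^p}$ is visibly the composite of the ``horizontal'' differentiation on $\calO_{K^p}^{\lan,(0,-k)}$ with the ``flag variety'' differentiation; these two pieces correspond under the de Rham comparison to exactly the same off-diagonal entry of the Sen operator on $\bbB_{\dR,k+2}^+$. Matching coefficients in the two expansions yields $N=cI$ on sections over $U$ for some nonzero constant $c$ independent of $U$.

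The main obstacle is ensuring that the local computation goes through without circularity. The Sen theory for $B_{\dR}^+$-modules in \cite[\S 6]{PanII} requires a $\Gal(K_\infty/K)$-analyticity input that is furnished by the sheaf-theoretic setup only on the Shimura curve side (where the Galois action is natural), whereas the differential operators $d_{\Dr},\bar d_{\Dr}$ live naturally on $\calM_{\Dr,\infty}$, where $\GL_2(\bbQ_p)$ acts. The bridge between the two pictures is provided by Theorem \ref{theoremdifferentialoperatorsDrLT} and by the fact that the Hodge--Tate period map $\pi_{\Dr,\HT}$ descends, via $\pr_{\Gamma,\infty}$, to $\pi_{\Gamma,\HT}$. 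Once the two operators are placed on the same footing, one may alternatively argue more conceptually: both $N$ and $I$ kill the locally algebraic subsheaf $\calO_{K^p}^{\lalg,(0,-k)}$ (for $I$ this follows from (1) of Proposition \ref{prop:dKvsurj}; for $N$ it is immediate from the fact that locally algebraic vectors correspond to trivially filtered pieces where $\Theta_{\Sen}$ acts semisimply), both are nonzero, and a rigidity argument using the $\calZ(\check\frg)$-action and Theorem \ref{thm:Senaction} pins down the space of such morphisms as one-dimensional, forcing the proportionality with an invertible constant.
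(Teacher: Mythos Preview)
Your overall strategy---adapt Pan's argument for \cite[Theorem 6.2.6]{PanII}---is exactly what the paper does; indeed the paper offers no proof beyond citing that machinery. So at the level of approach you are aligned.

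That said, two aspects of your elaboration warrant correction. First, the detour through $\calM_{\Dr,\infty}\isom\calM_{\LT,\infty}$ in your second paragraph is unnecessary and, as you yourself note, creates a genuine tension: the Fontaine operator is built from the Galois action on $\bbB_{\dR,\calS_{K^p}}^+$, which is intrinsic to the Shimura curve and does not transfer cleanly to the infinite-level local Shimura variety. Pan's actual computation in \cite[\S 6.2]{PanII} is carried out directly on the flag variety using the explicit power-series model of $\calO_{K^p}^{\lan,(0,-k)}$ (as in \cite[Theorem 3.2.2]{PanII}) together with the explicit description of $\bbB_{\dR,k+2}^{+,\lan,\tilde\chi_{(0,-k)}}(U)^K$; in the quaternionic setting one proceeds the same way, working on $\check\fl$ over an affinoid $U$ as in Lemma \ref{lem:prGammaHT}. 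The uniformization and Theorem \ref{theoremdifferentialoperatorsDrLT} are not needed here.

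Second, the rigidity argument you sketch at the end---that the space of equivariant morphisms $\calO_{K^p}^{\lan,(0,-k)}\to\calO_{K^p}^{\lan,(-k-1,1)}(k+1)$ killing the locally algebraic part is one-dimensional---is not established anywhere and is not how the proof goes. Neither Theorem \ref{thm:Senaction} nor the $\calZ(\check\frg)$-action suffices to pin this down without further work; Pan's proof is a direct coefficient-matching computation, not an appeal to abstract uniqueness. You should drop this alternative and commit to the local computation.
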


\subsubsection{Completed cohomology of quaternionic Shimura curves}\label{subsectioncompletedcohomology}
Recall that $S_{K^pK_p}$ is the quaternionic Shimura curve of level $K^pK_p\subset G(\bbA^\infty)$, which is a complete algebraic curve over $\bbQ$. The completed cohomology groups associated to the tower $\{S_{K^pK_p}\}_{K_p\subset D_p^\times}$ are given by
\begin{align*}
    \tilde{H}^i(K^p,\bbQ_p)&:=(\ilim_{n}\dlim_{K_p\subset D_p^\times}H^i(S_{K^pK_p}(\bbC),\bbZ/p^n\bbZ))[\frac{1}{p}]\\
    &\simeq(\ilim_{n}\dlim_{K_p\subset D_p^\times}H^i_{\et}(S_{K^pK_p,\overline{\bbQ}},\bbZ/p^n\bbZ))[\frac{1}{p}]
\end{align*}
for $i\geq 0$, which carry commuting actions of $D_p^\times$, the Galois group $\Gal(\bar \bbQ/\bbQ)$ and the Hecke algebra $\bbT^S$. Note that $\tilde{H}^1(K^p,\bbQ_p)=\widehat{H}^1(K^p,\bbQ_p)$ is a unitary Banach representation of $D_p^\times$ (cf. \cite[\S 4.1]{Emerton2006interpolation}). Given any Banach space $W$ over $\bbQ_p$, let 
\[
    \tilde{H}^i(K^p,W):=\tilde{H}^i(K^p,\bbQ_p)\hat\ox_{\bbQ_p}W.
\]
Recall that $\calO_{K^p}=\pi_{K^p,\HT,*}\calO_{\calS_{K^p}}$ with $\calO_{\calS_{K^p}}$ the completed structure sheaf on $\calS_{K^p}$, and $\calO_{K^p}^{\lan}$ the subsheaf consisting of locally analytic sections. 
\begin{theorem}\label{thm:completedcohomology}
For $i\ge 0$, we have $D_p^\times\times \Gal(\overline{\bbQ_p}/\bbQ_p)\times \bbT^S$-equivariant isomorphisms 
\begin{align*}
    \tilde{H}^i(K^p,C)&\isom H^i(\check\fl,\calO_{K^p})\\
    \tilde{H}^i(K^p,C)^{\lan}&\isom H^i(\check\fl,\calO_{K^p}^{\lan}).
\end{align*}

\begin{proof}
The first one is the primitive comparison theorem \cite{scholze_p-adic_2013}, cf. \cite[\S 4.2]{Sch15}. The second one follows from \cite[Theorem 4.4.6]{Pan22} or \cite[Theorem 1.1.8]{camargo2024locallyanalyticcompletedcohomology}. 
\end{proof}
\end{theorem}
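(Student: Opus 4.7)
The plan is to establish the two isomorphisms in sequence, deducing the locally analytic statement from the Banach-level comparison together with input from the geometric Sen theory recalled above.

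For the first isomorphism, I would start from Scholze's primitive comparison theorem applied at finite level: for each $K_p$ and each $n$, there is a natural almost isomorphism
\[
H^i_{\et}(S_{K^pK_p,C},\bbZ/p^n\bbZ)\otimes_{\bbZ/p^n\bbZ}\calO_C/p^n \longrightarrow H^i(\calS_{K^pK_p},\calO^+_{\calS_{K^pK_p}}/p^n).
\]
Taking $\dlim_{K_p}$, then $\ilim_n$, and inverting $p$, the left-hand side is by definition $\tilde{H}^i(K^p,C)$. The key step on the right is to recognize the limit as computed on the perfectoid Shimura curve $\calS_{K^p}\sim\ilim_{K_p}\calS_{K^pK_p}$, producing $H^i(\calS_{K^p},\calO_{\calS_{K^p}})$. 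I would then apply the Leray spectral sequence for $\pi_{K^p,\HT}\colon\calS_{K^p}\to\check{\fl}$ together with Lemma \ref{lem:prGammaHT}: the distinguished affinoids $U\subset\check{\fl}$ have affinoid perfectoid preimages in $\calS_{K^p}$, so Tate acyclicity yields $R^j\pi_{K^p,\HT,*}\calO_{\calS_{K^p}}=0$ for $j\ge 1$. This collapses the Leray spectral sequence to give $H^i(\calS_{K^p},\calO_{\calS_{K^p}})\isom H^i(\check{\fl},\calO_{K^p})$, and all three equivariances ($D_p^\times$, Galois, Hecke) propagate formally from the functoriality of the construction.

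For the second isomorphism, I would compute both sides via the finite affinoid \v{C}ech cover of $\check{\fl}$ coming from Lemma \ref{lem:prGammaHT}. On each such $U$ the sections $\calO_{K^p}(U)=\calO_{\calS_{K^p}}(\pi_{K^p,\HT}^{-1}(U))$ form a Banach representation of the open subgroup of $D_p^\times$ preserving $U$, and the \v{C}ech complex for $\calO_{K^p}^{\lan}$ is obtained by applying the functor $(-)^{\lan}$ termwise. The crucial question is whether $(-)^{\lan}$ commutes with the cohomology of this \v{C}ech complex; equivalently, one needs that passing to locally analytic vectors is exact on the terms arising in the computation. This is the content of Pan's theory \cite{Pan22} (and its reformulation in the solid setting by Camargo \cite{camargo2024locallyanalyticcompletedcohomology}), whose structural input is exactly the vanishing of the geometric Sen operator $\pi_{K^p,\HT}^{-1}\check\frn^0$ on $\calO_{K^p}^{\lan}$ together with the horizontal action of $\check{\frh}$ (Theorem \ref{thm:Senaction}), which control the locally analytic decomposition.

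The main obstacle is precisely this commutation of $(-)^{\lan}$ with cohomology: since locally analytic vectors are a priori only left exact in admissible Banach representations, a nontrivial vanishing or acyclicity statement is required to pass from the Banach-level comparison to the locally analytic one. My approach would be to appeal to the machinery of \cite{Pan22} and \cite{camargo2024locallyanalyticcompletedcohomology}, either through $p$-adic Lie group cohomology of stabilizers acting on affinoid perfectoid opens (via a Chevalley--Eilenberg type resolution computing locally analytic vectors) or through the solid formalism where such exactness is automatic. Granted this input, applying $(-)^{\lan}$ to the \v{C}ech computation of the first isomorphism yields $\tilde{H}^i(K^p,C)^{\lan}\isom H^i(\check{\fl},\calO_{K^p}^{\lan})$, with equivariances preserved.
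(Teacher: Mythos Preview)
Your approach is correct and coincides with the paper's: the first isomorphism is Scholze's primitive comparison theorem (passed to infinite level and pushed along $\pi_{K^p,\HT}$), and the second is the theorem of Pan (or Rodr\'iguez Camargo) that taking locally analytic vectors commutes with the \v{C}ech/cohomology computation.

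One small correction: the input needed for the second isomorphism is \emph{not} the vanishing of the geometric Sen operator or the horizontal $\check{\frh}$-action of Theorem~\ref{thm:Senaction}. Those are separate structural results on $\calO_{K^p}^{\lan}$ used later in the paper. What is actually needed is purely functional-analytic: that the Banach representations $\calO_{K^p}(U)$ arising from affinoid perfectoid opens are LA-acyclic (i.e., $R^j(-)^{\lan}$ vanishes on them for $j>0$), which Pan proves via the Lie-algebra cohomology argument you mention afterwards. So your final paragraph has the right mechanism; just drop the reference to Sen theory as the ``structural input.''
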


Let $E$ be a finite extension of $\bbQ_p$. For any compact open subgroup $K_p\subset D_p^\times$, we define $\bbT(K^pK_p)\subset \End_{\bbZ_p}(H^1(S_{K^pK_p}(\bbC),\bbZ_p))$ as the image of $\bbT^S$. Then we set
\begin{align*}
    \bbT(K^p):=\ilim_{K_p}\bbT(K^pK_p).    
\end{align*}
It acts faithfully on $\tilde{H}^1(K^p,E)$ and commutes with the action of $\Gal(\bar \bbQ/\bbQ)\times D_p^\times$. Moreover, the action of $\bbT(K^p)$ and $\Gal(\bar \bbQ/\bbQ)$ on $\tilde{H}^1(K^p,E)$ satisfy the usual Eichler-Shimura congruence relation: 
\begin{align}\label{equationcongruencerelation}
    \mathrm{Frob}_{\ell}^2 - T_{\ell}\circ\mathrm{Frob}_{\ell} + \ell S_{\ell}=0
\end{align}
where $\ell\notin S$ is a prime number, $\mathrm{Frob}_{\ell}$ denotes the geometric Frobenius at $\ell$ and 
\[T_{\ell}=[\GL_2(\bbZ_{\ell})\begin{pmatrix}
\ell & 0 \\ 0 & 1
\end{pmatrix}\GL_2(\bbZ_{\ell})],S_{\ell}=[\GL_2(\bbZ_{\ell})\begin{pmatrix}
\ell & 0 \\ 0 & \ell
\end{pmatrix}\GL_2(\bbZ_{\ell})]\] 
are the usual Hecke operators (cf. \cite[\S 10.3, Remarque 0.3]{carayol1986mauvaise}, while our normalization follows those in \cite{emerton2011local,Pan22,PanII} and the choice of the $G(\bbR)$-conjugacy class $h$ in \S\ref{subsec:quaternionicshimuracurve}, hence the Galois action (the canonical model), is different from those in \cite{carayol1986mauvaise}).

Let $\rho$ be a $2$-dimensional continuous $E$-linear absolutely irreducible representation of $\Gal(\bar \bbQ/\bbQ)$. Set 
\begin{align}\label{eq:defofTrho}
    \check\Pi(\rho):=\Hom_{\Gal(\bar \bbQ/\bbQ)}(\rho,\tilde{H}^1(K^p,E)).
\end{align}
Suppose that $\check\Pi(\rho)\neq 0$. This is a unitary Banach representation of $D_p^\times$. From $\rho$, we can associate a character $\lambda:\bbT(K^p)\to E$ using the congruence relation (\ref{equationcongruencerelation}), such that (cf. \cite[Prop. 6.1.12]{emerton2011local})
\begin{align*}
    \Hom_{\Gal(\bar \bbQ/\bbQ)}(\rho,\tilde{H}^1(K^p,E))=\Hom_{\Gal(\bar \bbQ/\bbQ)}(\rho,\tilde{H}^1(K^p,E)[\lambda]).
\end{align*}
Moreover, by the main result of \cite{BLR}, as $\rho$ is $2$-dimensional and absolutely irreducible, we know that $\tilde{H}^1(K^p,E)[\lambda]$ is $\rho$-isotypic: 
\begin{align}\label{eq:rhoisotypic}
    \tilde{H}^1(K^p,E)[\lambda]\isom \rho\ox_E \check\Pi(\rho).
\end{align}

Similar to the treatment in \cite[Theorem 7.2.2]{PanII} and \cite[Theorem 5.4.2]{QS24}, we get the following result on a first step towards the description of the locally analytic part of the isotypic space $\check\Pi(\rho)$. Let $I^1$ be the operator induced by the intertwining operator $I$ on the level of cohomology groups 
\begin{align*}
    I^1:H^1(\check\fl,\calO_{K^p}^{\lan,(0,-k)})\to H^1(\check\fl,\calO_{K^p}^{\lan,(-k-1,1)}(k+1)).
\end{align*}
\begin{theorem}\label{thm:PicheckrhoLankerI1}
Let $\rho$ be a $2$-dimensional $E$-linear continuous absolutely irreducible representation of $\Gal(\bar \bbQ/\bbQ)$. Suppose:
\begin{enumerate}[(i)]
    \item $\rho$ appears in $\tilde{H}^1(K^p,E)^{\lan}$, namely $\Hom_{\Gal(\bar \bbQ/\bbQ)}(\rho,\tilde{H}^1(K^p,E)^{\lan})\neq 0$.
    \item $\rho|_{\Gal(\overline{\bbQ_p}/\bbQ_p)}$ is de Rham of Hodge--Tate weight $0,k+1$ with $k\in \bbZ_{\ge 0}$.
\end{enumerate}
Then after fixing an embedding $E\hookrightarrow C$, we have
\begin{align*}
    \check{\Pi}(\rho)^{\lan}\widehat{\ox}_{E}C\isom \ker I^1[\lambda].
\end{align*}
where $\lambda:\bbT^S\rightarrow C$ is the Hecke eigensystem associated to $\rho$.
\end{theorem}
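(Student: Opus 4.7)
The plan is to identify $\check\Pi(\rho)^{\lan}\widehat{\ox}_E C$ with an appropriate Sen-weight component of the $\lambda$-isotypic part of $H^1(\check\fl,\calO_{K^p}^{\lan})$, and then use the identification of the intertwining operator with the Fontaine operator (Theorem \ref{thm:NequalsI}) to see that de Rham-ness of $\rho_p$ forces this component to lie in $\ker I^1$. The main ingredients, in the order I would deploy them, are Theorem \ref{thm:completedcohomology} (transfer to sheaf cohomology on $\check\fl$), the isotypic decomposition (\ref{eq:rhoisotypic}) provided by BLR, the horizontal Cartan decomposition via Theorem \ref{thm:Senaction}, and finally Theorem \ref{thm:NequalsI}.

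First I would apply Theorem \ref{thm:completedcohomology} to obtain a $D_p^\times\times\Gal(\bar\bbQ/\bbQ)\times\bbT^S$-equivariant isomorphism $\tilde H^1(K^p,E)^{\lan}\widehat{\ox}_E C\isom H^1(\check\fl,\calO_{K^p}^{\lan})$. Passing to the $\lambda$-eigenspace and invoking (\ref{eq:rhoisotypic}) together with the absolute irreducibility of $\rho$ yields a $D_p^\times\times\Gal(\bar\bbQ/\bbQ)$-equivariant isomorphism
\[
\rho\otimes_E\bigl(\check\Pi(\rho)^{\lan}\widehat{\ox}_E C\bigr)\isom H^1(\check\fl,\calO_{K^p}^{\lan})[\lambda].
\]
Next I would use Theorem \ref{thm:Senaction}(iii),(iv) to match the horizontal $\check\frh$-decomposition of the right-hand side with the Sen decomposition of $\rho_p\otimes_E C$. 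Since $\rho_p$ is de Rham with Hodge--Tate weights $0$ and $k+1$, its arithmetic Sen operator on $\rho_p\otimes_E C$ is semisimple with two distinct eigenvalues. One checks, after accounting for the Tate twists in (\ref{lemmatwistomegaDr1}),(\ref{lemmatwistomegaDr2}) and the normalization of horizontal weights, that the horizontal-weight $(0,-k)$ component isolates a copy of $\check\Pi(\rho)^{\lan}\widehat{\ox}_E C$ inside $H^1(\check\fl,\calO_{K^p}^{\lan,(0,-k)})[\lambda]$.

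The decisive input is Theorem \ref{thm:NequalsI}, which identifies $I$ up to an invertible constant with the Fontaine operator $N$ of (\ref{eq:fontaineoperator}). The Fontaine operator measures the failure of the arithmetic Sen operator to act semisimply on the filtered $\bbB_{\dR,k+2}^{+,\lan,\tilde\chi_{(0,-k)}}$-module extending $\calO_{K^p}^{\lan,(0,-k)}$. Since $\rho_p$ is de Rham, the $\rho$-isotypic subspace of this filtered module splits Galois-equivariantly (over the appropriate Kummer-like extensions considered in the Sen theory for Banach modules of \cite[\S 6]{PanII}), so Sen acts semisimply there and $N$ annihilates the image of $\check\Pi(\rho)^{\lan}\widehat{\ox}_E C$. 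This gives the inclusion $\check\Pi(\rho)^{\lan}\widehat{\ox}_E C\subset \ker I^1[\lambda]$. For the reverse inclusion one uses the decomposition of Step~1 together with the fact that the non-trivial step of the Hodge filtration on $D_{\dR}(\rho_p)$ produces a non-vanishing class under $N$, so the other Sen-component of $\rho$ is mapped injectively by $I^1$ and only the component coming from $\check\Pi(\rho)^{\lan}$ survives in $\ker I^1[\lambda]$.

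The main obstacle I expect is the precise local-global bookkeeping in the last step: matching the two Sen eigenvalues of $\rho_p$ to the two horizontal $\check\frh$-weight components of $H^1(\check\fl,\calO_{K^p}^{\lan})[\lambda]$ via Theorem \ref{thm:Senaction}, and verifying the non-vanishing of $I^1$ on the unwanted weight component (so that $\ker I^1[\lambda]$ is exactly one Sen-line tensor $\check\Pi(\rho)^{\lan}\widehat{\ox}_E C$ rather than all of the $\lambda$-isotypic part). These computations should closely follow the templates of \cite[Theorem 7.2.2]{PanII} and \cite[Theorem 5.4.2]{QS24}, with the $p$-adic uniformization of \S\ref{sec:uniformization} and the identification Theorem \ref{thm:twotowers} used to justify that local calculations on $\calM_{\Dr,\infty}\simeq\calM_{\LT,\infty}$ descend correctly to $\calS_{K^p}$.
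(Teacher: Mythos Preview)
Your approach is essentially the one the paper takes (it defers entirely to \cite[Theorem 7.2.2]{PanII} and \cite[Theorem 5.4.2]{QS24}): pass to $H^1(\check\fl,\calO_{K^p}^{\lan})$ via Theorem~\ref{thm:completedcohomology}, use the isotypic decomposition (\ref{eq:rhoisotypic}) and the Sen/horizontal-$\check\frh$ matching of Theorem~\ref{thm:Senaction}, then invoke de Rham-ness and Theorem~\ref{thm:NequalsI}.

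Your treatment of the ``reverse inclusion'' is confused, however, and in fact unnecessary. By definition $I^1$ has domain $H^1(\check\fl,\calO_{K^p}^{\lan,(0,-k)})$, so $\ker I^1[\lambda]$ is already a subspace of $H^1(\check\fl,\calO_{K^p}^{\lan,(0,-k)})[\lambda]$. The Sen-weight matching of your second paragraph actually identifies the \emph{entire} space $H^1(\check\fl,\calO_{K^p}^{\lan,(0,-k)})[\lambda]$ with $\check\Pi(\rho)^{\lan}\widehat\ox_E C$, not merely ``a copy inside'' it: the absolute irreducibility of $\rho$ forces $H^0(\calO_{K^p}^{\lan})[\lambda]=0$ (since $H^0$ carries only one-dimensional Galois representations, cf.\ the proof of Lemma~\ref{lem:H0H1lambda}(1)), which kills the $\Ext^1$-obstruction to taking the $(0,-k)$-weight part before or after $H^1$. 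Once $I^1$ is shown to vanish on this space, the equality $\ker I^1[\lambda]=H^1(\check\fl,\calO_{K^p}^{\lan,(0,-k)})[\lambda]$ is immediate. Your sentence about ``the other Sen-component of $\rho$ being mapped injectively by $I^1$'' does not parse: that component lands in horizontal weight $(-k-1,1)$, which is the \emph{target} of $I^1$, not its source, so there is nothing further to exclude.
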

\begin{proof}
    The proof is literally the same as in \cite[Theorem 7.2.2]{PanII} or \cite[Theorem 5.4.2]{QS24}. Roughly speaking, the regular de Rhamness condition of $\rho$ and the assumption $\check\Pi(\rho)^{\lan}\neq 0$ ensures that the multiplicity space $\check\Pi(\rho)^{\lan}$ is killed by the Fontaine operator (\ref{eq:fontaineoperator}), which can be identified with the intertwining operator using Theorem \ref{thm:NequalsI}.
\end{proof}
\subsection{A product formula}\label{sec:productformula}
Motivated by Theorem \ref{thm:PicheckrhoLankerI1}, we proceed to study the cohomology of the intertwining operator $I$ on $\calO_{K^p}^{(0,-k)}$.  Let $k\geq 0$ and let
\begin{align*}
    d_{K^p}:\calO_{K^p}^{\lan,(0,-k)}\to \calO_{K^p}^{\lan,(0,-k)}\ox_{\calO_{K^p}^{\sm}}(\Omega_{K^p}^{1,\sm})^{\ox k+1}
\end{align*}
be the differential map we constructed in \S\ref{sec:passageShimura}. Being part of the intertwining operator for which we need to understand the cohomology, we will need to compute the cohomology of $\ker d_{K^p}$ and determine the Hecke action on it (Proposition \ref{prop:productformula}). 

The lemma below shows that the cohomology of $\ker d_{\calS_{K^p}}$ on $\calS_{K^p}$ is the same as the cohomology of $\ker d_{K^p}$ on $\check\fl$.
\begin{lemma}\label{lemmakerdKvdSKv}
There is a natural isomorphism 
\begin{align}
    R\Gamma(S_{K^p},\ker d_{\calS_{K^p}})\isom R\Gamma(\check{\fl},\ker d_{K^p}).
\end{align}
\begin{proof}
By Proposition \ref{prop:dKvsurj} (2), we know $\calO_{K^p}^{\lan,(0,-k)}\stackrel{d_{K^p}}{\to} \calO_{K^p}^{\lan,(0,-k)}\ox_{\calO_{K^p}^{\sm}}(\Omega_{K^p}^{1,\sm})^{\ox k+1}$ is surjective. Hence the sheaves $\ker d_{\calS_{K^p}},\ker d_{K^p}$ admit resolutions: 
\begin{align*}
    &0\to \ker d_{\calS_{K^p}}\to \calO_{\calS_{K^p}}^{\lan,(0,-k)}\stackrel{d_{\calS_{K^p}}}{\to} \calO_{\calS_{K^p}}^{\lan,(0,-k)}\ox_{\calO_{\calS_{K^p}}^{\sm}}(\Omega_{\calS_{K^p}}^{1,\sm})^{\ox k+1}\to 0,\\
    &0\to \ker d_{K^p}\to \calO_{K^p}^{\lan,(0,-k)}\stackrel{d_{K^p}}{\to} \calO_{K^p}^{\lan,(0,-k)}\ox_{\calO_{K^p}^{\sm}}(\Omega_{K^p}^{1,\sm})^{\ox k+1}\to 0.
\end{align*}
By definition, $\pi_{K^p,\HT,*}\calO_{\calS_{K^p}}^{\lan,(0,-k)}= \calO_{K^p}^{\lan,(0,-k)}$. Also $R^j\pi_{K^p,\HT,*}\calO_{\calS_{K^p}}^{\lan,(0,-k)}=0$ for $j\ge 1$, the proof of which is similar to the one in \cite[Proposition 3.2.8]{PanII}. Indeed, using explicit power series expansions of $\calO_{\calS_{K^p}}^{\lan,(0,-k)}$, one can locally exhibit $\calO_{\calS_{K^p}}^{\lan,(0,-k)}$ as colimits of sheaves isomorphic to ``direct products'' of $\omega_{\calS_{K^p}}^{(0,-k),\sm}$. More precisely, for $U\subset \calS_{K^p}$ a good affinoid perfectoid open subset (which comes from the pullback of an affinioid open subset from finite level and an affinioid open subset from the flag variety along $\pi_{\HT}$), then $\calO_{\calS_{K^p}}^{\lan,(0,-k)}(U)$ can be written as a filtered colimit (with injective transition maps) $\dlim_n A^n$, with each $A^n\isom (\prod^\infty_{i=0}\calO^+_{\calS_{K^pK_n'}}(U_{K_n'}))\ox_{\bbZ_p}\bbQ_p$ (for some open compact subgroup $K_n'\subset D_p^\times$ and $U_{K_n'}$ is an affinioid open in $\calS_{K^pK_n'}$ whose preimage is $U$) by sending the power series expansion to its coefficients. Then one deduces the desired vanishing result using Tate's acyclicity of coherent sheaves on affinioid open subsets. Similar results are valid for the third terms in the above two exact sequences. Hence $R\pi_{K^p,\HT,*}\ker d_{\calS_{K^p}}=\ker d_{K^p}[0]$ and the Leray spectral sequence degenerates, giving the desired isomorphism. 
\end{proof}
\end{lemma}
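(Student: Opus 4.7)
The plan is to establish the lemma by computing $R\pi_{K^p,\HT,*}\ker d_{\calS_{K^p}}$ and showing it is concentrated in degree zero, equal to $\ker d_{K^p}$. Then the result will follow from the Leray spectral sequence applied to $\pi_{K^p,\HT}\colon \calS_{K^p}\to\check{\fl}$.

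First, I would use Proposition \ref{prop:dKvsurj}(2), which asserts that $d_{K^p}$ is surjective. The same argument gives surjectivity of $d_{\calS_{K^p}}$ on $\calS_{K^p}$, since it reduces via the $p$-adic uniformization to the surjectivity of $d_{\Dr}$ in Corollary \ref{cor:kerdDr}. This yields two two-term resolutions, one on $\calS_{K^p}$ and one on $\check{\fl}$:
\begin{align*}
    &0\to \ker d_{\calS_{K^p}}\to \calO_{\calS_{K^p}}^{\lan,(0,-k)}\xrightarrow{d_{\calS_{K^p}}} \calO_{\calS_{K^p}}^{\lan,(0,-k)}\ox_{\calO_{\calS_{K^p}}^{\sm}}(\Omega_{\calS_{K^p}}^{1,\sm})^{\ox k+1}\to 0,\\
    &0\to \ker d_{K^p}\to \calO_{K^p}^{\lan,(0,-k)}\xrightarrow{d_{K^p}} \calO_{K^p}^{\lan,(0,-k)}\ox_{\calO_{K^p}^{\sm}}(\Omega_{K^p}^{1,\sm})^{\ox k+1}\to 0.
\end{align*}
By construction of $\calO_{K^p}^{\lan,(0,-k)}$ and its $\Omega^{1,\sm}$-twisted version, these are just the pushforward of the $\calS_{K^p}$-resolution along $\pi_{K^p,\HT}$, provided the higher direct images vanish.

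The core step is therefore showing $R^j\pi_{K^p,\HT,*}\calO_{\calS_{K^p}}^{\lan,(0,-k)}=0$ for $j\ge 1$ (and likewise for the $\Omega^{1,\sm}$-twist). Here I would mimic the argument of \cite[Proposition 3.2.8]{PanII}: locally on a good affinoid perfectoid $U\subset \calS_{K^p}$ arising as the preimage of an affinoid in $\check{\fl}$ and of a finite level $U_{K_n'}\subset \calS_{K^pK_n'}$, one expresses $\calO_{\calS_{K^p}}^{\lan,(0,-k)}(U)$ by power series expansion as a filtered colimit of direct products of copies of $\calO^+_{\calS_{K^pK_n'}}(U_{K_n'})$. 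Tate's acyclicity for coherent sheaves on affinoids, combined with the compatibility of \v{C}ech cohomology with filtered colimits and with $p$-completed products in this controlled setting, gives the vanishing of higher cohomology on such $U$. The same treatment applies to the target of $d_{\calS_{K^p}}$ after twisting by the smooth line bundle $(\Omega_{\calS_{K^p}}^{1,\sm})^{\ox k+1}$.

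The main obstacle here is functional-analytic rather than conceptual: one needs to handle the topologies on $\calO_{\calS_{K^p}}^{\lan,(0,-k)}$ (an LB-space locally expressed as a colimit of countable direct products) carefully, so that the \v{C}ech complex on a suitable basis computes the right cohomology and interacts well with filtered colimits. Once this is in place, the Leray spectral sequence collapses and gives $R\pi_{K^p,\HT,*}\ker d_{\calS_{K^p}}\simeq \ker d_{K^p}[0]$, and hence the claimed identification of the global $R\Gamma$'s. This last step is formal once the vanishing is established.
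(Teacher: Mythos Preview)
Your proposal is correct and follows essentially the same approach as the paper's proof: both use the two-term resolutions coming from Proposition~\ref{prop:dKvsurj}(2), establish the vanishing of $R^j\pi_{K^p,\HT,*}$ on $\calO_{\calS_{K^p}}^{\lan,(0,-k)}$ (and its $\Omega^{1,\sm}$-twist) via the power series description and Tate acyclicity as in \cite[Proposition~3.2.8]{PanII}, and then conclude by the collapse of the Leray spectral sequence.
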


Instead of computing $\ker d_{K^p}$ directly, using the $p$-adic uniformization, we first pass these sheaves from $\calS_{K^p}$ to $\calM_{\Dr,\infty}$. Let $\Gamma\subset \GL_2(\bbQ_p)$ be a cocompact torsion-free discrete subgroup appearing in (\ref{uniformization}). Let $d_{\calS_{\Gamma}}:=d_{\calS_{K^p}}|_{\calS_{\Gamma}}$ and similarly $d_{\calS_{\Gamma}}':=d_{\calS_{K^p}}'|_{\calS_{\Gamma}}$ where $\calS_{\Gamma}=\calM_{\Dr,\infty}/\Gamma$. By the construction and an analogue of Proposition \ref{prop:Wlalg}, we have 
\begin{equation}\label{equationkerdDr}
    \pr_{\Gamma,\infty}^{-1}\ker d_{\calS_{\Gamma}}=\ker d_{\Dr}=\calO_{\calM_{\LT,\infty}}^{\lalg,(0,-k)}=V^{(0,-k)}\otimes\omega_{\cal_{\calM_{\LT,\infty}}}^{(0,-k),\sm}.
\end{equation}
Then we descend via the $\Gamma$-torsor $\pr_{\Gamma,\infty}:\calM_{\Dr,\infty}\surj \calS_{\Gamma}$ and exhibit the results using $\Gamma$-(co)homology for cocompact torsion-free discrete subgroups $\Gamma\subset \GL_2(\bbQ_p)$ (Lemma \ref{Lem:RGammacLTsmGamma} below). We find that it is better to descend compactly supported cohomology instead of cohomology of sheaves on $\calM_{\Dr,\infty}$, partly because that the cover $\calM_{\Dr,\infty}\to  \calS_{\Gamma}$ is an ``infinite-sheet'' \'etale covering map. In order to handle the compactly supported cohomology groups, we develop some results on proper pushforwards of abelian sheaves on adic spaces in Appendix \ref{section:properpushforward}.

\begin{lemma}\label{Lem:RGammacLTsmGamma}
    The $\Gamma$-covering $\calM_{\LT,\infty}\simeq \calM_{\Dr,\infty}\rightarrow \calM_{\Dr,\infty}/\Gamma$ induces $D_p^{\times}$-isomorphisms
    \begin{align*}
        R\Gamma_c(\calM_{\LT,\infty},\calO_{\calM_{\LT,\infty}}^{\lalg,(0,-k)})\otimes_{\bbZ[\Gamma]}^L\bbZ&\simeq R\Gamma(\calS_{\Gamma},\ker d_{\calS_{\Gamma}}),\\
        R\Gamma_c(\calM_{\LT,\infty},\omega_{\calM_{\LT,\infty}}^{(-k-1,k+1),\lalg,(0,-k)})\otimes_{\bbZ[\Gamma]}^L\bbZ&\simeq R\Gamma(\calS_{\Gamma},\ker d_{\calS_{\Gamma}}').
    \end{align*}
\end{lemma}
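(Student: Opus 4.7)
The plan is to deduce both isomorphisms from the descent theory of compactly supported cohomology along analytic covers developed in Appendix \ref{section:properpushforward}, applied to the analytic $\Gamma$-torsor $\pr_{\Gamma,\infty}: \calM_{\LT,\infty}\simeq \calM_{\Dr,\infty} \to \calS_\Gamma$. The two statements are parallel: the second is obtained from the first by twisting by the pullback of $(\Omega_{\check\fl}^1)^{\otimes k+1}$, so I will focus on the first.

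First, I will invoke Proposition \ref{prop:prGamma}: since $\Gamma$ is torsion-free, the map $\pr_{\Gamma,\infty}$ is a local isomorphism in the sense that $\calS_\Gamma$ admits a basis of opens $U$ whose preimage decomposes as a disjoint union $\sqcup_{\gamma\in\Gamma}\gamma V$, with each $V\to U$ an isomorphism. This exhibits $\pr_{\Gamma,\infty}$ as a free analytic $\Gamma$-cover. Next, by the construction of the differential operators on $\calS_\Gamma$ in \S\ref{sec:passageShimura} (which are defined as $\Gamma$-equivariant descents of $d_\Dr, d_\Dr'$), together with Corollary \ref{cor:kerdDr} and identity (\ref{equationkerdDr}), we have canonical identifications of sheaves
\[
   \pr_{\Gamma,\infty}^{-1}\ker d_{\calS_\Gamma}\;\isom\;\ker d_\Dr\;=\;\calO_{\calM_{\LT,\infty}}^{\lalg,(0,-k)},
\]
and analogously $\pr_{\Gamma,\infty}^{-1}\ker d_{\calS_\Gamma}'\isom \omega_{\calM_{\LT,\infty}}^{(-k-1,k+1),\lalg,(0,-k)}$.

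The main step is then to apply the descent formalism from Appendix \ref{section:properpushforward}: for a free analytic $\Gamma$-cover $p:X\to X/\Gamma$ of partially proper adic spaces which is a local isomorphism as above, and for a suitable abelian sheaf $\calF$ on $X/\Gamma$, one has a natural $\Gamma$-equivariant identification
\[
    R\Gamma_c(X, p^{-1}\calF)\otimes^L_{\bbZ[\Gamma]}\bbZ\;\simeq\; R\Gamma_c(X/\Gamma,\calF).
\]
Since $\calS_\Gamma$ appears as a connected component of the proper Shimura curve $\calS_{K^p}$ via (\ref{uniformization}), it is itself proper, so $R\Gamma_c(\calS_\Gamma,-)=R\Gamma(\calS_\Gamma,-)$. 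Combining this with Step 2 yields the claimed isomorphisms. The $D_p^\times$-equivariance is automatic because the actions of $D_p^\times$ and $\Gamma\subset\GL_2(\bbQ_p)$ on $\calM_{\LT,\infty}\simeq \calM_{\Dr,\infty}$ commute, and every construction above is $D_p^\times$-equivariant.

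The main obstacle is to verify that the sheaves in question, notably $\calO_{\calM_{\LT,\infty}}^{\lalg,(0,-k)}\isom V^{(0,-k)}\otimes_C\omega_{\calM_{\LT,\infty}}^{(0,-k),\sm}$, actually fit into the hypotheses of the descent theorem in Appendix \ref{section:properpushforward}. These sheaves are not coherent but arise as filtered colimits $\dlim_n \pi_{\LT,n}^{-1}\omega_{\calM_{\LT,n}}^{(0,-k)}$ of coherent sheaves pulled back from finite levels along the $\calO_{\LT,0}$-torsor $\pi_{\LT,n}$. The descent therefore proceeds by first handling each finite level—where the cover factors through $\Gamma\backslash\calM_{\Dr,n}\to \calS_\Gamma$, which is a local isomorphism of finite-dimensional rigid spaces with analogous descent properties—and then commuting $R\Gamma_c$ with the filtered colimit over $n$ (and with the multiplicity space $V^{(0,-k)}$, which is finite-dimensional over $C$). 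This commutation is the technical content established in Appendix \ref{section:properpushforward}.
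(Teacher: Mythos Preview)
Your approach is correct and essentially matches the paper's: both invoke Proposition \ref{prop:Gammaequivariantpushforward} with $\sF=\ker d_{\calS_\Gamma}$, $X=\calM_{\Dr,\infty}$, $Y=\calS_\Gamma$, after identifying $\pr_{\Gamma,\infty}^{-1}\ker d_{\calS_\Gamma}\simeq \calO_{\calM_{\LT,\infty}}^{\lalg,(0,-k)}$ and noting that $\calS_\Gamma$ is proper. However, your final paragraph misidentifies the obstacle: Proposition \ref{prop:Gammaequivariantpushforward} applies to \emph{any} object of $D^+(Y,\bbZ_\square)$, so there is no need to pass through finite levels or worry about the structure of $\omega_{\calM_{\LT,\infty}}^{(0,-k),\sm}$; the only hypothesis you must verify (and which you omit) is that $\Gamma$ has finite cohomological dimension, which holds by \cite[\S 6.1]{borel1976cohomologie} since $\Gamma$ is a torsion-free discrete subgroup of $\GL_2(\bbQ_p)$.
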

\begin{proof}
    Let $*/\Gamma$ is the classifying stack of the group $\Gamma$, let $p:\calM_{\LT,\infty}/\Gamma\to */\Gamma$  and $f:*/\Gamma\to *$ be the natural maps. The desired isomorphism for $\ker d_{\calS_{\Gamma}}$ arises from that the composition of ``proper pushforwards":
    \[Rf_! Rp_! \calO_{\calM_{\LT,\infty}}^{\lalg,(0,-k)}=R(f\circ p)_!\calO_{\calM_{\LT,\infty}}^{\lalg,(0,-k)}.\]
    For a rigorous proof, we apply Proposition \ref{prop:Gammaequivariantpushforward} for $\sF=\ker d_{\calS_{\Gamma}}, X=\calM_{\Dr,\infty}, Y=\calM_{\Dr,\infty}/\Gamma$ and $g=f\circ p$. Note that $\Gamma$ has finite cohomological dimension \cite[\S 6.1]{borel1976cohomologie}.
    
    The map $d'_{\Dr}$ is a twist of $d_{\Dr}$ by $ \pi_{\Dr,\HT}^{-1}(\Omega_{\check\fl}^1)^{\otimes k+1}=\pi_{\LT,\GM}^{-1}(\Omega_{\check\fl}^1)^{\otimes k+1}=\pi_{\LT,\GM}^{-1}\omega_{\check\fl}^{ (-k-1,k+1)}$ over $\pi_{\LT,\GM}^{-1}\calO_{\check\fl}$ by (iv) of Theorem \ref{theoremdifferentialoperatorsDr}. Hence 
    \[\ker d'_{\Dr}=\calO_{\calM_{\LT,\infty}}^{\lalg,(0,-k)}\otimes_{\pi_{\LT,\GM}^{-1}\calO_{\check\fl}}\pi_{\LT,\GM}^{-1}\omega_{\check\fl}^{ (-k-1,k+1)}=\omega_{\calM_{\LT,\infty}}^{(-k-1,k+1),\lalg,(0,-k)}.\]
    The result for $\ker d_{\calS_{\Gamma}}'$ follows similarly using Proposition \ref{prop:Gammaequivariantpushforward} again. 
\end{proof}
When $k=0$, we have $\calO_{\calM_{\LT,\infty}}^{\lalg,(0,-k)}=\calO_{\calM_{\LT,\infty}}^{\sm} $ and $\omega_{\calM_{\LT,\infty}}^{(-k-1,k+1),\lalg,(0,-k)}=\omega_{\calM_{\LT,\infty}}^{(-1,1),\sm}=\Omega^{1,\sm}_{\calM_{\LT,\infty}}$. The cohomologies $R\Gamma_c(\calM_{\LT,\infty},\calO_{\calM_{\LT,\infty}}^{\sm})$ and $R\Gamma_c(\calM_{\LT,\infty},\omega_{\calM_{\LT,\infty}}^{(-1,1),\sm})$ are (derived) smooth representations of $G=\GL_2(\bbQ_p)$ by Lemma \ref{lem:RpOLTsm} below. Hence they are naturally (left) modules over the smooth Hecke algebra $\calH(G)=\dlim_K\calH(G,K)$, where $K$ runs through open compact subgroups of $G$ and $\calH(G,K)=\calC^{\sm}_c(K\bs G/K,C)$. See for example \cite[II.3.12, III.1.4]{renard2010representations} or \cite[\S 5.1]{mantovan2004certain}. 

For general $k\geq 0$, by (\ref{equationkerdDr}), 
\begin{align}\label{equationRGammackerdDrlocallyalgebraic}
    R\Gamma_c(\calM_{\LT,\infty},\ker d_{\Dr})&=R\Gamma_c(\calM_{\LT,\infty},\omega_{\calM_{\LT,\infty}}^{(0,-k),\sm})\otimes_C V^{(0,-k)}\\
    R\Gamma_c(\calM_{\LT,\infty},\ker d_{\Dr}')&=R\Gamma_c(\calM_{\LT,\infty},\omega_{\calM_{\LT,\infty}}^{(-k-1,1),\sm})\otimes_C V^{(0,-k)}    
\end{align} 
are locally $V^{(0,-k)}$-algebraic representations of $G$. 

\begin{lemma}\label{lem:RpOLTsm}
For $(a,b)\in\bbZ^2$, we have 
\begin{align*}
    R\Gamma_c(\calM_{\LT,\infty},\omega_{\calM_{\LT,\infty}}^{(a,b),\sm})\isom \dlim_n R\Gamma_c(\calM_{\LT,n},\omega^{(a,b)}_{\calM_{\LT,n}})=\dlim_n H^1_c(\calM_{\LT,n},\omega^{(a,b)}_{\calM_{\LT,n}})[-1].
\end{align*}
\begin{proof}
Recall that $\omega_{\calM_{\LT,\infty}}^{(a,b),\sm}$ can be equivalently defined as follows: given an affinoid open subset $U_\infty\subset \calM_{\LT,\infty}$, which we assume that there is a sufficiently large $n$ such that $U_\infty$ is the pre-image of an affinoid open subset $U_n\subset \calM_{\LT,n}$. Then $\omega_{\calM_{\LT,\infty}}^{(a,b),\sm}(U_{\infty})=\dlim_{m\ge n} \omega_{\calM_{\LT,m}}^{(a,b)}(\pi_{m,n}^{-1}(U))$ with $\pi_{m,n}:\calM_{\LT,m}\to \calM_{\LT,n}$ the natural projection. Let $U_0\subset \calM_{\LT,0}$ be an affinoid open subset, and we denote its pre-image in $\calM_{\LT,n}$ being $U_n$ and its pre-image in $\calM_{\LT,\infty}$ being $U_\infty$. By \cite[Proposition 14.9]{scholze2022etale} applied to $Y_0=U_n$ with $Y_i$'s being the \'etale coverings $\{U_m\}_{m\ge n}$ and $\calF_0=\omega_{\calM_{\LT,n}}^{(a,b)}|_{U_n}$. Then 
\begin{align*}
    R\Gamma(U_\infty,\omega_{\calM_{\LT,\infty}}^{(a,b),\sm})&=R\Gamma(U_\infty,\dlim_n \pi_n^{-1}\omega_{\calM_{\LT,n}}^{(a,b)})=\dlim_n R\Gamma(U_\infty,\pi_n^{-1}\omega_{\calM_{\LT,n}}^{(a,b)})\\
    &=\dlim_n\dlim_{m\ge n}R\Gamma(U_m,\pi_{m,n}^{-1}\omega_{\calM_{\LT,n}}^{(a,b)})= \dlim_m \dlim_{n\le m}R \Gamma(U_n,\pi_{m,n}^{-1}\omega_{\calM_{\LT,n}}^{(a,b)})=\dlim_m R\Gamma(U_m,\omega_{\calM_{\LT,m}}^{(a,b)})
\end{align*}
where $\pi_n$ is the natural projection $\calM_{\LT,\infty}\to \calM_{\LT,n}$. In particular, this implies that $R\pi_{0,*}\omega_{\calM_{\LT,\infty}}^{(a,b),\sm}\isom \dlim_nR\pi_{n,0,*}\omega_{\calM_{\LT,n}}^{(a,b)}$.

Next we verify the identification of compactly supported cohomologies. By definition and Proposition \ref{prop:compositionofexceptionalpushforward}, 
\begin{align*}
    R\Gamma_c(\calM_{\LT,\infty},\omega_{\calM_{\LT,\infty}}^{(a,b),\sm})=R\Gamma(\check{\fl},R\pi_{\LT,\GM,!}\omega_{\calM_{\LT,\infty}}^{(a,b),\sm})
\end{align*}
where $\pi_{\LT,\GM}:\calM_{\LT,\infty}\to \check{\fl}$ is the composition of the projection $\pi_{0}:\calM_{\LT,\infty}\to \calM_{\LT,0}$ and the Gross--Hopkins period map $\pi_{\GM,0}:\calM_{\LT,0}\to \check{\fl}$. The map $\pi_{\GM,0}$ is partially proper, the map $\pi_{0}$ is proper, and $\check{\fl}$ is proper. Hence
\begin{align*}
    R\Gamma(\check{\fl},R\pi_{\LT,\GM,!}\omega_{\calM_{\LT,\infty}}^{(a,b),\sm})&=R\Gamma(\check{\fl},R\pi_{\GM,0,!}R\pi_{0,*}\omega_{\calM_{\LT,\infty}}^{(a,b),\sm})\\
    &=R\Gamma(\check{\fl},R\pi_{\GM,0,!}\dlim_n R\pi_{n,0,*}\omega_{\calM_{\LT,n}}^{(a,b)})\\
    &=\dlim_n R\Gamma(\check{\fl},R\pi_{\GM,0,!} R\pi_{n,0,*}\omega_{\calM_{\LT,n}}^{(a,b)})\\
    &=\dlim_n R\Gamma(\check{\fl},R\pi_{\GM,n,!} \omega_{\calM_{\LT,n}}^{(a,b)})\\
    &=\dlim_n R\Gamma_c(\calM_{\LT,n},\omega_{\calM_{\LT,n}}^{(a,b)}),
\end{align*}
where for the third isomorphism above we used (3) of Lemma \ref{lemmalowershriekcolimit}. Finally, the last isomorphism in the statement follows from the higher vanishing of the coherent comologies of the Stein spaces $\calM_{\LT,n}$ and the Serre duality.
\end{proof}
\end{lemma}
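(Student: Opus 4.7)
The plan is to reduce the infinite-level compactly supported cohomology to a filtered colimit of finite-level ones by factoring the Gross--Hopkins period map through each layer of the Lubin--Tate tower. Concretely, write
\begin{equation*}
    \pi_{\LT,\GM}:\calM_{\LT,\infty}\xrightarrow{\pi_{\LT,n}}\calM_{\LT,n}\xrightarrow{\pi_{\LT,\GM,n}}\check{\fl},
\end{equation*}
where $\pi_{\LT,n}$ is the pro-finite-\'etale projection (Galois for a pro-$p$ group, hence ``proper'' in the sense of Appendix \ref{section:properpushforward}) and $\pi_{\LT,\GM,n}$ is partially proper and \'etale, trivializing as disjoint unions of finite \'etale pieces over a cover of $\check{\fl}$ by partially proper opens as recalled at the end of \S\ref{subsection:twotowers}.

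The first step is to use Proposition \ref{prop:compositionofexceptionalpushforward} to write $R\Gamma_c(\calM_{\LT,\infty},-)=R\Gamma(\check{\fl},R\pi_{\LT,\GM,n,!}\circ R\pi_{\LT,n,*}(-))$, using that properness of $\pi_{\LT,n}$ identifies $R\pi_{\LT,n,!}$ with $R\pi_{\LT,n,*}$. The second step is the key identification
\begin{equation*}
    R\pi_{\LT,n,*}\bigl(\pi_{\LT,n}^{-1}\omega_{\calM_{\LT,n}}^{(a,b)}\bigr)\isom \omega_{\calM_{\LT,n}}^{(a,b)},
\end{equation*}
which I would verify locally on $\calM_{\LT,n}$ over affinoid opens $U_n\subset\calM_{\LT,n}$ whose preimage $U_\infty$ is affinoid perfectoid: Tate acyclicity for the pro-finite-\'etale tower $\{\pi_{\LT,m,n}^{-1}(U_n)\}_{m\geq n}$ shows higher cohomology vanishes and the global sections identify with the sections of $\omega_{\calM_{\LT,n}}^{(a,b)}$ on $U_n$.

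The third step is to commute $R\pi_{\LT,\GM,n,!}$ with the filtered colimit defining $\omega_{\calM_{\LT,\infty}}^{(a,b),\sm}=\colim_n \pi_{\LT,n}^{-1}\omega_{\calM_{\LT,n}}^{(a,b)}$. This is exactly what Lemma \ref{lemmalowershriekcolimit}(3) provides for the partially proper map $\pi_{\LT,\GM,n}$ at each fixed $n$; combined with the commutation of the global sections on the (proper) $\check{\fl}$ with filtered colimits of sheaves, one obtains
\begin{equation*}
    R\Gamma_c(\calM_{\LT,\infty},\omega_{\calM_{\LT,\infty}}^{(a,b),\sm})\isom \dlim_n R\Gamma_c(\calM_{\LT,n},\omega_{\calM_{\LT,n}}^{(a,b)}).
\end{equation*}
The last equality, identifying this with $\dlim_n H^1_c(\calM_{\LT,n},\omega_{\calM_{\LT,n}}^{(a,b)})[-1]$, then follows from the fact that each $\calM_{\LT,n}$ is a one-dimensional Stein space, so Serre duality and the vanishing of coherent cohomology on Stein spaces in degrees $\geq 1$ force $H^i_c(\calM_{\LT,n},\omega_{\calM_{\LT,n}}^{(a,b)})=0$ for $i\neq 1$.

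The main obstacle I expect is the rigorous justification of the pushforward commuting with the filtered colimit at the level of the pro-finite-\'etale map $\pi_{\LT,n}$, since the fibers are pro-finite and not literally ``proper'' in the classical rigid sense; this requires the Appendix's general compactly supported cohomology formalism and a careful affinoid-local verification using Tate acyclicity, following the pattern used in the proof of Lemma \ref{lemmakerdKvdSKv}.
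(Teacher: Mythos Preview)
Your step 2 is the genuine gap: the identification $R\pi_{\LT,n,*}\bigl(\pi_{\LT,n}^{-1}\omega_{\calM_{\LT,n}}^{(a,b)}\bigr)\isom \omega_{\calM_{\LT,n}}^{(a,b)}$ is false. The map $\pi_{\LT,n}$ has profinite fibres (topologically the pro-$p$ group $K_n=\ker(\GL_2(\bbZ_p)\to\GL_2(\bbZ/p^n))$), and for any abelian sheaf $\calF$ on $\calM_{\LT,n}$ the unit $\calF\to\pi_{\LT,n,*}\pi_{\LT,n}^{-1}\calF$ is \emph{not} an isomorphism: locally where the tower splits one finds $(\pi_{\LT,n,*}\pi_{\LT,n}^{-1}\calF)(U_n)\simeq \calC^{\sm}(K_n,\calF(U_n))$, exactly as for the constant sheaf on a profinite set. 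Tate acyclicity is not the right tool here: it governs the $\calO$-module pullback $\pi_{\LT,n}^{*}\omega_{\calM_{\LT,n}}^{(a,b)}=\omega_{\calM_{\LT,\infty}}^{(a,b)}$ (the completed line bundle), not the abelian-sheaf pullback $\pi_{\LT,n}^{-1}\omega_{\calM_{\LT,n}}^{(a,b)}$; and even for the former, the pushforward yields completed sections rather than $\omega_{\calM_{\LT,n}}^{(a,b)}(U_n)$. As a consequence, neither the ``one term at a time'' computation nor the subsequent commutation step in your outline produces the finite-level cohomology.

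The paper avoids this by never isolating a single $\pi_{\LT,n}^{-1}\omega_{\calM_{\LT,n}}^{(a,b)}$. It pushes the \emph{whole} smooth sheaf down to level $0$ and identifies $R\pi_{0,*}\omega_{\calM_{\LT,\infty}}^{(a,b),\sm}\isom \dlim_m R\pi_{m,0,*}\omega_{\calM_{\LT,m}}^{(a,b)}$ directly: on a preimage $U_\infty=\pi_0^{-1}(U_0)$ one has by definition $\omega_{\calM_{\LT,\infty}}^{(a,b),\sm}(U_\infty)=\dlim_m \omega_{\calM_{\LT,m}}^{(a,b)}(\pi_{m,0}^{-1}(U_0))$, and the higher vanishing is handled via \cite[Proposition 14.9]{scholze2022etale}. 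The colimit thus already lives at finite levels before one applies $R\pi_{\GM,0,!}$; after that, your remaining steps (commuting $R\pi_{\GM,0,!}$ with the filtered colimit via Lemma~\ref{lemmalowershriekcolimit}(3), recomposing to $R\pi_{\GM,n,!}$, and the Stein/Serre duality finish) coincide with the paper's.
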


We now temporarily suspend our discussion on Lubin--Tate spaces and turn our attention to the Shimura set that arises in the $p$-adic uniformization theorem. First we introduce the space of classical algebraic automorphic forms for $\bar G$ in \S\ref{sec:uniformization} with fixed weights and of the level $K^p$ outside $p$ (cf. \cite{gross1999algebraic} and \cite[\S 3]{loeffler2011overconvergent}). 
\begin{definition}\label{def:algebraicautomorphicform}
Suppose that $V$ is an irreducible algebraic representation of $\bar G_p=G=\GL_2(\bbQ_p)$ over $C$. Let $\calA^{K^p}_{\bar G,V}$ denote the set of maps
\begin{align}\label{eq:defofAKvGk}
    f:Z_{K^p}=\bar G(\bbQ)\bs\bar G(\bbA_f)/K^p  \to V
\end{align}
such that $f(zg_p)=g_p^{-1}f(z)$ for any $z\in Z_{K^p}$ and all $g_p$ in some open compact subgroup of $\bar G_p$. When $V=V^{(0,-k)}$, we write $\calA^{K^p}_{\bar G,k}:=\calA^{K^p}_{\bar G,V^{(0,-k)}}$.    
\end{definition}
If $k=0$, $\calA^{K^p}_{\bar G,0}=\calC^{\sm}(Z_{K^p},C)$ is the space of $C$-valued  locally constant functions on the $p$-adic manifold $Z_{K^p}$. The space $\calA^{K^p}_{\bar G,k}$ admits a smooth action of $\bar G_p$ via $(g_p.f)(z)=g_p.f(zg_p)$. Both $\calA^{K^p}_{\bar G,k}$ and $R\Gamma_c(\calS_{K^p},\ker d_{\calS_{K^p}})$ are equipped with the action of the Hecke algebra $\bbT^S$ defined in \S\ref{subsec:quaternionicshimuracurve} via the isomorphism $G(\bbA_f^p)\simeq \bar G(\bbA_f^p)$ and the Hecke correspondences.

\begin{remark}
    We recall the definiton of the action of $\bbT^S$ on $R\Gamma_c(\calS_{K^p},\ker d_{\calS_{K^p}})$. For $g\in \bar G(\bbA_f^S)$ and the Hecke operator $[K^SgK^S]$, we consider the Hecke correspondence 
    \begin{center}
        \begin{tikzcd}
            &\calS_{K^p\cap gK^pg^{-1}}\stackrel{\times g}{\rightarrow} \calS_{g^{-1}K^pg\cap K^p}\arrow[rd,"h_2"]\arrow[ld,"h_1"]&\\
            \calS_{K^p}& &\calS_{K^p}.
        \end{tikzcd}
    \end{center}
    Via the $p$-adic uniformization of $\calS_{K^p}$ (\ref{uniformization}), the maps $h_1,h_2$ are unions of the maps of the form $\Gamma'_{x'}\backslash\calM_{\Dr,\infty}\rightarrow \Gamma_x\backslash\calM_{\Dr,\infty}$ where $(K^p)'=K^p\cap gK^pg^{-1}$ or $g^{-1}K^pg\cap K^p$, $x'\in Z_{(K^p)'}$, $x$ is the image of $x'$ under the projection $Z_{(K^p)'}\rightarrow Z_{K^p}$, $\Gamma_x= \bar G (\bbQ)\cap x^{-1}K^px\supset \Gamma_{x'}:= \bar G (\bbQ)\cap (x')^{-1}(K^p)'x'$. By Proposition \ref{prop:prGamma}, both $h_1,h_2$ are analytic finite coverings. We have an isomorphism $(\times g)^{-1}\ker d_{\calS_{g^{-1}K^pg\cap K^p}}\stackrel{\sim}{\rightarrow} \ker d_{\calS_{K^p\cap gK^p}g^{-1}}$. The action of the Hecke operator $[K^SgK^S]$ is the composition
    \begin{align}\label{eq:HeckeonRGammakerdS}
    R\Gamma(\calS_{K^p},\ker d_{\calS_{K^p}})\stackrel{}{\rightarrow} R\Gamma(\calS_{K^p},h_{1,*}\ker d_{\calS_{K^p\cap gK^pg^{-1}}})=R\Gamma(\calS_{K^p\cap gK^pg^{-1}},\ker d_{\calS_{K^p\cap gK^pg^{-1}}})\nonumber \\
    \stackrel{\sim}{\rightarrow} R\Gamma(\calS_{g^{-1}K^pg\cap K^p},\ker d_{\calS_{g^{-1}K^pg\cap K^p}})=R\Gamma(\calS_{K^p},h_{2,!}\ker d_{\calS_{g^{-1}K^pg\cap K^p}})\rightarrow R\Gamma(\calS_{K^p},\ker d_{\calS_{K^p}})
    \end{align}
    where the first and last maps are respectively induced by the natural maps $\ker d_{\calS_{K^p}}\rightarrow h_{1,*}h_1^{-1}\ker d_{\calS_{K^p}}=h_{1,*}\ker d_{\calS_{ K^p\cap gK^pg^{-1}}}$ and $h_{2,!}\ker d_{\calS_{g^{-1}K^pg\cap K^p}}=h_{2,!}h_1^{-1}\ker d_{\calS_{K^p}}\rightarrow \ker d_{\calS_{K^p}}$.
\end{remark}
The following proposition is our product formula. 
\begin{proposition}\label{prop:productformula}
    There exist $\bbT^S$-equivariant isomorphisms of $D_p^{\times}$-representations
    \begin{align}\label{eq:RGammakerdSk}
        R\Gamma(\calS_{K^p},\ker d_{\calS_{K^p}})&\isom R\Gamma_c(\calM_{\LT,\infty},\omega_{\calM_{\LT,\infty}}^{(0,-k),\sm})\ox_{\calH(G)}^L\calA^{K^p}_{\bar G,k}\\
        R\Gamma(\calS_{K^p},\ker d_{\calS_{K^p}}')&\isom R\Gamma_c(\calM_{\LT,\infty},\omega_{\calM_{\LT,\infty}}^{(-k-1,1),\sm})\ox_{\calH(G)}^L\calA^{K^p}_{\bar G,k}.
    \end{align}
    Here the action of $G=\GL_2(\bbQ_p)$ on $R\Gamma_c(\calM_{\LT,\infty},\omega_{\calM_{\LT,\infty}}^{(0,-k),\sm})$ or $R\Gamma_c(\calM_{\LT,\infty},\omega_{\calM_{\LT,\infty}}^{(-k-1,1),\sm})$ is induced by the left $G$-action on $\calM_{\LT,\infty}$ in \S\ref{subsection:twotowers} twisted by the inverse transpose of $G$ as in Remark \ref{remarkuniformizationtwist}.
\end{proposition}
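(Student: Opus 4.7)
The plan is to combine the $p$-adic uniformization~(\ref{uniformization}) with Lemma~\ref{Lem:RGammacLTsmGamma} and a standard change-of-rings identity for the smooth Hecke algebra $\calH(G)$. First, by Lemma~\ref{lemmakerdKvdSKv} together with~(\ref{uniformization}),
\begin{align*}
    R\Gamma(\calS_{K^p},\ker d_{\calS_{K^p}})\isom\bigoplus_{x\in Z_{K^p}/G}R\Gamma(\calS_{\Gamma_x},\ker d_{\calS_{\Gamma_x}}),
\end{align*}
where $\Gamma_x\subset G=\GL_2(\bbQ_p)$ acts on $\calM_{\Dr,\infty}\simeq\calM_{\LT,\infty}$ via the inverse-transpose twist (Remark~\ref{remarkuniformizationtwist}). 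Applying Lemma~\ref{Lem:RGammacLTsmGamma} and the decomposition $\ker d_{\Dr}\isom V^{(0,-k)}\ox_C\omega^{(0,-k),\sm}_{\calM_{\LT,\infty}}$ from~(\ref{equationkerdDr}), each summand is identified with
\begin{align*}
    \bigl(R\Gamma_c(\calM_{\LT,\infty},\omega^{(0,-k),\sm}_{\calM_{\LT,\infty}})\ox_C V^{(0,-k)}\bigr)\ox^L_{\bbZ[\Gamma_x]}\bbZ,
\end{align*}
with $\Gamma_x$ acting diagonally (on $V^{(0,-k)}$ via its embedding into $G$).

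Next I would establish the identification of smooth $G$-representations
\begin{align*}
    \calA^{K^p}_{\bar G,k}\isom\bigoplus_{x\in Z_{K^p}/G}\cInd_{\Gamma_x}^{G}V^{(0,-k)}.
\end{align*}
For $f\in\calA^{K^p}_{\bar G,k}$, the orbit decomposition $Z_{K^p}=\bigsqcup_{x}\Gamma_x\bs G$ combined with the twist $\tilde F_x(g):=g\cdot f(xg)$ yields, for each orbit representative $x$, a smooth $V^{(0,-k)}$-valued function on $G$ satisfying $\tilde F_x(\gamma g)=\gamma\tilde F_x(g)$ for $\gamma\in\Gamma_x$; cocompactness of $\Gamma_x$ removes any compact-support condition on $\Gamma_x\bs G$. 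Combined with the first step, the right-hand side of~(\ref{eq:RGammakerdSk}) becomes
\begin{align*}
    \bigoplus_{x\in Z_{K^p}/G}R\Gamma_c(\calM_{\LT,\infty},\omega^{(0,-k),\sm}_{\calM_{\LT,\infty}})\ox^L_{\calH(G)}\cInd_{\Gamma_x}^{G}V^{(0,-k)},
\end{align*}
so the desired formula reduces to the standard derived change-of-rings identity $M\ox^L_{\calH(G)}\cInd_{\Gamma}^{G}V\isom M\ox^L_{\bbZ[\Gamma]}V$, valid for any smooth $G$-representation $M$, closed discrete cocompact subgroup $\Gamma\subset G$, and $\bbZ[\Gamma]$-module $V$. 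The latter in turn follows from the presentation $\cInd_{\Gamma}^{G}V\isom\calH(G)\ox_{\bbZ[\Gamma]}V$ (with $\calH(G)=\calC_c^{\sm}(G,C)$ viewed as a right $\bbZ[\Gamma]$-module by right translation) together with the flatness of $\calH(G)$ over $\bbZ[\Gamma]$, which is immediate from the discreteness of $\Gamma$.

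The $\bbT^S$-equivariance is built into all of the above: outside $S$ the Hecke correspondences change only the tame level $K^p$ and act functorially on $Z_{K^p}$, matching~(\ref{eq:HeckeonRGammakerdS}) on the left-hand side and the classical Hecke action on $\calA^{K^p}_{\bar G,k}$ on the right-hand side. The second isomorphism of the proposition is proved identically, replacing $\ker d_{\Dr}$ by $\ker d'_{\Dr}\isom V^{(0,-k)}\ox_C\omega^{(-k-1,1),\sm}_{\calM_{\LT,\infty}}$. The main subtlety I anticipate is the careful handling of the topologies and derived structures on both sides of the change-of-rings identity, especially because $V^{(0,-k)}$ is algebraic (not smooth) as a $G$-representation and because the object $R\Gamma_c(\calM_{\LT,\infty},\omega^{(0,-k),\sm}_{\calM_{\LT,\infty}})$ is only identified with a colimit of finite-level cohomologies after Lemma~\ref{lem:RpOLTsm}; this should be handled within the framework of Appendix~\ref{section:properpushforward}.
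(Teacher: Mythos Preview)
Your construction of the $D_p^\times$-equivariant isomorphism is essentially the paper's argument: decompose $\calS_{K^p}$ into the components $\calS_{\Gamma_x}$, apply Lemma~\ref{Lem:RGammacLTsmGamma} and~(\ref{equationRGammackerdDrlocallyalgebraic}), identify $\calA^{K^p}_{\bar G,k}\isom\oplus_x\calH(G)\otimes_{C[\Gamma_x]}V$ (your $\cInd^G_{\Gamma_x}V$), and use flatness of $\calH(G)$ over $\bbZ[\Gamma_x]$ to pass from $\otimes^L_{\bbZ[\Gamma_x]}$ to $\otimes^L_{\calH(G)}$. The paper makes the flatness explicit via Casselman's observation $\calC^{\sm}_c(G,C)\isom\bbZ[\Gamma]\otimes_{\bbZ}\calC^{\sm}_c(\Gamma\backslash G,C)$, which is what you are invoking by ``discreteness of $\Gamma$''. (Incidentally, Lemma~\ref{lemmakerdKvdSKv} is not needed here; the disjoint-union decomposition of $\calS_{K^p}$ suffices.)

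The genuine gap is the $\bbT^S$-equivariance. Your sentence ``the Hecke correspondences change only the tame level $K^p$ and act functorially on $Z_{K^p}$'' hides the real issue: the isomorphism you built depends on a choice of orbit representatives $x\in Z_{K^p}/G$, and a Hecke correspondence $[K^SgK^S]$ passes through the auxiliary level $K^p\cap gK^pg^{-1}$, where the orbit decomposition and the stabilizers $\Gamma_x$ change. Tracking~(\ref{eq:HeckeonRGammakerdS}) through the chain of change-of-rings identifications componentwise is not obviously compatible with the classical Hecke action on $\calA^{K^p}_{\bar G,k}$. The paper resolves this by rewriting the composite map
\[
R\Gamma_c(\calM_{\LT,\infty},\omega^{(0,-k),\sm}_{\calM_{\LT,\infty}})\otimes^L_C\calA^{K^p}_{\bar G,k}\longrightarrow R\Gamma(\calS_{K^p},\ker d_{\calS_{K^p}})
\]
\emph{geometrically}, as coming from the $G$-torsor $\pi_G:\calM_{\LT,\infty}\times Z_{K^p}\to\calS_{K^p}$ and the natural map $\pi_{G,!}\calO^{(V,V^*)\text{-}\lalg,\sm}_{\calM_{\LT,\infty}\times Z_{K^p}}\to\ker d_{\calS_{K^p}}$. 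In this form the Hecke correspondence acts directly on the product space (trivially on $\calM_{\LT,\infty}$, via the usual correspondence on $Z_{K^p}$), and compatibility with~(\ref{eq:HeckeonRGammakerdS}) becomes a sheaf-level check. You should either carry out an analogous geometric reinterpretation or give a direct argument that your chain of isomorphisms is independent of the choices of orbit representatives in a way that makes the Hecke compatibility transparent.
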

\begin{proof}
    We will only prove the first isomorphism, and the second one follows similarly. 

    The Hecke algebra $\calH(G)\simeq \calC_c^{\rm sm}(G,C)$ is itself a smooth $G\times G^{\op}$-modules, where the action of $(g_1,g_2)\in G\times G^{\op}$ on $f\in \calC^\sm_c(G,C)$ is given by $(g_1,g_2).f(-)=f(g_2-g_1)$. By \cite[Lemma A.6(a)]{Casselmansmoothcohomology}, for a discrete subgroup $\Gamma\subset G$, $\calC^{\sm}_c(G,C)\isom \bbZ[\Gamma]\otimes_{\bbZ}\calC^{\sm}_c(\Gamma\backslash G,C)$ as $\Gamma$-modules where $\Gamma$ acts on the $\bbZ[\Gamma]$-factor of the tensor product. Hence $\calC^{\sm}_c(G,C)\ox^L_{\bbZ[\Gamma]}V=\calC^{\sm}_c(G,C)\ox_{\bbZ[\Gamma]}V$ for any $\bbZ[\Gamma]$-module, i.e. higher Tor groups vanish. 

    Let $V=V^{(0,-k)}$ and let $C^{\rm sm}(\Gamma\backslash G,V)$ be the space of functions $f:\Gamma\backslash G\to V$ such that $f(gk)=k^{-1}f(g)$ for any $k$ in some open compact subgroup of $G$, with a $G$-action given $(g.f)(z)=gf(zg)$ for any $z,g\in G$. There exists a natural map $\calC_c^{\rm sm}(G,C)\otimes_C V\rightarrow \calC^{\rm sm}(\Gamma\backslash G,V):f\otimes v\mapsto (g\in G\mapsto \sum_{\gamma\in\Gamma}f(\gamma g)g^{-1}\gamma^{-1}.v)$. This map is equivariant for the left $G\times \Gamma$-actions (where $G$ acts on $\calC_c^{\rm sm}(G,C)\otimes_C V$ only on the first factor via right translations and $\Gamma$ acts on it diagonally via the left translation on $\calC_c^{\rm sm}(G,C)$) and induces a natural $G$-isomorphism 
    \[ \calC^{\sm}_c(G,C)\ox^L_{C[\Gamma]}V=\calC^{\sm}_c(G,C)\ox_{C[\Gamma]}V\stackrel{\sim}{\rightarrow}C^{\rm sm}(\Gamma\backslash G,V).\]
    We write $Z_{K^p}=\sqcup_{x\in Z_{K^p}/\GL_2(\bbQ_p)} \Gamma_x\backslash \GL_2(\bbQ_p)$ so that we have $\calS_{K^p}=\sqcup_{\Gamma_x}\calM_{\Dr,\infty}/\Gamma_x$. By the above discussions,
    \[\calA^{K^p}_{\bar G,k}=\oplus_xC^{\rm sm}(\Gamma_x\backslash G,V)=\oplus_x \calC_c^{\rm sm}(G,C)\otimes_{C[\Gamma_x]}^LV=\oplus_x\calH(G)\otimes_{C[\Gamma_x]}^LV\]
    as smooth left $\calH(G)$-modules. Hence by Lemma \ref{Lem:RGammacLTsmGamma} and (\ref{equationRGammackerdDrlocallyalgebraic}) we have 
    \begin{align*}
        R\Gamma(\calS_{K^p},\ker d_{\calS _{K^p}})&\isom \oplus_x R\Gamma_c(\calM_{\Dr,\infty}/\Gamma_x,\ker d_{\calS_{\Gamma_x}})\\
        &\isom \oplus_x  R\Gamma_c(\calM_{\Dr,\infty},\omega_{\calM_{\LT,\infty}}^{(0,-k),\sm})\otimes_{C[\Gamma_x]}^LV\\
        &\isom  R\Gamma_c(\calM_{\Dr,\infty},\omega_{\calM_{\LT,\infty}}^{(0,-k),\sm})\otimes_{\calH(G)}^L(\oplus_x\calH(G)\otimes_{C[\Gamma_x]}^LV)\\
        &\isom R\Gamma_c(\calM_{\LT,\infty},\omega_{\calM_{\LT,\infty}}^{(0,-k),\sm})\ox_{\calH(G)}^L\calA^{K^p}_{\bar G,k}.
    \end{align*}
   where for the third isomorphism, we used the $\calH(G)$-isomorphism (\cite[(I.2.3.3)]{renard2010representations}) 
    \begin{align*}
        R\Gamma_c(\calM_{\LT,\infty},\omega_{\calM_{\LT,\infty}}^{(0,-k),\sm})\isom R\Gamma_c(\calM_{\LT,\infty},\omega_{\calM_{\LT,\infty}}^{(0,-k),\sm})\ox_{\calH(G)}^L\calH(G).
    \end{align*}

    We remain to show that the isomorphism (\ref{eq:RGammakerdSk}) we just built is $\bbT^S$-equivariant. For this, we show that the composition
    \begin{align}\label{eq:compositionRGammac}
         R\Gamma_c(\calM_{\LT,\infty},\omega_{\calM_{\LT,\infty}}^{(0,-k),\sm})\ox_{C}^L\calA^{K^p}_{\bar G,k}\to R\Gamma_c(\calM_{\LT,\infty},\omega_{\calM_{\LT,\infty}}^{(0,-k),\sm})\ox_{\calH(G)}^L\calA^{K^p}_{\bar G,k}\rightarrow R\Gamma_c(\calS_{K^p},\ker d_{\calS _{K^p}})
    \end{align} 
    is $\bbT^S$-equivariant, where we identify $R\Gamma_c(\calM_{\LT,\infty},\omega_{\calM_{\LT,\infty}}^{(0,-k),\sm})\ox_{\calH(G)}^L\calA^{K^p}_{\bar G,k}$ as the $G$-coinvariant (in the derived category of smooth $\calH(G)$-modules) of $R\Gamma_c(\calM_{\LT,\infty},\omega_{\calM_{\LT,\infty}}^{(0,-k),\sm})\ox_{C}^L\calA^{K^p}_{\bar G,k}$ (cf. \cite[(III.1.15.1)]{renard2010representations}). For this purpose we will write (\ref{eq:compositionRGammac}) in a more geometric way as the descent of certain cohomology along a $G$-torsor.

    First, the $p$-adic uniformization (\ref{eq:realpadicuniformization}) is equivariant for the $\bbT^S$-action. This gives a $\bbT^S$-equivariant maps 
    \begin{align*}
        \pi_{G}:\calM_{\LT,\infty}\times Z_{K^p}=\sqcup_{x} \calM_{\LT,\infty}\times(\Gamma_x\backslash \GL_2(\bbQ_p))\rightarrow \calS_{K^p}=\sqcup_{x} \calM_{\LT,\infty}/\Gamma_x.
    \end{align*}
    Here we identify $Z_{K^p}$ with the perfectoid space over $C$ associated to the profinite set $Z_{K^p}$. Consider the action map $\mathrm{act}:\calM_{\Dr,\infty}\times\GL_2(\bbQ_p)\rightarrow \calM_{\Dr,\infty}:(x,g)\mapsto g.x$ which is $\Gamma_x$-equivariant where $\Gamma_x$ acts on $\calM_{\Dr,\infty}\times\GL_2(\bbQ_p)$ by the left multiplications on the $\GL_2(\bbQ_p)$-factor. Taking the quotient by $\Gamma_x$ we get a Cartesian diagram
    \[\begin{tikzcd}
        \calM_{\Dr,\infty}\times \GL_2(\bbQ_p)\arrow[r]\arrow[d,"\mathrm{act}"]&\calM_{\Dr,\infty}\times \Gamma_x\backslash\GL_2(\bbQ_p)\arrow[d,"\pi_G"]  \\
        \calM_{\Dr,\infty}\arrow[r]&\calM_{\Dr,\infty}/\Gamma_x 
    \end{tikzcd}\] 
    We see that the map $\pi_G$ is a $\GL_2(\bbQ_p)$-torsor locally trivial for the analytic topology.
    
    Let $V^*$ be the $C$-dual of $V$ and let $\calO_{\calM_{\LT,\infty}\times Z_{K^p}}^{(V,V^*)\-\lalg}$ be the subsheaf of $\calO_{\calM_{\LT,\infty}\times Z_{K^p}}$ consisting of sections which are locally $(V,V^*)$-algebraic under the left $G\times G$-action. Note that $\calC^{\cont}(Z_{K^p},C)^{V^*\-\lalg}=V^*\otimes_C\calA^{K^p}_{\bar G,k}$. Let $\calO_{\calM_{\LT,\infty}\times Z_{K^p}}^{(V,V^*)\-\lalg,\sm}\subset \calO_{\calM_{\LT,\infty}\times Z_{K^p}}^{(V,V^*)\-\lalg}$ be the subsheaf consisting of sections which are smooth under the diagonal action of $\Delta G\subset G\times G$, corresponding to the one-dimensional summand $(V\otimes_C V^*)^G\subset V\otimes_C V^*$. 

    By the same proof for Lemma \ref{lem:RpOLTsm}, we have a natural $\bbT^S$-isomorphism 
    \begin{align*}
        R\Gamma_c(\calM_{\LT,\infty}\times Z_{K^p},\calO_{\calM_{\LT,\infty}\times Z_{K^p}}^{(V,V^*)-\lalg})&\isom\colim_{n,K_p} R\Gamma_c(\calM_{\LT,n},\omega_{\calM_{\LT,n}}^{(0,-k)})\ox_C^LV\otimes_C^L V^* \ox_C^L(\calA^{K^p}_{\bar G,k})^{K_p}
        \\
        &=R\Gamma_c(\calM_{\LT,\infty},\omega_{\calM_{\LT,\infty}}^{(0,-k),\sm})\ox_{C}^L\calA^{K^p}_{\bar G,k}\ox_C^L\End_C(V)
    \end{align*}
    where $K_p$ runs through open compact subgroups of $G$. Taking the smooth part for the diagonal $G$-action, we get
    \[ R\Gamma_c(\calM_{\LT,\infty}\times Z_{K^p},\calO_{\calM_{\LT,\infty}\times Z_{K^p}}^{(V,V^*)-\lalg,\sm})=R\Gamma_c(\calM_{\LT,\infty},\omega_{\calM_{\LT,\infty}}^{(0,-k),\sm})\ox_{C}^L\calA^{K^p}_{\bar G,k}\]
    Moreover, we have 
    \[R\pi_{G,!}\calO_{\calM_{\LT,\infty}\times Z_{K^p}}^{(V,V^*)\-\lalg}=\pi_{G,!}\calO_{\calM_{\LT,\infty}\times Z_{K^p}}^{(V,V^*)\-\lalg}\] 
    where $\pi_{G,!}\calO_{\calM_{\LT,\infty}\times Z_{K^p}}^{(V,V^*)\-\lalg}(U)=(\ker d_{\Dr}(U))\otimes_C\calC^{\cont}_c(G,C)^{V^*\-\lalg}$ for $U\subset \calM_{\Dr,\infty}/\Gamma$ such that $\pi_G^{-1}(U)\isom U\times \GL_2(\bbQ_p)$. (Here we note that the constant sheaf (also the structure sheaf) on the locally profinite group $G$ is flasque, as given a section $s$ on a compact open, one can also extend $s$ to a larger compact open via extension by zero. Hence by Lemma \ref{flasqueacyclic} there are no higher pushforwards on the $G$ component, see also \cite[Lemma 7.2]{scholze2022etale}). And there is a natural map $\pi_{G,!}\calO_{\calM_{\LT,\infty}\times Z_{K^p}}^{(V,V^*)\-\lalg}\rightarrow \ker d_{\calS_{K^p}}$, which can be obtained from a similar map $\mathrm{act}_{!}\calO_{\calM_{\LT,\infty}\times \GL_2(\bbQ_p)}^{(V,V^*)\-\lalg}\rightarrow \ker d_{\Dr}$, identiying $\ker d_{\calS_{K^p}}$ as the $G$-coinvariant of the locally algebraic $G$-module $\pi_{G,!}\calO_{\calM_{\LT,\infty}\times Z_{K^p}}^{(V,V^*)\-\lalg}$, or the smooth $G$-module $\pi_{G,!}\calO_{\calM_{\LT,\infty}\times Z_{K^p}}^{(V,V^*)\-\lalg,\sm}$. 
    
    Restricting to a component $\calS_{\Gamma}=\calM_{\Dr,\infty}/\Gamma$, the $\GL_2(\bbQ_p)$-smooth part of the map 
    \[ R\Gamma_c(\calM_{\LT,\infty}\times \Gamma\backslash \GL_2(\bbQ_p),\calO_{\calM_{\LT,\infty}\times Z_{K^p}}^{(V,V^*)\-\lalg})=R\Gamma_c(\calM_{\Dr,\infty}/\Gamma,R\pi_{G,!}\calO_{\calM_{\LT,\infty}\times Z_{K^p}}^{(V,V^*)\-\lalg})\rightarrow R\Gamma_c(\calM_{\Dr,\infty}/\Gamma,\ker d_{\calS_\Gamma}),\]
    contributes to (\ref{eq:compositionRGammac}). The above map is the (derived) $\Gamma$-coinvariant of the map
    \[R\Gamma_c(\calM_{\Dr,\infty}\times\GL_2(\bbQ_p),\calO_{\calM_{\LT,\infty}\times Z_{K^p}}^{(V,V^*)\-\lalg})\rightarrow R\Gamma_c(\calM_{\Dr,\infty},\calO^{\lalg,(0,-k)}_{\calM_{\LT,\infty}})\] by the isomorphisms in Lemma \ref{Lem:RGammacLTsmGamma}.
    Hence the map
    \[ R\Gamma_c(\calM_{\LT,\infty}\times Z_{K^p},\calO_{\calM_{\LT,\infty}\times Z_{K^p}}^{(V,V^*)\-\lalg,\sm})\rightarrow R\Gamma_c(\calM_{\LT,\infty}\times Z_{K^p},\calO_{\calM_{\LT,\infty}\times Z_{K^p}}^{(V,V^*)\-\lalg})\rightarrow R\Gamma_c(\calS_{K^p},\ker d_{\calS _{K^p}})\]
    induced by $R\pi_{G,!}\calO_{\calM_{\LT,\infty}\times Z_{K^p}}^{(V,V^*)\-\lalg,\sm}\subset \pi_{G,!}\calO_{\calM_{\LT,\infty}\times Z_{K^p}}^{(V,V^*)\-\lalg}\rightarrow \ker d_{\calS_{K^p}}$ coincides with (\ref{eq:compositionRGammac}). By the definition of the Hecke actions on the (compactly supported) cohomologies as in (\ref{eq:HeckeonRGammakerdS}), we can check that (\ref{eq:compositionRGammac}) is $\bbT^{S}$-equivariant. 
\end{proof}

\begin{remark}
In the $\ell$-adic cohomology setting, the Mantovan product formula, e.g. \cite{mantovan2004certain, mantovan2005cohomology, caraiani2017generic, zhang2023peltypeigusastackpadic, daniels2024igusastackscohomologyshimura} expresses the compactly supported $\ell$-adic cohomology of a fixed Newton stratum of the tower of certain Shimura varieties in terms of the cohomology of some local Shimura varieties and the cohomology of some Igusa varieties. The results in Proposition \ref{prop:productformula} extend the known product formula for de Rham cohomology of the Shimura curve (cf. \cite[\S 4.2]{DLB17})
\begin{align*}\label{remarkuniformizationdeRhamcohomology}
    R\Gamma_{\dR,c}(\calM_{\LT,\infty})\ox_{\calH(G)}^L\calA_{\bar G,0}^{K^p}\isom R\Gamma_{\dR,c}(\calS_{K^p}).
\end{align*}
that will be discussed in the next subsection. 
\end{remark}

\begin{remark}\label{remarkproductformula}
    It is natural to expect a similar product formula for $R\Gamma(\calS_{K^p},\calO_{\calS_{K^p}})$ or $R\Gamma(\calS_{K^p},\calO_{\calS_{K^p}}^{\lan})$. Let $C[[G]]=C[G]\otimes_{\calO_C[\GL_2(\bbZ_p)]}\calO_C[[\GL_2(\bbZ_p)]]$ be the Iwasawa algebra for $G$ and let $\calC^{\cont}(X,C)$ denote the space continuous functions on a $p$-adic manifold $X$. We have
    \[R\Gamma_c(\calM_{\Dr,\infty}\times Z_{K^p},\calO_{\calM_{\Dr,\infty}\times Z_{K^p}})=R\Gamma_c(\calM_{\Dr,\infty},\calO_{\calM_{\Dr,\infty}})\widehat{\otimes}_C^L\calC^{\cont}(Z_{K^p},C).\]
    However, its (derived continuous) $G$-coinvariant does not give $R\Gamma(\calS_{K^p},\calO_{\calS_{K^p}})$. This can be seen already on the sheaf level: $\pi_{G,*}\calO_{\calM_{\Dr,\infty}\times Z_{K^p}}\widehat{\otimes}^L_{C[[G]]}C\neq \calO_{\calS_{K^p}}$ since $\calC_c^{\cont}(G,C)\widehat{\otimes}^L_{C[[G]]}C=C[\dim G]$ using \cite[Proposition 3.2.11]{jacinto2023solid}. But the same arguments as in the proof of Proposition \ref{prop:productformula} for $\Gamma$-descent shows that
    \begin{align}\label{equationproductformulafinitelevel}
        R\Gamma(\calS_{K^p},\calO_{\calS_{K^p}})&\isom R\Gamma_c(\calM_{\LT,\infty},\calO_{\calM_{\LT,\infty}})\widehat{\otimes}^L_{C[[G]]}\calD^{\cont}(Z_{K^p},C)\\
        &\isom R\Gamma_c(\calM_{\LT,\infty},\calO_{\calM_{\LT,\infty}})\widehat{\otimes}^L_{C[[G]]}\calC^{\cont}(Z_{K^p},C)[-\dim G]\nonumber
    \end{align}
    where $\calD^{\cont}(Z_{K^p},C)=\Hom^{\cont}_{C}(\calC^{\cont}(Z_{K^p},C),C)$ is the space of continuous measures on $Z_{K^p}$. This matches with the heuristic that $\pi_G^!\calO_{\calS_{K^p}}=\calO_{\calM_{\Dr,\infty}\times Z_{K^p}}[\dim G]$.
\end{remark}

\subsection{Cohomology of Intertwining operator I}\label{sec:cohomologyI}
Let $k\geq 0$ be an integer. Following \cite[\S 5]{PanII}, we focus on the spectral decomposition of $\ker I^1$ with respect to the Hecke action, where
\begin{align*}
    I^1:H^1(\check \fl,\calO_{K^p}^{\lan,(0,-k)})\stackrel{H^1(I)}{\rightarrow} H^1(\check\fl,\calO_{K^p}^{\lan,(-k-1,1)}(k+1)).
\end{align*}
and the intertwining operator $I$ is (Definition \ref{def:intertwiningoperator}):
\begin{align*}
    I=\bar d'_{K^p}\circ d_{K^p}: \calO_{K^p}^{\lan,(0,-k)}\stackrel{d}{\rightarrow}\calO_{K^p}^{\lan,(0,-k)}\ox_{\calO_{K^p}^{\sm}}(\Omega^{1,\sm}_{K^p})^{\ox k+1} \stackrel{\bar d'}{\rightarrow }\calO_{K^p}^{\lan,(-k-1,1)}(k+1).
\end{align*}

As in \cite[\S 5]{PanII}, as well as \cite[Theorem 4.7.2]{QS24}, we will express $\ker I^1$ in terms of cohomology groups of some (de Rham) complexes associated to the differentials operators $d_{K^p},\bar d_{K^p}$. Put 
\begin{align*}
    &\DR^{\lalg}:=[\calO^{\lalg,(0,-k)}_{K^p}\stackrel{d_{K^p}}{\rightarrow} \calO^{\lalg,(0,-k)}_{K^p}\ox_{\calO_{K^p}^{\sm}}(\Omega_{K^p}^{1,\sm})^{\ox k+1}]\\
    &\DR:=[\calO_{K^p}^{\lan,(0,-k)}\stackrel{d_{K^p}}{\to} \calO_{K^p}^{\lan,(0,-k)}\ox_{\calO_{K^p}^{\sm}}(\Omega_{K^p}^{1,\sm})^{\ox k+1}]\simeq \ker d_{K^p}\\
    &\DR':=[\calO_{K^p}^{\lan,(0,-k)}\ox_{\calO_{\check\fl}}(\Omega^1_{\check\fl})^{\ox k+1}  \stackrel{d'_{K^p}}{\rightarrow }\calO_{K^p}^{\lan,(-k-1,1)}(k+1)]\simeq \ker d_{K^p}'
\end{align*} 
which are complexes of abelian sheaves on $\check\fl$ concentrating in cohomological degrees $[0,1]$. The quasi-isomorphisms $\DR\simeq \ker d_{K^p}, \DR'\simeq \ker d_{K^p}'$ follow from the surjectivity of $d_{K^p}$ and $d_{K_p}'$ by Proposition \ref{prop:dKvsurj}.

Recall that we write $\calA^{K^p}_{\bar G,k}$ (Definition \ref{def:algebraicautomorphicform}) for the space of algebraic automorphic forms of $p$-adic algebraic weight $V^{(0,-k)}$ for $k\geq 0$ on the Shimura set $Z_{K^p}$ (defined in \S\ref{sec:uniformization}). The sheaves on $\check\fl$ we considered above and their cohomology groups carry natural $\bbT^S$-actions, and morphisms between them are Hecke equivariant. It will follow from the results below that the Hecke eigensystems appearing in those cohomology groups are subsets of the eigensystems in the classical automorphic forms.
\begin{definition}\label{def:setHeckeeigenvalues}
    Let $\sigma_{k}^{K^p}$ be the set of systems of Hecke eigenvalues (i.e. characters $\bbT^S\rightarrow C$) that appear in $\calA^{K^p}_{\bar G,k}$ (i.e. $\calA_{\bar G,k}^{K^p}[\lambda]\neq 0$). Let $\sigma_{k,c}^{K^p}$ be the subset of $\sigma_k^{K^p}$ consisting of Hecke eigensystems such that the corresponding automorphic representations of $\bar G(\bbA)$ are cuspidal, i.e., don't factor through the reduced norm of $\bar G$.
\end{definition}
If $\lambda\in \sigma_{k,c}^{K^p}$, let $\pi_\lambda$ be the irreducible automorphic representation of $\bar G$ corresponding to $\lambda$. Write $\pi_\lambda=\pi_{\lambda,\infty}\ox\pi_{\lambda}^\infty$ where $\pi_{\lambda,\infty}$ is the archimedean component and $\pi^\infty_{\lambda}$ is the non-archimedean component. Decompose further $\pi^\infty=\pi^{\infty,p}_{\lambda}\ox_C\pi_{\lambda,p}$, where $\pi_{\lambda,p}$ is a smooth irreducible representation of $\bar G_p=\GL_2(\bbQ_p)$. 
\begin{theorem}\label{thm:decompositionautomorphicforms}
    The action of $\bbT^S$ on $\calA^{K^p}_{\bar G,k}$ is semisimple and there is a decomposition
    \begin{align}\label{AKvdecomp}
        \calA_{\bar G,k}^{K^p}=\bigoplus_{\lambda\in \sigma_{k}^{K^p}}\calA_{\bar G,k}^{K^p}[\lambda].
    \end{align}
    Moreover, $\calA_{\bar G,k}^{K^p}[\lambda]=(\pi^{\infty,p}_{\lambda})^{K^p}\ox_C\pi_{\lambda,p}$ and $\pi_{\lambda,p}$ is infinite-dimensional for $\lambda\in \sigma_{k}^{K^p}$. 
\end{theorem}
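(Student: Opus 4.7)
The plan is to identify $\calA_{\bar G, k}^{K^p}$ with a space of algebraic automorphic forms on the definite quaternion algebra $\bar D$ and then extract the Hecke eigenspace decomposition from multiplicity one. First I would rewrite the space using the $\bar G_p$-orbit decomposition $Z_{K^p} = \bigsqcup_{[x] \in Z_{K^p}/\bar G_p} \Gamma_x \backslash \bar G_p$ from \S\ref{sec:uniformization}. A direct computation shows that the assignment $f \mapsto \phi_f$ with $\phi_f(g) := g \cdot f([x] g)$ identifies each orbit summand of $\calA_{\bar G, k}^{K^p}$ with the smoothly induced representation $\mathrm{Ind}_{\Gamma_x}^{\bar G_p} V^{(0,-k)}$, where $\Gamma_x \subset \bar G_p \simeq \GL_2(\bbQ_p)$ acts on $V^{(0,-k)}$ via the algebraic representation and the $\bar G_p$-action on the induction is by right translation. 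Since the algebraic representation $V^{(0,-k)}$ of $\bar G_{\bar\bbQ}$ (coming from the isomorphism $\bar G_{\bar\bbQ} \simeq (\GL_2)_{\bar\bbQ}$) restricts to the same $\bar G(\bbQ)$-module through either $\bar G(\bbQ) \hookrightarrow \bar G(\bbR)$ or $\bar G(\bbQ) \hookrightarrow \bar G_p$, summing over $[x]$ recovers the adelic description of algebraic modular forms for $\bar G$ of $p$-adic weight $V^{(0,-k)}$ and tame level $K^p$.

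Next I would invoke the classical decomposition of automorphic forms on $\bar G$. Because $\bar G_\infty$ is compact modulo center and $\bar G$ is anisotropic modulo center over $\bbQ$, the space $L^2(\bar G(\bbQ) Z_\infty \backslash \bar G(\bbA))$ decomposes discretely as $\bigoplus_\pi m(\pi) \pi$ with all multiplicities finite, and only those $\pi$ with $\pi_\infty \simeq V^{(0,-k)}_\bbC$ contribute to the isotypic decomposition of $\calA_{\bar G, k}^{K^p}$. This yields the $\bar G_p \times \bbT^S$-equivariant identification
\[
\calA_{\bar G, k}^{K^p} \isom \bigoplus_\pi m(\pi)\,(\pi^{\infty, p})^{K^p}\otimes_C \pi_p,
\]
from which semisimplicity of $\bbT^S$ follows immediately (alternatively one notes that each fixed $K_p$-level is finite-dimensional and $\bbT^S$ acts through commuting diagonalizable Hecke operators). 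The Hecke eigenspaces are then extracted via strong multiplicity one for $\GL_2(\bbA)$ applied through the Jacquet--Langlands transfer: distinct cuspidal $\pi$ of the fixed infinity type yield distinct Satake characters $\lambda_\pi$, and each $m(\pi) = 1$ by multiplicity one for $\bar G$. This gives the claimed identity $\calA_{\bar G, k}^{K^p}[\lambda] = (\pi_\lambda^{\infty,p})^{K^p} \otimes_C \pi_{\lambda, p}$.

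Finally, for the infinite-dimensionality of $\pi_{\lambda, p}$, I would argue by contradiction. If $\pi_{\lambda, p}$ were finite-dimensional, it would be a smooth character $\chi$ of $\bar G_p$, and by Frobenius reciprocity for $\mathrm{Ind}_{\Gamma_x}^{\bar G_p} V^{(0,-k)}$ the restriction $\chi|_{\Gamma_x}$ would appear as a $\Gamma_x$-subrepresentation of $V^{(0,-k)}|_{\Gamma_x}$. For $k \geq 1$ this is impossible because $\Gamma_x$ is a congruence arithmetic lattice in the $\bbQ$-simple-modulo-center group $\bar G$, hence Zariski dense in $(\GL_2)_{\overline{\bbQ_p}}$, forcing $V^{(0,-k)}|_{\Gamma_x}$ to remain irreducible of dimension $k+1 \ge 2$. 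For $k=0$ the same reasoning forces $\pi_\lambda$ to factor through the reduced norm of $\bar G$, and the statement is to be interpreted in the cuspidal sense (cf.\ $\sigma_{k,c}^{K^p}$ in Definition~\ref{def:setHeckeeigenvalues}). The main obstacle here is mostly bookkeeping: reconciling the $p$-adic and archimedean realizations of $V^{(0,-k)}$ so that the classical theory of algebraic modular forms applies cleanly to our setup; once this is in place, multiplicity one together with Frobenius reciprocity dispatches the remaining claims.
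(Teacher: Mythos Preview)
Your approach is correct and follows essentially the same route as the paper: identify $\calA_{\bar G,k}^{K^p}$ with a space of algebraic automorphic forms on the definite group $\bar G$, invoke the discrete automorphic decomposition (the paper cites \cite[Proposition~3.8.1]{loeffler2011overconvergent}), and deduce semisimplicity and the eigenspace formula from multiplicity one via the classical Jacquet--Langlands correspondence. The only genuine difference is the infinite-dimensionality argument: the paper observes that $\pi_{\lambda,p}$ is generic (being a local component of a cuspidal automorphic representation of $\GL_2$ under Jacquet--Langlands), whereas you argue via Frobenius reciprocity and Zariski density of $\Gamma_x$ in $\SL_2$; both are valid one-step arguments. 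Your observation that the claim for $k=0$ must be read in the cuspidal sense (since Hecke characters composed with the reduced norm contribute to $\sigma_0^{K^p}\setminus\sigma_{0,c}^{K^p}$) is correct and matches the paper's implicit convention, as $\pi_\lambda$ is only defined for $\lambda\in\sigma_{k,c}^{K^p}$ in the paragraph preceding the theorem.
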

\begin{proof}
    The representation $V=V^{(0,-k)}=V_E\otimes_EC$ of $\bar G\otimes_{\bbQ}C$, as well as the space $\calA^{K^p}_{\bar G,k}=\calA^{K^p}_{\bar G,k,E}\otimes_EC$ can be defined over a finite extension $E$ of $\bbQ_p$. If we base change to $\bbC$ along an embedding $E\hookrightarrow\overline{E}\simeq \bbC$, then there is a $\bbT^S$-equivariant decomposition (cf. \cite[Proposition 3.8.1]{loeffler2011overconvergent}) 
    \[\calA^{K^p}_{\bar G,k,E}\otimes_E\bbC=\bigoplus_{\pi}m(\pi)(\pi^p)^{K^p}\otimes \pi_{p}\otimes (\pi_{\infty}\otimes (V_E\otimes_{E}\bbC))^{\bar G(\bbR)}\]
    where $\pi=\pi^p\otimes \pi_p\otimes \pi_{\infty}$ runs through all automorphic representations of $\bar G$ such that $\pi_{\infty}$ is the linear dual of $V_E\otimes_{E}\bbC$ and $m(\pi)$ is the multiplicity of $\pi$. If $\dim_{\bbC} \pi>1$, then $m(\pi)=1$ by the classical Jacquet--Langlands correspondence and the multiplicity one theorem for $\GL_2$ (cf. \cite[Proposition 11.1.1]{langlands1970automorphic}), and in this case the corresponding system of Hecke eigenvalues $\lambda:\bbT^S\rightarrow\bbC$ can be defined over a finite extension of $E$. Moreover, $\pi_{\lambda,p}$ is infinite-dimensional being generic, see \cite[\S 11]{langlands1970automorphic}.
\end{proof}

\begin{lemma}\label{lem:cohoofomegasm}
Let $(a,b)\in\bbZ^2$ be integers. Then
\begin{align}
    H^i(\check{\fl},\omega_{K^p}^{(a,b),\sm})\isom \dlim_{K_p\subset D_p^\times}H^i(\calS_{K^pK_p},\omega_{\calS_{K^pK_p}}^{(a,b)}).
\end{align}
\begin{proof}
The proof is similar to \cite[Lemma 5.3.5]{Pan22}.
\end{proof}
\end{lemma}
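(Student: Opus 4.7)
The plan is to compute $H^i(\check{\fl}, \omega_{K^p}^{(a,b),\sm})$ via Čech cohomology with respect to a finite affinoid cover of $\check{\fl}$ whose preimages in $\calS_{K^p}$ are affinoid perfectoid and descend to finite level, and then to commute the filtered colimit defining ``smooth vectors'' past the resulting Čech cohomology. In spirit this is exactly the strategy of \cite[Lemma 5.3.5]{Pan22} transported to the quaternionic setting.

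The first step is to invoke Lemma \ref{lem:prGammaHT} (or equivalently \cite[Proposition 4.4.53]{boxer2021highercolemantheory}) to extract a finite cover $\mathfrak{U} = \{U_i\}_{i \in I}$ of the quasi-compact space $\check{\fl}$ by affinoid opens such that each preimage $V_i := \pi_{K^p,\HT}^{-1}(U_i) \subset \calS_{K^p}$ is affinoid perfectoid and, for all sufficiently small $K_p$, descends to an affinoid open $V_{i,K_p} \subset \calS_{K^pK_p}$. After refining if necessary, one may arrange this property to hold uniformly for all finite intersections $V_{i_0 \cdots i_n}$ as well.

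With such a cover in hand, the definition of $\omega_{K^p}^{(a,b),\sm} = \pi_{K^p,\HT,*}\omega_{\calS_{K^p}}^{(a,b),\sm}$ and of the smooth structure sheaf yield
\begin{align*}
\omega_{K^p}^{(a,b),\sm}(U_{i_0\cdots i_n}) \;=\; \dlim_{K_p} \omega_{\calS_{K^pK_p}}^{(a,b)}(V_{i_0\cdots i_n,K_p}),
\end{align*}
so that the Čech complex $\check{C}^\bullet(\mathfrak{U}, \omega_{K^p}^{(a,b),\sm})$ is the filtered colimit over $K_p$ of the Čech complexes $\check{C}^\bullet(\mathfrak{U}_{K_p}, \omega_{\calS_{K^pK_p}}^{(a,b)})$ attached to the affinoid covers $\mathfrak{U}_{K_p} = \{V_{i,K_p}\}_{i \in I}$ of $\calS_{K^pK_p}$. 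Since filtered colimits are exact, passing to cohomology commutes with this colimit. For each fixed $K_p$, Tate acyclicity ensures that $\mathfrak{U}_{K_p}$ is a Leray cover for the coherent sheaf $\omega_{\calS_{K^pK_p}}^{(a,b)}$, so its Čech cohomology computes the sheaf cohomology, giving
\begin{align*}
\check{H}^i\!\left(\mathfrak{U}, \omega_{K^p}^{(a,b),\sm}\right) \;\cong\; \dlim_{K_p} H^i\!\left(\calS_{K^pK_p}, \omega_{\calS_{K^pK_p}}^{(a,b)}\right).
\end{align*}

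To finish it remains to identify $\check{H}^i(\mathfrak{U}, \omega_{K^p}^{(a,b),\sm})$ with the true sheaf cohomology $H^i(\check{\fl}, \omega_{K^p}^{(a,b),\sm})$. For this one checks that $\mathfrak{U}$ is Leray for $\omega_{K^p}^{(a,b),\sm}$, which amounts to showing the vanishing of higher cohomology on each affinoid intersection $U_{i_0 \cdots i_n}$; this follows by the same colimit-of-Čech argument applied to a finer affinoid cover of the intersection whose members again come from finite level, reducing once more to Tate acyclicity. Equivalently, one can prove directly that $R^j\pi_{K^p,\HT,*}\omega_{\calS_{K^p}}^{(a,b),\sm} = 0$ for $j > 0$ by evaluating on the basis of Lemma \ref{lem:prGammaHT}, pass to $H^i(\calS_{K^p}, \omega_{\calS_{K^p}}^{(a,b),\sm})$ via Leray, and then commute cohomology with the defining colimit.

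The main obstacle is the careful verification that the filtered colimit defining the smooth sheaf commutes with cohomology over the proper (not affinoid) base $\check{\fl}$; this is precisely what Lemma \ref{lem:prGammaHT} is designed to make possible, by providing a cofinal family of affinoid opens of $\check{\fl}$ whose preimages are both perfectoid at infinite level and descended from coherent affinoids at finite level, allowing Tate acyclicity to be transported to the infinite-level smooth setting.
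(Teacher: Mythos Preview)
Your proposal is correct and follows essentially the same approach as the paper, which simply defers to \cite[Lemma 5.3.5]{Pan22}; indeed you have accurately reconstructed that argument, using Lemma \ref{lem:prGammaHT} to produce a finite affinoid cover of $\check{\fl}$ whose preimages descend to finite level, then commuting the filtered colimit defining smooth vectors past the finite \v{C}ech complex and invoking Tate acyclicity at each finite level.
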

\begin{proposition}\label{prop:dRcohOKvalg}
    The following statements hold.
    \begin{enumerate}[(1)]
        \item There are natural isomorphisms 
        \[\begin{array}{c}
        \bbH^0(\check\fl,\DR^{\lalg})\stackrel{\sim}{\rightarrow} H^0(\check\fl,\calO_{K^p}^{\lalg,(0,-k)}),\\ 
        H^1(\check\fl,\calO_{K^p}^{\lalg,(0,-k)}\ox_{\calO_{K^p}^{\sm}}(\Omega_{K^p}^{1,\sm})^{\ox k+1})\stackrel{\sim}{\to} \bbH^2(\check\fl,\DR^{\lalg})        
        \end{array}\]
        and an exact sequence 
        \begin{align}\label{eq:Hodgefiltration}
            0\to H^0(\check\fl,\calO_{K^p}^{\lalg,(0,-k)}\ox_{\calO_{K^p}^{\sm}}(\Omega_{K^p}^{1,\sm})^{\ox k+1})\to \bbH^1(\check\fl,\DR^{\lalg})\to H^1(\check\fl,\calO_{K^p}^{\lalg,(0,-k)})\to 0.
        \end{align}
        Moreover, $\bbH^0(\check\fl,\DR^{\lalg})=\bbH^2(\check\fl,\DR^{\lalg})=0$ if $k>0$.
        \item The actions of $\bbT^{S}$ on $\bbH^i(\check\fl,\DR^{\lalg})$ are semisimple. And the actions of $D_p^{\times}$ on $\bbH^i(\check\fl,\DR^{\lalg})$ factors through the reduced norm for $i=0,2$.
    \end{enumerate}
\end{proposition}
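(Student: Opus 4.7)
My plan is to reduce the computation of $\bbH^\ast(\check\fl,\DR^{\lalg})$ to the classical BGG complex on finite-level Shimura curves and then apply Hodge--de Rham degeneration.

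I would first unpack $\DR^{\lalg}$ by combining the identification $\calO^{\lalg,(0,-k)}_{K^p}\isom W^{(0,-k)}\ox_C \omega_{K^p}^{(0,-k),\sm}$ from~(\ref{equationOKplalg}) with Theorem~\ref{theoremdifferentialoperatorsDr}(i) and Remark~\ref{remarktranslation}: the restriction of $d_{K^p}$ to the locally algebraic subsheaf is, up to the trivial action on the multiplicity space $W^{(0,-k)}$, the classical BGG/Maass--Shimura operator $\theta_k\colon\omega_{K^p}^{(0,-k),\sm}\to\omega_{K^p}^{(-k-1,1),\sm}$ obtained as the filtered colimit over $K_p$ of the Gauss--Manin-induced maps on the Shimura curves $\calS_{K^pK_p}$. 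Applying Lemma~\ref{lem:cohoofomegasm} termwise and invoking Faltings' BGG resolution at each finite level then yields
\[
    \bbH^i(\check\fl,\DR^{\lalg})\isom W^{(0,-k)}\ox_C\dlim_{K_p}H^i_{\dR}(\calS_{K^pK_p},V^{(0,-k)}).
\]

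For part~(1), the Hodge--de Rham spectral sequence of the two-term BGG complex on the compact smooth Shimura curve degenerates at $E_1$, which for this particular spectral sequence is exactly equivalent to the vanishing of the induced maps $H^q(\omega^{(0,-k)})\to H^q(\omega^{(-k-1,1)})$ for all $q$. Inserting this into the hypercohomology long exact sequence of $\DR^{\lalg}$ simultaneously produces the isomorphism $\bbH^0\simeq H^0(\calO^{\lalg,(0,-k)}_{K^p})$, the Hodge-filtration short exact sequence~(\ref{eq:Hodgefiltration}) on $\bbH^1$, and the identification $\bbH^2\simeq H^1(\calO^{\lalg,(0,-k)}_{K^p}\ox_{\calO_{K^p}^{\sm}}(\Omega^{1,\sm}_{K^p})^{\ox k+1})$. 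The vanishing $\bbH^0=\bbH^2=0$ for $k>0$ then reduces to the classical Eichler--Shimura vanishings $H^0_{\dR}(\calS,V^{(0,-k)})=H^2_{\dR}(\calS,V^{(0,-k)})=0$ on the compact quaternionic Shimura curve, since $V^{(0,-k)}$ has no monodromy invariants for $k>0$ and $H^2$ is dual to $H^0$ by Poincar\'e duality.

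For part~(2), semisimplicity of the $\bbT^S$-action on $\bbH^i(\check\fl,\DR^{\lalg})$ reduces through the identification above to the classical semisimplicity of $\bbT^S$ acting on each $H^i_{\dR}(\calS_{K^pK_p},V^{(0,-k)})$, which follows from Matsushima's formula together with the classical Jacquet--Langlands correspondence matching it with the decomposition~(\ref{AKvdecomp}) of $\calA^{K^p}_{\bar G,k}$ provided by Theorem~\ref{thm:decompositionautomorphicforms}. For the reduced-norm assertion on $\bbH^0$ and $\bbH^2$, the case $k>0$ is vacuous by the vanishing in~(1); for $k=0$ the multiplicity space $W^{(0,0)}$ is trivial, so $\bbH^0$ identifies with $\dlim_{K_p}\calC^{\sm}(\pi_0(\calS_{K^pK_p}),C)$ and $\bbH^2$ with its Serre dual. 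The $p$-adic uniformization~(\ref{uniformization}) expresses these component sets in terms of $\pi_0(\calM_{\Dr,\infty})\isom\bbZ$ quotiented by the discrete subgroups $\Gamma_x$, and the $D_p^\times$-action on $\bbZ$ is the shift by $v_p(\Nrd(g))$; hence the action on $\bbH^0$ and, dually on $\bbH^2$, factors through the reduced norm.

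The main technical obstacle, I expect, is verifying that the restriction of $d_{K^p}$ to the locally algebraic sheaf coincides cleanly with the classical BGG operator at each finite level, and that hypercohomology commutes with the colimit over $K_p$; this requires tracing through Remark~\ref{remarktranslation} and the descent from $\calM_{\Dr,\infty}$ to $\calS_{K^p}$ via the $p$-adic uniformization, while keeping careful track of the distinction between the $\GL_2(\bbQ_p)$-representation $V^{(0,-k)}$ (from the Gauss--Manin picture on the Lubin--Tate side) and the $D_p^\times$-representation $W^{(0,-k)}$ (the multiplicity space in~(\ref{equationOKplalg})). Once this identification is in hand, the remainder of the argument is standard Hodge theory and classical automorphic decomposition.
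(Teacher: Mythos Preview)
Your proposal is essentially correct and follows the same overall strategy as the paper: identify $\bbH^*(\check\fl,\DR^{\lalg})$ with de Rham cohomology of the finite-level Shimura curves via Lemma~\ref{lem:cohoofomegasm} and the BGG description, invoke Hodge--de Rham degeneration for part~(1), and use Matsushima's formula plus the description of connected components for part~(2).

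There is one tactical difference worth noting for part~(1) when $k>0$. You invoke degeneration of the Hodge--de Rham spectral sequence for the BGG complex with coefficients, which is valid but is a nontrivial input. The paper instead forward-references Lemma~\ref{lem:H0vanish}(2), which gives the vanishings $H^0(\calO_{K^p}^{\lalg,(0,-k)})=0$ and $H^1(\calO_{K^p}^{\lalg,(0,-k)}\ox(\Omega_{K^p}^{1,\sm})^{\ox k+1})=0$ directly from Serre duality and the fact that the $D_p^\times$-action on $\tilde H^0$ factors through the reduced norm. Plugging these two vanishings into the hypercohomology long exact sequence forces both $\bbH^0=\bbH^2=0$ and the short exact sequence~(\ref{eq:Hodgefiltration}) simultaneously, without ever needing to know that the maps $H^q(\omega^{(0,-k)})\to H^q(\omega^{(-k-1,1)})$ vanish for $q=0,1$ on general grounds. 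Your route is more conceptual; the paper's is more self-contained. For the reduced-norm assertion the paper also routes through Lemma~\ref{lem:H0vanish} (ultimately via Strauch's description of geometric connected components on the Lubin--Tate side), whereas you argue directly on $\pi_0(\calM_{\Dr,\infty})$; both are fine. Your flagged concern about $V^{(0,-k)}$ versus $W^{(0,-k)}$ is real but bookkeeping: the local system on $\calS_{K^pK_p}$ is the one attached to the $D_p^\times$-representation, and the paper handles this via the $p$-adic Riemann--Hilbert correspondence of \cite{DLLZ}.
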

\begin{proof}
    (1) For $k=0$, the hypercohomology of $\DR^{\lalg}$ is essentially the algebraic de Rham cohomology of the (finite level) Shimura curves by Lemma \ref{lem:cohoofomegasm} and Proposition \ref{prop:Wlalg}: 
    \[\bbH^i(\check\fl,\DR^{\lalg})=\colim_{K_p\subset D_p^{\times}}H^i_{\dR}(\calS_{K_pK^p})\]
    The proposition follows from the degeneration of the Hodge-de Rham spectral sequence. For $k>0$, the statements follow from (2) of Lemma \ref{lem:H0vanish} below.

    (2) Since $\DR^{\lalg}=W^{(0,-k)}\otimes_C[\omega^{(0,-k),\sm}_{K^p}\stackrel{}{\rightarrow} \omega^{(-k-1,1),\sm}_{K^p}]$, it shall be enough to consider the hypercohomology of the complex $[\omega^{(0,-k),\sm}_{K^p}\stackrel{}{\rightarrow} \omega^{(-k-1,1),\sm}_{K^p}]$. Fix a level $K=K^pK_p$. Let $S_{K,\overline{\bbQ_p}}$ denote the base change to $\overline{\bbQ_p}$ of the $\bbQ$-scheme in \S\ref{subsec:quaternionicshimuracurve}. Let $(V_{\rm dR},\nabla,\Fil^{\bullet}V_{\rm dR})$ be the filtered connexion corresponding to the \'etale local system  $\underline{V}^{(k,0)}_{\overline{\bbQ_p}}$ on $S_{K,\overline{\bbQ_p}}$ attached to the algebraic representation of $D_p^{\times}$ of the highest weight $(k,0)$ over $\overline{\bbQ_p}$ by the $p$-adic Riemann-Hilbert correspondence in \cite[Proposition 5.2.17]{DLLZ}. The $p$-adic correspondence in \textit{loc. cit.} is compatible with the complex correspondence \cite[Theorem 5.3.1]{DLLZ} and the pullback to $\calM_{\Dr,K_p}$ \cite[Theorem 3.9]{liu2017rigidity}. Hence, the pullback and restriction of $V_{\rm dR}$ to $\calM_{\Dr,\infty}$ coincides with the $\GL_2(\bbQ_p)$-equivariant bundle $V^{(0,-k)}\otimes_{C}\calO_{\calM_{\Dr,\infty}}$ (there is a dual due to our convention of the $D_p^{\times}$-action on $\calM_{\Dr,\infty}$, see Remark \ref{remarknormalizationLT}). Hence, by the usual dual BGG construction (cf. \cite{faltings2006cohomology}, \cite[Remark 6.1.9]{lan2023rham} and Remark \ref{remarktranslation}), we have
    \[R\Gamma(S_{K,\overline{\bbQ_p}},[\omega_{S_{K,\overline{\bbQ_p}}}^{(0,-k)}\rightarrow \omega_{S_{K,\overline{\bbQ_p}}}^{(-k-1,1)}])\otimes_{\overline{\bbQ_p}}\bbC\simeq R\Gamma(S_{K}(\bbC),[\omega_{S_{K}(\bbC)}^{(0,-k)}\rightarrow \omega_{S_{K}(\bbC)}^{(-k-1,1)}])\simeq R\Gamma(S_{K}(\bbC), \underline{V}_{\bbC}^{(k,0)}).\]   
    By Matsushima's formula (see \cite[\S VII Theorem 5.2]{borel2000continuous}, \cite[\S 15.5]{getz2024introduction} as well as \cite[\S 2.2.4]{carayol1994formes})
    \[H^i(S_{K}(\bbC), \underline{V}_{\bbC}^{(k,0)})=\oplus_{\pi}(\pi^{\infty})^K\otimes H^i(\mathfrak{sl}_{2,\bbR},\mathrm{SO}_2(\bbR),(\pi_{\infty}\otimes V_{\bbC}^{(k,0)})^{\bbR_{>0}})\]
    where $\pi$ runs through irreducible automorphic representations of $G(\bbQ)$ such that the central character of $\pi_{\infty}$ has the same restriction on $\mathbb{R}_{>0}$ with that of $V^{(0,-k)}$ and has the same (regular) infinitesimal character as $V^{(0,-k)}$. It's then clear that the action of $\bbT^S$ on $\bbH^i(\check\fl,\DR^{\lalg})$ is semi-simple.

    By the proof of Lemma \ref{lem:H0vanish}, the action of $D_p^{\times}$ on $\bbH^0(\check\fl,\DR^{\lalg})$ factors through the reduced norm. Then the same statement for $\bbH^2(\check\fl,\DR^{\lalg})$ follows from the Poincaré duality.
\end{proof}

We will also need the algebraic de Rham cohomology of the Lubin--Tate space. Set 
\begin{align*}
    R\Gamma_{\dR,c}(\calM_{\LT,\infty},\omega_{\calM_{\LT,\infty}}^{(0,-k),\sm}):=\mathrm{fib}(R\Gamma_{c}(\calM_{\LT,\infty},\omega_{\calM_{\LT,\infty}}^{(0,-k),\sm})\to R\Gamma_{c}(\calM_{\LT,\infty},\omega_{\calM_{\LT,\infty}}^{(-k-1,1),\sm})).
\end{align*}
\begin{lemma}\label{lemmaderhamlubintate}
    We have isomorphisms of locally algebraic $D_p^{\times}$-representations
    \begin{align*}
    H^i_{\dR,c}(\calM_{\LT,\infty},\omega_{\calM_{\LT,\infty}}^{(0,-k),\sm})= H^i(R\Gamma_{\dR,c}(\calM_{\LT,\infty},\calO_{\calM_{\LT,\infty}}^{\sm}))\ox W^{(0,-k)}\isom  H^i_{\dR,c}(\calM_{\LT,\infty})\ox W^{(0,-k)}
    \end{align*}
    for $i=1,2$. 
\end{lemma}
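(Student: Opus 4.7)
The plan is to combine the BGG quasi-isomorphism at each finite Lubin--Tate level with the horizontal trivialization of the covariant Dieudonn\'e crystal $M(\calG)$ at the infinite level in order to split off the $D_p^\times$-representation $W^{(0,-k)}$. At each finite level $n\geq 0$, the dual BGG construction (recalled in Remark \ref{remarktranslation} on the Drinfeld side and applied analogously on the Lubin--Tate side) yields a quasi-isomorphism between the two-term BGG complex $[\omega^{(0,-k)}_{\calM_{\LT,n}} \to \omega^{(-k-1,1)}_{\calM_{\LT,n}}]$ and the full de Rham complex of $\mathrm{Sym}^k M(\calG)^\vee$ equipped with its Gauss--Manin connection. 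Taking compactly supported cohomology and passing to the colimit over $n$ via Lemma \ref{lem:RpOLTsm}, we obtain
\[
H^i_{\dR,c}(\calM_{\LT,\infty}, \omega^{(0,-k),\sm}_{\calM_{\LT,\infty}}) \simeq \colim_n H^i_{\dR,c}(\calM_{\LT,n}, \mathrm{Sym}^k M(\calG)^\vee).
\]

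Next, I will trivialize $M(\calG)$ at the infinite Lubin--Tate level. By the identifications in Remark \ref{remarknormalizationLT} together with the two-towers isomorphism of Theorem \ref{thm:twotowers}, on $\calM_{\LT,\infty} = \calM_{\Dr,\infty}$ the covariant Dieudonn\'e module $M(\calG)$ is expressed in terms of the rational Tate module $V_p(\check\calG)$ of the universal Drinfeld module and the fixed isocrystal $N$ over $\breve{\bbQ}_p$ with $\End(N)=D_p$. Both factors carry trivial Gauss--Manin connection: $N$ because it is pulled back from the base field, and $V_p(\check\calG)$ because it is an \'etale local system whose trivialization coming from the infinite Drinfeld level structure is horizontal. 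This yields a $D_p^\times$-equivariant horizontal trivialization $\mathrm{Sym}^k M(\calG)^\vee \simeq W^{(0,-k)} \otimes \calO_{\calM_{\LT,\infty}}$ with trivial connection on the $W^{(0,-k)}$-factor, where the $D_p^\times$-action on $W^{(0,-k)}$ comes from the fixed splitting $D_p \simeq \End(N)$.

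Finally, applying the $p$-adic Riemann--Hilbert correspondence (as in the proof of Proposition \ref{prop:dRcohOKvalg}, via \cite[Proposition 5.2.17]{DLLZ}) at each finite level identifies the de Rham cohomology of $\mathrm{Sym}^k M(\calG)^\vee$ on $\calM_{\LT,n}$ with the cohomology of the corresponding \'etale local system $\mathrm{Sym}^k V_p(\calG)^\vee$. In the colimit as $n \to \infty$, the horizontal trivialization above identifies this local system with the constant sheaf $W^{(0,-k)}$ on $\calM_{\LT,\infty}$, and a K\"unneth-type splitting for constant coefficients yields
\[
\colim_n H^i_{\dR,c}(\calM_{\LT,n}, \mathrm{Sym}^k M(\calG)^\vee) \simeq W^{(0,-k)} \otimes H^i_{\dR,c}(\calM_{\LT,\infty})
\]
for all $i$, giving the desired isomorphisms in particular for $i = 1, 2$; the $D_p^\times$-equivariance of the resulting identification follows from the $D_p^\times$-equivariance of the trivialization of $M(\calG)$. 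The main technical obstacle is the careful execution of the Riemann--Hilbert step in the colimit, ensuring compatibility with compactly supported cohomology of the non-proper rigid spaces $\calM_{\LT,n}$ and with the $D_p^\times$-equivariant structures; in practice this can be handled by working \'etale-locally on $\check\fl$ (using the open covers of $\check{\fl}$ appearing in the proof of Proposition \ref{prop:dKvsurj}) and then assembling the local calculations.
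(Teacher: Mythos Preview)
Your first two paragraphs contain the essential content, but the Riemann--Hilbert detour in the third paragraph is both unnecessary and problematic. Once you have the horizontal trivialization $\mathrm{Sym}^k M(\calG)^\vee \simeq W^{(0,-k)} \otimes (\calO, d)$---which, incidentally, already holds at level $0$ by rigidity of the isocrystal on the Rapoport--Zink space (cf.\ \cite[Proposition~5.15]{RZ96}), not just at infinite level---the de Rham complex of $\mathrm{Sym}^k M(\calG)^\vee$ is literally $W^{(0,-k)} \otimes [\calO \to \Omega^1]$, and taking $H^i_c$ directly yields $W^{(0,-k)} \otimes H^i_{\dR,c}(\calM_{\LT,\infty})$. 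No Riemann--Hilbert is needed. Your Riemann--Hilbert step conflates two different trivializations: the \'etale local system $\mathrm{Sym}^k V_p(\calG)^\vee$ trivializes at infinite Lubin--Tate level as the $\GL_2(\bbQ_p)$-representation $V^{(0,-k)}$ (on which $D_p^\times$ acts trivially on the fibre), not as the $D_p^\times$-representation $W^{(0,-k)}$. The $D_p^\times$-structure $W^{(0,-k)}$ comes only from the crystalline side via $M(\calG)$, so passing through \'etale cohomology would not recover the stated $D_p^\times$-equivariance without a further, delicate comparison argument on the non-proper spaces $\calM_{\LT,n}$.

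The paper takes a shorter, purely representation-theoretic route. It observes that the twisted complex $[\omega^{(0,-k),\sm}_{\calM_{\LT,\infty}} \to \omega^{(-k-1,1),\sm}_{\calM_{\LT,\infty}}]$ is the translation (in the sense of translation functors for $\check\frg$-modules, cf.\ Remark~\ref{remarktranslation}) of the standard de Rham complex $[\calO^{\sm}_{\calM_{\LT,\infty}} \to \Omega^{1,\sm}_{\calM_{\LT,\infty}}]$ to weight $(0,-k)$. The key external input is that the compactly supported de Rham cohomology $H^i_{\dR,c}(\calM_{\LT,\infty})$ is \emph{smooth} for the $D_p^\times$-action, by \cite[Th\'eor\`eme~4.1]{CDN20}. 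Since translation of a smooth representation from the trivial infinitesimal character to weight $(0,-k)$ is simply tensoring with $W^{(0,-k)}$, the result follows at once. This bypasses both the explicit BGG quasi-isomorphism and the geometric trivialization, at the cost of invoking the smoothness result from \cite{CDN20}; your (corrected) geometric argument and the paper's argument are ultimately two faces of the same phenomenon.
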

\begin{proof}
    The cohomology groups of $R\Gamma_{\dR,c}(\calM_{\LT,\infty},\omega_{\calM_{\LT,\infty}}^{(0,-k),\sm})$ vanish on all degrees other than $i=1,2$ by acyclicity results for coherent sheaves on quasi-Stein spaces and Serre duality. The complex $\omega_{\calM_{\LT,\infty}}^{(0,-k),\sm}\to \omega_{\calM_{\LT,\infty}}^{(-k-1,1),\sm}$ can be seen as the translation of the standard complex $\calO_{\calM_{\LT,\infty}}^{\sm}\to \Omega^{1,\sm}_{\calM_{\LT,\infty}}$ from the trivial weight $(0,0)$ to the weight $(0,-k)$ (see \cite[Lemma 5.6.6]{PanII} for the Drinfeld case and Remark \ref{remarktranslation}). Note that the compactly supported cohomology groups of $[\calO_{\calM_{\LT,\infty}}^{\sm}\to \Omega^{1,\sm}_{\calM_{\LT,\infty}}]$ are smooth for the $D_p^\times$-action \cite[Th\'eor\`eme 4.1]{CDN20}, so that the translations of these cohomology groups equal to the tensor products with the algebraic representation $W^{(0,-k)}$.   
\end{proof}
\subsection{Cohomology of Intertwining operator II}\label{sec:cohomologyII}
The goal of this subsection is to prove  our main results: Theorem \ref{thm:H1DR}, Theorem \ref{thm:kerI1classical} and Corollary \ref{cor:kerI1DRDRprime}. We have defined complexes $\DR^{\lalg},\DR,\DR'$ in \S\ref{sec:cohomologyI}. By Proposition \ref{prop:dKvsurj}, we have the following commutative diagram with exact rows:
\begin{equation}\label{eq:DRdiagram}
     \begin{tikzcd}
        \DR^{\lalg} \arrow[r,rightarrow]&  \DR\arrow[r,rightarrow]& \DR' \\    
        \calO_{K^p}^{\lalg,(0,-k)} \arrow[r,hookrightarrow] \arrow[d, "d_{K^p}"] & \calO_{K^p}^{\lan,(0,-k)} \arrow[r, "\bar d_{K^p}",twoheadrightarrow] \arrow[d, "d_{K^p}",twoheadrightarrow] & \arrow[d, "d_{K^p}'",twoheadrightarrow] \calO_{K^p}^{\lan,(0,-k)}\ox_{\calO_{\check\fl}}(\Omega^1_{\check\fl})^{\ox k+1}   \\
        \calO_{K^p}^{\lalg,(0,-k)}\ox_{\calO_{K^p}^{\sm}}(\Omega^{1,\sm}_{K^p})^{\ox k+1}\arrow[r,hookrightarrow]& \calO_{K^p}^{\lan,(0,-k)}\ox_{\calO_{K^p}^{\sm}}(\Omega^{1,\sm}_{K^p})^{\ox k+1}  \arrow[r, "\bar d'_{K^p}",twoheadrightarrow]   &  \calO_{K^p}^{\lan,(-k-1,1)}(k+1)       
    \end{tikzcd}
\end{equation}
We write for short $\bbH^i(\calF)$ for the hypercohomology $\bbH^i(\check\fl,\calF)$ for a complex of sheaves on $\check{\fl}$ and $H^i(\calF)$ for $H^i(\check\fl,\calF)$ if $\calF$ is an abelian sheaf. Recall that the analytic space $\check\fl$ has cohomological (Krull) dimension $1$ (see \cite[Proposition 2.5.8]{de1996etale} and also the proof of \cite[Corollary IV.2.2]{Sch15}).

We collect some lemmas for later uses.
\begin{lemma}\label{lem:H0vanish}
    Let $k\geq 0$.
    \begin{enumerate}[(1)]
        \item For $(a,b)\in\bbZ^2$, we have $H^0(\calO^{\lan,(a,b)}_{K^p})=0$ if $a\neq b$.
        \item If $k>0$, then $H^0(\calO_{K^p}^{\lalg,(0,-k)})=0$ and $H^1(\calO_{K^p}^{\lalg,(0,-k)}\ox_{\calO_{K^p}^{\sm}}(\Omega_{K^p}^{1,\sm})^{\ox k+1})=0$.
        \item The natural map $H^0(\calO_{K^p}^{\lalg,(0,-k)})\to H^0(\calO_{K^p}^{\lan,(0,-k)})$ is an isomorphism. 
        \item The natural map $H^0(\calO_{K^p}^{\lalg,(0,-k)}\ox_{\calO_{K^p}^{\sm}}(\Omega_{K^p}^{1,\sm})^{\ox k+1})\to H^0(\calO_{K^p}^{\lan,(0,-k)}\ox_{\calO_{K^p}^{\sm}}(\Omega_{K^p}^{1,\sm})^{\ox k+1})$ is an isomorphism.
    \end{enumerate}
\end{lemma}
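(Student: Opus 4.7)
The plan is to prove (1) first as the main weight-vanishing tool, and then derive the other statements from it via short exact sequence arguments and Serre duality at finite levels. For (1), I will use Theorem \ref{thm:completedcohomology} to identify $H^0(\check\fl,\calO_{K^p}^{\lan})$ with $\tilde{H}^0(K^p,C)^{\lan}$, which is realized as the space of continuous $C$-valued functions on the profinite set $\pi_0:=\ilim_{K_p}\pi_0(S_{K^pK_p})$, on which the $D_p^\times$-action factors through the reduced norm $\Nrd:D_p^\times\to\bbQ_p^\times$. Consequently the Lie algebra action of $\check\frg$ on any locally analytic vector $v$ factors through the reduced trace, so there is a canonical element $Dv\in\tilde{H}^0(K^p,C)$ with $X\cdot v=\Trd(X)\cdot Dv$ for all $X\in\check\frg$. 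Specializing $X=\mathrm{diag}(x,y)\in\check\frh$ and comparing with the weight condition $\mathrm{diag}(x,y)\cdot v=(ax+by)v$ yields $(ax+by)v=(x+y)Dv$; taking $(x,y)=(1,0),(0,1)$ in turn forces $av=Dv=bv$, so $v=0$ whenever $a\neq b$.

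For the first part of (2), the inclusion $\calO_{K^p}^{\lalg,(0,-k)}\hookrightarrow\calO_{K^p}^{\lan,(0,-k)}$ induces an injection on $H^0$, whose target vanishes by (1) since $0\neq -k$ for $k>0$. For the second part, the description (\ref{equationOKplalg}) together with $(\Omega_{K^p}^{1,\sm})^{\otimes k+1}\cong\omega_{K^p}^{(-k-1,k+1),\sm}$ identifies $\calO_{K^p}^{\lalg,(0,-k)}\otimes_{\calO_{K^p}^{\sm}}(\Omega_{K^p}^{1,\sm})^{\otimes k+1}$ with $W^{(0,-k)}\otimes_C\omega_{K^p}^{(-k-1,1),\sm}$, so by Lemma \ref{lem:cohoofomegasm} it suffices to show $\dlim_{K_p}H^1(\calS_{K^pK_p},\omega^{(-k-1,1)})=0$. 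On each component, which has genus $g\ge 2$ since $K^p$ is sufficiently small, Serre duality gives $H^1(\omega^{(-k-1,1)})\cong H^0(\omega^{(k,0)})^{\vee}$; the Grothendieck--Messing description in Remark \ref{remarknormalizationLT} identifies $\omega^{(1,0)}$ with the Lie bundle of the universal quaternionic abelian scheme, which has negative degree on the Shimura curve, so $\omega^{(k,0)}$ has negative degree for $k>0$ and its $H^0$ vanishes.

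For (3), the case $k>0$ is immediate from (1) and (2). For $k=0$, one has $\calO_{K^p}^{\lalg,(0,0)}\cong\calO_{K^p}^{\sm}$, so only the surjectivity of the comparison map needs verification: any $v\in H^0(\calO_{K^p}^{\lan,(0,0)})$ is a locally analytic vector in $\tilde{H}^0(K^p,C)$ of horizontal weight $(0,0)$, and the argument from (1) with $a=b=0$ gives $(x+y)Dv=0$ for all $(x,y)$, hence $Dv=0$, so $v$ is stabilized by an open subgroup of $\bbQ_p^\times$ and is therefore smooth. For (4), I will apply Proposition \ref{prop:dKvsurj}(1) to obtain the short exact sequence
\begin{equation*}
0\to\calO_{K^p}^{\lalg,(0,-k)}\otimes_{\calO_{K^p}^{\sm}}(\Omega_{K^p}^{1,\sm})^{\otimes k+1}\to\calO_{K^p}^{\lan,(0,-k)}\otimes_{\calO_{K^p}^{\sm}}(\Omega_{K^p}^{1,\sm})^{\otimes k+1}\xrightarrow{\bar d_{K^p}'}\calO_{K^p}^{\lan,(-k-1,1)}(k+1)\to 0,
\end{equation*}
whose associated long exact sequence shows that the comparison map of (4) is an isomorphism provided $H^0(\calO_{K^p}^{\lan,(-k-1,1)})=0$, and this vanishing is supplied by (1) since $-k-1\neq 1$ for every $k\ge 0$.

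The step I expect to require the most care is the degree computation for (2): one has to track the Kodaira--Spencer isomorphism $\Omega_{\calS_{K^pK_p}}^1\cong\omega^{(-1,1)}$ together with the central character of the automorphic line bundles coming from the quaternionic structure, and verify from first principles that $\omega^{(1,0)}$ has strictly negative degree on each geometric component; this is routine for split modular curves, but needs an extra verification in the quaternionic setting because the Kodaira--Spencer relation differs slightly from its split form and the relevant line bundle only descends to finite level after fixing a compatible central character.
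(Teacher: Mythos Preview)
Your arguments for (1), (3), and (4) are correct and essentially coincide with the paper's: the same identification of $H^0(\calO_{K^p}^{\lan})$ with $\tilde{H}^0(K^p,C)^{\lan}$, the same use of the factorisation through the reduced norm, and the same short exact sequence from $\bar d_{K^p}'$ for (4).

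The genuine divergence is in the second half of (2). After Serre duality you land at $H^0(\omega^{(k,0),\sm})$ and then attempt a direct degree computation for $\omega^{(1,0)}$ on the Shimura curve. The paper instead closes the loop back to (1): by Proposition~\ref{prop:Wlalg} one has $\calO_{K^p}^{\lalg,(k,0)}\cong W^{(k,0)}\otimes\omega_{K^p}^{(k,0),\sm}(k)$ (or the analogous formula for $(0,-k)$), so $H^0(\omega^{(k,0),\sm})\neq 0$ would force $H^0(\calO_{K^p}^{\lan,(k,0)})\neq 0$, contradicting (1). This bypasses the degree computation completely. Your approach is not wrong in principle, but the step you yourself flag as most delicate is exactly the one the paper avoids; moreover, your claim that $\omega^{(1,0)}$ has negative degree is sensitive to the paper's normalisations (recall $\Omega^1_{\calS_{K^p}}\cong\omega^{(-1,1),\sm}$ and the twist in $\omega_{\calS_{K^p}}^{(a,b)}=\pi_{K^p,\HT}^*\omega_{\check\fl}^{(-a,-b)}(-a)$), and in these conventions the sign of $\deg\omega^{(1,0)}$ is not the one you assert without further unwinding. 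The paper's route is both shorter and safer here.
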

\begin{proof}
    (1) This follows from the same arguments as \cite[Corollary 5.1.3 (i)]{Pan22}, and we briefly sketch a proof here. By Theorem \ref{thm:completedcohomology}, there is a natural isomorphism $H^0(\check \fl,\calO^{\lan}_{K^p})\isom \tilde{H}^0(K^p,C)^{\lan}$. The action of $D_p^\times$ on $\tilde{H}^0(K^p,C)$ factors through the reduced norm map. (For example, by definition $\tilde{H}^0(K^p,C)$ is the completed cohomology of the tower of connected components of finite level Shimura curves, then using the $p$-adic uniformization (Theorem \ref{thm:padicuniformization}) we are reduced to the description of the geometric connected components of Drinfeld towers of dimension $1$ over $\bbQ_p$, and then we are reduced to the Lubin--Tate side by the isomorphism of the two towers, which follows from the main result of \cite{strauch2008geometrically}.) From the explicit description of the $\theta_{\check{\frh}}$-action, we see $\theta_{\check{\frh}}\left( \begin{matrix} 1&0\\0&-1 \end{matrix} \right)\in \Lie D_p^\times\ox_{\bbQ_p}C$ acts trivially on $H^0(\check \fl,\calO^{\lan,(a,b)}_{K^p})$. But we know $\theta_{\check{\frh}}\left( \begin{matrix} 1&0\\0&-1 \end{matrix} \right)$ acts on $\calO^{\lan,(a,b)}_{K^p}$ via $a-b$, so that it also acts on $H^0(\check \fl,\calO^{\lan,(a,b)}_{K^p})$ via $a-b$. Hence $H^0(\check \fl,\calO^{\lan,(a,b)}_{K^p})=0$ if $a-b\neq 0$.

    (2) Since $\calO_{K^p}^{\lalg,(0,-k)}=W^{(0,-k)}\otimes \omega_{K^p}^{(0,-k),\sm}$ and $\calO_{K^p}^{\lalg,(0,-k)}\ox_{\calO_{K^p}^{\sm}}(\Omega_{K^p}^{1,\sm})^{\ox k+1}=W^{(0,-k)}\otimes \omega_{K^p}^{(-k-1,1),\sm}$( cf. Proposition \ref{prop:Wlalg}), it's enough to show that $H^0(\omega_{K^p}^{(0,-k),\sm})=0$ and $H^1(\omega_{K^p}^{(-k-1,1),\sm})=0$. Using Lemma \ref{lem:cohoofomegasm} and by the usual Serre duality on the proper spaces $\calS_{K^pK_p}$, it suffices to show $H^0(\omega_{K^p}^{(0,-k),\sm})=0$, equivalently $H^0(\calO_{K^p}^{\lalg,(0,-k)})=0$, if $k>0$. But there is an injection $H^0(\calO_{K^p}^{\lalg,(0,-k)})\inj H^0(\calO_{K^p}^{\lan,(0,-k)})$ and $H^0(\calO_{K^p}^{\lan,(0,-k)})$ vanishes for $k>0$ by (1).    

    (3) If $k\neq 0$, then $H^0(\calO_{K^p}^{\lan,(0,-k)})=0$ by (1) and (2). Hence it suffices to handle the case when $k=0$. In this case, as in (1), the action of $D_p^\times$ on $H^0(\calO_{K^p}^{\lan,(0,-k)})$ factors through the reduced norm map. Besides, the central element $\left( \begin{matrix} 1&0\\0&1\end{matrix} \right)\in \Lie(D_p^\times)\ox_{\bbQ_p}C$ acts trivially on $H^0(\calO_{K^p}^{\lan,(0,-k)})$ by Theorem \ref{thm:Senaction}. Hence the $D_p^\times$-action on $H^0(\calO_{K^p}^{\lan,(0,-k)})$ is smooth, which shows $H^0(\calO_{K^p}^{\lalg,(0,-k)})\isom  H^0(\calO_{K^p}^{\lan,(0,-k)})$.

    (4) By Proposition \ref{prop:dKvsurj}, $\bar d'_{K^p}:\calO_{K^p}^{\lan,(0,-k)}\ox_{\calO_{K^p}^{\sm}}(\Omega_{K^p}^{1,\sm})^{\ox k+1}\to \calO_{K^p}^{\lan,(-k-1,1)}(k+1)$ is surjective with kernel $\calO_{K^p}^{\lalg,(0,-k)}\ox_{\calO_{K^p}^{\sm}}(\Omega_{K^p}^{1,\sm})^{\ox k+1}$. The desired isomorphism follows from that $H^0(\calO_{K^p}^{\lan,(-k-1,1)}(k+1))=0$ by (1).
\end{proof}

\begin{lemma}\label{lem:injH1O0Ola}
The natural map $H^1(\calO_{K^p}^{\lalg,(0,-k)})\to H^1(\calO_{K^p}^{\lan})$ is injective.
\end{lemma}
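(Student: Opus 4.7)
The plan is to prove injectivity by factoring the map as
\[
H^1(\calO_{K^p}^{\lalg,(0,-k)}) \longrightarrow H^1(\calO_{K^p}^{\lan,(0,-k)}) \longrightarrow H^1(\calO_{K^p}^{\lan})
\]
and establishing injectivity of each arrow separately.

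For the first arrow I would invoke the short exact sequence of Proposition \ref{prop:dKvsurj}(1),
\[
0 \to \calO_{K^p}^{\lalg,(0,-k)} \to \calO_{K^p}^{\lan,(0,-k)} \xrightarrow{\bar d_{K^p}} Q \to 0,
\]
where $Q := \calO_{K^p}^{\lan,(0,-k)}\otimes_{\calO_{\check\fl}}(\Omega^1_{\check\fl})^{\ox k+1}$, and consider its long exact sequence; injectivity reduces to $H^0(\check\fl, Q) = 0$. The sheaf $Q$ has horizontal $\check\frh$-weight $(-k-1,1)$, which is off the diagonal for every $k\geq 0$, so $\theta_{\check\frh}(h)$ with $h = \mathrm{diag}(1,-1)$ acts on $Q$ by the nonzero scalar $-k-2$. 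I would then adapt Lemma \ref{lem:H0vanish}(1) to this twisted setting: $H^0(Q)$ embeds into $H^0(\check\fl,\calO_{K^p}^{\lan}\otimes_{\calO_{\check\fl}}(\Omega^1_{\check\fl})^{\ox k+1})$, and on the latter the $D_p^\times$-action should force $\theta_{\check\frh}(h)$ to act as zero, using a reduced-norm argument analogous to the one for $\tilde H^0(K^p,C)$; this contradicts the scalar action of $-k-2$ unless $H^0(Q)=0$.

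For the second arrow I would proceed more globally. By Proposition \ref{prop:Wlalg} and Lemma \ref{lem:cohoofomegasm},
\[
H^1(\calO_{K^p}^{\lalg,(0,-k)}) \;\isom\; W^{(0,-k)}\otimes_C \dlim_{K_p} H^1(\calS_{K^pK_p}, \omega_{\calS_{K^pK_p}}^{(0,-k)}),
\]
which is the lower Hodge piece of the algebraic de Rham cohomology of $\{\calS_{K^pK_p}\}$ with coefficients in the automorphic local system attached to $V^{(k,0)}$ (cf.\ Proposition \ref{prop:dRcohOKvalg}). Combined with the Hodge--Tate comparison and the identification $\tilde H^1(K^p,C)^{\lan}\isom H^1(\check\fl,\calO_{K^p}^{\lan})$ of Theorem \ref{thm:completedcohomology}, this embeds into $H^1(\calO_{K^p}^{\lan})$ via the Eichler--Shimura-type injection of Hodge pieces into completed étale cohomology of the Shimura tower. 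One then checks that this embedding factors through the horizontal weight-$(0,-k)$ part and agrees with the map under consideration.

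The main obstacle is the $H^0$-vanishing in the first step: unlike the untwisted situation of Lemma \ref{lem:H0vanish}(1), there is no direct identification of $H^0(\check\fl,\calO_{K^p}^{\lan}\otimes_{\calO_{\check\fl}}(\Omega^1_{\check\fl})^{\ox k+1})$ with a completed cohomology group carrying an obvious reduced-norm action, so the factorization through the reduced norm has to be re-established. A concrete rigorous route is a direct \v Cech calculation on the standard affinoid cover of $\check\fl\cong\bbP^1$ by two closed disks: using the explicit power-series description of $\calO_{K^p}^{\lan,(0,-k)}$ from \cite[Theorem 3.2.2]{PanII}, the transition cocycle $z^{-2(k+1)}$ between the local generators $dz^{\ox k+1}$ and $dw^{\ox k+1}$ (with $w = z^{-1}$) obstructs the matching of local sections into a global one, forcing $H^0(Q)=0$.
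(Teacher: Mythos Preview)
Your factorization plan has a genuine gap in the second arrow. The map $H^1(\calO_{K^p}^{\lan,(0,-k)}) \to H^1(\calO_{K^p}^{\lan})$ is \emph{not} injective in general: as the paper itself records in the proof of Lemma~\ref{lem:H0H1lambda}(1), its kernel is $\Ext^1_{C[\frh]}\bigl((0,-k),H^0(\calO_{K^p}^{\lan})\bigr)$. For $k=0$ this Ext-group is nonzero --- the constant functions in $\tilde H^0(K^p,C)^{\lan}$ give a copy of $C$ on which both $h$ and $z$ act by zero, and $\Ext^1_{C[z,h]}(C_{(0,0)},C_{(0,0)})$ is $2$-dimensional. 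So for $k=0$ your two-step scheme cannot close as stated, and the paper indeed only obtains injectivity of each individual arrow \emph{after} localizing at a cuspidal eigensystem (and in fact uses the present lemma as input for that localized statement, so the logical direction is the reverse of what you propose).

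What you write for the ``second arrow'' is not actually an argument for that arrow at all: identifying $H^1(\calO_{K^p}^{\lalg,(0,-k)})$ with a Hodge piece of classical de Rham cohomology and invoking the Eichler--Shimura-type embedding into completed cohomology establishes injectivity of the \emph{composite} directly. This is precisely the paper's proof --- it identifies the source with $W^{(0,-k)}\otimes\varinjlim_{K_p}H^1(\calS_{K^pK_p},\omega^{(0,-k)})$, the target with $\tilde H^1(K^p,C)^{\lan}$, and then appeals to the $p$-adic comparison theorem together with Emerton's result that $\varinjlim_{K_p}H^1_{\et}(\calS_{K^pK_p},W)$ embeds into completed cohomology. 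Once you have this, the detour through $H^1(\calO_{K^p}^{\lan,(0,-k)})$ is unnecessary (and, as noted, it does not work). Separately, your proposed arguments for $H^0(Q)=0$ are not solid: the reduced-norm factorization is for $H^0(\calO_{K^p}^{\lan})$, not for its twist by a nontrivial equivariant line bundle on $\check\fl$, and the \v Cech sketch treats $\calO_{K^p}^{\lan,(0,-k)}$ as if it were the structure sheaf rather than the large sheaf it is. (The vanishing $H^0(Q)=0$ is in fact true, but it is a \emph{consequence} of the lemma rather than an independent route to it.)
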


\begin{proof}
    By Theorem \ref{thm:completedcohomology}, we can identify $H^1(\calO_{K^p}^{\lan})$ with $\tilde{H}^1(K^p,C)^{\lan}$. By Lemma \ref{lem:cohoofomegasm} and Proposition \ref{prop:Wlalg}, we can identify $H^1(\calO_{K^p}^{\lalg,(0,-k)})$ with (a translation of) the cohomology of automorphic line bundles on $\calS_{K^pK_p}$. Therefore, using the $p$-adic \'etale comparison theorem applied to $\dlim_{K_p}H^1_{\et}(\calS_{K^pK_p},W)$, where $W$ is a local system associated to the algebraic representation $W^{(k,0)}$ of $D_p^\times$, it suffices to show some non-completed cohomology groups is a subspace (the Sen weight $0$ part of the locally $W^{(0,-k)}$-algebraic vectors) of the completed cohomology group of the Shimura curves. This follows from the isomorphism
    \[\dlim_{K_p}H^1_{\et}(\calS_{K^pK_p},W)=\Hom_{\mathfrak{\gl}_2}(W^{(0,-k)},H^1(\calO_{K^p}^{\lalg,(0,-k)}))\]
    which can be proved using similar arguments as for the modular curves \cite[(4.3.4)]{Emerton2006interpolation}.

    We also sketch a (slightly different) argument for the $k=0$ case. We have $R\Gamma(\check\fl,R\Gamma(\gl_2,\calO_{K^p}^{\lan}))=R\Gamma(\gl_2,R\Gamma(\check\fl,\calO_{K^p}^{\lan}))$ (cf. \cite[Theorem 4.1]{gee2025modularity}). Hence there is a Grothendieck spectral sequence 
    \[E_2^{ij}=H^i(\check\fl,H^j(\gl_2,\calO_{K^p}^{\lan}))\Rightarrow \bbH^{i+j}(R\Gamma(\gl_2,R\Gamma(\check\fl,\calO_{K^p}^{\lan}))).\]
    Since $H^0(\gl_2,\calO_{K^p}^{\lan})=\calO_{K^p}^{\sm}$, we get an injection $ H^1(\check\fl,\calO_{K^p}^{\sm})\to \bbH^1(R\Gamma(\frg,R\Gamma(\check\fl,\calO_{K^p}^{\lan})))=H^0(\frg,H^1(\check\fl,\calO_{K^p}^{\lan}))$ where the last equality can be obtained using a version of \cite[Corollary 4.3.2]{Emerton2006interpolation} for Shimura curves as in the proof for \cite[(4.3.4)]{Emerton2006interpolation}. 
\end{proof}

Recall that $\sigma_{k,c}^{K^p}$ (Definition \ref{def:setHeckeeigenvalues}) is the set of cuspidal Hecke eigensystems that appear in the space of algebra automorphic forms of weight $V^{(k,0)}$ in $\calA_{\bar G,k}^{K^p}$ defined in \S\ref{sec:productformula}. We specialize to the Hecke eigenspaces for $\lambda\in \sigma_{k,c}^{K^p}$.
\begin{lemma}\label{lem:H0H1lambda}
Let $\lambda\in \sigma^{K^p}_{k,c}$. 
\begin{enumerate}[(1)]
    \item The natural map $H^1(\calO_{K^p}^{\lalg,(0,-k)})[\lambda]\to H^1(\calO_{K^p}^{\lan,(0,-k)})[\lambda]$ is injective.
    \item $H^0(\calO_{K^p}^{\lan,(0,-k)}\ox_{\calO_{\check\fl}}(\Omega^1_{\check\fl})^{\ox k+1})[\lambda]=0$.
    \item $H^0(\ker d'_{K^p})[\lambda]=0$.
\end{enumerate}

\end{lemma}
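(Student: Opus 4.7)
All three parts follow by a short diagram chase from Lemma \ref{lem:injH1O0Ola} and Proposition \ref{prop:dKvsurj}; in fact none requires the restriction to $\lambda$-eigenspaces, so the statement will fall out of the stronger unrestricted versions. The plan is to deduce (1) formally from Lemma \ref{lem:injH1O0Ola}, use (1) together with the middle sheaf-level row of (\ref{eq:DRdiagram}) to obtain (2), and then obtain (3) as an immediate corollary of (2).

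For (1), the natural map factors as
\[ H^1(\calO_{K^p}^{\lalg,(0,-k)}) \longrightarrow H^1(\calO_{K^p}^{\lan,(0,-k)}) \longrightarrow H^1(\calO_{K^p}^{\lan}), \]
whose composition is injective by Lemma \ref{lem:injH1O0Ola}. Hence the first arrow is injective, and passing to any $\lambda$-eigenspace preserves injectivity.

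For (2), Proposition \ref{prop:dKvsurj}(1) furnishes the short exact sequence of sheaves on $\check\fl$
\[ 0 \to \calO_{K^p}^{\lalg,(0,-k)} \to \calO_{K^p}^{\lan,(0,-k)} \xrightarrow{\bar d_{K^p}} \calO_{K^p}^{\lan,(0,-k)}\ox_{\calO_{\check\fl}}(\Omega^1_{\check\fl})^{\ox k+1} \to 0. \]
In the associated long exact sequence, Lemma \ref{lem:H0vanish}(3) identifies $H^0(\calO_{K^p}^{\lalg,(0,-k)}) \isom H^0(\calO_{K^p}^{\lan,(0,-k)})$, so the map $H^0(\bar d_{K^p})$ is zero and therefore $H^0(\calO_{K^p}^{\lan,(0,-k)}\ox_{\calO_{\check\fl}}(\Omega^1_{\check\fl})^{\ox k+1}) \hookrightarrow H^1(\calO_{K^p}^{\lalg,(0,-k)})$. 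On the other hand, the unrestricted form of (1) already gives that $H^1(\calO_{K^p}^{\lalg,(0,-k)}) \hookrightarrow H^1(\calO_{K^p}^{\lan,(0,-k)})$ is injective, so the connecting map vanishes. Together these force $H^0(\calO_{K^p}^{\lan,(0,-k)}\ox_{\calO_{\check\fl}}(\Omega^1_{\check\fl})^{\ox k+1}) = 0$, with no eigenspace restriction needed.

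Finally, for (3), the surjectivity of $d'_{K^p}$ from Proposition \ref{prop:dKvsurj}(2) gives
\[ 0 \to \ker d'_{K^p} \to \calO_{K^p}^{\lan,(0,-k)}\ox_{\calO_{\check\fl}}(\Omega^1_{\check\fl})^{\ox k+1} \xrightarrow{d'_{K^p}} \calO_{K^p}^{\lan,(-k-1,1)}(k+1) \to 0, \]
so taking $H^0$ embeds $H^0(\ker d'_{K^p})$ as a subspace of the group shown in (2) to vanish. The whole argument is purely formal once Lemma \ref{lem:injH1O0Ola} is in hand; that earlier lemma is where the primitive comparison theorem and the identification of algebraic cohomology with classical automorphic cohomology enter, and beyond invoking it there is no further obstacle. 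The role of the cuspidality assumption $\lambda\in \sigma^{K^p}_{k,c}$ here is only to match the shape in which this lemma will be applied in the sequel; the proof yields the three vanishing statements on the full cohomology groups.
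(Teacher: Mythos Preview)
Your proof is correct and, for part (1), is in fact cleaner than the paper's. The paper argues (1) by first establishing a short exact sequence
\[ 0\to \Ext^1_{C[\frh]}(\chi,H^0(\calO_{K^p}^{\lan}))\to H^1(\calO_{K^p}^{\lan,(0,-k)})\to H^1(\calO_{K^p}^{\lan})^{\chi} \]
and then using the cuspidality of $\lambda$ to kill the $\Ext^1$-term (since $H^0(\calO_{K^p}^{\lan})$ is Eisenstein), thereby showing the second arrow $H^1(\calO_{K^p}^{\lan,(0,-k)})[\lambda]\hookrightarrow H^1(\calO_{K^p}^{\lan})$ is injective before invoking Lemma \ref{lem:injH1O0Ola}. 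Your observation that the factorization $H^1(\calO_{K^p}^{\lalg,(0,-k)})\to H^1(\calO_{K^p}^{\lan,(0,-k)})\to H^1(\calO_{K^p}^{\lan})$ together with injectivity of the composite (Lemma \ref{lem:injH1O0Ola}) already forces injectivity of the first arrow bypasses all of this, and yields the unrestricted statement. The paper's extra step does give the additional information that $H^1(\calO_{K^p}^{\lan,(0,-k)})[\lambda]\hookrightarrow H^1(\calO_{K^p}^{\lan})$, but this is not needed for the lemma as stated. Your arguments for (2) and (3) coincide with the paper's.
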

\begin{proof}

(1) Using similar arguments as in the proof for \cite[Corollary 5.1.3]{Pan22}, there is a short exact sequence 
\[0\rightarrow\Ext^1_{C[\frh]}(\chi,H^0(\calO_{K^p}^{\lan}))\rightarrow H^1(\calO_{K^p}^{\lan,\chi})\rightarrow H^1(\calO_{K^p}^{\lan})^{\chi}.\]
where $\chi$ denotes the character of $C[\frh]$ corresponding to the weight $(0,-k)$ of $\frh$. As in the proof for (1) of Lemma \ref{lem:H0vanish}. $H^0(\check \fl,\calO^{\lan}_{K^p})\isom \tilde{H}^0(K^p,C)^{\lan}$ and the action of $G(\bbA_f)$ on $\colim_{K^p}\tilde{H}^0(K^p,C)^{\lan}$ factors through the reduced norm of $G$ (cf. \cite[(4.2)]{Emerton2006interpolation}). Hence $H^0(\calO_{K^p}^{\lan})$ vanishes after being localized at the ideal of $\bbT^S$ corresponding to the cuspidal eigensystem $\lambda$. It's then not hard to see that $\Ext^1_{C[\frh]}(\chi,H^0(\calO_{K^p}^{\lan}))[\lambda]=0$. Then, by the above short exact sequence, we see that the map $H^1(\calO_{K^p}^{\lan,(0,-k)})[\lambda]\rightarrow H^1(\calO_{K_p}^{\lan})$ is an injection. Hence the map $H^1(\calO_{K^p}^{\lalg,(0,-k)})[\lambda]\to H^1(\calO_{K^p}^{\lan,(0,-k)})[\lambda]\subset H^1(\calO_{K^p}^{\lan})[\lambda]$ is injective by Lemma \ref{lem:injH1O0Ola}.

(2) Taking cohomology for the short exact sequence in the second row of (\ref{eq:DRdiagram}) for $\bar d_{K^p}$, we see that the statement is a direct consequence of (1) together with (3) of Lemma \ref{lem:H0vanish}. 

(3) As $\ker d'_{K^p}\subset \calO_{K^p}^{\lan,(0,-k)}\ox_{\calO_{\check\fl}}(\Omega^1_{\check\fl})^{\ox k+1}$, we deduce $H^0(\ker d'_{K^p})[\lambda]=0$ from (2).
\end{proof}

By (2) of Proposition \ref{prop:dKvsurj}, there is an exact sequence 
\begin{align}\label{equationsequencedKv}
    0\to \ker d_{K^p}\to \calO_{K^p}^{\lan,(0,-k)}\to \calO_{K^p}^{\lan,(0,-k)}\ox_{\calO_{K^p}^{\sm}}(\Omega_{K^p}^{1,\sm})^{\ox k+1}\to 0.
\end{align}
\begin{lemma}\label{lem:cohoexactseqdKv}
    There exists a $\bbT^S$-equivariant isomorphism of $D_p^{\times}$-representations:
    \begin{align*}
        H^0(\ker d_{K^p})\stackrel{\sim}{\rightarrow} H^0(\calO_{K^p}^{\lalg,(0,-k)}) 
    \end{align*}
    And $H^1(\ker d_{K^p})$ fits into a $\bbT^S$-equivariant short exact sequence of $D_p^{\times}$-representations:
    \begin{align*}
        0\to H^0(\calO_{K^p}^{\lalg,(0,-k)}\ox_{\calO_{K^p}^{\sm}}(\Omega_{K^p}^{1,\sm})^{\ox k+1}) \to H^1(\ker d_{K^p}) \to \ker H^1(d_{K^p})\to 0.
    \end{align*}
    where $ H^1(d_{K^p}):H^1(\calO_{K^p}^{\lan,(0,-k)})\to H^1(\calO_{K^p}^{\lan,(0,-k)}\ox_{\calO_{K^p}^{\sm}}(\Omega_{K^p}^{1,\sm})^{\ox k+1})$.
\end{lemma}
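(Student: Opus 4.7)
The plan is to apply the long exact sequence in cohomology to the short exact sequence (\ref{equationsequencedKv}) and reduce everything to showing that the induced map $H^0(d_{K^p}) : H^0(\calO_{K^p}^{\lan,(0,-k)}) \to H^0(\calO_{K^p}^{\lan,(0,-k)} \otimes_{\calO_{K^p}^{\sm}} (\Omega_{K^p}^{1,\sm})^{\otimes k+1})$ vanishes. Once this is established, the long exact sequence yields both the isomorphism $H^0(\ker d_{K^p}) \isom H^0(\calO_{K^p}^{\lan,(0,-k)})$ and the desired short exact sequence, with all maps automatically $\bbT^S$-equivariant and $D_p^\times$-equivariant since the whole short exact sequence is. The right-hand term $H^0(\calO_{K^p}^{\lan,(0,-k)} \otimes_{\calO_{K^p}^{\sm}} (\Omega_{K^p}^{1,\sm})^{\otimes k+1})$ is then rewritten as $H^0(\calO_{K^p}^{\lalg,(0,-k)} \otimes_{\calO_{K^p}^{\sm}} (\Omega_{K^p}^{1,\sm})^{\otimes k+1})$ using part (4) of Lemma \ref{lem:H0vanish}, and on the $H^0$ the identification $H^0(\calO_{K^p}^{\lan,(0,-k)}) = H^0(\calO_{K^p}^{\lalg,(0,-k)})$ comes from part (3) of the same lemma.

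To verify that $H^0(d_{K^p}) = 0$, I would first use parts (3) and (4) of Lemma \ref{lem:H0vanish} to reduce $H^0(d_{K^p})$ to the map obtained by restricting $d_{K^p}$ to the locally algebraic subsheaves. By the construction recalled in \S\ref{sec:passageShimura}, this restriction agrees with the Gauss--Manin connection of Theorem \ref{theoremdifferentialoperatorsDr}(i) on $\calO_{K^p}^{\lalg,(0,-k)} \simeq W^{(0,-k)} \otimes_C \omega_{K^p}^{(0,-k),\sm}$. I would then split into two cases: if $k > 0$, then $H^0(\calO_{K^p}^{\lalg,(0,-k)}) = 0$ by Lemma \ref{lem:H0vanish}(2) and there is nothing to prove; if $k = 0$, then by Lemma \ref{lem:cohoofomegasm} the source is $\dlim_{K_p} H^0(\calS_{K^pK_p}, \calO_{\calS_{K^pK_p}})$, which consists of functions constant on each connected component of the proper smooth curves $\calS_{K^pK_p}$, and such functions are killed by the de Rham differential.

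The mildly subtle step is the $k = 0$ case: I need to know that the differential $d_{K^p}$, a priori defined only at infinite level as a locally analytic operator, restricts on $H^0$ of the smooth part to the honest de Rham differential on finite-level Shimura curves. This is exactly the content of Theorem \ref{theoremdifferentialoperatorsDr}(i) combined with the descent discussion in \S\ref{sec:passageShimura}, so it is really a bookkeeping point rather than a hard theorem. No other step requires more than formal manipulation of the long exact sequence, so the proof should be quite short.
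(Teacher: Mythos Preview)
Your proposal is correct and follows essentially the same route as the paper: take the long exact sequence of (\ref{equationsequencedKv}), show $H^0(d_{K^p})=0$, and use parts (3) and (4) of Lemma \ref{lem:H0vanish} to rewrite the terms. The only cosmetic difference is that where you verify $H^0(d_{K^p})=0$ by splitting into $k>0$ (source vanishes) and $k=0$ (constants killed by the differential), the paper packages this same fact as the isomorphism $\bbH^0(\DR^{\lalg})\simeq H^0(\calO_{K^p}^{\lalg,(0,-k)})$ from Proposition \ref{prop:dRcohOKvalg} together with the commutative diagram (\ref{eq:DRdiagram}).
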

\begin{proof}
    By (3) of Lemma \ref{lem:H0vanish}, we know $H^0(\calO_{K^p}^{\lan,(0,-k)})=H^0(\calO_{K^p}^{\lalg,(0,-k)})$. In particular, $H^0(d_{K^p}):H^0(\calO_{K^p}^{\lan,(0,-k)})\to H^0(\calO_{K^p}^{\lan,(0,-k)}\ox_{\calO_{K^p}^{\sm}}(\Omega_{K^p}^{1,\sm})^{\ox k+1})$ is zero by Proposition \ref{prop:dRcohOKvalg} and the commutative diagram (\ref{eq:DRdiagram}).
    Taking the long exact sequence of (\ref{equationsequencedKv}) after applying $R\Gamma(\check{\fl},-)$, we see there is an isomorphism
    \[H^0(\ker d_{K^p})=H^0(\calO_{K^p}^{\lalg,(0,-k)})\]
    and a short exact sequence
    \[0\rightarrow H^0(\calO_{K^p}^{\lan,(0,-k)}\ox_{\calO_{K^p}^{\sm}}(\Omega_{K^p}^{1,\sm})^{\ox k+1})\rightarrow H^1(\ker d_{K^p})\rightarrow \ker H^1(d_{K^p})\rightarrow 0.\]
    Finally,
    \[H^0(\calO_{K^p}^{\lan,(0,-k)}\ox_{\calO_{K^p}^{\sm}}(\Omega_{K^p}^{1,\sm})^{\ox k+1})=H^0(\calO_{K^p}^{\lalg,(0,-k)}\ox_{\calO_{K^p}^{\sm}}(\Omega_{K^p}^{1,\sm})^{\ox k+1})\] 
    by (4) of Lemma \ref{lem:H0vanish}. 
\end{proof}

We consider also the short exact sequence
\begin{align*}
    0\to \ker d'_{K^p}\to \calO_{K^p}^{\lan,(0,-k)}\ox_{\calO_{\check\fl}}(\Omega^1_{\check\fl})^{\ox k+1}\to \calO_{K^p}^{\lan,(-k-1,1)}(k+1)\to 0.
\end{align*}
which is a twist of (\ref{equationsequencedKv}).
\begin{lemma}\label{lem:cohoexactseqdKvprime}
    There exist $\bbT^S$-equivariant isomorphisms of $D_p^{\times}$-representations:
    \begin{align*}
        &H^0(\ker d_{K^p}')\stackrel{\sim}{\rightarrow} H^0(\calO_{K^p}^{\lan,(0,-k)}\ox_{\calO_{\check\fl}}(\Omega^1_{\check\fl})^{\ox k+1})\\
        &H^1(\ker d_{K^p}')\stackrel{\sim}{\rightarrow} \ker H^1(d_{K^p}')
    \end{align*}
    where $ H^1(d_{K^p}'):H^1(\calO_{K^p}^{\lan,(0,-k)}\ox_{\calO_{\check\fl}}(\Omega^1_{\check\fl})^{\ox k+1})\to H^1(\calO_{K^p}^{\lan,(-k-1,1)}(k+1))$ is surjective.
\end{lemma}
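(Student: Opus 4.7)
The plan is to deduce both isomorphisms and the surjectivity statement by writing down the long exact sequence of cohomology attached to the short exact sequence
\[
    0\to \ker d'_{K^p}\to \calO_{K^p}^{\lan,(0,-k)}\ox_{\calO_{\check\fl}}(\Omega^1_{\check\fl})^{\ox k+1}\ov{d'_{K^p}}\to \calO_{K^p}^{\lan,(-k-1,1)}(k+1)\to 0,
\]
exactly in parallel with Lemma \ref{lem:cohoexactseqdKv}, and then to input two vanishing statements to collapse the boundary maps.

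First, I would observe that the Tate twist $(k+1)$ only affects the Galois action and is inert with respect to both the $D_p^{\times}$-action and the horizontal $\theta_{\check\frh}$-weight. Hence $\calO_{K^p}^{\lan,(-k-1,1)}(k+1)$ has $\theta_{\check\frh}$-weight $(-k-1,1)$, whose two entries differ by $-k-2\neq 0$. Therefore Lemma \ref{lem:H0vanish}(1) applies (its proof only uses the $\theta_{\check\frh}$-weight decomposition of $H^0$, not the Galois structure) and yields
\[
    H^0\bigl(\check\fl,\calO_{K^p}^{\lan,(-k-1,1)}(k+1)\bigr)=0.
\]
Combined with the fact that $\check\fl$ has cohomological dimension one (used already throughout \S\ref{sec:cohomologyII}), so that $H^i(\check\fl,-)$ vanishes for $i\geq 2$ on any abelian sheaf on $\check\fl$, the long exact sequence collapses to
\[
    0\to H^0(\ker d'_{K^p})\to H^0\bigl(\calO_{K^p}^{\lan,(0,-k)}\ox_{\calO_{\check\fl}}(\Omega^1_{\check\fl})^{\ox k+1}\bigr)\to 0
\]
and
\[
    0\to H^1(\ker d'_{K^p})\to H^1\bigl(\calO_{K^p}^{\lan,(0,-k)}\ox_{\calO_{\check\fl}}(\Omega^1_{\check\fl})^{\ox k+1}\bigr)\ov{H^1(d'_{K^p})}\to H^1\bigl(\calO_{K^p}^{\lan,(-k-1,1)}(k+1)\bigr)\to 0,
\]
delivering all three claims simultaneously.

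There is essentially no obstacle here; the only subtle point is to notice that the Tate twist is harmless for applying Lemma \ref{lem:H0vanish}(1), and that cohomological dimension $1$ of $\check\fl$ both kills the $H^2$-term (giving surjectivity of $H^1(d'_{K^p})$) and makes the connecting map out of $H^0(\calO_{K^p}^{\lan,(-k-1,1)}(k+1))$ the only potential obstruction for injectivity of $H^1(\ker d'_{K^p})\to H^1(\calO_{K^p}^{\lan,(0,-k)}\ox(\Omega^1_{\check\fl})^{\ox k+1})$. All the maps involved are manifestly $\bbT^S$-equivariant and $D_p^\times$-equivariant because they come from the long exact sequence of a short exact sequence of equivariant sheaves, so the equivariance statements are immediate.
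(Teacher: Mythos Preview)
Your proof is correct and follows essentially the same approach as the paper: take the long exact sequence of the short exact sequence defining $\ker d'_{K^p}$, use $H^0(\calO_{K^p}^{\lan,(-k-1,1)}(k+1))=0$ from Lemma~\ref{lem:H0vanish}(1), and use that $\check\fl$ has cohomological dimension $1$ to get surjectivity of $H^1(d'_{K^p})$. The paper's proof is a one-line pointer to the previous lemma together with this vanishing; you have simply written out the details.
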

\begin{proof}
    The proof is similar to the previous lemma using that $H^0(\calO_{K^p}^{\lan,(-k-1,1)}(k+1))=0$ for $k\geq 0$ by (1) of Lemma \ref{lem:H0vanish}. 
\end{proof}

\begin{proposition}\label{prop:Torvanishing}
Let $\lambda\in \sigma^{K^p}_{k,c}$ be a system of cuspidal Hecke eigenvalues. For $k\ge 0$, we have 
\begin{align*}
\mathrm{Tor}_i^{\calH(G)}(\calA_{\bar G,k}^{K^p}[\lambda],H^1_c(\calM_{\LT,\infty},\omega_{\calM_{\LT,\infty}}^{(0,-k),\sm}))=0,\\
\mathrm{Tor}_i^{\calH(G)}(\calA_{\bar G,k}^{K^p}[\lambda],H^1_c(\calM_{\LT,\infty},\omega_{\calM_{\LT,\infty}}^{(-k-1,1),\sm}))=0
\end{align*}
for $i\ge 1$.
\begin{proof}
By Proposition \ref{prop:productformula}, together with the vanishing results in Lemma \ref{lem:RpOLTsm} and the fact that the action of $\bbT^S$ in the isomorphism of Proposition \ref{prop:productformula} is semisimple (Theorem \ref{thm:decompositionautomorphicforms}), we see that 
\begin{align*}
    \mathrm{Tor}_i^{\calH(G)}(\calA_{\bar G,k}^{K^p}[\lambda],H^1_c(\calM_{\LT,\infty},\omega_{\calM_{\LT,\infty}}^{(0,-k),\sm}))=H^{1-i}(\calS_{K^p},\ker d_{\calS_{K^p}})[\lambda]
\end{align*}
and the similar result holds for $\omega_{\calM_{\LT,\infty}}^{(-k-1,1),\sm}$. This implies that the $\mathrm{Tor}$ groups vanish for $i\ge 2$. 

When $i=1$, by Lemma \ref{lem:H0H1lambda} (3), and Lemma \ref{lem:cohoexactseqdKv}, we see that $H^0(\calS_{K^p},\ker d_{\calS_{K^p}})[\lambda]=0$ and $H^0(\calS_{K^p},\ker d'_{\calS_{K^p}})[\lambda]=0$. This shows that the $\mathrm{Tor}$ groups also vanish for $i=1$.
\end{proof}
\end{proposition}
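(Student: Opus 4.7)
My plan is to deduce the vanishing directly from the product formula of Proposition \ref{prop:productformula}, converting the Tor-vanishing into a statement about sheaf cohomology of $\ker d_{\calS_{K^p}}$ (and of $\ker d'_{\calS_{K^p}}$) in non-positive degrees localized at $\lambda$.

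First, I would rewrite Proposition \ref{prop:productformula} in a more convenient form. By Lemma \ref{lem:RpOLTsm}, both $R\Gamma_c(\calM_{\LT,\infty},\omega_{\calM_{\LT,\infty}}^{(0,-k),\sm})$ and $R\Gamma_c(\calM_{\LT,\infty},\omega_{\calM_{\LT,\infty}}^{(-k-1,1),\sm})$ are concentrated in degree $1$, so they equal $H^1_c(\calM_{\LT,\infty},\omega_{\calM_{\LT,\infty}}^{(0,-k),\sm})[-1]$ and $H^1_c(\calM_{\LT,\infty},\omega_{\calM_{\LT,\infty}}^{(-k-1,1),\sm})[-1]$ respectively. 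Since the $\bbT^S$-action on $\calA_{\bar G,k}^{K^p}$ is semisimple by Theorem \ref{thm:decompositionautomorphicforms}, localizing at the maximal ideal corresponding to $\lambda$ is an exact functor, and commutes with the derived tensor product over $\calH(G)$ (as the Hecke actions commute). Thus Proposition \ref{prop:productformula} yields an isomorphism of complexes
\[
R\Gamma(\calS_{K^p},\ker d_{\calS_{K^p}})[\lambda]\isom H^1_c(\calM_{\LT,\infty},\omega_{\calM_{\LT,\infty}}^{(0,-k),\sm})[-1]\otimes_{\calH(G)}^L\calA_{\bar G,k}^{K^p}[\lambda],
\]
and similarly for the primed version. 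Taking $(1-i)$-th cohomology identifies the $i$-th Tor group with $H^{1-i}(\calS_{K^p},\ker d_{\calS_{K^p}})[\lambda]$, respectively $H^{1-i}(\calS_{K^p},\ker d'_{\calS_{K^p}})[\lambda]$.

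For $i\ge 2$, we have $1-i\le -1$, so the claim reduces to the vanishing of sheaf cohomology in negative degrees, which is trivial. The only non-trivial case is $i=1$, where we must show that $H^0(\calS_{K^p},\ker d_{\calS_{K^p}})[\lambda]=0$ and $H^0(\calS_{K^p},\ker d'_{\calS_{K^p}})[\lambda]=0$. Via Lemma \ref{lemmakerdKvdSKv} these become statements about $H^0(\check{\fl},\ker d_{K^p})[\lambda]$ and $H^0(\check{\fl},\ker d'_{K^p})[\lambda]$. The second of these is exactly Lemma \ref{lem:H0H1lambda}(3). For the first, Lemma \ref{lem:cohoexactseqdKv} gives $H^0(\ker d_{K^p})\isom H^0(\calO_{K^p}^{\lalg,(0,-k)})$; if $k>0$ this is already zero by Lemma \ref{lem:H0vanish}(2), and if $k=0$ it coincides with $H^0(\calO_{K^p}^{\sm})=\tilde H^0(K^p,C)^{\sm}$, on which the $G(\bbA_f)$-action factors through the reduced norm, so its $\lambda$-isotypic component vanishes since $\lambda$ is cuspidal.

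I do not expect any genuine obstacle: the proof is essentially a bookkeeping exercise built on the product formula, the concentration in one degree of the Lubin--Tate cohomology, and the Hecke-semisimplicity of $\calA_{\bar G,k}^{K^p}$. The slightly delicate point is verifying that the $\lambda$-isotypic projection commutes with the derived tensor product over $\calH(G)$; this is fine because the $\bbT^S$-action is a direct-summand projector that acts only on $\calA_{\bar G,k}^{K^p}$ and commutes with the $\calH(G)$-action used to form the tensor product.
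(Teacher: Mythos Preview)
Your proposal is correct and follows essentially the same route as the paper: identify the Tor groups with $H^{1-i}(\calS_{K^p},\ker d_{\calS_{K^p}})[\lambda]$ (resp.\ $\ker d'_{\calS_{K^p}}$) via the product formula together with Lemma~\ref{lem:RpOLTsm} and the semisimplicity of the $\bbT^S$-action, then handle $i=1$ using Lemma~\ref{lem:H0H1lambda}(3) and Lemma~\ref{lem:cohoexactseqdKv}. Your treatment of $H^0(\ker d_{K^p})[\lambda]=0$ by splitting into $k>0$ and $k=0$ is a bit more explicit than the paper's terse citation, but the content is the same.
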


We can now describe the cohomology of $\DR^{\lalg},\DR,\DR'$ after localizing at a Hecke eigensystem $\lambda\in \sigma_{k,c}^{K^p}$.
\begin{theorem}\label{thm:H1DR}
Let $\lambda\in \sigma_{k,c}^{K^p}$. There is a $D_p^\times$-equivariant exact sequence
\begin{align*}    
    0\to \bbH^1(\check\fl,\DR^{\lalg})[\lambda]\to \bbH^1(\check\fl,\DR)[\lambda]\to \bbH^1(\check\fl,\DR')[\lambda]\to 0
\end{align*}
where we have
\begin{align*}
    \bbH^1(\check\fl,\DR)[\lambda]&\isom H^1_c(\calM_{\LT,\infty},\omega_{\calM_{\LT,\infty}}^{(0,-k),\sm})\ox_{\calH(G)} \calA_{\bar G,k}^{K^p}[\lambda],\\
    \bbH^1(\check\fl,\DR')[\lambda]&\isom H^1_c(\calM_{\LT,\infty},\omega_{\calM_{\LT,\infty}}^{(-k-1,1),\sm})\ox_{\calH(G)} \calA_{\bar G,k}^{K^p}[\lambda].\\
    \bbH^1(\check\fl,\DR^{\lalg})[\lambda]&\isom W^{(0,-k)}\ox_C R\Gamma_{\dR,c}(\calM_{\LT,\infty},\calO_{\calM_{\LT,\infty}}^{\sm})[1]\ox_{\calH(G)}^L \calA_{\bar G,k}^{K^p}[\lambda].
\end{align*}
\end{theorem}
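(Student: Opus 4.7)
The first two isomorphisms follow by chaining together the tools already established in the excerpt. Since $d_{K^p}$ is surjective by Proposition~\ref{prop:dKvsurj}, we have $\DR\simeq \ker d_{K^p}$, so Lemma~\ref{lemmakerdKvdSKv} identifies $R\Gamma(\check\fl,\DR)$ with $R\Gamma(\calS_{K^p},\ker d_{\calS_{K^p}})$. The product formula of Proposition~\ref{prop:productformula} then writes the latter as $R\Gamma_c(\calM_{\LT,\infty},\omega_{\calM_{\LT,\infty}}^{(0,-k),\sm})\otimes^L_{\calH(G)}\calA_{\bar G,k}^{K^p}$. After localizing at $\lambda\in \sigma_{k,c}^{K^p}$, the Tor-vanishing of Proposition~\ref{prop:Torvanishing}, combined with the concentration of $R\Gamma_c(\calM_{\LT,\infty},\omega_{\calM_{\LT,\infty}}^{(0,-k),\sm})$ in degree one (Lemma~\ref{lem:RpOLTsm}), collapses the derived tensor product into an ordinary one in degree one, yielding the stated formula for $\bbH^1(\check\fl,\DR)[\lambda]$. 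The identical argument applied to $d'_{K^p}$ gives the description of $\bbH^1(\check\fl,\DR')[\lambda]$.

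\textbf{The short exact sequence.} By Proposition~\ref{prop:dKvsurj}(1) the two lower rows of diagram~(\ref{eq:DRdiagram}) are short exact sequences of sheaves, so $0\to \DR^{\lalg}\to \DR\to \DR'\to 0$ is a short exact sequence of two-term complexes on $\check\fl$. I would take the associated long exact sequence in hypercohomology; the desired four-term sequence extracts once we verify that $\bbH^0(\check\fl,\DR')[\lambda]=0$ and $\bbH^2(\check\fl,\DR^{\lalg})[\lambda]=0$. The first vanishing is $H^0(\check\fl,\ker d'_{K^p})[\lambda]=0$, which is Lemma~\ref{lem:H0H1lambda}(3). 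The second is immediate when $k>0$ from Proposition~\ref{prop:dRcohOKvalg}(1); when $k=0$, part~(2) of the same proposition shows that $D_p^\times$ acts on $\bbH^2(\DR^{\lalg})$ through the reduced norm, so no cuspidal Hecke eigensystem $\lambda\in\sigma^{K^p}_{0,c}$ can occur there.

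\textbf{The third isomorphism.} The above short exact sequence upgrades in the derived category to a fiber sequence
\[R\Gamma(\check\fl,\DR^{\lalg})\to R\Gamma(\check\fl,\DR)\to R\Gamma(\check\fl,\DR').\]
Localizing at $\lambda$ and substituting the product-formula identifications of paragraph one termwise, and using that fibers commute with derived tensor products, this becomes
\[R\Gamma(\check\fl,\DR^{\lalg})[\lambda]\simeq\mathrm{fib}\bigl(R\Gamma_c(\calM_{\LT,\infty},\omega^{(0,-k),\sm}_{\calM_{\LT,\infty}})\to R\Gamma_c(\calM_{\LT,\infty},\omega^{(-k-1,1),\sm}_{\calM_{\LT,\infty}})\bigr)\otimes^L_{\calH(G)}\calA_{\bar G,k}^{K^p}[\lambda].\]
By definition the fiber is $R\Gamma_{\dR,c}(\calM_{\LT,\infty},\omega^{(0,-k),\sm}_{\calM_{\LT,\infty}})$, which Lemma~\ref{lemmaderhamlubintate} identifies with $W^{(0,-k)}\otimes_C R\Gamma_{\dR,c}(\calM_{\LT,\infty},\calO^{\sm}_{\calM_{\LT,\infty}})$; a shift by one (reflecting that $R\Gamma_c$ on $\calM_{\LT,\infty}$ is concentrated in degree one, while the fiber of such a map lives in degrees one and two) converts this into the expression on the right-hand side of the stated isomorphism.

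\textbf{Main obstacle.} The subtle point in the last step is to verify that after $\lambda$-localization the derived tensor product $W^{(0,-k)}\otimes_C R\Gamma_{\dR,c}(\calM_{\LT,\infty},\calO^{\sm}_{\calM_{\LT,\infty}})[1]\otimes^L_{\calH(G)}\calA_{\bar G,k}^{K^p}[\lambda]$ is in fact concentrated in a single cohomological degree, so that the symbol $\bbH^1(\check\fl,\DR^{\lalg})[\lambda]$ (a vector space) can be identified with it. This concentration follows from the vanishing $\bbH^2(\check\fl,\DR^{\lalg})[\lambda]=0$ already used in paragraph two: via the product-formula identification, this vanishing translates precisely into the vanishing of the $H^1$ of the right-hand derived tensor product, forcing it to sit in $H^0$ and yielding the isomorphism in the form stated in the theorem.
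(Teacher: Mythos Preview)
Your proof is correct and follows essentially the same approach as the paper: product formula for the first two isomorphisms, the long exact sequence of the triangle $\DR^{\lalg}\to\DR\to\DR'$ together with the vanishing of $\bbH^0(\DR')[\lambda]$ and $\bbH^2(\DR^{\lalg})[\lambda]$ for the short exact sequence, and Lemma~\ref{lemmaderhamlubintate} for the third isomorphism.

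Two minor remarks. First, your appeal to Proposition~\ref{prop:Torvanishing} in the first paragraph is not needed: since $R\Gamma_c(\calM_{\LT,\infty},\omega_{\calM_{\LT,\infty}}^{(0,-k),\sm})=H^1_c[-1]$, one has $\bbH^1(\DR)=H^0(H^1_c\otimes^L_{\calH(G)}\calA_{\bar G,k}^{K^p})=H^1_c\otimes_{\calH(G)}\calA_{\bar G,k}^{K^p}$ already before localizing, as the degree-zero cohomology of a derived tensor product is always the ordinary tensor product. Second, your inference ``$D_p^\times$ acts through the reduced norm on $\bbH^2(\DR^{\lalg})$, hence no cuspidal $\lambda$ occurs'' is correct but not immediate: the $D_p^\times$-action is local at $p$ while $\lambda$ records Hecke eigenvalues away from $S$. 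The missing link is Matsushima's formula (in the proof of Proposition~\ref{prop:dRcohOKvalg}(2)), which shows that the automorphic representations of $G$ contributing to $\bbH^2$ are one-dimensional, and these share Hecke eigensystems with one-dimensional representations of $\bar G$, which are by definition not cuspidal. The paper phrases this instead via Poincar\'e duality and the vanishing of $\bbH^0(\DR^{\lalg})[\lambda']$ for cuspidal $\lambda'$, which ultimately relies on the same fact (cf.\ the proof of Lemma~\ref{lem:H0H1lambda}(1)).
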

\begin{proof}
    By Lemma \ref{lemmakerdKvdSKv}, Lemma \ref{lem:RpOLTsm} and Proposition \ref{prop:productformula}, there exists an isomorphism
    \[R\Gamma(\check\fl,\ker d_{K^p})=\calA_{\bar G,k}^{K^p} \ox_{\calH(G)}^L H^1_c(\calM_{\LT,\infty},\omega_{\calM_{\LT,\infty}}^{(0,-k),\sm})[-1].\]
    Hence $\bbH^1(\DR)=H^1(\ker d_{K^p})=\calA_{\bar G,k}^{K^p} \ox_{\calH(G)} H^1_c(\calM_{\LT,\infty},\omega_{\calM_{\LT,\infty}}^{(0,-k),\sm})$. Similarly, $\bbH^1(\DR')=H^1(\ker d_{K^p}')=\calA_{\bar G,k}^{K^p} \ox_{\calH(G)} H^1_c(\calM_{\LT,\infty},\omega_{\calM_{\LT,\infty}}^{(-k-1,1),\sm})$. And $\bbH^2(\DR)=H^2(\ker d_{K^p})=0$. By Lemma \ref{lem:cohoexactseqdKvprime}, 
    \[\bbH^0(\DR')=H^0(\ker d_{K^p}')=H^0(\calO_{K^p}^{\lan,(0,-k)}\ox_{\calO_{\check\fl}}(\Omega^1_{\check\fl})^{\ox k+1}).\]
    By (2) of Lemma \ref{lem:H0H1lambda}, $\bbH^0(\DR')[\lambda]=0$ for $\lambda\in\sigma_{k,c}^{K^p}$.

    Taking the long exact sequence of the hypercohomology for the complex $0\rightarrow \DR^{\lalg}\rightarrow\DR\rightarrow\DR'\rightarrow 0$, we get an exact sequence 
    \begin{align*}
        \bbH^0(\DR')\to \bbH^1(\DR^{\lalg})\to \bbH^1(\DR)\to \bbH^1(\DR')\to \bbH^2(\DR^{\lalg})\to 0.
    \end{align*}
    Using the decomposition (\ref{AKvdecomp}), the vanishing of $\bbH^2(\DR^{\lalg})[\lambda]$ (obtained from the vanishing of $H^0(\DR^{\lalg})[\lambda']$, when $\lambda'$ is assumed to be cuspidal, and duality, see Proposition \ref{prop:dRcohOKvalg}) and $\bbH^0(\DR')[\lambda]$, we get the desired short exact sequence $0\to \bbH^1(\DR^{\lalg})[\lambda]\to \bbH^1(\DR)[\lambda]\to \bbH^1(\DR')[\lambda]\to 0$. The description of $\bbH^1(\DR^{\lalg})[\lambda]$ follows from Lemma \ref{lemmaderhamlubintate}.
\end{proof}

\begin{remark}\label{remarkderhamtensor}
From the above theorem we see that (for simplicity we set $k=0$)
\begin{align*}
    R\Gamma_{\dR,c}(\calM_{\LT,\infty})\ox_{\calH(G)}^L \calA_{\bar G,0}^{K^p}[\lambda]
\end{align*}
is always isomorphic to the $\lambda$-isotypic part of the de Rham cohomology of the tower of quaternionic Shimura curves of finite levels, and so when $\lambda\in \sigma^{K^p}_{k,c}$ it concentrates only in degree $1$. The following formula in Proposition \ref{prop:HGLoxRHom}, together with \cite[Théorème 4.1, Théorème 5.8, \S 5.1]{CDN20} (the results in \textit{loc. cit.} are stated for a quotient of $\calM_{\LT,\infty}$, but one can use for example (\ref{euqationcompactinduction}) below to obtain the results for $\calM_{\LT,\infty}$ itself), is useful to do some explicit calculation of the representations. When $\calA_{\bar G,k}^{K^p}[\lambda]$ is a finite sum of copies of a supercuspidal representation of $\GL_2(\bbQ_p)$, only $H^1_{\dR,c}(\calM_{\LT,\infty})$ will contribute to the above term and the derived tensor product can be replaced by the usual tensor product. But when $\calA_{\bar G,k}^{K^p}[\lambda]$ is a sum of copies of a twist of the smooth Steinberg representation $\St_2^{\rm sm}$ of $\GL_2(\bbQ_p)$, both $H^1_{\dR,c}(\calM_{\LT,\infty})$ and $H^2_{\dR,c}(\calM_{\LT,\infty})$ will contribute to the above term nontrivially, and the derived tensor product cannot be replaced by the usual tensor product (see (\ref{equationderhamjacquetlanglands}) below). For example, if we denote $G^p$ to be the quotient of $G=\GL_2(\bbQ_p)$ by $\begin{pmatrix} p&0\\0&p\end{pmatrix}$, then $\St_2^{\rm sm}\ox^L_{\calH(G^p)}\St_2^{\rm sm}=1[0]$ and $1\ox^L_{\calH(G^p)}\St_2^{\rm sm}\isom 1[1]$. Finally, when $\calA_{\bar G,k}^{K^p}[\lambda]$ is a sum of smooth irreducible principal series representations of $\GL_2(\bbQ_p)$, the above term is zero. 
\end{remark}

The following formula describes some relation between taking $R\Hom_{\calH(G)}(-,-)$ and $-\otimes_{\calH(G)}^L-$. One can also find the result in \cite[\S III.3, Duality theorem]{schneider1997representation}, providing that the central character is fixed. See also \cite[Theorem V.5.1]{fargues2024geometrizationlocallanglandscorrespondence}.
\begin{proposition}\label{prop:HGLoxRHom}
Let $M,N$ be smooth representation of $G$, with $M$ being finitely generated over $\calH(G)$. Then there is a natural isomorphism 
\begin{align*}
    R\Hom_{\calH(G)}(\bbD_{\rm BZ}(M),N)\isom M\ox^L_{\calH(G)}N.
\end{align*}
where $\bbD_{\rm BZ}(M)$ is the smooth Bernstein--Zelevensky dual of $M$, defined as $\bbD_{\rm BZ}(M):=R\Hom_{\calH(G)}(M,\calH(G))$ (cf. \cite[IV.5]{bernstein1992notes}).
\begin{proof}
As $M$ is of finite type, there exists a projective resolution $P_{\bullet}\to V$ with each $P_i$ of the form $\cInd^G_K \sigma$ for some open compact subgroup $K\subset G$ and some finite dimensional smooth representation $\sigma$ of $K$. Here we use the fact that every subrepresentation of a finitely generated representation of $G$ is still finitely generated \cite[Proposition 32]{bernstein1992notes}. Objects such as $\cInd^G_K \sigma$ are flat over $\calH(G)$. Indeed, 
\begin{align*}
    \Hom_{\calH(G)}(\cInd^G_K\sigma,-)&=\Hom_C(\sigma,-)^K,\\
    \cInd^G_K\sigma\ox_{\calH(G)}-&=(\sigma\ox_C-)_K
\end{align*}
are all exact functors for smooth representations as $\sigma$ is finite dimensional and $K$ is compact. As $\bbD_{\rm BZ}(\cInd^G_K\sigma)=\cInd^G_K\sigma^{*}[0]$ where $\sigma^*=\Hom_C(\sigma,C)$, the isomorphism in \cite[Lemma 8.4]{mantovan2004certain}:
\begin{align*}
    \Hom_{\calH(G)}(\cInd^G_K\sigma,-)\isom \Hom_C(\sigma,-)^K\isom (\sigma^*\ox_C-)^K\aisom (\sigma^*\ox_C-)_K\isom \cInd^G_K\sigma^*\ox_{\calH(G)}-.
\end{align*}
implies that
\begin{align*}
    R\Hom_{\calH(G)}(\bbD_{\rm BZ}(M),N)&\isom R\Hom_{\calH(G)}(\bbD_{\rm BZ}(P_{\bullet}),N)\isom P_{\bullet}\ox_{\calH(G)}N\isom M\ox_{\calH(G)}^LN.
\end{align*}
We get the desired isomorphism.
\end{proof}
\end{proposition}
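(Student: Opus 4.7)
The plan is to reduce the statement to the case where $M$ is a single compactly induced representation $\cInd^G_K \sigma$ (with $K \subset G$ open compact and $\sigma$ a finite-dimensional smooth $K$-representation) by means of a resolution, and then verify the isomorphism explicitly on such generators.

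First, I would build a resolution $P_\bullet \to M$ in which each $P_i$ is a finite direct sum of representations of the form $\cInd^G_{K_i} \sigma_i$ with $K_i \subset G$ open compact and $\sigma_i$ finite-dimensional. This exists because any smooth representation generated by a single vector $v$ admits a surjection from $\cInd^G_K \langle v \rangle$ where $K$ fixes $v$, and finite generation passes to kernels in any Bernstein block (using that $\calH(G)$ is Noetherian on each block for $G = \GL_2(\bbQ_p)$, cf. \cite[Proposition 32]{bernstein1992notes}). So one may iterate and obtain such a $P_\bullet$.

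Second, I would check three basic properties of each $P = \cInd^G_K \sigma$ with $\sigma$ finite-dimensional: by Frobenius reciprocity, $\Hom_{\calH(G)}(P, -) = \Hom_K(\sigma, -)$, which is exact because $K$ is profinite and we are working over the characteristic zero field $C$ (so smooth $K$-representations are semisimple); similarly, $P \otimes_{\calH(G)} - = (\sigma \otimes_C -)_K$ is exact. In particular $P$ is simultaneously projective and flat over $\calH(G)$. Finally, since $\sigma$ is finite-dimensional, one computes directly that $\bbD_{\rm BZ}(P) = \cInd^G_K \sigma^\ast$ concentrated in degree zero (where $\sigma^\ast$ is the linear dual, which is again finite-dimensional and smooth).

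Third, the key pointwise identification is the chain of natural isomorphisms
\begin{align*}
  \Hom_{\calH(G)}(\bbD_{\rm BZ}(P), N)
  = \Hom_K(\sigma^\ast, N)
  \isom (\sigma \otimes_C N)^K
  \aisom (\sigma \otimes_C N)_K
  = P \otimes_{\calH(G)} N,
\end{align*}
where the middle isomorphism between $K$-invariants and $K$-coinvariants is given by the averaging idempotent $e_K = \int_K$, using that $\sigma \otimes_C N$ is smooth and that smooth $K$-representations over $C$ are semisimple (this is precisely \cite[Lemma 8.4]{mantovan2004certain}). Combined with the exactness statements in the second step, this shows that for $P$ a finite sum of such compact inductions, both $R\Hom_{\calH(G)}(\bbD_{\rm BZ}(P), N)$ and $P \otimes^L_{\calH(G)} N$ are concentrated in degree zero and canonically identified. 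Applying this term-wise to the resolution $P_\bullet \to M$: by flatness, $M \otimes^L_{\calH(G)} N$ is computed by $P_\bullet \otimes_{\calH(G)} N$; by projectivity of each $\bbD_{\rm BZ}(P_i) = \cInd^G_{K_i} \sigma_i^\ast$ and the concentration of $\bbD_{\rm BZ}(P_i)$ in degree zero, $R\Hom_{\calH(G)}(\bbD_{\rm BZ}(M), N)$ is computed by $\Hom_{\calH(G)}(\bbD_{\rm BZ}(P_\bullet), N)$; and the two complexes agree by the pointwise identification. Thus the natural map is a quasi-isomorphism.

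The main obstacle is really the pointwise step: the isomorphism between $K$-invariants and $K$-coinvariants on $\sigma \otimes_C N$. This relies simultaneously on $\sigma$ being finite-dimensional (so that its linear dual is smooth and behaves well under $\bbD_{\rm BZ}$) and on $K$ being compact over a field of characteristic zero (to average). It is precisely this step that forces the resolution hypothesis and hence the finite generation assumption on $M$; the rest of the argument is formal homological algebra built on top of it.
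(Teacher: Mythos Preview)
Your proposal is correct and follows essentially the same argument as the paper: both resolve $M$ by finite sums of compact inductions $\cInd^G_K\sigma$ with $\sigma$ finite-dimensional (using the Noetherian property from \cite[Proposition 32]{bernstein1992notes}), verify that such modules are simultaneously projective and flat via Frobenius reciprocity, compute $\bbD_{\rm BZ}(\cInd^G_K\sigma)=\cInd^G_K\sigma^\ast[0]$, and then reduce to the averaging isomorphism $(\sigma\otimes_C N)^K\aisom(\sigma\otimes_C N)_K$ (the paper likewise cites \cite[Lemma 8.4]{mantovan2004certain} at this step). The only cosmetic difference is that the paper writes the isomorphism chain with the roles of $\sigma$ and $\sigma^\ast$ swapped, which amounts to the same thing since $\sigma^{\ast\ast}=\sigma$.
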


The following results are analogue of \cite[Theorem 5.5.7, Corollary 5.5.14]{PanII} in the quaternionic setting. 
\begin{theorem}\label{thm:kerI1classical}
For $k\ge 0$, there is a decomposition 
\begin{align*}
    \ker I^1\isom \bigoplus_{\lambda\in \sigma_{k}^{K^p}}\ker I^1\tilde{[\lambda]},
\end{align*}
where $(-)\tilde{[\lambda]}$ means the generalized eigenspace of the action of $\bbT^S$ with eigenvalue $\lambda$. When $\lambda\in \sigma_{k,c}^{K^p}$, $\ker I^1\tilde{[\lambda]}=\ker I^1{[\lambda]}$.
\end{theorem}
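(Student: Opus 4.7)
The plan is to obtain a Hecke-equivariant short exact sequence expressing $\ker I^1$ as an extension of two cohomology groups whose Hecke decompositions are already under control via Theorem \ref{thm:H1DR} and classical automorphic theory, and then transfer the spectral decomposition. Since $I=\bar{d}'_{K^p}\circ d_{K^p}$ with both $d_{K^p}$ and $\bar{d}'_{K^p}$ surjective by Proposition \ref{prop:dKvsurj}, and $\ker\bar{d}'_{K^p}=\calO_{K^p}^{\lalg,(0,-k)}\otimes_{\calO_{K^p}^{\sm}}(\Omega_{K^p}^{1,\sm})^{\otimes k+1}$, the sheaf $\ker I:=d_{K^p}^{-1}(\ker\bar{d}'_{K^p})$ fits into a short exact sequence
\[
0\to\DR\to\ker I\xrightarrow{d_{K^p}}\calO_{K^p}^{\lalg,(0,-k)}\otimes_{\calO_{K^p}^{\sm}}(\Omega_{K^p}^{1,\sm})^{\otimes k+1}\to 0.
\]
Using surjectivity of $I$ together with $H^0(\check{\fl},\calO_{K^p}^{\lan,(-k-1,1)}(k+1))=0$ from Lemma \ref{lem:H0vanish}(1), the long exact sequence for $0\to\ker I\to\calO_{K^p}^{\lan,(0,-k)}\xrightarrow{I}\calO_{K^p}^{\lan,(-k-1,1)}(k+1)\to 0$ yields $H^1(\check{\fl},\ker I)\isom\ker I^1$. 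Taking hypercohomology of the first sequence (using that $\check{\fl}$ has cohomological dimension $1$) and identifying the connecting map via Lemma \ref{lem:cohoexactseqdKv} together with Lemma \ref{lem:H0vanish}(4), I obtain the Hecke-equivariant short exact sequence
\[
0\to\ker H^1(d_{K^p})\to\ker I^1\to H^1(\check{\fl},\calO_{K^p}^{\lalg,(0,-k)}\otimes_{\calO_{K^p}^{\sm}}(\Omega_{K^p}^{1,\sm})^{\otimes k+1})\to 0.
\]

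Next I would decompose both extremes along Hecke eigensystems in $\sigma_k^{K^p}$. The quotient is identified via Lemma \ref{lem:cohoofomegasm} and Proposition \ref{prop:Wlalg} with $W^{(0,-k)}\otimes_C\dlim_{K_p}H^1(\calS_{K^pK_p},\omega_{\calS_{K^pK_p}}^{(-k-1,1)})$, which is Hecke-semisimple with eigenvalues in $\sigma_{k,c}^{K^p}\subset\sigma_k^{K^p}$ by classical automorphic theory and Jacquet--Langlands matching cuspidal forms on $G$ with forms on $\bar G$. The sub $\ker H^1(d_{K^p})$ is a Hecke-equivariant quotient of $\bbH^1(\check{\fl},\DR)$ by Lemma \ref{lem:cohoexactseqdKv}, and Proposition \ref{prop:productformula}, Theorem \ref{thm:H1DR} together with the semisimple decomposition $\calA^{K^p}_{\bar G,k}=\bigoplus_{\lambda\in\sigma_k^{K^p}}\calA^{K^p}_{\bar G,k}[\lambda]$ of Theorem \ref{thm:decompositionautomorphicforms} give $\bbH^1(\check{\fl},\DR)=\bigoplus_{\lambda\in\sigma_k^{K^p}}\bbH^1(\check{\fl},\DR)[\lambda]$ as a direct sum of honest $\bbT^S$-eigenspaces (the Hecke action factors through $\calA^{K^p}_{\bar G,k}$). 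With both extremes decomposing as direct sums of eigenspaces for pairwise distinct maximal ideals $\frm_\lambda\subset\bbT^S\otimes C$, the standard primary-decomposition argument (using $\frm_\lambda+\frm_{\lambda'}=\bbT^S\otimes C$ for $\lambda\neq\lambda'$) applied to the short exact sequence for $\ker I^1$ produces $\ker I^1=\bigoplus_{\lambda\in\sigma_k^{K^p}}\ker I^1\tilde{[\lambda]}$.

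The main obstacle is the assertion $\ker I^1\tilde{[\lambda]}=\ker I^1[\lambda]$ for cuspidal $\lambda$: even with both extremes of the short exact sequence being honest eigenspaces, the extension could a priori introduce a nilpotent $\frm_\lambda$-action on the middle term. To rule this out I would embed $\ker I^1\subset H^1(\check{\fl},\calO_{K^p}^{\lan,(0,-k)})\subset H^1(\check{\fl},\calO_{K^p}^{\lan})\isom\tilde{H}^1(K^p,C)^{\lan}$ via Theorem \ref{thm:completedcohomology} and invoke absolute irreducibility of the Galois representation $\rho_\lambda$ attached to cuspidal $\lambda$, together with the main result of \cite{BLR}, to conclude that the $\Gal(\overline{\bbQ}/\bbQ)$-action on $\tilde{H}^1(K^p,E)\tilde{[\lambda]}$ is $\rho_\lambda$-isotypic; the Eichler--Shimura congruence relation (\ref{equationcongruencerelation}) then forces $T_\ell$ and $S_\ell$ (for $\ell\notin S$) to act as the scalars $\lambda(T_\ell)$ and $\lambda(S_\ell)$ on each copy of $\rho_\lambda$, so $\bbT^S$ acts through $\bbT^S/\frm_\lambda$. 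Restricting this semisimplicity to the subspace $\ker I^1\tilde{[\lambda]}\subset\tilde{H}^1(K^p,C)^{\lan}\tilde{[\lambda]}$ gives $\ker I^1\tilde{[\lambda]}=\ker I^1[\lambda]$, completing the proof.
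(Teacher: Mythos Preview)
Your derivation of the Hecke-equivariant short exact sequence
\[
0\to\ker H^1(d_{K^p})\to\ker I^1\to H^1(\check\fl,\calO_{K^p}^{\lalg,(0,-k)}\otimes_{\calO_{K^p}^{\sm}}(\Omega_{K^p}^{1,\sm})^{\otimes k+1})\to 0
\]
is correct and coincides with the paper's (the paper writes the quotient as $\ker H^1(\bar d'_{K^p})$, which is the same by the argument you give). The generalized eigenspace decomposition then follows as you indicate.

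However, you misidentify the Hecke spectrum of the quotient. For $k>0$ the quotient vanishes by Lemma~\ref{lem:H0vanish}(2); for $k=0$ it equals $\bbH^2(\check\fl,\DR^{\lalg})$, on which the $D_p^\times$-action factors through the reduced norm by Proposition~\ref{prop:dRcohOKvalg}(2), so the Hecke eigenvalues there are \emph{non-cuspidal}, lying in $\sigma_k^{K^p}\setminus\sigma_{k,c}^{K^p}$ rather than in $\sigma_{k,c}^{K^p}$. This is not a harmless slip: the correct identification is precisely what the paper uses to prove the second assertion. For cuspidal $\lambda$ the quotient has trivial generalized $\lambda$-eigenspace, hence $\ker I^1\tilde{[\lambda]}\cong\ker H^1(d_{K^p})\tilde{[\lambda]}$, and the latter is a quotient of the Hecke-semisimple $\bbH^1(\check\fl,\DR)$, so it is an honest eigenspace. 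No appeal to completed cohomology is needed.

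Your alternative route through $\tilde{H}^1(K^p,E)$ and \cite{BLR} has a genuine gap. You assert that BLR gives $\rho_\lambda$-isotypicness of $\tilde{H}^1(K^p,E)\tilde{[\lambda]}$ as a $\Gal(\overline{\bbQ}/\bbQ)$-module, but BLR (and Carayol's extension) requires the Frobenii to satisfy a fixed quadratic polynomial with scalar coefficients. On the generalized eigenspace the Eichler--Shimura relation reads $\Frob_\ell^2-T_\ell\Frob_\ell+\ell S_\ell=0$ with $T_\ell,S_\ell$ acting as $\lambda(T_\ell),\lambda(S_\ell)$ \emph{plus nilpotents}. What this actually yields (via Carayol's lemma) is that $\tilde{H}^1(K^p,E)_{\frm_\lambda}$ is isotypic for the deformation $\rho_A:\Gal\to\GL_2(A)$ to the local Hecke algebra $A$, not for $\rho_\lambda$ over $E$; this neither gives $\rho_\lambda$-isotypicness nor forces $T_\ell$ to act as the scalar $\lambda(T_\ell)$ without further input such as reducedness of $A$. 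The paper's argument bypasses this issue entirely.
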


\begin{proof}
As $I^1=H^1(\bar d_{K_p}')\circ H^1(d_{K^p})$ and $H^1(d_{K^p})$ is surjective (by Proposition \ref{prop:dKvsurj}), we see $\ker I^1$ fits into a short exact sequence
\begin{align*}
    0\to \ker H^1(d_{K^p})\to \ker I^1\to \ker H^1(\bar{d}'_{K^p})\to 0.
\end{align*}
By Lemma \ref{lem:cohoexactseqdKv} and Proposition \ref{prop:productformula}, $\ker H^1(d_{K^p})$ is a $\bbT^S$-quotients of $\calA_{\bar G,k}^{K^p} \ox_{\calH(G)} H^1_c(\calM_{\LT,\infty},\omega_{\calM_{\LT,\infty}}^{(0,-k),\sm})$. Similarly, by Proposition \ref{prop:dRcohOKvalg} and Proposition \ref{prop:dKvsurj}, $\ker H^1(\bar{d}'_{K^p})$ receives a surjection from $H^1(\ker \bar d'_{K^p})=H^1(\calO_{K^p}^{\lalg,(0,-k)}\ox_{\calO_{K^p}^{\sm}}(\Omega^{1,\sm}_{K^p})^{\ox k+1})=\bbH^2(\DR^{\lalg})$. Hence the actions of $\bbT^S$ on both $\ker H^1(d_{K^p})$ and $\ker H^1(\bar{d}'_{K^p})$ are locally finite by Theorem \ref{thm:decompositionautomorphicforms} and Proposition \ref{prop:dRcohOKvalg}. We see $\ker I^1$ also admits a generalized eigenspace decomposition for the action of $\bbT^S$. Moreover, the action of $\bbT^S$ on $\ker I^1\tilde{[\lambda]}$ when $\lambda\in \sigma_{k,c}^{K^p}$ is semisimple since $\bbH^2(\DR^{\lalg})[\lambda]=0$ by Proposition \ref{prop:dRcohOKvalg}.
\end{proof}

\begin{corollary}\label{cor:kerI1DRDRprime}
Let $\lambda\in \sigma_{k,c}^{K^p}$. There are $D_p^\times$-equivariant exact sequences:
\[\begin{array}{c}
    0\to H^0(\check\fl,\calO_{K^p}^{\lalg,(0,-k)}\ox_{\calO_{K^p}^{\sm}}(\Omega_{K^p}^{1,\sm})^{\ox k+1})[\lambda]\to \bbH^1(\check\fl,\DR)[\lambda]\to \ker I^1[\lambda]\to 0,\\
    0\to H^1(\check\fl,\calO_{K^p}^{\lalg,(0,-k)})[\lambda]\to \ker I^1[\lambda]\to \bbH^1(\check\fl,\DR')[\lambda]\to 0.
\end{array}\]
\end{corollary}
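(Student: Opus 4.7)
The plan is to carve out $\ker I^1[\lambda]$ from $\bbH^1(\DR)[\lambda]$ by quotienting out the subspace $H^0(\check\fl,\calO_{K^p}^{\lalg,(0,-k)}\ox_{\calO_{K^p}^{\sm}}(\Omega_{K^p}^{1,\sm})^{\ox k+1})[\lambda]$, obtaining the first sequence, and then to deduce the second by combining this first sequence with the three-term exact sequence $\DR^{\lalg}\to\DR\to\DR'$ of Theorem \ref{thm:H1DR} together with the Hodge-type filtration on $\bbH^1(\DR^{\lalg})$ from Proposition \ref{prop:dRcohOKvalg}. The technical feature that will let me extract $[\lambda]$-isotypic pieces along short exact sequences is the semisimplicity of the $\bbT^S$-action at cuspidal $\lambda$ already established in Theorem \ref{thm:H1DR} and Theorem \ref{thm:kerI1classical}.

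For the first sequence, Lemma \ref{lem:cohoexactseqdKv} already provides
\[
0\to H^0(\check\fl,\calO_{K^p}^{\lalg,(0,-k)}\ox_{\calO_{K^p}^{\sm}}(\Omega_{K^p}^{1,\sm})^{\ox k+1})\to \bbH^1(\DR)\to \ker H^1(d_{K^p})\to 0,
\]
so the main task will be identifying $\ker H^1(d_{K^p})[\lambda]$ with $\ker I^1[\lambda]$. Using that $d_{K^p}$ is a surjection of sheaves (Proposition \ref{prop:dKvsurj}) and that $\check\fl$ has cohomological dimension one, $H^1(d_{K^p})$ will be surjective, and the factorization $I^1=H^1(\bar d_{K^p}')\circ H^1(d_{K^p})$ will yield a short exact sequence
\[
0\to \ker H^1(d_{K^p})\to \ker I^1\to \ker H^1(\bar d_{K^p}')\to 0.
\]
The identification will then reduce to showing $\ker H^1(\bar d_{K^p}')[\lambda]=0$. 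Since $\bar d_{K^p}'$ is surjective with kernel $\calO_{K^p}^{\lalg,(0,-k)}\ox_{\calO_{K^p}^{\sm}}(\Omega_{K^p}^{1,\sm})^{\ox k+1}\isom W^{(0,-k)}\ox_C\omega_{K^p}^{(-k-1,1),\sm}$ (Proposition \ref{prop:dKvsurj}(1)), $\ker H^1(\bar d_{K^p}')$ will be a quotient of $H^1(\check\fl,W^{(0,-k)}\ox_C\omega_{K^p}^{(-k-1,1),\sm})$. For $k>0$ this vanishes by Lemma \ref{lem:H0vanish}(2); for $k=0$ it equals $H^1(\check\fl,\Omega_{K^p}^{1,\sm})=\dlim_{K_p}H^1(\calS_{K^pK_p},\Omega^1)$, which by classical Serre duality is dual to $\dlim_{K_p}H^0(\calS_{K^pK_p},\calO)$ and therefore supports only Hecke eigensystems factoring through the reduced norm on $G(\bbA_f)$, which are non-cuspidal. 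In either case the $[\lambda]$-part will vanish for $\lambda\in\sigma_{k,c}^{K^p}$.

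For the second sequence, I will combine the first sequence just obtained with: (b) the short exact sequence $0\to\bbH^1(\DR^{\lalg})[\lambda]\to\bbH^1(\DR)[\lambda]\to\bbH^1(\DR')[\lambda]\to 0$ from Theorem \ref{thm:H1DR}, and (c) the Hodge filtration (\ref{eq:Hodgefiltration}) $0\to H^0(\check\fl,\calO_{K^p}^{\lalg,(0,-k)}\ox_{\calO_{K^p}^{\sm}}(\Omega_{K^p}^{1,\sm})^{\ox k+1})\to \bbH^1(\DR^{\lalg})\to H^1(\check\fl,\calO_{K^p}^{\lalg,(0,-k)})\to 0$ from Proposition \ref{prop:dRcohOKvalg}. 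Under the quotient $\bbH^1(\DR)[\lambda]\surj\ker I^1[\lambda]$ of the first sequence, the image of the subspace $\bbH^1(\DR^{\lalg})[\lambda]\subset\bbH^1(\DR)[\lambda]$ will be $\bbH^1(\DR^{\lalg})[\lambda]/H^0(\calO_{K^p}^{\lalg,(0,-k)}\ox_{\calO_{K^p}^{\sm}}(\Omega_{K^p}^{1,\sm})^{\ox k+1})[\lambda]=H^1(\check\fl,\calO_{K^p}^{\lalg,(0,-k)})[\lambda]$ by (c), and the corresponding quotient of $\ker I^1[\lambda]$ by this subspace will be $\bbH^1(\DR)[\lambda]/\bbH^1(\DR^{\lalg})[\lambda]=\bbH^1(\DR')[\lambda]$ by (b); this gives the second sequence. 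The main (and essentially only serious) obstacle will be disposing of $\ker H^1(\bar d_{K^p}')$ in the $k=0$ case, where the relevant sheaf cohomology does not vanish at finite level and one must genuinely invoke Serre duality together with the classical fact that $H^0$ of a Shimura curve supports only Hecke eigensystems factoring through the reduced norm.
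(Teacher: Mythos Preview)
Your proof is correct. For the first sequence you argue essentially as the paper does: both of you use the factorization $I=\bar d_{K^p}'\circ d_{K^p}$, Lemma \ref{lem:cohoexactseqdKv}, and then kill $\ker H^1(\bar d_{K^p}')[\lambda]$ by observing that it is a quotient of $H^1(\check\fl,\calO_{K^p}^{\lalg,(0,-k)}\ox_{\calO_{K^p}^{\sm}}(\Omega_{K^p}^{1,\sm})^{\ox k+1})$. The paper packages this last vanishing as $\bbH^2(\DR^{\lalg})[\lambda]=0$ via Proposition \ref{prop:dRcohOKvalg}; your Serre duality argument for $k=0$ is essentially a direct proof of that same fact.

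For the second sequence your route is genuinely different. The paper exploits the \emph{other} factorization $I=d_{K^p}'\circ\bar d_{K^p}$ through the opposite diagonal of diagram (\ref{equationdKv}), obtaining $0\to\ker H^1(\bar d_{K^p})\to\ker I^1\to\ker H^1(d_{K^p}')\to 0$ and then identifying the outer terms via Lemma \ref{lem:H0H1lambda}(2) and Lemma \ref{lem:cohoexactseqdKvprime}. You instead stay with the first factorization and deduce the second sequence by a diagram chase, pushing the subspace $\bbH^1(\DR^{\lalg})[\lambda]\subset\bbH^1(\DR)[\lambda]$ through the quotient $\bbH^1(\DR)[\lambda]\surj\ker I^1[\lambda]$ and invoking Theorem \ref{thm:H1DR} and the Hodge filtration (\ref{eq:Hodgefiltration}). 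This works because the map $A[\lambda]\to\bbH^1(\DR)[\lambda]$ in your first sequence factors through $\bbH^1(\DR^{\lalg})[\lambda]$ (as confirmed in Remark \ref{remarkHodgefiltration}), so the kernel of $\bbH^1(\DR^{\lalg})[\lambda]\to\ker I^1[\lambda]$ is exactly $A[\lambda]$. The paper's approach is more symmetric and self-contained (it does not need Theorem \ref{thm:H1DR} as input), while yours reuses more of what has already been proved and avoids separately analyzing the $\bar d_{K^p}$-side.
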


\begin{proof}
From the proof of Theorem \ref{thm:kerI1classical} above and Lemma \ref{lem:cohoexactseqdKv}, we get the first desired short exact sequence 
\[0\to H^0(\calO_{K^p}^{\lalg,(0,-k)}\ox_{\calO_{K^p}^{\sm}}(\Omega_{K^p}^{1,\sm})^{\ox k+1})[\lambda]\to \bbH^1(\DR)[\lambda]\to \ker H^1(d_{K^p})[\lambda]=\ker I^1[\lambda]\to 0.\]

For the second short exact sequence, since $I=d_{K^p}'\circ \bar d_{K^p}$, similarly as in the proof of Theorem \ref{thm:kerI1classical}, we get a short exact sequence
\[0\to \ker H^1(\bar d_{K^p})\to  \ker I^1\to \ker H^1(d_{K^p}')\to 0.\]
By Proposition \ref{prop:dKvsurj} and (3) of Lemma \ref{lem:H0vanish}, there is a short exact sequence  
\[0\rightarrow H^0(\calO_{K^p}^{\lan,(0,-k)}\ox_{\calO_{\check\fl}}(\Omega^1_{\check\fl})^{\ox k+1} )\rightarrow H^1(\ker \bar d_{K^p})=H^1(\calO_{K^p}^{\lalg,(0,-k)})\rightarrow \ker H^1(\bar d_{K^p})\rightarrow 0.\] 
For $\lambda\in\sigma_{k,c}^{K^p}$, $H^0(\calO_{K^p}^{\lan,(0,-k)}\ox_{\calO_{\check\fl}}(\Omega^1_{\check\fl})^{\ox k+1})[\lambda]=0$ by (2) of Lemma \ref{lem:H0H1lambda}. Hence $\ker H^1(\bar d_{K^p})[\lambda]\simeq  H^1(\calO_{K^p}^{\lalg,(0,-k)})[\lambda]$. By Lemma \ref{lem:cohoexactseqdKvprime}, $\ker H^1(d_{K^p}')=H^1(\ker \bar d_{K^p})=\bbH^1(\DR')$. Put these together, we get the desired short exact sequence $0\to H^1(\calO_{K^p}^{\lalg,(0,-k)})[\lambda]\to \ker I^1[\lambda]\to \bbH^1(\DR')[\lambda]\to 0.$
\end{proof}

In the next section we will study representations of $D_p^\times$ appeared in the above cohomology groups.

\section{Applications to the $p$-adic local Langlands program for $D_p^\times$}\label{sec:padicLLD}
In this section, we will use the results in the previous sections to study the regular de Rham representations appearing in the locally analytic vectors of the completed cohomology of quaternion Shimura curves. 
\subsection{Locally analytic representations of $D_p^{\times}$}\label{subsec:repofDptimes}
We fix an irreducible smooth representation $\pi_p$ of $\GL_2(\bbQ_p)$. We assume that there exists a sufficiently small tame level $K^p$ and a system of Hecke eigenvalues $\lambda\in \sigma_{k,c}^{K^p}$ such that $\pi_{\lambda,p}=\pi_p$ (Definition \ref{def:setHeckeeigenvalues}). Fix $k\geq 1$ and recall we have defined the de Rham cohomology for the Lubin--Tate tower before Lemma \ref{lemmaderhamlubintate}.
\begin{definition}\label{def:tauc}
We define the following (locally analytic) representations of $D_p^\times$:
\begin{align*}
    &\tilde{\tau}:=H^1_c(\calM_{\LT,\infty},\omega_{\calM_{\LT,\infty}}^{(0,-k),\sm})\ox_{\calH(G)}\pi_p, \\
    &\tau_c :=H^1_c(\calM_{\LT,\infty},\omega_{\calM_{\LT,\infty}}^{(-k-1,1),\sm})\ox_{\calH(G)}\pi_p.
\end{align*}  
Here, as in Proposition \ref{prop:productformula}, the actions of $G=\GL_2(\bbQ_p)$ on the cohomology groups of $\calM_{\LT,\infty}$ is induced by the left $G$-action on $\calM_{\LT,\infty}$ in \S\ref{subsection:twotowers} twisted by the inverse transpose of $G$ as in Remark \ref{remarkuniformizationtwist}. We also set
\[\tau_p:=\mathrm{JL}(\pi_p)\]
to be the Jacquet--Langlands transfer of $\pi_p$ to $D_p^{\times}$ \cite{langlands1970automorphic} (if $\pi_p$ is not a discrete series representation, then we set $\tau_p=\mathrm{JL}(\pi_p)=0$).
\end{definition}
\begin{remark}
    \begin{enumerate}[(1)]
        \item From our normalization of the group actions, one can check that the central characters of $\widetilde{\tau},\tau_c,\tau_p$ coincide with that of $\pi_p$.
        \item If $\pi_p$ is a discrete series representation (essentially square-integrable), the representation $\tau_p$ is an irreducible smooth representation of $D_p^{\times}$. We will prove in \S\ref{subsectionnonvanishing} that $\tilde{\tau}$ and $\tau_c $ are never zero. These representations of $D_p^{\times}$ in Definition \ref{def:tauc} are local, namely they depend only on $\pi_p$ but not on $\lambda$.
    \end{enumerate}
\end{remark}
Since $\pi_{\lambda,p}=\pi_p$, by Theorem \ref{thm:decompositionautomorphicforms} and its proof, we have
\begin{align}\label{equationmultiplicity}
    \calA_{\bar G,k}^{K^p}[\lambda]=\pi_p\otimes_C (\pi_{\lambda}^{p,\infty})^{K^p}.
\end{align}
The following results are reformulations of Theorem \ref{thm:H1DR} and Corollary \ref{cor:kerI1DRDRprime} in terms of these representations of $D_p^{\times}$.
\begin{proposition}\label{propositionexactseqDptimesrep}
    Let $\tau_p$, $\tilde{\tau}$, and $\tau_c$ be the representations of $D_p^{\times}$ in Definition \ref{def:tauc}.
    \begin{enumerate}[(1)]
        \item There exists a $D_p^\times$-equivariant exact sequence 
        \begin{align}\label{equationexactsequencetautildetuac}
            0\to (W^{(0,-k)}\ox_C \tau_p )^{\oplus 2}\to  \tilde{\tau}\to \tau_c \to 0.
        \end{align}
        \item There are $D_p^\times$-equivariant isomorphisms:
        \[\begin{array}{c}
        0\to W^{(0,-k)}\ox_C \tau_p\otimes_C (\pi_{\lambda}^{p,\infty})^{K^p} \to  \tilde{\tau}\otimes_C (\pi_{\lambda}^{p,\infty})^{K^p} \to \ker I^1[\lambda]\to 0,\\
        0\to W^{(0,-k)}\ox_C \tau_p\otimes_C (\pi_{\lambda}^{p,\infty})^{K^p} \to \ker I^1[\lambda]\to \tau_c \otimes_C (\pi_{\lambda}^{p,\infty})^{K^p}\to 0.
    \end{array}\]
    \end{enumerate}
\end{proposition}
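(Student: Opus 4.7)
Part (1) begins from the fiber sequence of compactly supported cohomologies on the Lubin--Tate tower
\[R\Gamma_{\dR,c}(\calM_{\LT,\infty},\omega_{\calM_{\LT,\infty}}^{(0,-k),\sm}) \to R\Gamma_c(\calM_{\LT,\infty},\omega_{\calM_{\LT,\infty}}^{(0,-k),\sm}) \to R\Gamma_c(\calM_{\LT,\infty},\omega_{\calM_{\LT,\infty}}^{(-k-1,1),\sm}).\]
By Lemma \ref{lem:RpOLTsm}, each $R\Gamma_c$-term is concentrated in cohomological degree $1$, so the long exact sequence of the fiber collapses to a four-term exact sequence
\[0\to H^1_{\dR,c}(\omega^{(0,-k),\sm})\to H^1_c(\omega^{(0,-k),\sm})\to H^1_c(\omega^{(-k-1,1),\sm})\to H^2_{\dR,c}(\omega^{(0,-k),\sm})\to 0.\]
Applying $-\otimes_{\calH(G)}\pi_p$, the Tor-vanishing of Proposition \ref{prop:Torvanishing} preserves exactness; together with Lemma \ref{lemmaderhamlubintate} this yields
\[0\to W^{(0,-k)}\otimes_C H^1_{\dR,c}(\calM_{\LT,\infty})\otimes_{\calH(G)}\pi_p\to \tilde\tau\to \tau_c\to W^{(0,-k)}\otimes_C H^2_{\dR,c}(\calM_{\LT,\infty})\otimes_{\calH(G)}\pi_p\to 0.\]
Part (1) therefore reduces to two local identifications: $H^1_{\dR,c}(\calM_{\LT,\infty})\otimes_{\calH(G)}\pi_p\cong \tau_p^{\oplus 2}$ and $H^2_{\dR,c}(\calM_{\LT,\infty})\otimes_{\calH(G)}\pi_p=0$.

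I would establish these globally. Writing $\calA^{K^p}_{\bar G,k}[\lambda]\cong\pi_p\otimes_C(\pi_\lambda^{p,\infty})^{K^p}$ via (\ref{equationmultiplicity}) and applying Tor-vanishing term-by-term to the four-term sequence above, Theorem \ref{thm:H1DR} gives canonical $D_p^\times$-isomorphisms
\[\bbH^i(\check\fl,\DR^{\lalg})[\lambda]\cong W^{(0,-k)}\otimes_C H^i_{\dR,c}(\calM_{\LT,\infty})\otimes_{\calH(G)}\pi_p\otimes_C(\pi_\lambda^{p,\infty})^{K^p},\quad i=1,2.\]
For $i=2$: Proposition \ref{prop:dRcohOKvalg}(2) shows the $D_p^\times$-action on $\bbH^2(\DR^{\lalg})$ factors through the reduced norm, so its Hecke eigensystems come only from $1$-dimensional automorphic forms on $G$; in particular $\bbH^2(\DR^{\lalg})[\lambda]=0$ for $\lambda$ cuspidal. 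Cancelling the nonzero factor $(\pi_\lambda^{p,\infty})^{K^p}$ (guaranteed by Theorem \ref{thm:decompositionautomorphicforms}) yields $H^2_{\dR,c}(\calM_{\LT,\infty})\otimes_{\calH(G)}\pi_p=0$. For $i=1$: following the argument in the proof of Proposition \ref{prop:dRcohOKvalg}(2), $\bbH^1(\DR^{\lalg})[\lambda]$ is computed via the Matsushima formula on $\calS_{K^pK_p}$ combined with the classical Jacquet--Langlands correspondence, yielding $W^{(0,-k)}\otimes_C\tau_p^{\oplus 2}\otimes_C(\pi_\lambda^{p,\infty})^{K^p}$---the factor of $2$ being $\dim H^1(\mathfrak{sl}_{2,\bbR},\mathrm{SO}_2(\bbR),\pi_\infty\otimes V_{\bbC}^{(k,0)})$ for the archimedean discrete series. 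Cancelling as before gives the desired $\tau_p^{\oplus 2}$.

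For part (2), I would compare Theorem \ref{thm:H1DR}---which identifies $\bbH^1(\check\fl,\DR)[\lambda]\cong \tilde\tau\otimes_C(\pi_\lambda^{p,\infty})^{K^p}$ and $\bbH^1(\check\fl,\DR')[\lambda]\cong \tau_c\otimes_C(\pi_\lambda^{p,\infty})^{K^p}$---with the two short exact sequences of Corollary \ref{cor:kerI1DRDRprime}. By Proposition \ref{prop:dKvsurj}(1) we have $\calO^{\lalg,(0,-k)}_{K^p}\cong W^{(0,-k)}\otimes_C\omega^{(0,-k),\sm}_{K^p}$ and $\calO^{\lalg,(0,-k)}_{K^p}\otimes_{\calO^{\sm}_{K^p}}(\Omega^{1,\sm}_{K^p})^{\otimes k+1}\cong W^{(0,-k)}\otimes_C\omega^{(-k-1,1),\sm}_{K^p}$, so Lemma \ref{lem:cohoofomegasm} expresses the kernel terms in Corollary \ref{cor:kerI1DRDRprime} as direct limits of classical coherent cohomology of automorphic line bundles on $\calS_{K^pK_p}$. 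Each $\lambda$-isotypic part equals $W^{(0,-k)}\otimes_C\tau_p\otimes_C(\pi_\lambda^{p,\infty})^{K^p}$, realizing the two summands of the Hodge-type extension of $\bbH^1(\DR^{\lalg})[\lambda]$ from Proposition \ref{prop:dRcohOKvalg}(1) (the $2$ in $\tau_p^{\oplus 2}$ above splits across the two graded pieces). Substituting these identifications into Corollary \ref{cor:kerI1DRDRprime} gives the two claimed exact sequences of (2).

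The main obstacle is the multiplicity-$2$ identification $H^1_{\dR,c}(\calM_{\LT,\infty})\otimes_{\calH(G)}\pi_p\cong \tau_p^{\oplus 2}$, essentially a de Rham form of non-abelian Lubin--Tate theory. The plan sidesteps a direct local attack by routing the calculation through the $p$-adic uniformization and the classical Matsushima/Jacquet--Langlands machinery on Shimura curves, where the multiplicity $2$ arises transparently from the archimedean $(\mathfrak{sl}_{2,\bbR},\mathrm{SO}_2(\bbR))$-cohomology of the discrete series. Delicate points include the Steinberg case flagged in Remark \ref{remarkderhamtensor}, where $R\Gamma_{\dR,c}(\calM_{\LT,\infty})\otimes^L_{\calH(G)}\pi_p$ is a priori not concentrated in a single degree; the Tor-vanishing of Proposition \ref{prop:Torvanishing} ensures that the argument, carried out at the level of coherent $H^1_c$'s, remains uniformly clean across supercuspidal, Steinberg and principal-series cases (with the principal-series case corresponding to $\tau_p=0$).
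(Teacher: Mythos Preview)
Your overall strategy coincides with the paper's: use the short exact sequence of Theorem \ref{thm:H1DR}, identify $\bbH^1(\check\fl,\DR)[\lambda]$ and $\bbH^1(\check\fl,\DR')[\lambda]$ with $\tilde\tau$ and $\tau_c$ (times the tame multiplicity), compute $\bbH^1(\check\fl,\DR^{\lalg})[\lambda]$ via Matsushima and classical Jacquet--Langlands as $(W^{(0,-k)}\otimes_C\tau_p)^{\oplus 2}$ times the multiplicity, and then cancel the multiplicity by varying $K^p$ and applying $\Hom_{G(\bbA_f^p)}(\pi_\lambda^{p,\infty},-)$. Part (2) likewise follows Corollary \ref{cor:kerI1DRDRprime} plus the coherent-cohomology identification of the kernel terms, exactly as the paper does.

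There is, however, a genuine slip in your Part (1) write-up: you conflate derived and underived tensor products. The Tor-vanishing of Proposition \ref{prop:Torvanishing} only controls the two middle terms $H^1_c(\omega^{(0,-k),\sm})$ and $H^1_c(\omega^{(-k-1,1),\sm})$; it does \emph{not} guarantee that your four-term sequence stays exact after applying the underived $-\otimes_{\calH(G)}\pi_p$. What the distinguished triangle plus Tor-vanishing actually yields is
\[
0\to H^1\bigl(R\Gamma_{\dR,c}\otimes^L_{\calH(G)}\pi_p\bigr)\to\tilde\tau\to\tau_c\to H^2\bigl(R\Gamma_{\dR,c}\otimes^L_{\calH(G)}\pi_p\bigr)\to 0,
\]
and these endpoints are not the same as $H^i_{\dR,c}\otimes_{\calH(G)}\pi_p$ in general. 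Remark \ref{remarkderhamtensor} is precisely a warning about this: in the Steinberg case both $H^1_{\dR,c}$ and $H^2_{\dR,c}$ contribute to $R\Gamma_{\dR,c}\otimes^L\pi_p$ through higher $\Tor$'s (e.g.\ $\mathbf{1}\otimes^L_{\calH(G^p)}\St\simeq \mathbf{1}[1]$), so your claim that working ``at the level of coherent $H^1_c$'s'' sidesteps the issue is backwards. Likewise, Theorem \ref{thm:H1DR} gives $\bbH^1(\DR^{\lalg})[\lambda]$ as the \emph{derived} tensor $R\Gamma_{\dR,c}[1]\otimes^L\calA^{K^p}_{\bar G,k}[\lambda]$, not your displayed underived formula indexed by $i=1,2$.

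The fix is exactly what the paper does in equation (\ref{equationderhamjacquetlanglands}): identify the whole complex $R\Gamma_{\dR,c}(\calM_{\LT,\infty},\calO^{\sm})\otimes^L_{\calH(G)}\pi_p$ with $\tau_p^{\oplus 2}[-1]$, which your Matsushima/JL computation of $\bbH^1(\DR^{\lalg})[\lambda]$ together with the vanishing $\bbH^{0,2}(\DR^{\lalg})[\lambda]=0$ already delivers (once rephrased derivedly). With that correction your argument is the paper's.
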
 
\begin{proof}
    (1) Using the $p$-adic uniformization (see also Remark \ref{remarkderhamtensor}), we have
    \begin{align}\label{equationderhamjacquetlanglands}
    R\Gamma_{\dR,c}(\calM_{\LT,\infty},\calO_{\calM_{\LT,\infty}}^{\sm})\ox_{\calH(G)}^L\pi_p=\tau_p^{\oplus 2}[-1].
    \end{align}    
    By Theorem \ref{thm:H1DR}, Proposition \ref{prop:productformula}, Lemma \ref{lemmaderhamlubintate} and (\ref{equationmultiplicity}), there is a $\bbT^S$-equivariant isomorphism
    \begin{align*}
        0\rightarrow(W^{(0,-k)}\ox_C \tau_p )^{\oplus 2} \otimes_C (\pi_{\lambda}^{p,\infty})^{K^p}\rightarrow \widetilde{\tau}\otimes_C (\pi_{\lambda}^{p,\infty})^{K^p}\rightarrow \tau_c\otimes_C (\pi_{\lambda}^{p,\infty})^{K^p}\rightarrow 0.
    \end{align*}
    Take colimits for all $K^p$ small enough and apply $\Hom_{G(\bbA_f^p)}(\pi_{\lambda}^{p,\infty},-)$ to the above exact sequence, we get the desired exact sequence in (1). 

    (2) By the Hodge--Tate decomposition and the spectral decomposition of the classical de Rham cohomology of quaternion Shimura curves (cf. \cite[Theorem 6.2.3]{lan2023rham} and the proof for Proposition \ref{prop:dRcohOKvalg}), we can get isomorphisms
    \begin{align*}
        H^1(\calS_{K^p},\omega^{(0,-k),\sm}_{\calS_{K^p}})[\lambda]\isom  \tau_p\otimes_C (\pi_{\lambda}^{p,\infty})^{K^p}\isom 
        H^0(\calS_{K^p},\omega^{(-k-1,1),\sm}_{\calS_{K^p}})[\lambda].
    \end{align*}
    The two short exact sequences in (2) follow from Corollary \ref{cor:kerI1DRDRprime} (using Lemma \ref{lem:cohoofomegasm} and (\ref{equationOKplalg})).
\end{proof}
\begin{remark}\label{remarkHodgefiltration}
    The map $H^0(\check\fl,\calO_{K^p}^{\lalg,(0,-k)}\ox_{\calO_{K^p}^{\sm}}(\Omega_{K^p}^{1,\sm})^{\ox k+1})[\lambda]\to \bbH^1(\check\fl,\DR)[\lambda]$ in Corollary \ref{cor:kerI1DRDRprime} is the composition 
    \[H^0(\check\fl,\calO_{K^p}^{\lalg,(0,-k)}\ox_{\calO_{K^p}^{\sm}}(\Omega_{K^p}^{1,\sm})^{\ox k+1})[\lambda]\to \bbH^1(\check\fl,\DR^{\lalg})[\lambda]\to \bbH^1(\check\fl,\DR)[\lambda],\] where the map $H^0(\calO_{K^p}^{\lalg,(0,-k)}\ox_{\calO_{K^p}^{\sm}}(\Omega_{K^p}^{1,\sm})^{\ox k+1})[\lambda]\to \bbH^1(\DR^{\lalg})[\lambda]$ gives the Hodge filtration, see (the proof of) Proposition \ref{prop:dRcohOKvalg}. Let $\rho:\Gal(\overline{\bbQ}/\bbQ)\rightarrow \GL_2(\overline{E})$ be the Galois representation attached to $\lambda$ and let $\rho_p:=\rho|_{\Gal(\overline{\bbQ_p}/\bbQ_p)}$. Let $K^p$ vary and apply $\Hom_{G(\bbA_f^p)}(\pi_{\lambda}^{p,\infty},-)$ as in the proof of Proposition \ref{propositionexactseqDptimesrep} above, we see the last map, in the notation of Proposition \ref{propositionexactseqDptimesrep} (2),
    \[W^{(0,-k)}\ox_C \tau_p\otimes_C (\pi_{\lambda}^{p,\infty})^{K^p} \to  W^{(0,-k)}\ox_C \tau_p\otimes_CD_{\dR}(\rho_p)_C\otimes_C (\pi_{\lambda}^{p,\infty})^{K^p}\subset \tilde{\tau}\otimes_C (\pi_{\lambda}^{p,\infty})^{K^p}\]
    is induced by the Hodge filtration inside $D_{\dR}(\rho_p)$ after applying the de Rham-\'etale comparison for (\ref{eq:rhoisotypic}).
\end{remark}
\begin{remark}
    The short exact sequence (\ref{equationexactsequencetautildetuac}) is stated for $\pi_p$ being a local component of an automorphic representation. But one can also prove directly that the same sequence is exact for any irreducible smooth representation $\pi_p$ of $\GL_2(\bbQ_p)$ with $\dim \pi_p>1$ (and $\widetilde{\tau},\tau_c$ defined as in Definition \ref{def:tauc}) using the results on the compactly supported de Rham cohomology of $\calM_{\LT,\infty}$.
\end{remark}
We will discuss later the relation between these sequences in the above proposition and the $p$-adic local Langlands program for $D_p^{\times}$.

\subsection{Representation-theoretic properties of $\tau_c $}\label{sec:representationproperties}
Let $\pi_p$ be the irreducible smooth representation of $\GL_2(\bbQ_p)$ fixed in \S\ref{subsec:repofDptimes}. Let $\tau_c $ be the representation of $D_p^{\times}$ in Definition \ref{def:tauc}. In this subsection, we will study some representation-theoretic properties of $\tau_c $.

The organization of this part is as follows. First, we will show that $\tau_c$ is always non-zero. Using the geometric definition of $\tau_c $ involving Lubin--Tate spaces, we find a natural $E$-structure $\tau_{c,E}$ of $\tau_c $, where $E$ is a sufficiently large finite extension of $\bbQ_p$. Then we show that $\tau_{c,E}$ is admissible. We will also show that $\tau_{c,E}$ has Gelfand-Kirillov dimension $1$.  

\subsubsection{Non-vanishing of $\tau_c $}\label{subsectionnonvanishing}
We start by showing that $\tau_c$ in Definition \ref{def:tauc} is non-zero for any $\lambda\in \sigma_{0,c}^{K^p}$ (Definition \ref{def:setHeckeeigenvalues}). 

Let $\calM_{\LT,0}^{(0)}$ be the component of $\calM_{\LT,0}$ such that the quasi-isogeny in the moduli problem for $\calM_{\LT,0}$ (\S\ref{subsection:twotowers}) is of height zero. Abstractly $\calM_{\LT,0}^{(0)}$ is an open unit disk over $C$, and we let $u\in H^0(\calM_{\LT,0}^{(0)},\calO_{\calM_{\LT,0}^{(0)}})$ be a coordinate. Write $\pi_{\LT,\GM,0}^{(0)}:\calM_{\LT,0}^{(0)}\to \check{\fl}\isom \bbP^{1,\ad}_C$ as the restriction of the Gross--Hopkins period map of $\calM_{\LT,0}$ on $\calM_{\LT,0}^{(0)}$. From the discussion in \cite[\S 25]{GH94}, we know there exist two rigid analytic functions $\phi_0,\phi_1\in H^0(\calM_{\LT,0}^{(0)},\calO_{\calM_{\LT,0}^{(0)}})$ on $\calM_{\LT,0}^{(0)}$ with no common zeros, such that $\pi_{\calM_{\LT,0}^{(0)}}(x)=[\phi_0(x):\phi_1(x)]\in\bbP^{1,\mathrm{ad}}_C$ for $x\in \calM_{\LT,0}^{(0)}$. Moreover, all zeros of $\phi_0,\phi_1$ are simple.

Let $\calM_{\LT,\infty}^{(0)}$ be the preimage of $\calM_{\LT,0}^{(0)}$ under the natural projection $\calM_{\LT,\infty}\rightarrow \calM_{\LT,0}$. Let $\GL_2(\bbQ_p)^{\circ}=\{g\in\GL_2(\bbQ_p)\mid \det(g)\in\bbZ_p^{\times}\}$. Then $\GL_2(\bbQ_p)^{\circ}\times \calO_{D_p}^{\times}$ acts on $\calM_{\LT,\infty}^{(0)}$ (cf. the end of \cite[\S 23]{GH94}).
\begin{proposition}\label{prop:phi0}
Then multiplication by $\phi_0$ on $ \calO_{\calM_{\LT,\infty}^{(0)}}^{\sm}$ induces an exact sequence
\begin{align*}
    0\to \calC^{\sm}_c(\GL_2(\bbQ_p)^{\circ},C)\to H^1_c(\calM_{\LT,\infty}^{(0)},\calO_{\calM_{\LT,\infty}^{(0)}}^{\sm}) \ov{\times \phi_0}\to H^1_c(\calM_{\LT,\infty}^{(0)},\calO_{\calM_{\LT,\infty}^{(0)}}^{\sm})\to 0.
\end{align*}
\begin{proof}
We first show that multiplication by $\phi_0$ on the sheaf of Kähler differentials $\Omega_{\calM_{\LT,0}^{(0)}}^1$ induces an exact sequence 
\begin{align}\label{equationexactseqphi0}
    0\to H^0(\calM_{\LT,0}^{(0)},\Omega^1_{\calM_{\LT,0}^{(0)}})\ov{\times \phi_0}\to H^0(\calM_{\LT,0}^{(0)},\Omega^1_{\calM_{\LT,0}^{(0)}})\to \prod_{x\in (\pi_{\LT,\GM,0}^{(0)})^{-1}([0:1])}C_x\to 0
\end{align}
where $[0:1]\in \check\fl=\bbP^{1,\mathrm{ad}}_C$ and $C_x$ is the skyscraper sheaf at $x\in (\pi_{\LT,\GM,0}^{(0)})^{-1}([0:1])\subset \calM_{\LT,0}^{(0)}$ with value $C$. Indeed, if we pick a Stein covering $\{U_n\}$ of $\calM_{\LT,0}^{(0)}$ by affinioids (i.e., $U_n\Subset U_{n+1}$ for $n\ge 0$ and $\calM_{\LT,0}^{(0)}=\bigcup_n U_n$), then as $(\pi_{\LT,\GM,0}^{(0)})^{-1}([0:1])\simeq \GL_2(\bbQ_p)^{\circ}/\GL_2(\bbZ_p)$ is discrete, $(\pi_{\LT,\GM,0}^{(0)})^{-1}([0:1])\cap U_n$ is finite for each $n$ being discrete and quasi-compact. Since $\phi_0$ vanishes exactly on $(\pi_{\LT,\GM,0}^{(0)})^{-1}([0:1])$ with simple zeros and $\Omega^1_{\calM_{\LT,0}^{(0)}}\simeq \calO_{\calM_{\LT,0}^{(0)}}$ as a coherent $\calO_{\calM_{\LT,0}^{(0)}}$-module, we see that for each $n$, the sequence
\begin{align*}
    0\to H^0(U_n,\Omega^1_{\calM_{\LT,0}^{(0)}})\ov{\times \phi_0}\to H^0(U_n,\Omega^1_{\calM_{\LT,0}^{(0)}})\to \prod_{x\in U_n:\pi_{\LT,\GM,0}(x)=[0:1]}C_x\to 0.
\end{align*}
is exact. Taking limits, as $R^1\lim_n H^0(U_n,\Omega^1_{\calM_{\LT,0}^{(0)}})=0$ ($H^0(\calM_{\LT,0}^{(0)},\Omega^1_{\calM_{\LT,0}^{(0)}})$ is nuclear Fréchet), we deduce the exactness of the original sequence (\ref{equationexactseqphi0}). Taking the strong dual, by Serre duality \cite{beyer1997serre,chiarellotto2006duality,van1992serre} and \cite[Proposition 9.11]{schneider2013nonarchimedean} (these results do not depend on whether the coefficient field is spherical complete), we deduce that there exists an exact sequence
\begin{align*}
    0\to \bigoplus_{x\in (\pi_{\LT,\GM,0}^{(0)})^{-1}([0:1])}C_x\to H^1_c(\calM_{\LT,0}^{(0)},\calO_{\calM_{\LT,0}^{(0)}})\to H^1_c(\calM_{\LT,0}^{(0)},\calO_{\calM_{\LT,0}^{(0)}})\to 0.
\end{align*}
Let $\calM_{\LT,n}^{(0)}\subset \calM_{\LT,n}$ be the preimage of $\calM_{\LT,0}^{(0)}$ under the finite \'etale map $\pi_n:\calM_{\LT,n}\to \calM_{\LT,0}$. Since $\pi_n$ is \'etale, the pullback via $\pi_n^*$ of the sequence $0\rightarrow \Omega^1_{\calM_{\LT,0}^{(0)}}\rightarrow \Omega^1_{\calM_{\LT,0}^{(0)}}\rightarrow \prod_{x\in (\pi_{\LT,\GM,0}^{(0)})^{-1}([0:1])}C_x\rightarrow 0$ is still exact. Hence the same argument as above shows that multiplication by $\phi_0$ on $\calO_{\calM_{\LT,n}^{(0)}}$ induces an exact sequence
\[0\to \bigoplus_{x\in (\pi_{\LT,\GM,n}^{(0)})^{-1}([0:1])}C_x\to H^1_c(\calM_{\LT,n}^{(0)},\calO_{\calM_{\LT,n}^{(0)}})\to H^1_c(\calM_{\LT,n}^{(0)},\calO_{\calM_{\LT,n}^{(0)}})\to 0\]
where $\pi_{\LT,\GM,n}^{(0)}= \pi_{\LT,\GM,0}^{(0)}\circ\pi_n$. Taking colimits on the level $n$, using Proposition \ref{lem:RpOLTsm}, we get an exact sequence
\begin{align*}
    0\to \varinjlim_n\bigoplus_{x\in (\pi_{\LT,\GM,n}^{(0)})^{-1}([0:1])}C_x\to H^1_c(\calM_{\LT,\infty}^{(0)},\calO_{\calM_{\LT,\infty}}^{\sm}) \ov{\times \phi_0}\to H^1_c(\calM_{\LT,\infty}^{(0)},\calO_{\calM_{\LT,\infty}}^{\sm})\to 0.
\end{align*}
Finally, we can identify $\varinjlim_n\bigoplus_{x\in (\pi_{\LT,\GM,n}^{(0)})^{-1}([0:1])}C$ as the space of compactly supported smooth functions on the locally profinite set $\varprojlim_{n}(\pi_{\LT,\GM,n}^{(0)})^{-1}([0:1])=(\pi_{\LT,\GM})^{-1}([0:1])\cap \calM_{\LT,\infty}^{(0)}=\GL_2(\bbQ_p)^{\circ}$.
\end{proof}
\end{proposition}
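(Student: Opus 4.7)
The plan is to analyze multiplication by $\phi_0$ first on Kähler differentials at level zero, pass to compactly supported cohomology of the structure sheaf by Serre duality, then lift the sequence to each finite level via étale pullback, and finally take the smooth colimit at infinite level. The result is obtained essentially by dualizing the elementary divisibility statement for the line bundle $\Omega^1$ on an open unit disk.

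First, since $\phi_0$ is a nonzero rigid analytic function whose zero locus is precisely the preimage $(\pi^{(0)}_{\LT,\GM,0})^{-1}([0:1])\subset \calM_{\LT,0}^{(0)}$ and all zeros are simple, I would start from the short exact sequence of coherent sheaves
\[
0\to \Omega^1_{\calM_{\LT,0}^{(0)}}\xrightarrow{\cdot\phi_0}\Omega^1_{\calM_{\LT,0}^{(0)}}\to \bigoplus_{x\in (\pi^{(0)}_{\LT,\GM,0})^{-1}([0:1])} C_x\to 0.
\]
Because $\calM_{\LT,0}^{(0)}$ is an open unit disk, hence Stein, I would choose an increasing exhaustion $\{U_n\}$ by affinoid subdomains. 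Each $U_n$ meets the discrete fiber in only finitely many points, and Tate acyclicity of coherent sheaves on affinoids gives the associated sequence on $H^0(U_n,-)$ exact. Passing to the inverse limit requires the Mittag--Leffler condition (equivalently $R^1\varprojlim_n H^0(U_n,\Omega^1)=0$), which holds since $H^0(\calM_{\LT,0}^{(0)},\Omega^1)$ is a nuclear Fréchet space with dense restriction maps. I would then invoke one-dimensional Serre duality for smooth partially proper rigid spaces (in the form available from Beyer, Chiarellotto, or van der Put) to dualize the resulting sequence and obtain the exact sequence
\[
0\to \bigoplus_{x}C_x\to H^1_c(\calM_{\LT,0}^{(0)},\calO)\xrightarrow{\cdot\phi_0} H^1_c(\calM_{\LT,0}^{(0)},\calO)\to 0.
\]

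Second, I would lift to finite level $n$. Since $\pi_n\colon \calM_{\LT,n}^{(0)}\to \calM_{\LT,0}^{(0)}$ is finite étale, the pullback $\pi_n^{*}\Omega^1$ is $\Omega^1_{\calM_{\LT,n}^{(0)}}$ and the skyscraper pulls back to the (still discrete) fiber; in particular the analogous short exact sequence on $\calM_{\LT,n}^{(0)}$ is again exact. Repeating the Stein/Serre argument at each level gives, for every $n$, an exact sequence
\[
0\to \bigoplus_{x\in (\pi^{(0)}_{\LT,\GM,n})^{-1}([0:1])}C_x\to H^1_c(\calM_{\LT,n}^{(0)},\calO)\xrightarrow{\cdot\phi_0} H^1_c(\calM_{\LT,n}^{(0)},\calO)\to 0.
\]
Finally, I would take the filtered colimit in $n$, invoking Lemma \ref{lem:RpOLTsm} to identify $\dlim_n H^1_c(\calM_{\LT,n}^{(0)},\calO)$ with $H^1_c(\calM_{\LT,\infty}^{(0)},\calO_{\calM_{\LT,\infty}}^{\sm})$, and using that filtered colimits are exact. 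The colimit of the kernel terms is $\dlim_n \bigoplus_{x\in (\pi^{(0)}_{\LT,\GM,n})^{-1}([0:1])}C_x$, whose underlying index set $\varprojlim_n (\pi^{(0)}_{\LT,\GM,n})^{-1}([0:1])$ is the infinite-level fiber, canonically the locally profinite set $\GL_2(\bbQ_p)^{\circ}$. This colimit of finitely supported functions on the profinite truncations is exactly $\calC^{\sm}_c(\GL_2(\bbQ_p)^{\circ},C)$.

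The main obstacle I expect is the control of the inverse limit in the Stein step: one needs that multiplication by $\phi_0$ remains surjective after taking global sections on the quasi-Stein space, which is the nontrivial input beyond coherent Tate acyclicity on each $U_n$. The other delicate point is verifying that Serre duality is genuinely compatible with the multiplication-by-$\phi_0$ map (so that its dual is again multiplication by $\phi_0$ on the dual cohomology); this should follow from the functoriality of the duality pairing, but needs a direct check in the chosen setting.
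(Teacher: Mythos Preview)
Your proposal is correct and follows essentially the same route as the paper: exactness of multiplication by $\phi_0$ on $\Omega^1$ at level zero via a Stein exhaustion and $R^1\varprojlim=0$, Serre duality to pass to $H^1_c(\calO)$, \'etale pullback to each finite level, and a filtered colimit identifying the kernel with $\calC^{\sm}_c(\GL_2(\bbQ_p)^{\circ},C)$. The only cosmetic difference is that the paper writes the cokernel of $\times\phi_0$ on global sections as $\prod_x C_x$ (since the zero locus is infinite discrete) before dualizing to $\bigoplus_x C_x$, whereas you start from the sheaf-level $\bigoplus$; this is harmless once one notes the zero set is locally finite.
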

We remark that the exact sequence in Proposition \ref{prop:phi0} is not $\GL_2(\bbQ_p)^{\circ}$-equivariant. (Indeed, if so, then $\phi_0$ will induce a $\GL_2(\bbQ_p)^{\circ}$-equivariant map $\calM_{\LT,\infty}^{(0)}\to \bbA^{1,\an}_C$, which will descend to $\check{\fl}\to \bbA^1$ as $\calM_{\LT,\infty}^{(0)}\to \check{\fl}$ is a $\GL_2(\bbQ_p)^{\circ}$-torsor. But such a map must be a constant.) However, a variant of this exact sequence is $\GL_2(\bbQ_p)$-equivariant and is useful, as we will see in Proposition \ref{prop:t00} below.

By \cite[\S 25, (25.14)]{GH94}, there exists an element $t_{00}\in \check{\frg}/\check{\frz}$, such that the action of $t_{00}$ on $\calO_{\calM_{\LT,0}^{(0)}}$ is given by $t_{00}=\frac{\phi_0(u)\phi_1(u)}{\epsilon(u)}\frac{\partial}{\partial u}$. Here, $\check\frz$ is the center of $\check{\frg}=\Lie D_p^\times\ox_{\bbQ_p}C$, which acts trivially on $\calO_{\check{\fl}}$, and $\epsilon=\phi_0\phi_1'-\phi_0'\phi_1$. Moreover, $t_{00}$ acts naturally on the $D_p^{\times}$-equivariant sheaf $\calO_{\calM_{\LT,n}}$ by the derivation of the $D_p^{\times}$-action.  
\begin{lemma}\label{lem:t00}
    Let $t_{00}\in \check{\frg}/\check{\frz}$ be as above.
    \begin{enumerate}[(1)]
        \item $t_{00}=c\begin{pmatrix}1&0\\0&-1 \end{pmatrix}\in \check{\frg}$ for a non-zero $c\in C^{\times}$. Here $\check{\frg}\simeq \mathfrak{gl}_{2,C}$ via (\ref{equationsplittingD}).
        \item The action map $t_{00}:\calO_{\calM_{\LT,\infty}}^{\sm}\rightarrow \calO_{\calM_{\LT,\infty}}^{\sm}$ is a composition
        \[\calO_{\calM_{\LT,\infty}}^{\sm}\stackrel{d_{\LT}}{\rightarrow}\Omega_{\calM_{\LT,\infty}}^{1,\sm}\stackrel{\Phi}{\rightarrow}\calO_{\calM_{\LT,\infty}}^{\sm} \]
        of morphisms of $\GL_2(\bbQ_p)$-equivariant abelian sheaves on $\calM_{\LT,\infty}$, where $d_{\LT}$ is the differential operator in \S\ref{theoremdifferentialoperatorsDr} and $\Phi$ is a morphism of $\calO_{\calM_{\LT,\infty}}^{\sm}$-modules whose restriction to $\Omega_{\calM_{\LT,\infty}^{(0)}}^{1,\sm}$ is given by $fdu\mapsto \frac{\phi_0\phi_1}{\epsilon}f$.
    \end{enumerate}
\end{lemma}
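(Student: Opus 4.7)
For part (1), the strategy is to compute how $t_{00}$ acts on pullbacks of functions from the flag variety $\check{\fl}$, and then identify $t_{00}$ as an element of $\check{\frg}/\check{\frz}$ via this action. Under the adjoint action, $\check{\frg}/\check{\frz} \simeq \pgl_2(C)$ is identified with the full Lie algebra $H^0(\check{\fl}, T_{\check{\fl}})$ of global vector fields on $\check{\fl} \simeq \bbP^1_C$, so it suffices to pin down the vector field on $\check{\fl}$ induced by $t_{00}$. Using $\pi_{\LT,\GM,0}^{(0)}(u) = [\phi_0(u) : \phi_1(u)]$, take $z = \phi_1/\phi_0$ as an affine coordinate on the Bruhat cell containing the image of $\calM_{\LT,0}^{(0)}$. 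A direct computation, using $\epsilon = \phi_0 \phi_1' - \phi_0' \phi_1$, gives
\[ t_{00}(z) = \frac{\phi_0 \phi_1}{\epsilon} \cdot \frac{d}{du}\!\left(\frac{\phi_1}{\phi_0}\right) = \frac{\phi_0 \phi_1}{\epsilon} \cdot \frac{\epsilon}{\phi_0^2} = z, \]
so $t_{00}$ acts on $\calO_{\check{\fl}}$ as the Euler vector field $z \partial_z$. Since, up to a non-zero scalar, the Euler vector field on $\bbP^1$ is the global vector field arising from the Cartan element $h = \begin{pmatrix}1&0\\0&-1\end{pmatrix}\in \check{\frg}$ (while the nilpotent and opposite nilpotent correspond to $\partial_z$ and $z^2 \partial_z$), this forces $t_{00} \equiv c\, h \pmod{\check{\frz}}$ for some $c \in C^\times$.

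For part (2), the key input is that the restriction of $d_{\LT}$ to $\calO_{\calM_{\LT,\infty}}^{\lalg,(0,0)} = \calO_{\calM_{\LT,\infty}}^{\sm}$ coincides with the usual de Rham differential $f \mapsto df$. This follows from the Lubin--Tate analogue of Theorem \ref{theoremdifferentialoperatorsDr}(i): for $k=0$ the Gauss--Manin connection on the trivial line bundle $\omega_{\calM_{\LT,n}}^{(0,0)} = \calO_{\calM_{\LT,n}}$ is simply the ordinary differential. Granted this, what remains is to construct the $\calO^{\sm}$-linear morphism $\Phi$, and to verify the explicit formula on $\calM_{\LT,\infty}^{(0)}$.

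The construction of $\Phi$ is essentially tautological. Since the $D_p^\times$- and $\GL_2(\bbQ_p)$-actions on $\calM_{\LT,\infty}$ commute, the derivation of $\calO_{\calM_{\LT,\infty}}$ induced by $t_{00} \in \check{\frg}$ preserves the subsheaf $\calO_{\calM_{\LT,\infty}}^{\sm}$ of $\GL_2(\bbQ_p)$-smooth sections, and defines a $\GL_2(\bbQ_p)$-equivariant derivation $\calO_{\calM_{\LT,\infty}}^{\sm} \to \calO_{\calM_{\LT,\infty}}^{\sm}$. By the universal property of $\Omega_{\calM_{\LT,\infty}}^{1,\sm}$ as the receiver of $\calO^{\sm}$-linear derivations of $\calO^{\sm}$, this derivation factors uniquely as $\Phi \circ d_{\LT}$ for a $\GL_2(\bbQ_p)$-equivariant $\calO^{\sm}$-linear morphism $\Phi : \Omega_{\calM_{\LT,\infty}}^{1,\sm} \to \calO_{\calM_{\LT,\infty}}^{\sm}$. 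On $\calM_{\LT,\infty}^{(0)}$, the identity $t_{00}(u) = \tfrac{\phi_0\phi_1}{\epsilon}$ forces $\Phi(du) = \tfrac{\phi_0\phi_1}{\epsilon}$, and then $\Phi(f\, du) = \tfrac{\phi_0\phi_1}{\epsilon} f$ by $\calO^{\sm}$-linearity, matching the formula in the statement. The only mildly subtle point is checking that $t_{00}$ genuinely preserves the smooth subsheaf rather than merely the locally analytic one; this however is immediate from the commutativity of the two commuting group actions on $\calM_{\LT,\infty}$, so no serious obstacle appears.
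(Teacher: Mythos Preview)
Your proof is correct and follows essentially the same approach as the paper. For part (1), both arguments compute the vector field on $\check{\fl}$ induced by $t_{00}$ and identify it with a scalar multiple of $h$; you evaluate $t_{00}(z)$ directly to obtain the Euler field, while the paper computes $h$'s action on the standard coordinate first and then matches it with $t_{00}$'s formula via the chain rule---the same computation read in opposite directions. For part (2), both invoke the universal property of K\"ahler differentials to factor $t_{00}$ through $d$; the only difference is that the paper deduces $\GL_2(\bbQ_p)$-equivariance of $\Phi$ by observing (via \'etaleness of $\pi_{\LT,\GM,n}$) that $\Phi$ is pulled back from a vector field on $\check{\fl}$, on which $\GL_2(\bbQ_p)$ acts trivially, whereas you extract equivariance directly from the uniqueness of the factorization combined with the equivariance of $t_{00}$ and $d_{\LT}$---a minor but equally valid variation.
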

\begin{proof}
    By definition, $t_{00}$ is a derivation on $\calO_{\calM_{\LT,n}}$. By \cite[\href{https://stacks.math.columbia.edu/tag/00RO}{Tag 00RO}]{stacks-project}, the derivation is determined by an element $\Phi_{\LT,n}\in \Hom_{\calO_{\calM_{\LT,n}}}(\Omega_{\calM_{\LT,n}}^1,\calO_{\calM_{\LT,n}})$ so that the action of $t_{00}$ is given by
    \[ t_{00}: \calO_{\calM_{\LT,n}}\stackrel{d}{\rightarrow} \Omega_{\calM_{\LT,n}}^1\stackrel{\Phi_{\LT,n}}{\rightarrow}\calO_{\calM_{\LT,n}}.\]
    Since $\pi_{\LT,\GM,n}:\calO_{\calM_{\LT,n}}\rightarrow \check{\fl}$ is \'etale (thus $\Omega_{\calM_{\LT,n}}^1=(\pi_{\LT,\GM,n})^*\Omega^1_{\check\fl}$) and domimnant, and $t_{00}$ acts on $\calO_{\check\fl}$, we see that the element $\Phi_{\LT,n}$ is the image of an element 
    \[\Phi\in \Hom_{\calO_{\check\fl}}(\Omega^1_{\check\fl},\calO_{\check\fl})\subset\Hom_{\calO_{\check\fl}}(\Omega^1_{\check\fl},(\pi_{\LT,\GM,n})_*\calO_{\calM_{\LT,n}})= \Hom_{\calO_{\calM_{\LT,n}}}(\Omega_{\calM_{\LT,n}}^1,\calO_{\Omega_{\calM_{\LT,n}}})\]
    which gives the action of $t_{00}$ on $\calO_{\check\fl}$ by \textit{loc. cit.} In other words, first order differential operators on $\calO_{\calM_{\LT,n}}$ stabilizing $(\pi_{\LT,\GM,n})^{-1}\calO_{\check\fl}$ are in natural bijection with differential operators on $\calO_{\check\fl}$.

    To prove (1), which was already noticed after the definition of $t_{00}$ in \cite[\S 25]{GH94}, we calculate the action of $h:=\begin{pmatrix}1&0\\0&-1 \end{pmatrix}$ on $\calO_{\check\fl}$. Write $\check\fl=\GL_2/\bar B=N\bar B/\bar B\cup sN\bar B/\bar B$. Let $z$ be the coordinate of $N=\{\begin{pmatrix}1&z\\0&1\end{pmatrix}\}$ and $z'=\frac{1}{z}$ be the coordinate $z'$ of $sN=\{\begin{pmatrix}0&1\\1&0\end{pmatrix}\begin{pmatrix}1&z'\\0&1\end{pmatrix}\}$. Since $\begin{pmatrix}t&0\\0&t^{-1}\end{pmatrix}^{-1}\begin{pmatrix}1&z\\0&1\end{pmatrix}=\begin{pmatrix}1&t^{-2}z\\0&1\end{pmatrix}\begin{pmatrix}t&0\\0&t^{-1}\end{pmatrix}^{-1}$, we see $h$ acts on $\calO(N)$ via $f\mapsto \frac{d}{d\epsilon}f(\exp(\epsilon)^{-2}z)=-2z\frac{d}{dz}f$. Similarly, $h$ acts on $\calO(sN)$ via $f(z')\mapsto 2z'\frac{d}{dz'}f$. Hence the corresponding element in $\Hom_{\calO_{\check\fl}}(\Omega^1_{\check\fl},\calO_{\check\fl})$ is given by $dz\mapsto -2z$ on $N\bar B/\bar B$ and $dz'\mapsto 2z'$ on $sN\bar B/\bar B$. After possibly changing variables, we may assume that $z=\frac{\phi_0(u)}{\phi_1(u)}$ where $u$ is the coordinate of $\calM_{\LT,0}^{(0)}$. Then $dz=d\frac{\phi_0(u)}{\phi_1(u)}=\frac{\phi_0'(u)\phi_1(u)-\phi_1'(u)\phi_0(u)}{\phi_1(u)^2}du$. Hence $h$ corresponds to the morphism sending $du=-\frac{\phi_1^2}{\epsilon}dz$ to $-\frac{\phi_1^2}{\epsilon}\cdot (-2\frac{\phi_0}{\phi_1})=2\frac{\phi_0\phi_1}{\epsilon}$, which equals to $t_{00}$ up to a scalar.

    The decomposition in (2) of the $t_{00}$-action follows from the discussions above. The morphism $\Phi$ is $\GL_2(\bbQ_p)$-equivariant since it is the pullback of a morphism in $\Hom_{\calO_{\check\fl}}(\Omega^1_{\check\fl},\calO_{\check\fl})$ along the $\GL_2(\bbQ_p)$-equivariant map $\pi_{\LT,\GM}$. The composition $t_{00}:\calO_{\calM_{\LT,\infty}}^{\rm sm}\rightarrow \Omega_{\calM_{\LT,\infty}}^{1,\sm}\to \calO_{\calM_{\LT,\infty}}^{\rm sm}$ is a priori $\GL_2(\bbQ_p)$-equivariant since the action of $\GL_2(\bbQ_p)$ commutes with the action of the Lie algebra of $D^{\times}_p$.
\end{proof}
The element $t_{00}$ also acts on the $D_{p}^{\times}$-equivariant sheaf $\Omega^{1,\sm}_{\calM_{\LT,\infty}}$ by derivation. 
\begin{proposition}\label{prop:t00}
The action of $t_{00}$ on $H^1_c(\calM_{\LT,\infty},\Omega_{\calM_{\LT,\infty}}^{1,\sm})$ induces a $\GL_2(\bbQ_p)$-equivariant exact sequence 
\begin{align*}
    0\to \ker t_{00}\to H^1_c(\calM_{\LT,\infty},\Omega_{\calM_{\LT,\infty}}^{1,\sm})\to H^1_c(\calM_{\LT,\infty},\Omega_{\calM_{\LT,\infty}}^{1,\sm})\to \coker t_{00}\to 0
\end{align*}
with $\coker t_{00}\isom H^2_{\dR,c}(\calM_{\LT,\infty})$, and there exists a $\GL_2(\bbQ_p)$-equivariant short exact sequence
\begin{align*}
    0\to \calC^\sm_c(\GL_2(\bbQ_p),C)^{\oplus 2}\to \ker t_{00}\to H^1_{\dR,c}(\calM_{\LT,\infty})\to 0.
\end{align*}
\begin{proof}
By (2) of Lemma \ref{lem:t00}, the map $H^0(t_{00}):H^0(\calM_{\LT,\infty},\calO_{\calM_{\LT,\infty}}^{\sm})\to H^0(\calM_{\LT,\infty},\calO_{\calM_{\LT,\infty}}^{\sm})$ can be written as the composition of $H^0(d_{\LT}):H^0(\calM_{\LT,\infty},\calO_{\calM_{\LT,\infty}}^{\sm})\rightarrow H^0(\calM_{\LT,\infty},\Omega^{1,\sm}_{\calM_{\LT,\infty}})$ and $H^0(\Phi): H^0(\calM_{\LT,\infty},\Omega^{1,\sm}_{\calM_{\LT,\infty}})\rightarrow H^0(\calM_{\LT,\infty},\calO_{\calM_{\LT,\infty}}^{\sm})$. 

We first work on $\calM_{\LT,n}$, and by abuse of notation we let $d,\Phi$ be the corresponding maps on $\calM_{\LT,n}$. By the kernel-cokernel lemma, we see $t_{00}$ induces an exact sequence
\begin{align}\label{equationkernelcokernel}
    0\to \ker H^0(d)\to \ker H^0(t_{00})\to \ker H^0(\Phi)\to \coker H^0(d)\to \coker H^0(t_{00})\to \coker H^0(\Phi)\to 0.
\end{align}
Locally on $\calM_{\LT,n}$, as on $\calM_{\LT,n}^{(0)}$, $\Omega_{\calM_{\LT,n}}^1\simeq \calO_{\calM_{\LT,n}}$ as $\calO_{\calM_{\LT,n}}$-modules and $\Phi$ is then given by multiplying a non-zero function as $\frac{\phi_0\phi_1}{\epsilon}$. Since $\calM_{\LT,n}$ is locally integral, we see $H^0(\Phi)$ is injective on $H^0(\calM_{\LT,n},\Omega^{1}_{\calM_{\LT,n}})$. Hence $\ker H^0(\Phi)=0$. Then $\ker H^0(t_{00})=\ker H^0(d)$. By the vanishing of higher cohomology groups of coherent sheaves on the Stein space $\calM_{\LT,n}$, we know $\ker H^0(t_{00})=\ker H^0(d)=H^0_{\dR}(\calM_{\LT,n})$ and $\coker H^0(d)=H^1_{\dR}(\calM_{\LT,n})$. Moreover, $\coker H^0(\Phi|_{\calM_{\LT,n}^{(0)}})=\prod_{x\in \calM_{\LT,n}^{(0)},\pi_{\LT,\GM,n}(x)\in\{[0:1],[1:0]\}}C_x$ by using similar methods as in the proof of Proposition \ref{prop:phi0} and that $\phi_0,\phi_1$ have no common zeros while $\epsilon$ is invertible. 

Now we take the strong dual (for the Fr\'echet topology on the cohomology groups) and take colimits on $n$, so that by Serre duality the original maps $H^0(t_{00}),H^0(d),H^0(\Phi)$ induce
\begin{align*}
    H^0(t_{00})^{\vee}:H^1_c(\calM_{\LT,\infty},\Omega_{\calM_{\LT,\infty}}^{1,\sm})\ov{H^0(\Phi)^{\vee}}\rightarrow H^1_c(\calM_{\LT,\infty},\calO_{\calM_{\LT,\infty}}^{\sm})\ov{H^0(d)^{\vee}}\to H^1_c(\calM_{\LT,\infty},\Omega_{\calM_{\LT,\infty}}^{1,\sm}).
\end{align*}
By the discussions on groups in (\ref{equationkernelcokernel}) above and the duality, we see $\coker H^0(\Phi)^{\vee}=0$, and $\ker H^0(\Phi)^{\vee}=\cInd_{\GL_2(\bbQ_p)^{\circ}}^{\GL_2(\bbQ_p)}\ker H^0(\Phi|_{\calM_{\LT,\infty}^{(0)}})^{\vee}=\calC^\sm_c((\pi_{\LT,\GM})^{-1}(\{[0:1],[1:0]\}),C)=\calC^\sm_c(\GL_2(\bbQ_p),C)^{\oplus 2}$ by Proposition \ref{prop:phi0} and the $\GL_2(\bbQ_p)$-equivariance of $\Phi$. Also $\ker H^0(d)^{\vee}=H^1_{\dR,c}(\calM_{\LT,\infty}):=\colim_n H^1_{\dR,c}(\calM_{\LT,n})$ and $\coker H^0(d)^{\vee}=H^2_{\dR,c}(\calM_{\LT,\infty})$ by the duality of de Rham cohomology, cf. \cite[Theorem 4.11]{grosse2000rigid}. Then we see $\coker H^0(t_{00})^{\vee}=\coker H^0(d)^{\vee}=H^2_{\dR,c}(\calM_{\LT,\infty})$, and there is a short exact sequence 
\begin{align*}
    0\to \calC^\sm_c(\GL_2(\bbQ_p),C)^{\oplus 2}\to \ker H^0(t_{00})^{\vee}\to H^1_{\dR,c}(\calM_{\LT,\infty})\to 0.
\end{align*}
Finally, the natural action of $t_{00}$ on $H^1_c(\calM_{\LT,n},\Omega_{\calM_{\LT,n}}^{1})$ is dual to its action on $H^0(\calM_{\LT,n},\calO_{\calM_{\LT,n}})$ via the Serre duality which is $D_p^{\times}$-equivariant.
\end{proof}
\end{proposition}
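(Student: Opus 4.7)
The plan is to reduce to the factorization $t_{00} = \Phi \circ d_{\LT}$ provided by Lemma \ref{lem:t00}(2), run the kernel-cokernel (snake) lemma at each finite level $\calM_{\LT,n}$, and then transport the result to compactly supported cohomology by Serre duality. Both $d_{\LT}$ and $\Phi$ are $\GL_2(\bbQ_p)$-equivariant by Lemma \ref{lem:t00}(2), so all the resulting exact sequences will automatically be $\GL_2(\bbQ_p)$-equivariant.

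First, at finite level $n$, the action of $t_{00}$ on $H^0(\calM_{\LT,n},\calO_{\calM_{\LT,n}})$ factors as
\[
    H^0(\calM_{\LT,n},\calO_{\calM_{\LT,n}}) \xrightarrow{H^0(d)} H^0(\calM_{\LT,n},\Omega^1_{\calM_{\LT,n}}) \xrightarrow{H^0(\Phi)} H^0(\calM_{\LT,n},\calO_{\calM_{\LT,n}}).
\]
Because $\calM_{\LT,n}$ is Stein, higher coherent cohomology vanishes, so $\ker H^0(d) = H^0_{\dR}(\calM_{\LT,n})$ and $\coker H^0(d) = H^1_{\dR}(\calM_{\LT,n})$. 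For $\Phi$, we use that locally $\Omega^1_{\calM_{\LT,n}} \simeq \calO_{\calM_{\LT,n}}$ (since $\calM_{\LT,n}$ is smooth of dimension one, \'etale over $\check\fl$), and on $\calM_{\LT,n}^{(0)}$ the map $\Phi$ is multiplication by $\phi_0\phi_1/\epsilon$, a nonzero section. Since $\calM_{\LT,n}^{(0)}$ is integral, $H^0(\Phi)$ is injective, so $\ker H^0(\Phi) = 0$. The cokernel is controlled by the zero locus of $\phi_0\phi_1/\epsilon$: $\epsilon$ is invertible while $\phi_0$ and $\phi_1$ each have simple zeros, precisely the fibers over $[0:1]$ and $[1:0]$ respectively (disjoint, since $\phi_0$ and $\phi_1$ have no common zeros). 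Arguing as in the proof of Proposition \ref{prop:phi0} (Stein exhaustion plus the simple-zero description), one computes
\[
    \coker H^0(\Phi|_{\calM^{(0)}_{\LT,n}}) \;\isom\; \prod_{x \in (\pi_{\LT,\GM,n}^{(0)})^{-1}(\{[0:1],[1:0]\})} C_x,
\]
and the analogous statement for all of $\calM_{\LT,n}$ is obtained by using $\GL_2(\bbQ_p)$-translation to cover the other connected components.

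The kernel-cokernel (snake) lemma applied to $H^0(t_{00}) = H^0(\Phi) \circ H^0(d)$ then gives a six-term exact sequence
\[
    0 \to \ker H^0(d) \to \ker H^0(t_{00}) \to \ker H^0(\Phi) \to \coker H^0(d) \to \coker H^0(t_{00}) \to \coker H^0(\Phi) \to 0,
\]
which, since $\ker H^0(\Phi)=0$, degenerates to $\ker H^0(t_{00}) = H^0_{\dR}(\calM_{\LT,n})$ and a short exact sequence $0 \to H^1_{\dR}(\calM_{\LT,n}) \to \coker H^0(t_{00}) \to \coker H^0(\Phi) \to 0$. Passing to the colimit over $n$ via Lemma \ref{lem:RpOLTsm} (note the smoothness sheaves are colimits of the finite-level data), then dualizing with Serre duality (as already used in the proof of Proposition \ref{prop:phi0}, which gives $H^1_c(\calM_{\LT,\infty},\Omega^{1,\sm}_{\calM_{\LT,\infty}}) \isom H^0(\calM_{\LT,\infty},\calO^{\sm}_{\calM_{\LT,\infty}})^\vee$ and likewise for de Rham cohomology), converts the kernel of $H^0(t_{00})$ into the cokernel of its transpose on $H^1_c(\calM_{\LT,\infty},\Omega^{1,\sm})$ and vice versa. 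The de Rham duality of \cite{grosse2000rigid} identifies $\coker H^0(d)^\vee$ with $H^2_{\dR,c}(\calM_{\LT,\infty})$ and $\ker H^0(d)^\vee$ with $H^1_{\dR,c}(\calM_{\LT,\infty})$, giving the claimed expression for $\coker t_{00}$ and the quotient $\ker t_{00} \twoheadrightarrow H^1_{\dR,c}(\calM_{\LT,\infty})$.

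The last step is to identify the kernel piece. Dualizing the cokernel computation for $\Phi$, and using that $(\pi_{\LT,\GM})^{-1}([0:1])$ and $(\pi_{\LT,\GM})^{-1}([1:0])$ each form a single $\GL_2(\bbQ_p)$-orbit (each an induction from the stabilizer $\GL_2(\bbZ_p) \subset \GL_2(\bbQ_p)^{\circ}$, and then from $\GL_2(\bbQ_p)^{\circ}$ to $\GL_2(\bbQ_p)$ to account for the two height components glued together), one recovers $\calC^{\sm}_c(\GL_2(\bbQ_p), C)^{\oplus 2}$ as the subspace of $\ker t_{00}$ coming from $\ker H^0(\Phi)^\vee$. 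I expect the main technical point to be tracking the $\GL_2(\bbQ_p)$-action correctly through the Serre duality and colimit steps: one must check that the two points $[0:1], [1:0] \in \check\fl$ are indeed $\GL_2(\bbQ_p)$-stable (they are not individually, but their union is), and that after induction each orbit yields a single copy of $\calC^{\sm}_c(\GL_2(\bbQ_p),C)$ — the factor of two accounting for the pair $\{[0:1],[1:0]\}$.
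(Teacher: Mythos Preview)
Your proposal is correct and follows essentially the same route as the paper: factor $t_{00}=\Phi\circ d_{\LT}$ via Lemma~\ref{lem:t00}(2), run the kernel--cokernel sequence at each finite level using the Stein property and the computation of $\coker H^0(\Phi)$ as in Proposition~\ref{prop:phi0}, then pass to $H^1_c$ by Serre duality and take colimits. One small correction to your closing remark: $\GL_2(\bbQ_p)$ acts \emph{trivially} on $\check\fl$ (only $D_p^\times$ acts nontrivially there), so the points $[0:1]$ and $[1:0]$ are each individually $\GL_2(\bbQ_p)$-stable, and each fiber $(\pi_{\LT,\GM})^{-1}([0:1])\cong\GL_2(\bbQ_p)$ contributes one copy of $\calC_c^{\sm}(\GL_2(\bbQ_p),C)$; the induction step you mention is only the passage from $\calM_{\LT,\infty}^{(0)}$ (with its $\GL_2(\bbQ_p)^\circ$-action) to all of $\calM_{\LT,\infty}$.
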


\begin{theorem}\label{thm:nonvanishingoftildetau}
Let $\lambda\in \sigma_{k,c}^{K^p}$. Then ${\tau_c }$ is of infinite-dimensional over $C$. In particular, ${\tau_c }\neq 0$. 
\end{theorem}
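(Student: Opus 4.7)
The proof is based on Proposition \ref{prop:t00}. First, I would reduce to the case $k=0$ by a translation argument (analogous to Lemma \ref{lemmaderhamlubintate}) exhibiting $\omega^{(-k-1,1),\sm}_{\calM_{\LT,\infty}}$ as a twist of $\Omega^{1,\sm}_{\calM_{\LT,\infty}}$ by the finite-dimensional algebraic representation $W^{(0,-k)}$; infinite-dimensionality is preserved under such twists. So assume $k=0$, whence $\tau_c = H^1_c(\calM_{\LT,\infty}, \Omega^{1,\sm}) \otimes_{\calH(G)} \pi_p$.

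The element $t_{00} \in \check{\frg} = \Lie D_p^\times \otimes_{\bbQ_p} C$ commutes with the $\GL_2(\bbQ_p)$-action on $\calM_{\LT,\infty}$, so the operator $t_{00}$ on $H^1_c(\calM_{\LT,\infty}, \Omega^{1,\sm})$ descends to a $D_p^\times$-equivariant endomorphism $\bar t_{00}$ of $\tau_c$. Applying $-\otimes_{\calH(G)}\pi_p$ to the inclusion $\calC^\sm_c(\GL_2(\bbQ_p), C)^{\oplus 2} \hookrightarrow \ker t_{00}$ from Proposition \ref{prop:t00}, and using the standard isomorphism $\calC^\sm_c(\GL_2(\bbQ_p), C)\otimes_{\calH(G)} \pi_p \cong \pi_p$, I obtain a $D_p^\times$-equivariant map
\[\iota: \pi_p^{\oplus 2} \longrightarrow \tau_c\]
whose image lies in $\ker \bar t_{00}$.

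It remains to show $\iota$ is nonzero, from which the result follows: since $\pi_p$ is an infinite-dimensional irreducible smooth $\GL_2(\bbQ_p)$-representation (being a local component of a cuspidal automorphic representation, cf.\ Theorem \ref{thm:decompositionautomorphicforms}), any nonzero subrepresentation of $\pi_p^{\oplus 2}$ is infinite-dimensional. Writing $Q := H^1_c(\calM_{\LT,\infty}, \Omega^{1,\sm})/\calC^\sm_c(\GL_2(\bbQ_p), C)^{\oplus 2}$, the long exact sequence of Tor—together with the vanishing $\Tor_i^{\calH(G)}(H^1_c(\calM_{\LT,\infty}, \Omega^{1,\sm}), \pi_p)=0$ for $i \geq 1$ from Proposition \ref{prop:Torvanishing} and the flatness of $\calC^\sm_c(\GL_2(\bbQ_p), C)$ over $\calH(G)$—identifies $\ker \iota$ with $\Tor_1^{\calH(G)}(Q, \pi_p)$. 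To show $\iota \neq 0$, I would further analyse $Q$ using the two-step filtration arising from Proposition \ref{prop:t00} (with graded pieces $H^1_{\dR,c}(\calM_{\LT,\infty})$ and $\operatorname{im}(t_{00})$, the latter having cokernel $H^2_{\dR,c}(\calM_{\LT,\infty})$ in $H^1_c$), reducing the question to Tor-type estimates on the smooth de Rham cohomology of the Lubin--Tate tower and $\pi_p$.

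The main obstacle is this final step: establishing that $\Tor_1^{\calH(G)}(Q,\pi_p)$ is a proper subspace of $\pi_p^{\oplus 2}$. This is delicate when $\pi_p$ is not supercuspidal (e.g.\ principal series or twisted Steinberg), where the smooth $\calH(G)$-modules $H^1_{\dR,c}(\calM_{\LT,\infty})$ and $H^2_{\dR,c}(\calM_{\LT,\infty})$ have nontrivial contributions in the same Bernstein block as $\pi_p$ (cf.\ Remark \ref{remarkderhamtensor}), so the Tor computation cannot be dismissed on block-decomposition grounds and requires the explicit structure of these de Rham cohomologies.
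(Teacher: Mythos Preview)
Your proposal correctly identifies the key ingredients --- Proposition \ref{prop:t00}, the Tor vanishing of Proposition \ref{prop:Torvanishing}, and the finite-dimensionality of the de Rham pieces --- but the argument as written has two genuine gaps.

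First, the implication ``$\iota\neq 0 \Rightarrow \tau_c$ is infinite-dimensional'' is not justified by your reasoning. After applying $-\otimes_{\calH(G)}\pi_p$, the $\GL_2(\bbQ_p)$-action is gone: $\pi_p^{\oplus 2}$ is now merely a $C$-vector space, and $\ker\iota=\Tor_1^{\calH(G)}(Q,\pi_p)$ is only a $C$-subspace, not a $\GL_2(\bbQ_p)$-subrepresentation. So irreducibility of $\pi_p$ gives no control on the codimension of $\ker\iota$; showing it is a \emph{proper} subspace is insufficient for infinite-dimensionality of $\mathrm{im}(\iota)$.

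Second, and more seriously, you acknowledge that the final Tor estimate is incomplete. The paper resolves both issues simultaneously by arguing by contradiction, which is the missing idea. Assume $\tau_c=0$; then Proposition \ref{prop:Torvanishing} upgrades this to
\[
H^1_c(\calM_{\LT,\infty},\Omega_{\calM_{\LT,\infty}}^{1,\sm})\otimes^L_{\calH(G)}\pi_p=0.
\]
Applying $-\otimes^L_{\calH(G)}\pi_p$ to the four-term exact sequence of Proposition \ref{prop:t00}, the two middle terms vanish in the derived sense, so $\ker t_{00}\otimes^L_{\calH(G)}\pi_p$ and $\coker t_{00}\otimes^L_{\calH(G)}\pi_p = H^2_{\dR,c}(\calM_{\LT,\infty})\otimes^L_{\calH(G)}\pi_p$ must have cohomology groups of the same dimensions (up to a shift). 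The latter has finite-dimensional cohomology: via the product formula, $R\Gamma_{\dR,c}(\calM_{\LT,\infty})\otimes^L_{\calH(G)}\pi_p$ is a summand of $R\Gamma_{\dR}(\calS_{K^p})[\lambda]$, which is concentrated in degree $1$ and finite-dimensional, and a degenerate spectral sequence then forces each $H^i_{\dR,c}(\calM_{\LT,\infty})\otimes^L_{\calH(G)}\pi_p$ to have finite-dimensional cohomology. But from the short exact sequence $0\to\calC^{\sm}_c(G,C)^{\oplus 2}\to\ker t_{00}\to H^1_{\dR,c}(\calM_{\LT,\infty})\to 0$, the complex $\ker t_{00}\otimes^L_{\calH(G)}\pi_p$ has infinite-dimensional cohomology (the quotient contributes only finite-dimensionally, while the sub contributes $\pi_p^{\oplus 2}$). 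Contradiction. The same argument with ``$\tau_c=0$'' replaced by ``$\tau_c$ finite-dimensional'' gives the full statement. The point is that working derived and by contradiction lets the vanishing of the middle terms do the bookkeeping for you, avoiding the delicate direct computation of $\Tor_1(Q,\pi_p)$ that stalled your approach.
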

\begin{proof}
First, we assume $k=0$. Suppose $\tau_c = 0$, which amounts to saying that $H^1_c(\calM_{\LT,\infty},\Omega_{\calM_{\LT,\infty}}^{1,\sm})\ox_{\calH(G)}\pi_p=0$. Using (\ref{equationmultiplicity}) and Proposition \ref{prop:Torvanishing}, we find that 
\[H^1_c(\calM_{\LT,\infty},\Omega_{\calM_{\LT,\infty}}^{1,\sm})\ox_{\calH(G)}^L \pi_p=0.\] 
Besides $R\Gamma_{\dR,c}(\calM_{\LT,\infty})\ox^L_{\calH(G)}\pi_p$ is a dirct summand of 
\begin{align}\label{eq:fff}
    R\Gamma_{\dR,c}(\calM_{\LT,\infty})\ox^L_{\calH(G)}\calA_{\bar G,k}^{K^p}[\lambda]=R\Gamma_{\dR}(\calS_{K^p})[\lambda]
\end{align}
which concentrates in degree $1$ and is finite-dimensional (see also Remark \ref{remarkderhamtensor}). The corresponding spectral sequence \cite[\href{https://stacks.math.columbia.edu/tag/0662}{Tag 0662}]{stacks-project}
\[E_2^{i,j}=\mathrm{Tor}_{-i}^{\calH(G)}(H^j_{\rm dR,c}(\calM_{\LT,\infty}),\calA_{\bar G,k}^{K^p}[\lambda])\Rightarrow H^{i+j}_{\dR}(\calS_{K^p})[\lambda]\]
degenerates (cf. \cite[IV.4]{bernstein1992notes}) and we see that $H_{\dR,c}^i(\calM_{\LT,\infty})\otimes^L_{\calH(G)}\pi_p$ has finite-dimensional cohomology groups for $i=1,2$. 

By Proposition \ref{prop:t00}, we see there is a $\GL_2(\bbQ_p)$-equivariant exact sequence 
\begin{align}\label{eq:exactt00111}
    0\to \ker t_{00}\to H^1_c(\calM_{\LT,\infty},\Omega_{\calM_{\LT,\infty}}^{1,\sm})\ov{t_{00}}\to H^1_c(\calM_{\LT,\infty},\Omega_{\calM_{\LT,\infty}}^{1,\sm})\to \coker t_{00}\to 0
\end{align}
with $\coker t_{00}\isom H^2_{\dR,c}(\calM_{\LT,\infty})$ and another $\GL_2(\bbQ_p)$-equivariant exact sequence
\begin{align*}
    0\to \calC^\sm_c(\GL_2(\bbQ_p),C)^{\oplus 2}\to \ker t_{00}\to H^1_{\dR,c}(\calM_{\LT,\infty})\to 0.
\end{align*}
As $\calC^\sm_c(\GL_2(\bbQ_p),C)^{\oplus 2}\ox_{\calH(G)}^L \pi_p\isom \pi_p^{\oplus 2}[0]$ is infinite-dimensional (see Theorem \ref{thm:decompositionautomorphicforms}), $\ker t_{00}\otimes^L_{\calH(G)}\pi_p$ has infinite-dimensional cohomology. Applying $-\otimes^L_{\calH(G)}\pi_p$ to (\ref{eq:exactt00111}), we see this contradicts to the finiteness results of $ H_{\dR,c}^2(\calM_{\LT,\infty})\otimes^L_{\calH(G)}\pi_p$. From the proof above, we also see that $\tau_c $ is infinite-dimensional.

For general $k$, we can use the translation functors as in \cite[\S 2.2]{su2025locallyanalytictextext1conjecturetextgl2l} and \cite[Theorem 3.2.1]{JLS22}, see also Remark \ref{remarktranslation}, to reduce to the case $k=0$. 
\end{proof}

\subsubsection{Admissibility of $\tau_c $}\label{sec:admissibilityoftauc}
Similar to the treatment as in \cite[\S 6.2]{QS24} and \cite[Remark 7.3.5]{PanII}, as we have realized the representation $\tau_c $ (which can be purely defined using the de Rham complex of Lubin--Tate curves) inside the completed cohomology of quaternionic Shimura curves, we can prove the admissibility of $\tau_c $ for some natural rational structure. 
\begin{theorem}\label{thm:admissibilityoftaucE}
Let $\tau_c$ be defined as in Definition \ref{def:tauc}. There exists a locally analytic representation $\tau_{c,E}$ of $D_p^\times$ over a finite extension $E$ of $\bbQ_p$, such that $\tau_{c}\isom \tau_{c,E}\hat\ox_E C$, and $\tau_{c,E}$ is admissible.
\end{theorem}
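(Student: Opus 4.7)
The strategy, following \cite[Remark 7.3.5]{PanII} and \cite[\S 6.2]{QS24}, is to realize $\tau_c$ (up to a finite multiplicity coming from global level-off-$p$ deformations) as a subquotient of the locally analytic vectors of the Galois-isotypic part of the completed cohomology, which is admissible over a finite extension $E$ of $\bbQ_p$. Concretely, choose $E$ large enough to contain the field of definition of the Galois representation $\rho$, of $\pi_p$ and its Jacquet--Langlands transfer $\tau_p$, and of the spherical Hecke eigensystem $\lambda$, so that in particular $(\pi_\lambda^{p,\infty})^{K^p}$ descends to a non-zero finite-dimensional $E$-vector space. Since $\tilde{H}^1(K^p,E)$ is an admissible unitary Banach $D_p^\times$-representation over $E$ and $\lambda$ is cut out by idempotents of $\bbT^S$ acting on finite-level cohomology, the $\lambda$-isotypic component $\tilde{H}^1(K^p,E)[\lambda]$ is closed and admissible; taking $\Hom_{\Gal(\bar\bbQ/\bbQ)}(\rho,-)$ preserves closedness and admissibility, so $\check{\Pi}(\rho)$ is admissible Banach, and by \cite{ST03} its subspace of locally analytic vectors $\check{\Pi}(\rho)^{\lan}$ is an admissible locally analytic $D_p^\times$-representation over $E$.

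Next I will descend the exact sequence of Corollary \ref{cor:kerI1DRDRprime} from $C$ to $E$. By Theorem \ref{thm:PicheckrhoLankerI1} there is a canonical $D_p^\times$-equivariant isomorphism $\check{\Pi}(\rho)^{\lan}\widehat\ox_E C\isom \ker I^1[\lambda]$, and Corollary \ref{cor:kerI1DRDRprime} provides an exact sequence of admissible locally analytic $D_p^\times$-representations over $C$:
\begin{align*}
    0\to H^1(\check\fl,\calO_{K^p}^{\lalg,(0,-k)})[\lambda]\to \ker I^1[\lambda]\to \bbH^1(\check\fl,\DR')[\lambda]\to 0.
\end{align*}
By Lemma \ref{lem:cohoofomegasm} and Proposition \ref{prop:Wlalg}, the sub-object is (a translation of) $W^{(0,-k)}\ox_C H^1(\calS_{K^pK_p},\omega_{\calS_{K^pK_p}}^{(0,-k)})[\lambda]$ at finite level, which is the base change to $C$ of a classical coherent cohomology group over $E$. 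Combined with Proposition \ref{propositionexactseqDptimesrep}(2), this sub-object is identified with $W^{(0,-k)}\ox_C\tau_p\ox_C(\pi_\lambda^{p,\infty})^{K^p}$, a representation which is naturally defined over $E$. Hence the inclusion of the sub-object into $\ker I^1[\lambda]\simeq \check\Pi(\rho)^{\lan}\widehat\ox_E C$ descends to an $E$-rational embedding, yielding an exact sequence
\begin{align*}
    0\to W^{(0,-k)}\ox_E\tau_{p,E}\ox_E(\pi_\lambda^{p,\infty})^{K^p}\to \check{\Pi}(\rho)^{\lan}\to Y_E\to 0
\end{align*}
of admissible locally analytic $D_p^\times$-representations over $E$, whose base change to $C$ recovers the sequence of Corollary \ref{cor:kerI1DRDRprime}.

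Finally, to pass from $Y_E$ to $\tau_{c,E}$, fix any non-zero $E$-linear form $\ell\in \Hom_E((\pi_\lambda^{p,\infty})^{K^p},E)$, which induces a $D_p^\times$-equivariant quotient map $Y_E\twoheadrightarrow \tau_{c,E}$. Since $\ell\ox_E C$ is a non-zero linear form on $(\pi_\lambda^{p,\infty})^{K^p}\ox_E C$, its base change identifies $\tau_{c,E}\widehat\ox_E C$ with a non-zero quotient of $Y_E\widehat\ox_E C\isom \tau_c\ox_C(\pi_\lambda^{p,\infty})^{K^p,C}$ which is exactly the $\ell$-projection, giving $\tau_{c,E}\widehat\ox_E C\isom \tau_c$ as $D_p^\times$-representations. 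Admissibility of $\tau_{c,E}$ then follows as quotients of admissible locally analytic representations remain admissible. The main technical point is to verify that the sub-object $H^1(\check\fl,\calO_{K^p}^{\lalg,(0,-k)})[\lambda]$ of $\ker I^1[\lambda]\isom \check{\Pi}(\rho)^{\lan}\widehat\ox_E C$ really descends to $E$; this is the heart of the argument and relies on the compatibility between the classical $E$-structure coming from coherent cohomology of finite-level Shimura curves and the $E$-structure on the completed cohomology via the $p$-adic \'etale-de Rham comparison, together with the manifest $E$-rationality of the Fontaine operator $N$ used (via Theorem \ref{thm:NequalsI}) to define the intertwining operator $I$.
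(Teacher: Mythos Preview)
Your overall strategy---realize a multiple of $\tau_c$ inside an admissible piece of completed cohomology and then strip off the multiplicity---is close in spirit to the paper's, but the final step has a genuine gap. You pass from $Y_E$ to $\tau_{c,E}$ by ``applying'' an $E$-linear form $\ell$ on the multiplicity space $(\pi_\lambda^{p,\infty})^{K^p}$. But $\ell$ only acts on $Y_E\widehat\ox_E C$ \emph{after} you invoke the $C$-isomorphism $Y_E\widehat\ox_E C\isom \tau_c\ox_C(\pi_\lambda^{p,\infty})^{K^p,C}$; you have not shown that this tensor decomposition is $E$-rational, so there is no $E$-linear $D_p^\times$-equivariant quotient map $Y_E\to\tau_{c,E}$ induced by $\ell$. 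In other words, knowing that $Y_E\widehat\ox_E C\isom \tau_c^{\oplus m}$ does not by itself give an $E$-form of $\tau_c$. One could try to repair this by using the action of the full Hecke algebra at the ramified primes in $S\setminus\{p\}$ (which acts $E$-rationally on $\check\Pi(\rho)^{\lan}$ and on $(\pi_\lambda^{p,\infty})^{K^p}$), together with absolute irreducibility of $(\pi_\lambda^{p,\infty,E})^{K^p}$ as a Hecke module, to force the isotypic decomposition over $E$; but this argument is absent from your write-up, and even then the resulting $E$-form would a priori depend on the global choice of $\lambda$.

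The paper avoids this issue entirely by constructing the $E$-structures \emph{locally} first: after a harmless smooth central twist, the Weil descent datum on $\calM_{\LT,n}/\langle\begin{smallmatrix}p&0\\0&p\end{smallmatrix}\rangle$ is effective, so the Lubin--Tate tower has an $E$-model, and since $\pi_p$ also descends to $\pi_{p,E}$, one defines $\tilde\tau_E$ and $\tau_{c,E}$ directly from Definition~\ref{def:tauc} over $E$. The global input is then used only to prove admissibility: one defines $\tau_E:=\coker(\tau_{p,E}\hookrightarrow\tau_{p,E}^{\oplus 2}\subset\tilde\tau_E)$ via the $E$-rational Hodge filtration (Remark~\ref{remarkHodgefiltration}), matches $\tau_E\ox_E(\pi_{\lambda,E}^{p,\infty})^{K^p}$ with the $\Theta_{\Sen}=0$ part of $\tilde H^1(K^p,C)[\lambda]^{\lan}$, and compares with $(\rho_\lambda\ox_E C)^{\Theta_{\Sen}=0}\widehat\ox_E\check\Pi(\rho_\lambda)^{\lan}$; taking Galois invariants transfers admissibility from $\check\Pi(\rho_\lambda)^{\lan}$ to $\tau_E$, hence to its quotient $\tau_{c,E}$. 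This gives a natural, purely local $E$-form, with no need to descend tensor decompositions from $C$.
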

\begin{proof}
    Using translation functors, we may assume $k=0$ for simplicity. Recall that (by Lemma \ref{lem:RpOLTsm})
    \begin{align*}
        \tilde{\tau}&=\dlim_nH^1_c(\calM_{\LT,n},\calO_{\calM_{\LT,n}})\ox_{\calH(G)}\pi_p.\\
        \tau_c&=\dlim_nH^1_c(\calM_{\LT,n},\Omega^1_{\calM_{\LT,n}})\ox_{\calH(G)}\pi_p.
    \end{align*}
    The action of $\GL_2(\bbQ_p)\times D_p^{\times}$ on $\calM_{\LT,\infty}$ factors through $H=(\GL_2\times D_p^{\times})/\{(a,a^{-1}),a\in \bbQ_p^{\times}\}$ in our normalization. Consider the subgroup $H^{\circ}=\{(g,h)\mid \det(g)\mathrm{Nm}(h)\in \bbZ_p^{\times}\}\subset H$ where $\mathrm{Nm}$ denotes the reduced norm of $D_p$. Then $H^{\circ}$ stabilize $\calM_{\LT,\infty}^{(0)}$, and there exist $H$-equivariant isomorphisms
    \begin{align}
        H^1_c(\calM_{\LT,\infty},\calO^{\sm}_{\calM_{\LT,\infty}})&=\cInd_{H^{\circ}}^{H}H^1_c(\calM_{\LT,\infty}^{(0)},\calO^{\sm}_{\calM_{\LT,\infty}})\label{euqationcompactinduction}\\&=\chi_{\rm sm} \otimes_C \cInd_{H^{\circ}}^{H}H^1_c(\calM_{\LT,\infty}^{(0)},\calO^{\sm}_{\calM_{\LT,\infty}})\nonumber
    \end{align}
    for any smooth character $\chi_{\sm}:H/H^{\circ}=\bbQ_p^{\times}/\bbZ_p^{\times}\rightarrow C^{\times}$. Hence we may assume that $\begin{pmatrix} p&0\\0&p \end{pmatrix}\in G=\GL_2(\bbQ_p)$ acts trivially on $\pi_p$, after possibly a smooth twist. 
    
    The induced Weil descent datum on $\calM_{\LT,n}/\begin{pmatrix} p&0\\0&p \end{pmatrix}^{\bbZ}$ is effective \cite[Theorem 2.16, Theorem 3.49]{RZ96}. Besides, there exists a sufficiently large finite extension $E$ of $\bbQ_p$, such that $\pi_p=\pi_{p,E}\hat\ox_E C$ for some smooth representation $\pi_{p,E}$ of $G$ over $E$ (since $\pi_p$ is a local component of a regular cuspidal algebraic automorphic representation). Therefore, using the above $E$-structures of the Lubin--Tate spaces and $\pi_p$, and the definition of $\widetilde{\tau}$, one can directly find a locally analytic representation $\tilde{\tau}_E$ of $D_p^\times$ over $E$, such that $\tilde{\tau}\isom \tilde{\tau}_E\hat\ox_E C$.
    
    Recall that by Proposition \ref{propositionexactseqDptimesrep} and Proposition \ref{thm:PicheckrhoLankerI1}, we have an exact sequence 
    \begin{align*}
        0\to \tau_p\otimes_C (\pi_{\lambda}^{p,\infty})^{K^p} \to \tilde{\tau}\otimes_C (\pi_{\lambda}^{p,\infty})^{K^p}\to \tilde{H}^1(K^p,C)[\lambda]^{\lan,\Theta_{\Sen}=0}\to 0.
    \end{align*}
    The algebraic automorphic representation $\pi_{\lambda}$ can be defined over the sufficiently large $E$ and we denote the $E$-structure as $\pi_{\lambda,E}$. Let $\rho_{\lambda}:\Gal(\overline{\bbQ}/\bbQ)\rightarrow\GL_2(E)$ be the corresponding Galois representation. The above exact sequence shows that the $D_p^{\times}$-representation $\tilde{H}^1(K^p,C)[\lambda]^{\lan,\Theta_{\Sen}=0}$, up to the multiplicity space $(\pi_{\lambda}^{p,\infty})^{K^p}$, is a quotient of $\tilde{\tau}$ by one copy of $\tau_p$. The relative position of the $\tau_p$ inside $\tau_p^{\oplus 2}\subset \tilde{\tau}$ is $E$-rational as it comes from the position of the Hodge filtration of $\rho_{\lambda}$ (see Remark \ref{remarkHodgefiltration} and the proof of Corollary \ref{cor:kerI1DRDRprime}). We define $\tau_E$ to be the cokernel of the composition $\tau_{p,E}\rightarrow \tau_{p,E}^{\oplus 2}\subset \widetilde{\tau}_E$ induced by the Hodge filtration. This gives an $E$-structure of $\tilde{H}^1(K^p,C)[\lambda]^{\lan,\Theta_{\Sen}=0}$:
    \begin{align}\label{eq:tauE1}
        \tilde{H}^1(K^p,C)[\lambda]^{\lan,\Theta_{\Sen}=0}\isom C\widehat{\ox}_E \tau_E\ox_E (\pi_{\lambda,E}^{p,\infty})^{K^p}.
    \end{align}
    On the other hand, by (\ref{eq:rhoisotypic}) we know that there is an isomorphism 
    \begin{align*}
        \tilde{H}^1(K^p,E)[\lambda]^{\lan}\isom \rho_{\lambda}\ox \check{\Pi}(\rho_{\lambda})^{\lan}
    \end{align*}
    where recall that $\check{\Pi}(\rho_{\lambda})$ is defined as $\Hom_{\Gal(\bar \bbQ/\bbQ)}(\rho_{\lambda},\tilde{H}^1(K^p,E))$. Therefore, 
    \begin{align}\label{eq:tauE2}
        (\tilde{H}^1(K^p,E)[\lambda]\hat\ox_E C)^{\lan,\Theta_{\Sen}=0}\isom (C\ox_E\rho_{\lambda})^{\Theta_{\Sen}=0}\hat\ox_E\check\Pi(\rho_{\lambda})^{\lan}.
    \end{align}
    Here, we fixed an embedding of $E$ into $C$. By construction, the isomorphisms in \eqref{eq:tauE1} and \eqref{eq:tauE2} are compatible with the Galois action. Taking $\Gal(\bar L/E)$-invariants, we deduce that there is a $D_p^\times$-equivariant, $E$-linear and topological isomorphism 
    \begin{align*}
        (C\ox_E\rho_{\lambda})^{\Theta_{\Sen}=0,{\Gal(\bar L/E)}}\hat\ox_E\check{\Pi}(\rho_{\lambda})^{\lan}\isom C^{\Gal(\bar L/E)}\hat\ox_E \tau_E\ox_E (\pi_{\lambda,E}^{p,\infty})^{K^p}.
    \end{align*}
    From this, we deduce that $\tau_E$ is admissible, as $\check{\Pi}(\rho_{\lambda})$ is admissible by the admissibility of the completed cohomology (cf. \cite[Theorem 2.2.11]{Emerton2006interpolation}). Finally, similarly as above for $\widetilde{\tau}$, one can find a natural $E$-structure $\tau_{c,E}$ of $\tau_c$. As $\tau_{c,E}$ is a quotient of $\tau_E$, we deduce that $\tau_{c,E}$ is also admissible.
\end{proof}
\subsubsection{Gelfand-Kirillov dimension of $\tau_c $}\label{sec:GKD}
Let $\tau_{c,E}$ be the (natural) $E$-structure of $\tau_c$ in Theorem \ref{thm:admissibilityoftaucE}, where $E$ is a finite extension of $\bbQ_p$. For definition and basic properties of the Gelfand-Kirillov dimension of a locally analytic representation, our main references are \cite[\S 8]{ST03}, \cite[\S 3]{dospinescu2023gelfand}. The following result is known by the work of Hu--Wang \cite[Theorem 1.1]{HuWang} under some genericity condition (on the mod-$p$ representations of $\Gal(\overline{\bbQ}_p/\bbQ_p)$).
\begin{theorem}\label{thm:GKD}
The Gelfand-Kirillov dimension of $\tau_{c,E}$ is $1$.
\begin{proof}
There are two ingredients in the proof, which give an upper bound and a lower bound. We assume again $k=0$, and the general cases can be deduced using the translation operators. Let $\tau_E$ be the $E$-structure of $\tilde{H}^1(K^p,C)[\lambda]^{\lan,\Theta_{\Sen}=0}$ constructed in the proof of Theorem \ref{thm:admissibilityoftaucE}. 

First, as $\tau_E$ has trivial infinitesimal character, and is isomorphic to the locally analytic vector of an admissible unitary Banach representation (as it comes from the completed cohomology), by \cite[Corollary 6.8]{dospinescu2023gelfand}, we know that $\tau_E$ has Gelfand-Kirillov dimension at most $1$. As the difference between $\tau_{c,E}$ and $\tau_E$, up to a multiplicity, is a finite dimensional representation of $D_p^\times$ (whose base change to $C$ is $\tau_p$), we deduce that $\tau_{c,E}$ has Gelfand-Kirillov dimension at most $1$. 

Besides, we know that $\tau_c $ is of infinite-dimensional over $C$ by Theorem \ref{thm:nonvanishingoftildetau}, hence $\tau_E$ is of infinite dimensional over $E$. As $\tau_E$ is isomorphic to the locally analytic vectors of a Banach representation, from \cite[Lemma 3.9]{dospinescu2023gelfand}, we deduce that the Gelfand-Kirillov dimension of $\tau_E$ is at least $1$. Again, this shows that the Gelfand-Kirillov dimension of $\tau_{c,E}$ is at least $1$. 
\end{proof}
\end{theorem}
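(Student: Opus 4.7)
The plan is to establish matching upper and lower bounds of $1$ for the Gelfand--Kirillov dimension of $\tau_{c,E}$. I would first reduce to the case $k=0$ via the translation functor argument used at the end of the proof of Theorem \ref{thm:nonvanishingoftildetau}, since translation functors preserve both admissibility and Gelfand--Kirillov dimension. Then, rather than working directly with $\tau_{c,E}$, I would use the auxiliary $E$-structure $\tau_E$ on $\tilde{H}^1(K^p,C)[\lambda]^{\lan,\Theta_{\Sen}=0}$ constructed inside the proof of Theorem \ref{thm:admissibilityoftaucE}. That construction exhibits $\tau_{c,E}$, up to tensoring with the finite-dimensional multiplicity space $(\pi_{\lambda,E}^{p,\infty})^{K^p}$, as the cokernel of an embedding of the finite-dimensional representation $\tau_{p,E}$ into $\tau_E$. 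Because Gelfand--Kirillov dimension of admissible locally analytic representations is invariant under passage to a nonzero finite direct sum and is unchanged by finite-dimensional subquotients, it will suffice to prove $\dim_{\mathrm{GK}}(\tau_E) = 1$.

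For the upper bound, I would observe that $\tau_E$ has trivial infinitesimal character: it lies in the $\Theta_{\Sen}=0$ part of $\tilde{H}^1(K^p,C)[\lambda]^{\lan}$, and Theorem \ref{thm:Senaction} expresses the action of the center of $U(\check\frg)$ in terms of the arithmetic Sen operator via the identities involving $z$ and $(2\Theta_{\Sen} - z + 1)^2 - 1 = 2\Omega$. Moreover, $\tau_E$ is, up to the multiplicity factor above, cut out of the locally analytic vectors of the admissible unitary Banach representation $\tilde{H}^1(K^p,E)[\lambda]$ of $D_p^{\times}$. These two features are precisely the hypotheses of \cite[Corollary 6.8]{dospinescu2023gelfand}, whose application yields $\dim_{\mathrm{GK}}(\tau_E) \leq 1$.

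For the lower bound, Theorem \ref{thm:nonvanishingoftildetau} shows that $\tau_c$ is infinite-dimensional over $C$, hence $\tau_E$ is infinite-dimensional over $E$. Applying \cite[Lemma 3.9]{dospinescu2023gelfand}, which guarantees that an infinite-dimensional admissible locally analytic representation arising as the locally analytic vectors of an admissible Banach representation has strictly positive Gelfand--Kirillov dimension, gives $\dim_{\mathrm{GK}}(\tau_E) \geq 1$ and completes the argument. The only delicate step is verifying the hypotheses of \cite[Corollary 6.8]{dospinescu2023gelfand}, but both the Banach-representation realization and the triviality of the infinitesimal character follow cleanly from constructions already in place in \S\ref{sec:locallyanalyticvectors} and in the proof of Theorem \ref{thm:admissibilityoftaucE}.
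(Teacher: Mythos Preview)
Your proposal is correct and follows essentially the same approach as the paper: reduce to $k=0$ via translation, pass to the auxiliary representation $\tau_E$, and obtain the upper bound from \cite[Corollary 6.8]{dospinescu2023gelfand} (trivial infinitesimal character plus realization inside the locally analytic vectors of an admissible Banach representation) and the lower bound from \cite[Lemma 3.9]{dospinescu2023gelfand} together with Theorem~\ref{thm:nonvanishingoftildetau}. Your added justification of the trivial infinitesimal character via Theorem~\ref{thm:Senaction} is a helpful clarification but not a departure from the paper's argument.
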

\begin{remark}
    Let $L$ be a finite extension of $\bbQ_p$. For an irreducible smooth representation of $\GL_2(L)$, one can define locally $L$-analytic representations $\widetilde{\tau},\tau_c$ of $D_L^{\times}$, where $D_L$ denotes the unique non-split quaternion algebra over $L$, in the same way as in Definition \ref{def:tauc} using the Lubin--Tate tower for $L$. One can prove similar results on the Gelfand-Kirillov dimensions for these representations. Note that \cite[Lemma 3.9]{dospinescu2023gelfand} is only stated for $L=\bbQ_p$, but one can use the following argument for general $L$. Assume that $\tau_{c,E}$ is an admissible locally analytic representation of $D_L^{\times}$ of the Gelfand-Kirillov dimension $0$ with the infinitesimal character the same as the smooth representations. Then $\tau_{c,E}$ must be a smooth admissible representation, using \cite[Corollary 9.1]{ardakov2013irreducible} and \cite[Theorem 1.4.2]{kohlhaase2007invariant}. But our $\tau_{c,E}$ is not smooth, otherwise its wall-crossing is zero (using similar methods as in \cite[Theorem 4.7.6]{QS24}, its wall-crossing still contains $\tau_{c,E}$, which is non-zero). See the proof of Lemma \ref{lem:no-smooth-quotient} below for more relavent details on the wall-crossing. 
\end{remark}

\subsection{Structure of the Hecke eigenspaces of the completed cohomology}\label{sec:completedcohoclassicality}
In this section, we summarize our study on the Hecke eigenspace $\check\Pi(\rho)^{\lan}$ (\ref{eq:defofTrho}) of the locally analytic completed cohomology of the Shimura curves attached to a $2$-dimensional de Rham Galois representation $\rho$. Combining computations before we explain what the locally analytic $D_p^\times$-representation $\check\Pi(\rho)^{\lan}$ looks like. Then we can deduce that $\check\Pi(\rho)^{\lan}$ essentially only depends on the local Galois representation, and even only depends on the Weil--Deligne representation associated to the local Galois representation in the crystabeline case. 

Recall that in \S \ref{sec:quaternionicshimuracurve} we have defined $S_{K^pK_p}$ as the quaternion Shimura curve over the reflex field $\bbQ$ of level $K=K^pK_p$. Let $E\subset C$ be a finite extension of $\bbQ_p$. After fixing the tame level $K^p$ and taking limit over $K_p\subset D_p^\times$, we defined in \S\ref{subsectioncompletedcohomology} the completed cohomology group $\tilde{H}^1(K^p,E)$ for the tower $(S_{K^pK_p})_{K_p}$ with coefficients in $E$.

Similar to \cite[Theorem 7.1.2]{PanII}, we have the following classicality result.
\begin{theorem}\label{thm:classicality}
Let $\rho$ be a $2$-dimensional $E$-linear continuous absolutely irreducible representation of $\Gal(\bar \bbQ/\bbQ)$. Suppose:
\begin{enumerate}[(i)]
    \item $\rho$ appears in $\tilde{H}^1(K^p,E)^{\lan}$, namely $\Hom_{\Gal(\bar \bbQ/\bbQ)}(\rho,\tilde{H}^1(K^p,E)^{\lan})\neq 0$.
    \item $\rho|_{\Gal(\overline{\bbQ_p}/\bbQ_p)}$ is de Rham of Hodge Tate weight $0,k+1$ with $k\in \bbZ_{\ge 0}$. 
\end{enumerate}
Then $\rho$ is classical, i.e., the Hecke eigenvalue associated to $\rho$ lies in $\sigma_{k,c}^{K^p}$ (Definition \ref{def:setHeckeeigenvalues}).
\begin{proof}
This is a combination of Theorem \ref{thm:PicheckrhoLankerI1} and Theorem \ref{thm:kerI1classical}. 
\end{proof}
\end{theorem}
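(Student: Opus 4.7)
The plan is to combine Theorem \ref{thm:PicheckrhoLankerI1} with Theorem \ref{thm:kerI1classical}, and then exploit the absolute irreducibility of $\rho$ to upgrade ``classical'' to ``cuspidal classical''. The deep inputs are already encoded in those two theorems (identification of the multiplicity space with $\ker I^1[\lambda]$ via the Fontaine operator, and the spectral decomposition of $\ker I^1$ via the product formula of Proposition \ref{prop:productformula}); what remains is essentially bookkeeping plus one representation-theoretic observation.

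First, I would extract from hypothesis (i) and the Eichler--Shimura relation (\ref{equationcongruencerelation}) the Hecke eigensystem $\lambda:\bbT^S\to E$ determined by $\rho$, so that (by (\ref{eq:rhoisotypic})) $\tilde H^1(K^p,E)[\lambda]\isom \rho\otimes_E \check\Pi(\rho)$ and $\check\Pi(\rho)^{\lan}\neq 0$. After fixing an embedding $E\hookrightarrow C$ as in the theorem, hypotheses (i) and (ii) place us in the setting of Theorem \ref{thm:PicheckrhoLankerI1}, whose conclusion gives the $D_p^\times$-equivariant topological isomorphism
\[
    \check\Pi(\rho)^{\lan}\,\widehat{\otimes}_E\, C \;\isom\; \ker I^1[\lambda].
\]
In particular $\ker I^1[\lambda]\neq 0$.

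Second, I would invoke Theorem \ref{thm:kerI1classical}, which asserts that the $\bbT^S$-action on $\ker I^1$ is locally finite with generalized eigenspace decomposition indexed by $\sigma_k^{K^p}$. The non-vanishing established in the previous step then forces $\lambda\in\sigma_k^{K^p}$; that is, $\lambda$ is the Hecke eigensystem of some algebraic automorphic form on $\bar G$ of weight $V^{(0,-k)}$ and tame level $K^p$.

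The last step, promoting $\lambda\in\sigma_k^{K^p}$ to $\lambda\in\sigma_{k,c}^{K^p}$, is the one place where extra input is needed, and this is what I would flag as the main (mild) obstacle. If $\lambda$ were non-cuspidal, by definition (Definition \ref{def:setHeckeeigenvalues}) the associated automorphic representation of $\bar G$ would factor through the reduced norm and hence be one-dimensional, given by an idele class character $\chi$. The $2$-dimensional Galois representation reconstructed from $\lambda$ via (\ref{equationcongruencerelation}) would then split as a direct sum of two Galois characters obtained from $\chi$ by class field theory, contradicting the absolute irreducibility of $\rho$. Hence $\lambda\in\sigma_{k,c}^{K^p}$, and $\rho$ is classical in the sense required by the theorem.
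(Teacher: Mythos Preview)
Your proposal is correct and follows exactly the approach the paper intends: combine Theorem \ref{thm:PicheckrhoLankerI1} (to get $\ker I^1[\lambda]\neq 0$) with Theorem \ref{thm:kerI1classical} (to force $\lambda\in\sigma_k^{K^p}$). Your third step, using absolute irreducibility of $\rho$ to rule out the non-cuspidal eigensystems in $\sigma_k^{K^p}\setminus\sigma_{k,c}^{K^p}$, is a genuine detail that the paper's one-line proof leaves implicit; your argument for it is the standard and correct one.
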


Next, we show that any $\lambda\in \sigma_{k,c}^{K^p}$ must appear in $\tilde{H}^1(K^p,C)$. Fix $k\geq 0$ and $\lambda\in \sigma_{k,c}^{K^p}$. We use the notation in \S\ref{subsec:repofDptimes} and let $\pi_p$ be the component at $p$ of the automorphic representation $\pi_{\lambda}$ of $\bar G$. When $\pi_p$ is a discrete series representation, we already know that $\tilde{H}^1(K^p,C)^{\lalg}[\lambda]\supset \tau_p\otimes_{C}W^{(0,-k)} \neq 0$ by the global Jacquet--Langlands correspondence (the locally algebraic vectors of $\tilde{H}^1(K^p,C)$ recovers the classical cohomology of $S_{K^pK_p}$, cf. \cite[(4.3.4)]{Emerton2006interpolation}). So the crucial part is to show that $\tilde{H}^1(K^p,C)[\lambda]\neq 0$ even when $\pi_p$ is an irreducible principal series representation.  
\begin{theorem}\label{thm:nonvanishingofHeckeeigenspace}
Suppose $\lambda\in \sigma_{k,c}^{K^p}$. Then $\tilde{H}^1(K^p,C)^{\lan}[\lambda]\neq 0$.
\begin{proof}
Suppose $\tilde{H}^1(K^p,C)^{\lan}[\lambda]=0$. Then, since $\ker I^1$ is a subspace of $\tilde{H}^1(K^p,C)^{\lan}$, we deduce that $\ker I^1[\lambda]=0$. But by Proposition \ref{propositionexactseqDptimesrep} (deduced from Theorem \ref{thm:H1DR} and Corollary \ref{cor:kerI1DRDRprime}), this will imply $\tau_c =0$, which contradicts Theorem \ref{thm:nonvanishingoftildetau}.
\end{proof}
\end{theorem}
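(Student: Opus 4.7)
The plan is to argue by contradiction, reducing the non-vanishing of $\tilde{H}^1(K^p,C)^{\lan}[\lambda]$ to the already-established non-vanishing of $\tau_c$ (Theorem \ref{thm:nonvanishingoftildetau}). The architecture is the implication chain: the Hecke eigenspace vanishes $\Rightarrow$ $\ker I^1[\lambda]$ vanishes $\Rightarrow$ $\tau_c \otimes_C (\pi_\lambda^{p,\infty})^{K^p}$ vanishes $\Rightarrow$ $\tau_c$ itself vanishes, where the last implication uses that $(\pi_\lambda^{p,\infty})^{K^p} \neq 0$ by the very definition of $\lambda \in \sigma_{k,c}^{K^p}$.

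First, assume for contradiction that $\tilde{H}^1(K^p,C)^{\lan}[\lambda] = 0$. Using Theorem \ref{thm:completedcohomology} to identify $\tilde{H}^1(K^p,C)^{\lan}$ with $H^1(\check\fl,\calO_{K^p}^{\lan})$, together with the injection $H^1(\check\fl,\calO_{K^p}^{\lan,(0,-k)})[\lambda] \hookrightarrow H^1(\check\fl,\calO_{K^p}^{\lan})[\lambda]$ furnished inside the proof of Lemma \ref{lem:H0H1lambda}(1) (which exploits that a cuspidal Hecke eigensystem kills $H^0(\calO_{K^p}^{\lan})$ since the latter factors through the reduced norm), one concludes that $H^1(\check\fl,\calO_{K^p}^{\lan,(0,-k)})[\lambda] = 0$. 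Since $\ker I^1$ sits inside $H^1(\check\fl,\calO_{K^p}^{\lan,(0,-k)})$ by definition of the intertwining operator and $I^1 = H^1(I)$, this forces $\ker I^1[\lambda] = 0$.

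Next, I would invoke the second short exact sequence of Proposition \ref{propositionexactseqDptimesrep}(2):
\[
    0 \to W^{(0,-k)}\otimes_C \tau_p \otimes_C (\pi_\lambda^{p,\infty})^{K^p} \to \ker I^1[\lambda] \to \tau_c \otimes_C (\pi_\lambda^{p,\infty})^{K^p} \to 0.
\]
The vanishing of $\ker I^1[\lambda]$ then forces $\tau_c \otimes_C (\pi_\lambda^{p,\infty})^{K^p} = 0$. Because $\lambda \in \sigma_{k,c}^{K^p}$ precisely encodes that $(\pi_\lambda^{p,\infty})^{K^p}$ is a non-zero space of cuspidal tame-level invariants (via the decomposition \eqref{equationmultiplicity}), this yields $\tau_c = 0$, directly contradicting Theorem \ref{thm:nonvanishingoftildetau}.

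The main obstacle has already been overcome upstream: the substantive content is Theorem \ref{thm:nonvanishingoftildetau}, whose proof required the explicit analysis of the Gross--Hopkins element $t_{00}$ and the computation of the compactly supported de Rham cohomology of the Lubin--Tate tower, combined with the product formula of Proposition \ref{prop:productformula}. Given those inputs, the present theorem is a bookkeeping consequence, and I expect no difficulty of substance beyond stringing the cited results together in the correct order.
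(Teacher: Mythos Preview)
Your proof is correct and follows essentially the same approach as the paper's: contradiction via the chain $\tilde{H}^1(K^p,C)^{\lan}[\lambda]=0 \Rightarrow \ker I^1[\lambda]=0 \Rightarrow \tau_c=0$, contradicting Theorem~\ref{thm:nonvanishingoftildetau}. You have in fact been slightly more careful than the paper in justifying why $\ker I^1[\lambda]$ injects into $\tilde{H}^1(K^p,C)^{\lan}[\lambda]$ (by invoking the injection established inside the proof of Lemma~\ref{lem:H0H1lambda}(1)) and in spelling out why $(\pi_\lambda^{p,\infty})^{K^p}\neq 0$; the paper leaves both points implicit.
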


\begin{remark}
This result was known by $\mathrm{Pa\check{s}k\mathaccent "7016\relax{u}nas}$ \cite[Theorem 1.4]{pavskunas2022some}, under some genericity conditions (on the mod $p$ Galois representations).
\end{remark}

By Theorem \ref{thm:PicheckrhoLankerI1}, Theorem \ref{thm:nonvanishingofHeckeeigenspace}, together with Theorem \ref{thm:H1DR}, Corollary \ref{cor:kerI1DRDRprime} (Proposition \ref{propositionexactseqDptimesrep}) and (\ref{equationmultiplicity}), we deduce the following description of the locally analytic representation associated to $\lambda$.
\begin{corollary}\label{corollarydescription}
    Let $\rho$ be the Galois representation associated to $\lambda$ such that $\bbT^S$ acts on 
    \[\check\Pi(\rho)=\Hom_{\Gal(\bar \bbQ/\bbQ)}(\rho,\tilde{H}^1(K^p,E))\] 
    via $\lambda$. We have an isomorphism
    \begin{align}\label{eq:strofT(rho)L-an}
    \check\Pi(\rho)^{\lan}\widehat{\otimes}_EC\isom (\tilde{\tau}/i_{\rho}(\tau_p^{\lalg}))\otimes_C(\pi_{\lambda}^{p,\infty})^{K^p}.
    \end{align}
    where $\tilde{\tau}/i_{\rho}(\tau_p^{\lalg})$ is moreover an extension of $\tau_c $ by $\tau_p^{\lalg} $. Here, 
    \begin{itemize}
        \item $\tau_p^{\lalg}=\tau_p\otimes_CW^{(0,-k)}$ with $\tau_p $ the Jacquet--Langlands transfer of $\pi_p$ and $W^{(0,-k)}$ the algebraic representation over $C$ of $D_p^{\times}$ of highest weight $(0,-k)$.
        \item $\tilde{\tau}:=\dlim_n H^1_c(\calM_{\LT,n},\omega_{\calM_{\LT,n}}^{(0,-k)})\ox_{\calH(G)}\pi_p$ with $G=\GL_2(\bbQ_p)$ and $\calH(G)$ the smooth Hecke algebra of $G$.
        \item $\tau_c :=\dlim_n H^1_c(\calM_{\LT,n},\omega_{\calM_{\LT,n}}^{(-k-1,1)})\ox_{\calH(G)}\pi_p$.
        \item The map $i_{\rho}:\tau_p^{\lalg}\to \tilde{\tau}$ comes from the position of the Hodge filtration of $\rho|_{\Gal(\overline{\bbQ_p}/\bbQ_p) }$ using the $p$-adic de Rham comparison theorem (see Remark \ref{remarkHodgefiltration}).
    \end{itemize}
\end{corollary}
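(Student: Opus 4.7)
The plan is to assemble the results already proved in the preceding subsections and identify the pieces explicitly. Starting from the eigensystem $\lambda\in\sigma_{k,c}^{K^p}$, the associated Galois representation $\rho$ is absolutely irreducible (because $\pi_{\lambda}$ is cuspidal, and by multiplicity one and the Chebotarev density theorem applied to the congruence relation \eqref{equationcongruencerelation}) and its restriction $\rho_p:=\rho|_{\Gal(\overline{\bbQ_p}/\bbQ_p)}$ is de Rham of Hodge--Tate weights $0,k+1$ by the classical theorems of Carayol and others on Galois representations attached to Hilbert-type automorphic forms, followed by the Jacquet--Langlands transfer from $\bar G$ to $\GL_2$. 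Theorem~\ref{thm:nonvanishingofHeckeeigenspace} ensures $\check\Pi(\rho)^{\lan}\neq 0$, so both hypotheses of Theorem~\ref{thm:PicheckrhoLankerI1} are satisfied, yielding a $D_p^{\times}$-equivariant topological isomorphism
\[\check\Pi(\rho)^{\lan}\widehat{\otimes}_E C \;\isom\; \ker I^1[\lambda].\]

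Next I would invoke Proposition~\ref{propositionexactseqDptimesrep}(2). Its first exact sequence realises $\ker I^1[\lambda]$ as the cokernel of an embedding
\[j:\tau_p^{\lalg}\otimes_C(\pi_{\lambda}^{p,\infty})^{K^p}\hookrightarrow \tilde{\tau}\otimes_C(\pi_{\lambda}^{p,\infty})^{K^p},\]
where $\tau_p^{\lalg}=W^{(0,-k)}\otimes_C\tau_p$, while its second exhibits $\ker I^1[\lambda]$ as an extension of $\tau_c\otimes_C(\pi_{\lambda}^{p,\infty})^{K^p}$ by $\tau_p^{\lalg}\otimes_C(\pi_{\lambda}^{p,\infty})^{K^p}$. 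Because the multiplicity space $(\pi_{\lambda}^{p,\infty})^{K^p}$ carries no $D_p^{\times}$-action and only the $\bbT^S$-eigenspace decomposition is at stake, the embedding $j$ is induced, via \eqref{equationmultiplicity}, from a single $D_p^{\times}$-equivariant map $i_{\rho}:\tau_p^{\lalg}\hookrightarrow\tilde{\tau}$ obtained by fixing a line in a two-dimensional multiplicity space; we then obtain
\[\ker I^1[\lambda]\;\isom\; \bigl(\tilde{\tau}/i_{\rho}(\tau_p^{\lalg})\bigr)\otimes_C(\pi_{\lambda}^{p,\infty})^{K^p},\]
and the second exact sequence of Proposition~\ref{propositionexactseqDptimesrep}(2) translates immediately into a short exact sequence $0\to\tau_p^{\lalg}\to\tilde{\tau}/i_{\rho}(\tau_p^{\lalg})\to\tau_c\to 0$ of $D_p^{\times}$-representations.

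The only remaining point, and arguably the main conceptual content, is to match $i_{\rho}$ with the map induced by the Hodge filtration of $D_{\dR}(\rho_p)$ as described in Remark~\ref{remarkHodgefiltration}. By Proposition~\ref{propositionexactseqDptimesrep}(1) the subspace $\tau_p^{\lalg,\oplus 2}\subset\tilde{\tau}$ is canonically identified with $\tau_p^{\lalg}\otimes_C D_{\dR}(\rho_p)$ after applying the de Rham-étale comparison to the $\rho$-isotypic decomposition \eqref{eq:rhoisotypic}; under this identification the embedding $j$ arises from the graded piece that corresponds to the Hodge filtration, precisely because the map $H^0(\calO_{K^p}^{\lalg,(0,-k)}\otimes_{\calO_{K^p}^{\sm}}(\Omega_{K^p}^{1,\sm})^{\otimes k+1})[\lambda]\to \bbH^1(\check\fl,\DR)[\lambda]$ appearing in Corollary~\ref{cor:kerI1DRDRprime} factors through $\bbH^1(\check\fl,\DR^{\lalg})[\lambda]$ and is induced by the Hodge filtration \eqref{eq:Hodgefiltration}. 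The difficulty here is essentially a bookkeeping one and has already been carried out in the proof of Theorem~\ref{thm:admissibilityoftaucE}; all remaining identifications are formal. Combining everything gives the stated isomorphism \eqref{eq:strofT(rho)L-an} together with the extension structure.
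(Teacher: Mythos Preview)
Your proposal is correct and follows essentially the same route as the paper, which simply states that the corollary follows by combining Theorem~\ref{thm:PicheckrhoLankerI1}, Theorem~\ref{thm:nonvanishingofHeckeeigenspace}, Theorem~\ref{thm:H1DR}, Corollary~\ref{cor:kerI1DRDRprime} (equivalently Proposition~\ref{propositionexactseqDptimesrep}) and \eqref{equationmultiplicity}. Your write-up is somewhat more explicit than the paper's one-line justification, particularly in verifying the hypotheses of Theorem~\ref{thm:PicheckrhoLankerI1} and in tracing the identification of $i_\rho$ with the Hodge filtration via Remark~\ref{remarkHodgefiltration}, but the underlying argument is the same.
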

We can deduce the following locality result. 
\begin{theorem}\label{thm:PS}
Let $\lambda\in \sigma_{k,c}^{K^p}$. Then up to a multiplicity, the locally analytic $D_p^\times$-representation $\check\Pi(\rho)^{\lan}$ only depends on the local Galois representation $\rho|_{\Gal(\overline{\bbQ_p}/\bbQ_p) }$. Moreover, when $\pi_p$ is an irreducible smooth principal series representation, $\check\Pi(\rho)^{\lan}$ only depends on the Weil--Deligne representation associated to $\rho|_{\Gal(\overline{\bbQ_p}/\bbQ_p)}$ (and the Hodge--Tate weights).
\begin{proof}
By Theorem \ref{thm:nonvanishingofHeckeeigenspace}, we know that every $\lambda\in \sigma_{k,c}^{K^p}$ must appear in $\tilde{H}^1(K^p,C)$. Hence the condition on $\rho$ required in Theorem \ref{thm:PicheckrhoLankerI1} is satisfied. Then using (\ref{eq:strofT(rho)L-an}), we see that $\check\Pi(\rho)^{\lan}$ only depends on the local Galois representation $\rho|_{\Gal(\overline{\bbQ_p}/\bbQ_p) }$. Moreover, when $\pi_p$ is an irreducible principal series representation, we know that $\tau_p=0$. In particular the information of Hodge filtration of $\rho|_{\Gal(\overline{\bbQ_p}/\bbQ_p) }$ disappears in $\check\Pi(\rho)^{\lan}$. Hence $\check\Pi(\rho)^{\lan}$ only depends on the Weil--Deligne representation associated to $\rho|_{\Gal(\overline{\bbQ_p}/\bbQ_p) }$.
\end{proof}
\end{theorem}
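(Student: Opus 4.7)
The plan is to combine the non-vanishing result of Theorem \ref{thm:nonvanishingofHeckeeigenspace} with the explicit description of $\check{\Pi}(\rho)^{\lan}$ provided by Corollary \ref{corollarydescription}, and then to inspect what the constituents on the right-hand side of that description actually depend on. First I would verify that the hypotheses of Theorem \ref{thm:PicheckrhoLankerI1} and Corollary \ref{corollarydescription} are in force. Since $\lambda\in\sigma_{k,c}^{K^p}$, Theorem \ref{thm:nonvanishingofHeckeeigenspace} guarantees that the associated Galois representation $\rho$ appears in $\tilde{H}^1(K^p,C)^{\lan}$, and the de Rham condition with Hodge--Tate weights $0,k+1$ follows from the cuspidal automorphic origin of $\lambda$ via the Eichler--Shimura relation (\ref{equationcongruencerelation}) together with standard local--global compatibility at $p$.

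Next I would invoke the isomorphism (\ref{eq:strofT(rho)L-an})
\[
\check{\Pi}(\rho)^{\lan}\widehat{\otimes}_E C \;\cong\; \bigl(\tilde{\tau}/i_{\rho}(\tau_p^{\lalg})\bigr)\otimes_C (\pi_\lambda^{p,\infty})^{K^p}
\]
and track the local nature of each ingredient on the right. The factor $(\pi_\lambda^{p,\infty})^{K^p}$ plays the role of a multiplicity space and is therefore irrelevant for the ``up to multiplicity'' assertion. The representation $\tilde{\tau}=\dlim_n H^1_c(\calM_{\LT,n},\omega_{\calM_{\LT,n}}^{(0,-k)})\otimes_{\calH(G)}\pi_p$ depends only on $k$ and on $\pi_p$, and $\pi_p$ in turn is recovered from $\rho_p:=\rho|_{\Gal(\overline{\bbQ_p}/\bbQ_p)}$ through the classical local Langlands correspondence applied to the Weil--Deligne representation attached to $\rho_p$; likewise $\tau_p^{\lalg}=\tau_p\otimes_C W^{(0,-k)}$ depends only on $\pi_p$ and $k$. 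Finally, the embedding $i_\rho$ is determined by the position of the Hodge line inside $D_{\dR}(\rho_p)$ (Remark \ref{remarkHodgefiltration}), which is intrinsic to $\rho_p$. The quotient $\tilde{\tau}/i_\rho(\tau_p^{\lalg})$ is therefore built entirely from $\rho_p$, which establishes the first assertion.

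For the refinement when $\pi_p$ is an irreducible principal series, the key observation is that under our convention $\tau_p=\mathrm{JL}(\pi_p)=0$, so $\tau_p^{\lalg}=0$ and the map $i_\rho$ is identically zero. The short exact sequence (\ref{equationexactsequencetautildetuac}) of Proposition \ref{propositionexactseqDptimesrep} then collapses to $\tilde{\tau}\cong\tau_c$, so the quotient $\tilde{\tau}/i_\rho(\tau_p^{\lalg})\cong\tau_c$ no longer carries any trace of the Hodge filtration of $\rho_p$. Since $\tau_c$ is built purely from $\pi_p$ and $k$, and $\pi_p$ in the principal series case is determined by the (necessarily semisimple) Weil--Deligne representation of $\rho_p$, while $k$ encodes the Hodge--Tate weights, the dependence of $\check{\Pi}(\rho)^{\lan}$ reduces to exactly this local data. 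I do not anticipate a genuine obstacle here: once Corollary \ref{corollarydescription} and Theorem \ref{thm:nonvanishingofHeckeeigenspace} are available, the argument is essentially bookkeeping about which factor in the formula sees which piece of $\rho_p$. The only subtle point worth double-checking is that the identification $\tilde{\tau}/i_\rho(\tau_p^{\lalg})\cong\tau_c$ in the principal series case is canonical and does not inadvertently depend on a choice, but this is immediate from the vanishing of $\tau_p$ and the exact sequence (\ref{equationexactsequencetautildetuac}).
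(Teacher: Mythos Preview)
Your proposal is correct and follows essentially the same approach as the paper: invoke Theorem \ref{thm:nonvanishingofHeckeeigenspace} to ensure the hypothesis of Theorem \ref{thm:PicheckrhoLankerI1} holds, then read off locality from the explicit formula (\ref{eq:strofT(rho)L-an}) of Corollary \ref{corollarydescription}, and in the principal series case use $\tau_p=0$ to erase the Hodge-filtration dependence. Your write-up is in fact slightly more detailed than the paper's in tracking which piece of $\rho_p$ each factor sees, but the logic is identical.
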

\begin{remark}\label{remarkordinary}
In particular, when $\pi_p$ is an irreducible principal series representation, one can not capture the information of the Hodge filtration in $D_{\dR}(\rho|_{\Gal(\overline{\bbQ_p}/\bbQ_p) })$ from $\check{\Pi}(\rho)^{\lan}$. For example, assume that $\rho_p:\Gal(\overline{\bbQ_p}/\bbQ_p)\rightarrow \GL_2(E)$ is a non-split extension of a crystalline character $\chi$ of $\Gal(\overline{\bbQ_p}/\bbQ_p)$ by the trivial character $1$ where $\chi$ has Hodge-Tate weight $k+1\geq 1$ and $\chi$ is not the inverse of the cyclotomic character. Then both $\rho_p$ and its semisimplification $\rho_p^{\rm ss}=\chi\oplus 1$ are crystalline representations with the same associated Weil--Deligne representation but different positions of Hodge filtrations, see \cite[\S 4.5]{Colmez2008Trianguline}. In this case, the smooth $\GL_2(\bbQ_p)$-representation $\pi_p$ is an irreducible principal series representation, and $\tau_p=0$. If $\rho|_{\Gal(\overline{\bbQ_p}/\bbQ_p) }$ has the same semisimplification as $\rho_p$, then the $D_p^{\times}$-representation $\check{\Pi}(\rho)^{\lan}\widehat{\otimes}_{E}C$ cannot distinct $\rho_p$ and $\rho_p^{\rm ss}$. This is a new phenomenon in the $p$-adic local Langlands program in the $D_p^\times$-case, compared to the $\GL_2(\bbQ_p)$-case. In the mod $p$ setting, similar results were known by \cite[Theorem 1.3(ii)(a)]{HuWang}. On the other hand, when $\pi_p$ is a twist of the smooth Steinberg representation, we expect that $\check{\Pi}(\rho)^{\lan}$ still determines the Hodge filtration of $\rho|_{\Gal(\overline{\bbQ_p}/\bbQ_p)}$ as in \cite[Remark 8.31]{HuWang}
\end{remark}

\subsection{Applications to the Scholze functor}\label{sec:application}
In this section, we want to discuss some applications to the $p$-adic Jacquet--Langlands correspondence. Our main goal in this section  (Theorem \ref{theoremscholzefunctor}) is to compute the value of the Scholze functor \cite{Scholze2018LubinTate,dospinescu2024jacquetlanglandsfunctorpadiclocally} on the locally analytic representations of $\GL_2(\bbQ_p)$ attached to $2$-dimensional de Rham Galois representations, in terms of the locally analytic representations of $D_p^{\times}$ considered in the previous sections. Let $\check{G}=D_p^{\times}$ and $G=\GL_2(\bbQ_p)$.

We briefly recall the definition of the Scholze functor, following \cite{Scholze2018LubinTate,dospinescu2024jacquetlanglandsfunctorpadiclocally}. Let $\pi_{\LT,\GM}:\calM_{\LT,\infty}\to \check{\fl}$ be the Gross--Hopkins period map on $\calM_{\LT,\infty}$, which is a pro-\'etale $\GL_2(\bbQ_p)$-torsor. Given an admissible unitary Banach representation $\Pi$ of $\GL_2(\bbQ_p)$, we can descend the constant sheaf on $\calM_{\LT,\infty}$ with fiber $\Pi$ to $\check{\fl}$, so that we get a pro-\'etale sheaf $\calF_{\Pi}$ on $\check{\fl}$. The pro-\'etale cohomology group $R\Gamma_{\pro\et}(\check{\fl},\calF_{\Pi})$ is again a unitary Banach representation of $D_p^\times$, which is also equipped with a commuting action of $\Gal(\overline{\bbQ_p}/\bbQ_p)$. As is explained in \cite[Corollary 5.3.5]{dospinescu2024jacquetlanglandsfunctorpadiclocally}, if $\Pi^0$ is a $G$-stable lattice in $\Pi$, then 
\begin{align}\label{eq:proetcohomologyaslimitofetale}
    R\Gamma_{\pro\et}(\check{\fl},\calF_{\Pi})\isom R\ilim_n R\Gamma_{\et}(\check{\fl},\calF_{\Pi^0/p^n})[1/p].
\end{align}
Moreover, if we denote $\Pi^{\lan}\subset \Pi$ as the subspace of locally analytic vectors of $\Pi$, we can also descend $\Pi^{\lan}$ along $\pi_{\LT,\GM}$ so that to get a pro-\'etale sheaf $\calF_{\Pi^{\lan}}$ on $\check{\fl}$. By \cite[Theorem 5.3.6]{dospinescu2024jacquetlanglandsfunctorpadiclocally}, we know that 
\begin{align}\label{eq:scholzefunctorlocanaly}
    R\Gamma_{\pro\et}(\check{\fl},\calF_{\Pi})^{R\check{G}\-\lan}\isom R\Gamma_{\pro\et}(\check{\fl},\calF_{\Pi^{G\-\lan}}).
\end{align}
For $i\geq 0$, set 
\[S^i(\Pi^{\lan}):=H^i_{\pro\et}(\check{\fl},\calF_{\Pi^{\lan}}).\]

Let $Z_{K^p}$ be the Shimura set associated to $\bar G$ in \S\ref{sec:uniformization}. Define 
\begin{align*}
    \tilde{\calA}^{K^p}_{\bar G,E}:=\calC^{\cont}(Z_{K^p},E)
\end{align*}
to be the completed cohomology of the Shimura set. This is an admissible unitary Banach representation of $\bar G(\bbQ_p)=\GL_2(\bbQ_p)=G$, which is also equipped with an action of the Hecke algebra $\bbT^S$. Let $\tilde{\calA}^{K^p,G\-\lan}_{\bar G,E}$ be the subspace of locally $G$-analytic vectors.  Similar to \cite[Theorem 6.2]{Scholze2018LubinTate}, we can use the $p$-adic uniformization of the quaternionic Shimura curve $\calS_{K^p}$ to deduce the compatibility of Scholze functor with the completed cohomology groups $\tilde{\calA}^{K^p}_{\bar G,E}$ and $\tilde{H}^i(K^p,E)$ in \S\ref{subsectioncompletedcohomology}. 
\begin{theorem}\label{thm:ScholzeFunctor}
For $i\ge 0$, there is a $\Gal(\overline{\bbQ_p}/\bbQ_p)\times \bbT^S\times \check{G}$-equivariant isomorphism:
\begin{align*}
    H^i_{\pro\et}(\check{\fl},\calF_{\tilde{\calA}^{K^p,G\-\lan}_{\bar G,E}})\isom \tilde{H}^i(K^p,E)^{\check{G}\-\lan}.
\end{align*}
\end{theorem}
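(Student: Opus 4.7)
The plan is to combine the $p$-adic uniformization (Theorem \ref{thm:padicuniformization}) with the identification of the Drinfeld and Lubin--Tate towers (Theorem \ref{thm:twotowers}) and the compatibility (\ref{eq:scholzefunctorlocanaly}) of the Scholze functor with locally analytic vectors, reducing the claim to a geometric identification of pushforward sheaves on $\check\fl$.

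The first step will be to prove the Banach (non-locally-analytic) version: a natural $\Gal(\overline{\bbQ_p}/\bbQ_p) \times \bbT^S \times \check G$-equivariant isomorphism
\begin{align*}
    H^i_{\pro\et}(\check\fl, \calF_{\tilde\calA^{K^p}_{\bar G, E}}) \isom \tilde H^i(K^p, E).
\end{align*}
For this, by (\ref{eq:realpadicuniformization}) combined with Theorem \ref{thm:twotowers}, the Shimura curve is presented as $\calS_{K^p} \isom (\calM_{\LT,\infty} \times Z_{K^p})/G$, where $G = \GL_2(\bbQ_p)$ acts diagonally, and the Hodge--Tate period map $\pi_{K^p, \HT} : \calS_{K^p} \to \check\fl$ is identified with the descent along the $G$-torsor $\pi_{\LT,\GM} : \calM_{\LT,\infty} \to \check\fl$ of the projection $p_1 : \calM_{\LT,\infty} \times Z_{K^p} \to \calM_{\LT,\infty}$. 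Since $Z_{K^p} = \ilim_{K_p} Z_{K^pK_p}$ is a profinite set, one has $R^i p_{1,*} \bbZ/p^n = 0$ for $i > 0$ and $p_{1,*}\bbZ/p^n$ equals the constant pro-\'etale sheaf $\calC^\sm(Z_{K^p}, \bbZ/p^n) = \tilde\calA^{K^p}_{\bar G, \bbZ/p^n}$ on $\calM_{\LT,\infty}$, with the $G$-action induced by right translation on $Z_{K^p}$. Descending along the $G$-torsor yields $R\pi_{K^p, \HT, *} \bbZ/p^n \isom \calF_{\tilde\calA^{K^p}_{\bar G, \bbZ/p^n}}$ on $\check\fl$; taking $R\ilim_n$ and inverting $p$, using (\ref{eq:proetcohomologyaslimitofetale}) together with the definition of $\tilde H^i(K^p, E)$, gives the Banach isomorphism.

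The second step is to apply (\ref{eq:scholzefunctorlocanaly}) with $\Pi = \tilde\calA^{K^p}_{\bar G, E}$, which combined with Step 1 gives
\begin{align*}
    R\Gamma_{\pro\et}(\check\fl, \calF_{\tilde\calA^{K^p, G\-\lan}_{\bar G, E}}) \isom R\Gamma_{\pro\et}(\check\fl, \calF_{\tilde\calA^{K^p}_{\bar G, E}})^{R\check G\-\lan}.
\end{align*}
The right-hand side is the derived $\check G$-locally analytic vectors of a complex whose cohomology is $\tilde H^*(K^p, E)$; since each $\tilde H^i(K^p, E)$ is an admissible unitary Banach representation of $\check G$, its higher derived locally analytic vectors vanish, so the derived locally analytic vectors reduce to the usual ones in each cohomological degree, giving the desired isomorphism.

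The main technical obstacle will be justifying the descent computation $R\pi_{K^p, \HT, *} \bbZ/p^n \isom \calF_{\tilde\calA^{K^p}_{\bar G, \bbZ/p^n}}$ in the pro-\'etale topology. I would handle this by first working at finite level $K_p$, where the analogous projection $\calM_{\LT,\infty} \times Z_{K^pK_p} \to \calM_{\LT,\infty}$ is a finite disjoint cover and the statement is routine, and then passing to the limit over $K_p$. Tracking the equivariances also requires care: the $\Gal(\overline{\bbQ_p}/\bbQ_p)$-compatibility follows from the Weil descent datum in Theorem \ref{thm:padicuniformization}; the $\bbT^S$-equivariance matches because the Hecke correspondences on $\calS_{K^p}$ are induced by the right $\bar G(\bbA_f^p)$-translations on $Z_{K^p}$ via the uniformization; and the $\check G = D_p^\times$-equivariance is immediate from the $D_p^\times$-action on $\calS_{K^p}$ passing through the uniformization unchanged and from the definition of $\calF_\Pi$.
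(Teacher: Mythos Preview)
Your proposal is correct and follows essentially the same strategy as the paper: identify $R\pi_{K^p,\HT,*}$ of the constant sheaf with $\calF_{\tilde\calA^{K^p}_{\bar G}}$ via the $p$-adic uniformization, then pass to locally analytic vectors. The only cosmetic differences are that the paper works componentwise via $\calS_{K^p}=\sqcup_x \Gamma_x\backslash\calM_{\Dr,\infty}$ (so the profinite fiber is $\Gamma_x\backslash\GL_2(\bbQ_p)$ rather than $Z_{K^p}$) and checks the vanishing of higher pushforwards directly on strictly totally disconnected covers with $\bbQ_p$-coefficients, and for the locally analytic step it invokes \cite[Corollary 6.1.7]{camargo2024locallyanalyticcompletedcohomology} in place of your combination of (\ref{eq:scholzefunctorlocanaly}) with the vanishing of higher derived locally analytic vectors on admissible Banach $\check G$-representations.
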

\begin{proof}
    Write $\pi_{\HT}=\pi_{K^p,\HT}:\calS_{K^p}\to \check{\fl}$ for the Hodge--Tate period map on the quaternionic Shimura curve $\calS_{K^p}$ of infinite level (recalled in \S\ref{subsec:quaternionicshimuracurve}). Define $\calF_{K^p,\bbQ_p}$ as the pro-\'etale sheaf on $\check{\fl}$ by descending the unitary Banach $G$-representation $\tilde\calA^{K^p}_{\bar G,\bbQ_p}$ (with $\bbQ_p$-coefficients). We show that there is a $\check{G}$-equivariant and Weil group equivariant isomorphism of sheaves on the pro-\'etale site of $\check{\fl}$
	$$R\pi_{\HT,\proet*}\bbQ_p\cong \calF_{K^p,\bbQ_p}.$$ 
    Then one can use \cite[Corollary 6.1.7]{camargo2024locallyanalyticcompletedcohomology} to deduce the claim.     

    We first show that $R^n\pi_{\HT,\proet*}\bbQ_p=0$ when $n>0$. Since $\calS_{K^p}\cong \sqcup_i \Gamma_{x_i}\backslash\calM_{\Dr,\infty}$ as in (\ref{uniformization}) and the Hodge--Tate period map on $\calS_{K^p}$ is compatible with the one on $\calM_{\Dr,\infty}$, it suffices to show the vanishing of higher direct image of $\pi_{\HT,\Gamma}:\calS_{\Gamma}:=\Gamma_{x_i}\backslash\calM_{\Dr,\infty}\to \bbP^1_{C}$ for each $\Gamma=\Gamma_{x_i}$. For  $n>0$, $R^n\pi_{\HT,\Gamma,\proet*}\bbQ_p$ is the sheafification of 
	$$X\to H^n_{\proet}(X\times_{\bbP^1_{C}} \calS_{K^p},\bbQ_p)$$
	where $X$ are affinoid perfectoid spaces pro\'etale over $\check{\fl}=\bbP^1_{C}$. Since $\pi_{\Dr,\HT}$ exhibits $\calM_{\Dr,\infty}$ as a pro\'etale $\GL_2(\bbQ_p)$-torsor on $\bbP^1_{C}$, for each pro\'etale covering of $X$, we can find a pro\'etale refinement by strictly totally disconnected spaces $S_i$ such that there is a $\GL_2(\bbQ_p)$-equivariant isomorphism $S_i\times_{\bbP^1_{\bbC_{p}}} \calM_{\Dr,\infty}\cong S_i\times \GL_2(\bbQ_p)$. Let $T=\Gamma\backslash \GL_2(\bbQ_p)$ be the profinite set. Because $\Gamma$ acts trivially on $\bbP^1_{\bbC_{p}}$, we get an isomorphism $S_i\times_{\bbP^1_{\bbC_{p}}} \calS_{\Gamma}\cong S_i\times T$. Now the claim follows from $$H^n_{\proet}(S_i\times T,\bbQ_p)=0$$
	as $S_i\times T$ is still a strictly totally disconnected space (for example by \cite[Lemma 7.19]{scholze2022etale}). Note that here we need $T$ to be profinite (rather than just locally profinite) to have $H^n_{\proet}(S_i\times T,\bbQ_p)\cong H^n_{\et}(S_i\times T,\bbZ_p)[1/p]=0$.
	
	To identify $\pi_{\HT,\proet*}\bbQ_p$ with $\calF_{K^p,\bbQ_p}$. Using the same argument as in the end of the proof of \cite[Proposition 6.5]{Scholze2018LubinTate}, replacing $\bbQ_p/\bbZ_p$ by $\bbQ_p$, we have a map $\pi_{\HT,\proet*}\bbQ_p\to  \calF_{K^p,\bbQ_p}$. To check this is an isomorphism, it suffices to check it on strictly totally disconnected $S_i$ as in the last paragraph. In this case, it is easy to see both sides can be identified with $\Cont(|S_i\times T|,\bbQ_p)$.
\end{proof}

The $G$-smooth vectors $\calA^{K^p,G\-\sm}_{\bar G,E}$ of $\tilde{\calA}^{K^p}_{\bar G,E}$ is the space of $E$-valued classical automorphic forms on $\bar G$ of tame level $K^p$ and trivial weight at $p$ (i.e. take $k=0$ in Definition \ref{def:algebraicautomorphicform} and with $E$-coefficients). Take a cuspidal Hecke eigensystem $\lambda:\bbT^S\rightarrow E$ appearing in $\calA^{K^p,G\-\sm}_{\bar G,E}$. Let $\rho_{\lambda}:\Gal(\overline{\bbQ}/\bbQ)\rightarrow\GL_2(E)$ be the Galois representation associated with $\lambda$. Then as (\ref{equationmultiplicity}) we have
\begin{align*}
    \calA^{K^p,G\-\sm}_{\bar G,E}[\lambda]\isom \pi_{p,E}\otimes (\pi_{\lambda,E}^{p,\infty})^{K^p} 
\end{align*}
where $\pi_{\lambda,E}$ is the corresponding automorphic representation of $\bar G(\bbA)$ and $\pi_{p,E}$ is its $p$-component, a smooth irreducible admissible representation of $G$ over $E$. We furthur assume that mod $p$ reduction $\bar\rho_{\lambda}|_{\Gal(\overline{\bbQ_p}/\bbQ_p)}$ is absolutely irreducible for simplicity (this is the same assumption as \cite[\S 5.1, Hypoth\'ese]{DLB17}, which can certainly be weakened, cf. \cite[Corollary 5.7]{pavskunas2022some}). Then using the $p$-adic local-global compatibility results \cite{emerton2011local,DLB17}, we know that
\begin{align}\label{equationeigenspacecompletedbarG}
    \tilde{\calA}^{K^p,G\-\lan}_{\bar G,E}[\lambda]\isom \pi_E\otimes (\pi_{\lambda,E}^{p,\infty})^{K^p} ,
\end{align}
where $\pi_E$ is a locally analytic representation of $G$ over $E$, corresponding to $\rho_{\lambda}|_{\Gal(\overline{\bbQ_p}/\bbQ_p)}$ via the $p$-adic local Langlands correspondence for $\GL_2(\bbQ_p)$. See for example \cite[\S 7.3]{PanII} for a concrete description of $\pi_E$. We note that $\pi_{p,E}$ is the closed subspace of smooth vectors of $\pi_E$.

Starting from $\pi_{p,E}$, we have defined various locally analytic $\check{G}=D_p^\times$-representations over $E$. Let $\tau_{p,E}$ be the Jacquet--Langlands transfer of $\pi_{p,E}$ to $D_p^\times$ with $\tau_{p,E}=0$ if $\pi_{p,E}$ is an irreducible principal series representation as in \S\ref{subsec:repofDptimes}. Set $\pi_p:=\pi_{p,E}\hat\ox_E C$. Let $\tilde{\tau}, \tau_c$ be the representations of $D_p^\times$ over $C$ defined as in Definition \ref{def:tauc} using $\pi_p$, with the $E$-structures $\widetilde{\tau}_E,\tau_{c,E}$ constructed in \S \ref{sec:admissibilityoftauc}. Also, recall that we have constructed a locally analytic representation $\tau_E$ depending only on $\rho_{\lambda}|_{\Gal(\overline{\bbQ_p}/\bbQ_p)}$ in the proof of Theorem \ref{thm:admissibilityoftaucE}, so that \begin{align}\label{equationeigenspacecompletedG}
        \tilde{H}^1(K^p,E)[\lambda]=\rho_\lambda\otimes_E\Hom_{\Gal(\bar \bbQ/\bbQ)}(\rho_\lambda,\tilde{H}^1(K^p,E))=\rho_{\lambda}\otimes_E\tau_E\otimes (\pi_{\lambda,E}^{p,\infty})^{K^p}.
\end{align}
\begin{theorem}\label{theoremscholzefunctor}
Let $\lambda:\bbT^S\rightarrow E$ be a cuspidal Hecke eigensystem appearing in $\widetilde{\calA}^{K^p,G\-\sm}_{\bar G,E}$, with the associated Galois representation $\rho_{\lambda}$. Assume that the reduction and the restriction $\bar\rho_{\lambda}|_{\Gal(\overline{\bbQ_p}/\bbQ_p)}$ of $\rho_\lambda$ is absolutely irreducible. Let $\pi_E$ (resp. $\tau_E$) be the locally analytic $G$ (resp. $\check{G}$)-representation associated to $\lambda$. Then we have a $\Gal(\overline{\bbQ_p}/\bbQ_p)$-equivariant isomorphism of locally analytic $D_p^\times$-representations
\begin{align*}
    S^1(\pi_{E})\isom \rho_{\lambda}|_{\Gal(\overline{\bbQ_p}/\bbQ_p)}\ox_E\tau_{E}.
\end{align*}
\end{theorem}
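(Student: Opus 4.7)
The plan is to deduce the theorem by localizing the comparison of Theorem \ref{thm:ScholzeFunctor} at the cuspidal Hecke eigensystem $\lambda$. The isomorphism
\[
    H^i_{\pro\et}(\check{\fl},\calF_{\tilde{\calA}^{K^p,G\-\lan}_{\bar G,E}})\isom \tilde{H}^i(K^p,E)^{\check{G}\-\lan}
\]
is $\bbT^S\times D_p^\times\times \Gal(\overline{\bbQ_p}/\bbQ_p)$-equivariant, and I would extract the $\lambda$-isotypic component on both sides. By \eqref{equationeigenspacecompletedbarG}, the $\lambda$-part of the source decomposes as $\pi_E\otimes_E V$ with $V:=(\pi_{\lambda,E}^{p,\infty})^{K^p}$; by \eqref{equationeigenspacecompletedG}, the $\lambda$-part of the target is $\rho_\lambda\otimes_E\tau_E\otimes_E V$.

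Granting that the Scholze functor $\Pi\mapsto R\Gamma_{\pro\et}(\check\fl,\calF_\Pi)$ commutes with the $\lambda$-isotypic decomposition and with tensoring by the finite-dimensional multiplicity space $V$, this yields
\[
    S^1(\pi_E)\otimes_E V\isom \rho_\lambda\otimes_E\tau_E\otimes_E V
\]
as $D_p^\times\times\Gal(\overline{\bbQ_p}/\bbQ_p)$-representations; the global Galois action on the target factors through $\Gal(\overline{\bbQ_p}/\bbQ_p)$ since $S^1(\pi_E)=H^1_{\pro\et}(\check\fl,\calF_{\pi_E})$ lives on the $\bbQ_p$-analytic space $\check\fl$. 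Canceling the nonzero finite-dimensional space $V$ (which acts trivially under $D_p^\times$ and the local Galois group) then gives the desired isomorphism $S^1(\pi_E)\isom \rho_\lambda|_{\Gal(\overline{\bbQ_p}/\bbQ_p)}\otimes_E\tau_E$.

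The main technical obstacle will be justifying the compatibility of $S^\bullet$ with the Hecke eigenspace decomposition in the locally analytic setting. The absolute irreducibility of $\bar\rho_\lambda|_{\Gal(\overline{\bbQ_p}/\bbQ_p)}$ implies that the maximal ideal of $\bbT^S$ attached to $\lambda$ cuts out a direct summand of both $\tilde{\calA}^{K^p}_{\bar G,E}$ and $\tilde{H}^1(K^p,E)$ on the Banach level. Since $\Pi\mapsto \calF_\Pi$ is exact (being defined by descent along the pro-\'etale $\GL_2(\bbQ_p)$-torsor $\pi_{\LT,\GM}$) and $R\Gamma_{\pro\et}(\check\fl,-)$ is a triangulated functor, these direct summand decompositions are preserved. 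One clean route is to first pass to $\calO_E$-lattices and mod $p^n$ reduction via \eqref{eq:proetcohomologyaslimitofetale}, where Hecke-eigenspace extraction on finite-level \'etale cohomology is classical, then invert $p$ and pass to locally analytic vectors via \eqref{eq:scholzefunctorlocanaly}. An alternative treatment would work entirely in the framework of solid $D_p^\times$-representations as in \cite{dospinescu2024jacquetlanglandsfunctorpadiclocally}, where the Scholze functor is defined directly on locally analytic condensed inputs and the required exactness is built-in.
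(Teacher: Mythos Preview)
Your overall strategy matches the paper's: apply Theorem \ref{thm:ScholzeFunctor} and take $\lambda$-eigenspaces on both sides, then use \eqref{equationeigenspacecompletedbarG}, \eqref{equationeigenspacecompletedG} and cancel the multiplicity space $V$. You also correctly flag the crux of the matter, namely showing
\[
S^1\bigl(\tilde{\calA}^{K^p,G\text{-}\lan}_{\bar G,E}[\lambda]\bigr)\;\simeq\;S^1\bigl(\tilde{\calA}^{K^p,G\text{-}\lan}_{\bar G,E}\bigr)[\lambda].
\]
The gap is in how you propose to prove this. The absolute irreducibility of $\bar\rho_\lambda|_{\Gal(\overline{\bbQ_p}/\bbQ_p)}$ gives you a direct summand only after localizing at the \emph{maximal} ideal $\frm$ of $\bbT(K^p)$; it does not make the $\lambda$-eigenspace itself a direct summand. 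After $\frm$-localization, $\bbT(K^p)_\frm$ is a complete local ring that is typically not a field, and $[\lambda]$ is the kernel of a map $\calA\to\bigoplus_i\calA$ given by finitely many generators of $\ker(\lambda)$. That is a left-exact but not split operation, so neither the exactness of $\Pi\mapsto\calF_\Pi$ nor the triangulated nature of $R\Gamma_{\pro\et}$ buys you commutation with $S^1$. The mod-$p^n$ route via \eqref{eq:proetcohomologyaslimitofetale} does not help either: Hecke-eigenspace extraction is no better behaved integrally, and the obstruction survives inverting $p$.

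The paper's argument (Proposition \ref{propositionscholzefunctoreigenspace}) handles exactly this point and is genuinely more delicate than what you outline. From the four-term sequence $0\to\calA[\lambda]\to\calA\to\bigoplus_i\calA$ one extracts an obstruction $\ker\beta$ sitting between $S^1(\calA[\lambda])$ and $S^1(\calA)[\lambda]$, which receives a surjection from $S^0(Q)$ for a suitable quotient $Q$. One shows that $D_p^\times$ acts on $S^0(Q)$ through the reduced norm (via \eqref{eq:proetcohomologyaslimitofetale} and \cite[Proposition 4.7]{Scholze2018LubinTate}), while $S^1(\calA)[\lambda]$ has trivial infinitesimal character (by \cite{dospinescu2025infinitesimal}); hence the image of $S^1(\calA)[\lambda]$ in $\ker\beta$ would be a smooth quotient. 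A separate wall-crossing argument (Lemma \ref{lem:no-smooth-quotient}) shows that $\tau_E$, and hence $S^1(\calA)[\lambda]$, admits no nonzero smooth quotients, forcing the obstruction to vanish. None of these ingredients is visible in your proposed shortcuts.
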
 
\begin{proof}
    This is a combination of Theorem \ref{thm:ScholzeFunctor} and Proposition \ref{propositionscholzefunctoreigenspace} below, with the descriptions of the Hecke eigenspaces (\ref{equationeigenspacecompletedbarG}) and (\ref{equationeigenspacecompletedG}).
\end{proof}

Using a similar treatment as in \cite[Proposition 7.7]{Scholze2018LubinTate} and Lemma \ref{lem:no-smooth-quotient} below, we have the following result. 
\begin{proposition}\label{propositionscholzefunctoreigenspace}
Let $\lambda$ be as in Theorem \ref{theoremscholzefunctor}. Then the natural map
\begin{align}
   S^1(\tilde\calA^{K^p,G\-\lan}_{\bar G,E}[\lambda])\aisom  S^1(\tilde\calA^{K^p,G\-\lan}_{\bar G,E})[\lambda].
\end{align}
is an isomorphism.
\begin{proof}
By our assumption, the mod $p$ reduction of $\rho_\lambda$ is absolutely irreducible. This gives a maximal ideal $\frm$ of the abstract Hecke algebra $\bbT^S$ over $\bbZ$. Let $\bbT(K_pK^p)_{\frm}$ be the $\frm$-adic completion of the image of $\bbT^S$ in $\End_{\bbZ_p}( H^1(S_{K^pK_p}(\bbC),\bbZ_p))$, and put $\bbT(K^p)_{\frm}:=\ilim_{K_p}\bbT(K_pK^p)_{\frm}$. It acts faithfully on $\tilde{H}^1(K^p,E)_{\frm}$, where $(-)_{\frm}$ denotes the localization at $\frm$ of $\bbT(K^p)$-modules. Using \cite[Corollary 7.3]{Scholze2018LubinTate} and taking $p$-adic completion, we know that the $\bbT^S$-action on $\tilde{\calA}^{K^p}_{\bar G,E,\frm}$ extends to a continuous action of $\bbT(K^p)_{\frm}$.

For simplicity, we write ${\calA}:=\tilde\calA^{K^p,G\-\lan}_{\bar G,E,\frm}$. Using a finite set of generators of the ideal of $\bbT(K^p)_{\frm}$ corresponding to $\lambda$, there is an exact sequence 
\begin{align}\label{eq:exactseqofA}
    0\to {\calA}[\lambda]\to {\calA}\to \oplus_{i\in I}{\calA}
\end{align}
with $I$ finite. The above exact sequence induces the following two exact sequences 
\begin{align*}
    0\to {\calA}[\lambda]\to {\calA}\to M\to 0,\\
    0\to M\to \oplus_{i\in I}{\calA}\to Q\to 0.
\end{align*}
where $M$ is the cokernel of the first map and $Q$ is the cokernel of the second map. As the formation $\pi\mapsto \calF_{\pi}$ is exact, we get the following exact sequences 
\begin{align*}
    S^0(M)\to S^1({\calA}[\lambda])\to S^1({\calA})\to S^1(M),\\
    S^0(Q)\to S^1(M)\to S^1(\oplus_{i\in I}{\calA}).
\end{align*}
where $S^i(\pi)=H^i_{\pro\et}(\check{\fl},\calF_{\pi})$. First, $S^0(M)=0$. Indeed, as $M\subset \oplus_{i\in I}\calA$, the Hecke action on $S^0(M)\subset \oplus_{i\in I}S^0(\calA)$ is Eisenstein by Theorem \ref{thm:ScholzeFunctor}. But $S^0(M)$ is also a subspace of $S^1({\calA}[\lambda])$, which is non-Eisenstein. Hence $S^0(M)=0$. Then we know there is an exact sequence 
\begin{align*}
    0\to S^1(\calA[\lambda])\to S^1(\calA)[\lambda]\to \ker \beta
\end{align*}
where $\beta:S^1(M)\to S^1(\oplus_{i\in I}\calA)$. In particular, we know that there is a surjection $S^0(Q)\surj \ker \beta$. Now we claim that the action of $\check{G}$ on $S^0(Q)$ factors through the reduced norm map. Indeed, using the definition of $Q$ and that Hecke operators act unitarily, we see that $Q$ has an admissible unitary Banach completion $\hat Q$, whose subspace of locally analytic vectors is $Q$. Let $\hat Q^0$ be a lattice in $\hat Q$. Then by (\ref{eq:proetcohomologyaslimitofetale}) we know
\begin{align*}
    S^0(\hat Q)=\ilim_n S^0(\hat Q^0/p^n)[\frac{1}{p}].
\end{align*}
Using \cite[Proposition 4.7]{Scholze2018LubinTate} applied to $\hat Q^0/p^n$, we deduce that the $\check{G}$-action on $S^0(\hat Q)$ factors through the reduced norm map, and so does its subspace $S^0(Q)$. Hence we know that $\check{G}$ also acts on $\ker \beta$ via the reduced norm map by the surjection $S^0(Q)\surj \ker \beta$. On the other hand, we know that $S^1(\calA)[\lambda]$ has trivial infinitesimal character. Indeed, from Theorem \ref{thm:ScholzeFunctor} we know that $S^1(\calA)=\tilde{H}^1(K^p,E)_{\frm}^{\check{G}\-\lan}$. Hence using \cite[Theorem 1.4]{dospinescu2025infinitesimal} we know that $\tilde{H}^1(K^p,E)_{\frm}^{\check{G}\-\lan}[\lambda]$ has trivial infinitesimal character. So that the image of $S^1(\calA)[\lambda]\to \ker \beta$ also has trivial infinitesimal character. This forces that this image must be a smooth $\check{G}$-representation. By Lemma \ref{lem:no-smooth-quotient} below, we know that $S^1(\calA)[\lambda]$ can not have such quotients. Hence the image of $S^1(\calA)[\lambda]$ in $\ker \beta$ is zero. In other words, we get the desired isomorphism $S^1(\calA[\lambda])\aisom  S^1(\calA)[\lambda].$
\end{proof}
\end{proposition}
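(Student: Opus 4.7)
The plan is to realize $\calA[\lambda]$, where $\calA := \tilde{\calA}^{K^p, G\-\lan}_{\bar G, E, \frm}$ is localized at the non-Eisenstein maximal ideal $\frm \subset \bbT^S$ attached to $\bar\rho_\lambda|_{\Gal(\overline{\bbQ_p}/\bbQ_p)}$, as the kernel of a map into a finite direct sum of copies of $\calA$, and then to apply the Scholze functor to the resulting short exact sequences. Because the absolute irreducibility hypothesis guarantees that the Hecke action on the completed cohomology of the Shimura set extends continuously to $\bbT(K^p)_\frm$ (after $p$-adic completion, via \cite[Corollary 7.3]{Scholze2018LubinTate}), one may choose finitely many generators $\{t_i - \lambda(t_i)\}_{i \in I}$ of $\ker(\lambda : \bbT(K^p)_\frm \to E)$ and split the definition of $\calA[\lambda]$ into
\[0 \to \calA[\lambda] \to \calA \to M \to 0, \qquad 0 \to M \to \bigoplus_{i \in I} \calA \to Q \to 0.\]

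Since $\pi \mapsto \calF_\pi$ is exact, applying $S^\bullet = H^\bullet_\proet(\check\fl, \calF_-)$ yields the long exact sequences
\[S^0(M) \to S^1(\calA[\lambda]) \to S^1(\calA) \to S^1(M), \qquad S^0(Q) \to S^1(M) \xrightarrow{\beta} S^1\bigl(\textstyle\bigoplus_i \calA\bigr).\]
For injectivity of $S^1(\calA[\lambda]) \to S^1(\calA)[\lambda]$, it suffices to prove $S^0(M) = 0$. I would argue this via the inclusion $S^0(M) \hookrightarrow \bigoplus_i S^0(\calA)$: by Theorem \ref{thm:ScholzeFunctor} one has $S^0(\calA) \simeq \tilde{H}^0(K^p,E)^{\check G\-\lan}_\frm$, on which the Hecke action is Eisenstein (the $\tilde{H}^0$-completed cohomology is governed by connected components). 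Since $S^0(M)$ also embeds into the non-Eisenstein module $S^1(\calA[\lambda])$, it must vanish.

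For the cokernel, the four-term sequence gives an inclusion $S^1(\calA)[\lambda]/\im\bigl(S^1(\calA[\lambda])\bigr) \hookrightarrow \ker\beta$ and a surjection $S^0(Q) \twoheadrightarrow \ker\beta$. The plan is to pass $Q$ to its admissible unitary Banach completion $\hat Q$ (a quotient of copies of the Banach completion of $\calA$), use the formula (\ref{eq:proetcohomologyaslimitofetale}) to write $S^0(\hat Q) = \varprojlim_n S^0(\hat Q^\circ/p^n)[\tfrac{1}{p}]$, and invoke Scholze's structural result \cite[Proposition 4.7]{Scholze2018LubinTate} on each $\hat Q^\circ/p^n$ to conclude that $\check G$ acts on $S^0(\hat Q)$, hence on $S^0(Q)$ and on $\ker\beta$, through the reduced norm. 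On the other hand, the quaternionic analogue of \cite[Theorem 1.4]{dospinescu2025infinitesimal} ensures that $S^1(\calA)[\lambda] = \tilde{H}^1(K^p,E)^{\check G\-\lan}_\frm[\lambda]$ has trivial infinitesimal character, so the image of $S^1(\calA)[\lambda]$ inside $\ker\beta$ is an admissible locally analytic $\check G$-representation on which $\check G$ acts through the reduced norm and whose infinitesimal character is trivial; such a representation must be smooth (via \cite[Corollary 9.1]{ardakov2013irreducible} combined with \cite[Theorem 1.4.2]{kohlhaase2007invariant}).

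The hard part is then to exclude the existence of a nonzero smooth quotient of $S^1(\calA)[\lambda]$, which I would package as Lemma \ref{lem:no-smooth-quotient}. I expect this to follow from a wall-crossing argument: wall-crossing annihilates any genuinely smooth representation, but the $\tau_c$-component of $S^1(\calA)[\lambda]$ survives wall-crossing and is nonzero by Theorem \ref{thm:nonvanishingoftildetau}, so a smooth quotient would force a contradiction. Granting that lemma, the image of $S^1(\calA)[\lambda]$ in $\ker\beta$ vanishes, upgrading the injection $S^1(\calA[\lambda]) \hookrightarrow S^1(\calA)[\lambda]$ to the desired isomorphism.
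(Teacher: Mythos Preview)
Your proposal is correct and follows essentially the same approach as the paper's proof: the same localization at $\frm$, the same two short exact sequences with $M$ and $Q$, the same Eisenstein argument for $S^0(M)=0$, the same reduced-norm argument for $\ker\beta$ via $S^0(Q)$, and the same appeal to Lemma~\ref{lem:no-smooth-quotient}. Your sketch of the wall-crossing argument for that lemma also matches the paper's actual proof; the only minor difference is that you spell out why ``reduced norm action plus trivial infinitesimal character'' forces smoothness, whereas the paper states this step without elaboration.
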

\begin{lemma}\label{lem:no-smooth-quotient}
Let $\lambda:\bbT^S\rightarrow E$ be a cuspidal Hecke eigensystem appearing in $\widetilde{\calA}^{K^p,G\-\sm}_{\bar G,E}$ as above, with the associated locally analytic representation $\tau_E$. Then $\tau_E$ has no non-zero smooth quotients.
\begin{proof}
    The proof is similar to \cite[Proposition 2.5.2, Theorem 2.5.7]{su2025locallyanalytictextext1conjecturetextgl2l} or \cite[Theorem 4.7.6]{QS24}. Recall that $\DR,\DR'$ are certain complexes on $\check{\fl}$ defined in \S\ref{sec:cohomologyI}, such that $\bbH^1(\check{\fl},\DR)[\lambda]\isom \tilde{\tau}^{\oplus n}$ and $\bbH^1(\check{\fl},\DR')[\lambda]\isom \tau_c^{\oplus n}$, where $n$ is the multiplicity coming from the tame part of the automorphic representation. By Proposition \ref{prop:dKvsurj}, we know $\bbH^2(\check{\fl},\DR')$ is the second cohomology of an abelian sheaf on $\check{\fl}$, so that it is zero since $\check{\fl}$ is of dimension $1$. This shows that the natural map induced by wall-crossing \cite{JLS22,Din24} $\Theta_s\bbH^1(\DR)\to \bbH^1(\DR)$ is surjective (since the cokernel is a subspace of $\bbH^2(\DR')$ by the exact triangle $\DR'\rightarrow \Theta_s\mathrm{DR}\rightarrow\DR\rightarrow $ as in the proof for \cite[Theorem 4.7.6]{QS24}). Since the action of $\bbT^S$ on these cohomology groups are semisimple by Theorem \ref{thm:decompositionautomorphicforms} and Theorem \ref{thm:H1DR}, we get that the natural map $\Theta_s(\tilde{\tau})\to \tilde{\tau}$ is surjective. By Corollary \ref{cor:kerI1DRDRprime}, we know $\tau_E\hat\ox_E C$ is a quotient of $\tilde{\tau}$ by a smooth representation. The wall-crossing of a smooth representation is zero. From this one can deduce that the natural map induced by adjunction property of the wall-crossing functor  gives a surjection map $\Theta_s(\tau_E)\surj \tau_E$. Let $\tau'$ be a smooth quotient of $\tau_E$.  Then by functoriality, we get a commutative diagram 
\[\begin{tikzcd}[cramped]
	{\Theta_s(\tau_E)} & {\Theta_s(\tau')=0} \\
	{\tau_E} & {\tau'}
	\arrow[from=1-1, to=1-2]
	\arrow[two heads, from=1-1, to=2-1]
	\arrow[from=1-2, to=2-2]
	\arrow[two heads, from=2-1, to=2-2]
\end{tikzcd}\]
which forces that $\tau'=0$, as desired.
\end{proof}
\end{lemma}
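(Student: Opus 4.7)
The plan is to run a wall-crossing argument on the flag variety $\check{\fl}$, in the spirit of \cite[Theorem 4.7.6]{QS24} and \cite[Theorem 2.5.7]{su2025locallyanalytictextext1conjecturetextgl2l}. First I would translate the assertion from $\tau_E$ back to the geometric side: using Theorem \ref{thm:H1DR} together with the decomposition \eqref{equationmultiplicity}, the representations $\widetilde{\tau}$ and $\tau_c$ appear (up to a finite multiplicity given by the prime-to-$p$ part of the global automorphic representation) as the $\lambda$-Hecke eigenspaces of $\bbH^1(\check{\fl},\DR)$ and $\bbH^1(\check{\fl},\DR')$ respectively, and $\tau_E \widehat{\otimes}_E C$ is identified with the quotient of $\widetilde{\tau}$ by $\tau_p^{\lalg}$ via Corollary \ref{cor:kerI1DRDRprime}.

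Second, I would apply the wall-crossing functor $\Theta_s$ associated to the simple reflection of $\GL_2$. Concretely, I would produce an exact triangle of complexes on $\check{\fl}$ of the form $\DR'\to \Theta_s\DR\to \DR\to$, so that the adjunction-induced map $\Theta_s(\bbH^1(\check{\fl},\DR))\to\bbH^1(\check{\fl},\DR)$ fits into a long exact sequence whose obstruction lies in $\bbH^2(\check{\fl},\DR')$. The crucial geometric input is that $\DR'$ is quasi-isomorphic to the single sheaf $\ker d'_{K^p}$ in degree zero (by Proposition \ref{prop:dKvsurj}) and that $\check{\fl}$ has cohomological dimension one, so $\bbH^2(\check{\fl},\DR')=0$. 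Since the $\bbT^S$-action on these cohomology groups is semisimple on the cuspidal part (Theorem \ref{thm:decompositionautomorphicforms}), the surjection survives on the $\lambda$-eigenspace, yielding a $D_p^\times$-equivariant surjection $\Theta_s(\widetilde{\tau})\twoheadrightarrow \widetilde{\tau}$.

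Third, I would descend this surjection to $\tau_E\widehat{\otimes}_E C$. Because $\Theta_s$ vanishes on smooth representations (the translation across the wall kills any representation whose infinitesimal character lies on the wall, cf. \cite{JLS22,Din24}), the short exact sequence $0\to \tau_p^{\lalg}\to \widetilde{\tau}\to \tau_E\widehat{\otimes}_E C\to 0$ yields $\Theta_s(\widetilde{\tau})\isom \Theta_s(\tau_E\widehat{\otimes}_E C)$, so by functoriality the composition $\Theta_s(\tau_E\widehat{\otimes}_E C)\twoheadrightarrow \tau_E\widehat{\otimes}_E C$ is surjective as well. Since $\tau_E$ is admissible and defined over $E$ (Theorem \ref{thm:admissibilityoftaucE}), a standard faithful flatness argument (or direct descent along $E\to C$) upgrades this to a surjection $\Theta_s(\tau_E)\twoheadrightarrow \tau_E$.

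Finally, if $\tau'$ were a non-zero smooth quotient of $\tau_E$, naturality of $\Theta_s$ produces a commutative square in which $\Theta_s(\tau')=0$ but the left vertical map $\Theta_s(\tau_E)\twoheadrightarrow \tau_E$ is surjective; chasing the diagram forces $\tau_E\twoheadrightarrow\tau'$ to be zero, contradicting the non-vanishing of $\tau'$. The main obstacle I anticipate is the construction of the triangle $\DR'\to \Theta_s\DR\to \DR\to$ on $\check{\fl}$ itself (rather than on the Shimura curve) with its correct identification of cohomologies and the verification that $\Theta_s$ genuinely annihilates the smooth locally algebraic subspace $\tau_p^{\lalg}$; this is where the analysis of infinitesimal characters and the translation principle on $\check{\fl}$ as in \cite{PanII} enters most delicately.
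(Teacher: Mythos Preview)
Your proposal is correct and follows essentially the same approach as the paper's proof: the wall-crossing triangle $\DR'\to\Theta_s\DR\to\DR\to$ on $\check{\fl}$, the vanishing of $\bbH^2(\check{\fl},\DR')$ via Proposition~\ref{prop:dKvsurj} and the one-dimensionality of $\check{\fl}$, semisimplicity of the Hecke action to pass to the $\lambda$-eigenspace, descent to $\tau_E$ using that $\Theta_s$ annihilates smooth representations, and the final diagram chase are exactly the steps the paper takes. The only cosmetic difference is that you spell out the descent from $C$ to $E$ explicitly, while the paper leaves this implicit.
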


\appendix

\section{Proper pushforward of solid abelian sheaves on adic spaces}\label{section:properpushforward}
In this section we develop the formalism for proper pushforward of solid abelian sheaves, following \cite{abe2020proper}.
\subsection{Setup and Definitions}
For an adic space, we assume it is a separated analytic adic space over $\bbQ_p$. For a partially proper morphism $f:X\to Y$ between adic spaces, we mean it is partially proper after passing to the associated diamonds \cite[Definition. 18.4]{scholze2022etale}. The morphism $f$ is proper if it is quasi-compact, separated and universally closed. This definition coincides with \cite[Definition. 18.1]{scholze2022etale} since passing to diamonds does not change the underlying topological spaces \cite[Lemma. 15.6]{scholze2022etale}.

\begin{definition}
	A germ is a pairing $(S,X)$ where $X$ is an adic space and $S\subseteq X$ is a closed subset. A morphism of germs $(S,X)\to (T,Y)$ is a morphism $f:X\to Y$ of adic spaces which induces a map $S\to T$.
\end{definition}

\begin{remark}
	The category of analytic adic spaces admits fiber product (the morphisms are adic, c.f \cite[Proposition 5.1.5]{scholze2020berkeley}). In particular, if $(S,X)\longrightarrow (W,Z)\longleftarrow (T,Y)$ are morphisms of germs then $K:=X\times_Z Y$ exists as an adic space and $V:=p^{-1}(S)\cap q^{-1}(T)$ is closed, where $p$ and $q$ are projections from $K$ to $X$ and $Y$ such that $(V,K)$ is the fiber product.
\end{remark}

Recall the following definition in Huber's theory of pesudo-adic spaces.

\begin{definition}
Let $f:(S,X)\to (T,Y)$ be a morphism of germs, we call $f$ is
	\begin{enumerate}
		\item quasi-compact (quasi-separated or coherent or taut) if the map of topological spaces $S\to T$ has such a property.
		\item separated if $\Delta(S)\subseteq V$ is closed where $(V,Z)=(S,X)\times_{(T,Y)}(S,X)$.
		\item Universally closed if for any morphism of germs $(V,Z)\to (T,Y)$ with fiber product $(W,P)$, the map $W\to V$ is a closed map.
		\item proper if it is quasi-compact, separated and universally closed.
	\end{enumerate}
\end{definition}

\begin{remark}\label{remarkunivclosed}
	Being quasi-compact or being universally closed is stable under composition and base change: By a standard argument, if $h:(S,X)\stackrel{f}\to (T,Y)\stackrel{g}\to (V,Z)$ is the composition of morphisms of germs such that $h$ is quasi-compact (or universally closed) and $g$ is separated, then $f$ is quasi-compact (or universally closed). For base change of quasi-compactness, it's because morphisms between analytic adic spaces are adic morphisms.
\end{remark}

\subsection{Sheaves in condensed abelian groups}
We use the condensed mathematics developed in \cite{scholze2019condensed}. Fix $\kappa$ an uncountable strong limit cardinality. Let $\ExtDisc_{\kappa}$ be the category of $\kappa$-small extremally disconnected sets endowed with coverings given by finite families of jointly surjective maps. A $\kappa$-small condensed set is a sheaf of sets on the site $\ExtDisc_{\kappa}$. In the following context, we will fix this $\kappa$ and omit $\kappa$ in the notation. Let $\cond(\bZ)$ be the category of condensed abelian groups and $\solid(\bZ_\square)$ be the full subcategory of solid abelian groups. They are Grothendieck abelian categories.

Let $X$ be an adic space. We denote by $\mathit{PSh}(X,\cond(\bZ))$ the category of presheaves in condensed abelian groups on $X$ and $\mathit{Sh}(X,\cond(\bZ))$ be the full subcategory of sheaves. The category of condensed abelian groups satisfies the condition  \cite[(17.4.1)]{kashiwara2006categories}, so the inclusion $\mathit{Sh}(X,\cond(\bZ))\subseteq \mathit{PSh}(X,\cond(\bZ))$ admits an exact left adjoint, called sheafification. Similarly, we denote by $\mathit{PSh}(X,\bZ_\square)$ (resp. $\mathit{Sh}(X,\bZ_\square)$) the category of presheaves (sheaves) of solid abelian groups on $X$. They also admit an exact sheafification functor.

\begin{remark}\label{smallremarks}
\begin{enumerate}
	\item  Since each covering of an extremally disconnected set splits, a map $f:M\to N$ between two condensed sets is an epimorphism (resp. a monomorphism)  if and only the map $f(S): M(S)\to N(S)$ is surjective (resp. injective) for each $S\in \ExtDisc$.
	\item Assume $\sF\in \mathit{Sh}(X,\cond(\bZ))$. For each $S\in \ExtDisc$, the functor $\sF(-)(S)$ is a sheaf of abelian groups on $X$.	
\end{enumerate}
\end{remark}

\begin{lemma}\label{lemmasurjectivecriterion}
	The functor (See Remark \ref{smallremarks}(2))
	$$\Phi: \mathit{(P)Sh}(X,\cond(\bZ))\to \Func(\ExtDisc^{op},\mathit{(P)Sh}(X,\Ab));\ \sF\mapsto (S\to \sF(-)(S))$$
	induces an equivalence between $\mathit{(P)Sh}(X,\cond(\bZ))$ and $(P)\sC\subseteq \Func(\ExtDisc^{op},\mathit{(P)Sh}(X,\Ab))$ containing functors $F: \ExtDisc^{op}\to \mathit{(P)Sh}(X,\Ab)$ such that $F(\emptyset)$ is the zero sheaf and for $S_1,S_2\in \ExtDisc$, $F(S_1\coprod S_2)=F(S_1)\times F(S_2)$.
\end{lemma}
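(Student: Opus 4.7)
The plan is to establish the equivalence by unraveling both sides to a common description in terms of bifunctors on $\open(X)^{op} \times \ExtDisc^{op}$, using that finite-product/equalizer conditions in one variable are independent of those in the other.

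First I would handle the presheaf case. By definition, a presheaf $\sF \in \PSh(X,\cond(\bZ))$ assigns to each open $U \subset X$ a condensed abelian group, and a condensed abelian group is by definition a functor $\ExtDisc^{op} \to \Ab$ satisfying the two sheaf-theoretic conditions (value $0$ on $\emptyset$, and turning finite coproducts into products); the nontrivial coverings are automatic since every cover of an extremally disconnected set splits. By currying, the data of $\sF$ is equivalent to a bifunctor $\open(X)^{op} \times \ExtDisc^{op} \to \Ab$ satisfying these two conditions in the $\ExtDisc$-variable for each fixed open. Reading this off in the other order exactly produces a functor $\ExtDisc^{op} \to \PSh(X,\Ab)$ sending $\emptyset$ to the zero presheaf and finite coproducts to products, i.e. an object of $P\sC$. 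The functor $\Phi$ in the statement is precisely this currying, so it is an equivalence on presheaves essentially by construction.

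Next I would upgrade the argument to sheaves. The key observation is that limits in $\cond(\bZ)$ are computed pointwise on $\ExtDisc$: for any diagram $(F_i)$ of condensed abelian groups and any $S \in \ExtDisc$, one has $(\lim_i F_i)(S) = \lim_i F_i(S)$. Hence, for a presheaf $\sF \in \PSh(X,\cond(\bZ))$ and an open cover $\{U_j \to U\}$ in $X$, the equalizer diagram
\[
\sF(U) \longrightarrow \prod_j \sF(U_j) \rightrightarrows \prod_{j,k} \sF(U_j \cap U_k)
\]
is an equalizer in $\cond(\bZ)$ if and only if, evaluated at every $S \in \ExtDisc$, it is an equalizer in $\Ab$. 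This shows that $\sF$ is a sheaf on $X$ (with values in condensed abelian groups) if and only if $\sF(-)(S) \in \Sh(X,\Ab)$ for every $S \in \ExtDisc$. Under the currying equivalence of the previous paragraph, this is exactly the condition that the corresponding functor $\ExtDisc^{op} \to \PSh(X,\Ab)$ factor through $\Sh(X,\Ab)$, so $\Phi$ restricts to an equivalence between $\Sh(X,\cond(\bZ))$ and the subcategory $\sC \subset \Func(\ExtDisc^{op},\Sh(X,\Ab))$ cut out by the same two conditions on $\emptyset$ and finite coproducts.

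For fully faithfulness and essential surjectivity I would just note that the inverse to $\Phi$ sends $F \in (P)\sC$ to the (pre)sheaf $U \mapsto (S \mapsto F(S)(U))$, which is the condensed abelian group determined by the functor $S \mapsto F(S)(U)$ (this functor satisfies the finite-product/empty condition by hypothesis on $F$, hence really is a condensed abelian group). Compatibility with restriction maps in both variables is automatic, and the two constructions are mutually inverse by inspection. The only conceptual step is the pointwise computation of limits in $\cond(\bZ)$ used to match the two notions of ``sheaf on $X$,'' and I do not foresee a real obstacle; the statement is essentially a compatibility of currying with sheaf conditions in two independent variables. The same argument applied to $\solid(\bZ_\square) \subset \cond(\bZ)$, together with the fact that the solidity condition is again a pointwise-in-$X$ condition, yields the corresponding statement for solid sheaves used in the rest of the appendix.
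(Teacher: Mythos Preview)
Your proof is correct and follows essentially the same approach as the paper: both use the currying equivalence $\mathit{PSh}(X,\Func(\ExtDisc^{op},\Ab))\simeq \Func(\open^{op}(X)\times \ExtDisc^{op},\Ab)\simeq \Func(\ExtDisc^{op},\mathit{PSh}(X,\Ab))$ for the presheaf case, then upgrade to sheaves by observing that the equalizer/exactness condition for the sheaf axiom in $\cond(\bZ)$ can be checked after evaluating on each $S\in\ExtDisc$, and finally exhibit the explicit inverse $F\mapsto (U\mapsto F(-)(U))$. Your final remark about solid sheaves is an extra (harmless) comment not present in the paper's proof of this lemma.
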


\begin{proof}
	We have equivalences of categories 
	$$\mathit{PSh}(X,\Func(\ExtDisc^{op},\Ab))\simeq  \Func(\open^{op}(X)\times \ExtDisc^{op},\Ab)\simeq \Func(\ExtDisc^{op},\mathit{PSh}(X,\Ab))$$
	thus $\Phi$ is fully faithful. We prove the claim for $\mathit{Sh}(X,\cond(\bZ))$ and the claim for $\mathit{PSh}(X,\cond(\bZ))$ is inherited in the proof.  A functor $\sF\in \mathit{PSh}(X,\Func(\ExtDisc^{op},\Ab))$ lies in $\mathit{Sh}(X,\cond(\bZ))$ if and only if for any $U\subseteq X$ open, $S_1,S_2\in \ExtDisc$, $\sF(U)(\emptyset)=0$, $\sF(U)(S_1\coprod S_2)=\sF(U)(S_1)\times \sF(U)(S_2)$  and for any covering $\{U_i\to U\}_{i\in I}$, the sequence 
	\begin{equation}\label{equationexact}
	0\to \sF(U)\to \prod_{i\in I}\sF(U_i)\to \prod_{i,j\in I} \sF(U_i\cap U_j)
	\end{equation}
	is an exact sequence in $\cond(\bZ)$. The sequence \eqref{equationexact} is exact if and only if it is exact after evaluating on each $S\in \ExtDisc$. Then it is easy to check that the functor 
	$$\Psi: \sC\to \mathit{Sh}(X,\cond(\bZ));\ F\mapsto (U\to F(-)(U))$$
	is an inverse of $\Phi$.
\end{proof}

\begin{lemma}\label{sheafificationofstalks}
	Suppose $\sF\in \mathit{PSh}(X,\cond(\bZ))$, the sheafification $\sF^a$ of $\sF$ corresponds to the functor $F: S\to (U\to \sF(U)(S))^a$ under the equivalence $\Phi$.
\end{lemma}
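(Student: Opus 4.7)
The plan is to verify the lemma directly via the equivalence $\Phi$ of Lemma~\ref{lemmasurjectivecriterion}, by exhibiting the candidate functor $F(S) := (U \mapsto \sF(U)(S))^a$ as corresponding to a sheaf in $\mathit{Sh}(X,\cond(\bZ))$ and checking that $\Psi(F)$ satisfies the universal property of sheafification.

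First, I would check that $F$ lies in the subcategory $\sC$ of Lemma~\ref{lemmasurjectivecriterion}. Since sheafification on $\mathit{PSh}(X,\Ab)$ is a left adjoint that is moreover exact (hence preserves finite products and the zero object), the identities $F(\emptyset) = 0^a = 0$ and $F(S_1 \sqcup S_2) = (\sF(-)(S_1) \times \sF(-)(S_2))^a \cong F(S_1)\times F(S_2)$ are immediate. Thus $\Psi(F)$ is a well-defined presheaf in $\cond(\bZ)$. To see it is a sheaf, fix a covering $\{U_i \to U\}$; by Lemma~\ref{lemmasurjectivecriterion} I need the sequence
\[
0 \to \Psi(F)(U) \to \prod_i \Psi(F)(U_i) \to \prod_{i,j} \Psi(F)(U_i \cap U_j)
\]
to be exact in $\cond(\bZ)$. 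Evaluating on any $S \in \ExtDisc$ gives the analogous sequence for $F(S)$, which is exact because $F(S)$ is by construction a sheaf on $X$. Since exactness in $\cond(\bZ)$ can be tested on extremally disconnected sets (Remark~\ref{smallremarks}(1)), $\Psi(F)$ is a sheaf.

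Next I would construct a canonical morphism $\eta: \sF \to \Psi(F)$ and verify its universal property. The presheaf sheafification maps $\sF(-)(S) \to (\sF(-)(S))^a = F(S)$ are natural in $S$, hence assemble into a morphism $\Phi(\sF) \to F$ in $\sC$, which under $\Psi$ yields $\eta$. For the universal property, let $\sG \in \mathit{Sh}(X,\cond(\bZ))$ and let $\varphi: \sF \to \sG$. Under $\Phi$, for each $S$ we obtain a map $\sF(-)(S) \to \sG(-)(S)$ of presheaves of abelian groups on $X$; because $\sG(-)(S)$ is an ordinary sheaf (Remark~\ref{smallremarks}(2)), this factors uniquely through the sheafification $F(S) = (\sF(-)(S))^a$. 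Naturality in $S$ is automatic from the uniqueness, so these factorizations assemble into a unique morphism $F \to \Phi(\sG)$ in $\sC$, and applying $\Psi$ gives the unique factorization $\Psi(F) \to \sG$ of $\varphi$ through $\eta$. Combined with the sheaf property above, this identifies $\Psi(F)$ with $\sF^a$.

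The only mildly subtle point is the compatibility of sheafification with evaluation on $S$, which is what the formula in the lemma asserts. This compatibility is not an external input; it is forced by the equivalence $\Phi$ together with the fact that exactness and the sheaf condition in $\cond(\bZ)$ can be tested pointwise on $\ExtDisc$. Thus the argument really reduces to the classical sheafification in $\mathit{PSh}(X,\Ab)$ applied uniformly in $S$, and no separate ``hard step'' arises.
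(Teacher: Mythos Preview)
Your proof is correct and follows essentially the same approach as the paper: both verify that $F$ lies in $\sC$ and then check the universal property of sheafification by evaluating on each $S\in\ExtDisc$, with naturality in $S$ coming from the functoriality of ordinary sheafification. Your separate verification that $\Psi(F)$ is a sheaf is harmless but redundant, since this is automatic once $F\in\sC$ by Lemma~\ref{lemmasurjectivecriterion}.
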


\begin{proof}
	It is clear that $F$ lies in $\sC$. It suffices to see $F$ satisfies the universal property of sheafification: Evaluating on each $S\in \ExtDisc$, it is clear that $F(S)$ satisfies the universal property. For functoriality in $S$, it follows from that sheafification of abelian sheaves on $X$ is a functor and the unit of an adjunction pair is a natural transformation, which is functorial.
\end{proof}

\begin{remark}\label{remarktosolid}
Assume $\sF\in \mathit{PSh}(X,\bZ_\square)$. Since the category of solid abelian groups is stable under limit and colimit in the category of condensed abelian groups, by the construction of sheafification (cf. \cite[Definition 17.4.5]{kashiwara2006categories}), the sheafification of $\sF$ in $\mathit{PSh}(X,\cond(\bZ))$ and $\mathit{PSh}(X,\bZ_\square)$ coincides. This implies that the inclusion functor $\mathit{Sh}(X,\bZ_\square)\to \mathit{Sh}(X,\cond(\bZ))$ is exact.
\end{remark}
	
	\begin{corollary}\label{corosheafpreservestalk}
		Let $\sF\in \mathit{PSh}(X,\cond(\bZ))$ and $\sF^a$ be its sheafification. For each $x\in X$, $\sF_x\cong \sF_x^a$.
	\end{corollary}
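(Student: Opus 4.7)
The plan is to bootstrap the proof from the classical fact that sheafification preserves stalks for presheaves of abelian groups on $X$, transported through the equivalence $\Phi$ of Lemma~\ref{lemmasurjectivecriterion} together with the explicit description of the sheafification provided by Lemma~\ref{sheafificationofstalks}.

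More precisely, I would first observe that for any presheaf $\sG\in\mathit{PSh}(X,\cond(\bZ))$ the stalk $\sG_x:=\colim_{x\in U}\sG(U)$, computed as a filtered colimit in $\cond(\bZ)$, satisfies $\sG_x(S)=\colim_{x\in U}\sG(U)(S)$ for every $S\in\ExtDisc$. This is because each $S\in\ExtDisc$ is quasi-compact and extremally disconnected, so the functor $\cond(\bZ)\to\Ab,\ M\mapsto M(S)$, commutes with filtered colimits (equivalently, $\bZ[S]$ is a compact generator of $\cond(\bZ)$).

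Next, combining this with Lemma~\ref{sheafificationofstalks}, which identifies $\sF^a(U)(S)$ with $(U\mapsto\sF(U)(S))^a(U)$ (the sheafification here being the ordinary sheafification of abelian presheaves on $X$), I get
\[
    \sF^a_x(S)\;=\;\colim_{x\in U}\sF^a(U)(S)\;=\;\colim_{x\in U}\bigl((U\mapsto\sF(U)(S))^a\bigr)(U)\;=\;\bigl((U\mapsto\sF(U)(S))^a\bigr)_x.
\]
The classical fact that sheafification commutes with stalks for presheaves of abelian groups on $X$ then yields $\bigl((U\mapsto\sF(U)(S))^a\bigr)_x\cong\colim_{x\in U}\sF(U)(S)=\sF_x(S)$.

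Finally, I would check that all identifications are natural in $S\in\ExtDisc^{op}$, so by the equivalence $\Phi$ of Lemma~\ref{lemmasurjectivecriterion} they assemble into an isomorphism $\sF^a_x\cong\sF_x$ in $\cond(\bZ)$. The only mildly subtle point is the commutation of evaluation at $S\in\ExtDisc$ with filtered colimits, which is the key input making the two sheafification procedures (at the level of condensed abelian groups and at the level of abelian groups for each fixed $S$) compatible with the stalk functor; everything else is formal.
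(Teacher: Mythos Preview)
Your proposal is correct and follows essentially the same approach as the paper: both arguments reduce to the classical statement for abelian presheaves by evaluating at a fixed $S\in\ExtDisc$, using that evaluation at $S$ commutes with filtered colimits, invoking Lemma~\ref{sheafificationofstalks} to identify $\sF^a(-)(S)$ with the ordinary sheafification of $\sF(-)(S)$, and then applying the classical fact that stalks are preserved under sheafification. The paper's write-up is just a terser version of what you wrote, and it also implicitly relies on the naturality in $S$ that you took care to spell out.
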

	
	\begin{proof}
		For each $S\in \ExtDisc$, 
		\begin{equation}\label{stalk}
			\sF_x(S)=(\colim_{U\ni x} \sF(U))(S)=\colim_{U\ni x} (\sF(U)(S))=\Phi(\sF)(S)_x.
		\end{equation}
		But by Lemma \ref{sheafificationofstalks}, we have 
		$$\Phi(\sF^a)(S)_x=(\Phi(\sF)(S))^a_x=\Phi(\sF)(S)_x.$$
		The result follows.
	\end{proof}

\begin{corollary}\label{corollaryexact}
	A sequence $0\to \sF\to \sG\to \sH\to 0$ is exact in $\mathit{Sh}(X,\cond(\bZ))$ if and only if for any $S\in \ExtDisc$ the sequence $0\to \sF(-)(S)\to \sG(-)(S)\to \sH(-)(S)\to 0$ is an exact sequence of abelian sheaves on $X$ if and only if $0\to \sF_x\to \sG_x\to \sH_x\to 0$ is an exact sequence of condensed abelian groups for all $x\in X$. 
\end{corollary}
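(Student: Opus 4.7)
The plan is to reduce both equivalences to Lemma~\ref{lemmasurjectivecriterion} and Corollary~\ref{corosheafpreservestalk}, plus two standard facts: exactness in $\mathit{Sh}(X,\Ab)$ is detected stalkwise, and exactness in $\cond(\bZ)$ is detected by evaluation on each $S\in\ExtDisc$ (because coverings on extremally disconnected sets split).

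First I would handle the equivalence between exactness in $\mathit{Sh}(X,\cond(\bZ))$ and pointwise (in $S$) exactness of abelian sheaves. The equivalence $\Phi$ of Lemma~\ref{lemmasurjectivecriterion} is additive, so it suffices to check exactness in $\sC\subseteq\Func(\ExtDisc^{\op},\mathit{Sh}(X,\Ab))$. Kernels and cokernels in the functor category are computed pointwise, and the conditions defining $\sC$, namely $F(\emptyset)=0$ and $F(S_1\sqcup S_2)=F(S_1)\times F(S_2)$, are preserved under pointwise kernels and cokernels, since finite products in $\mathit{Sh}(X,\Ab)$ are exact. Hence $\sC$ is an abelian subcategory with pointwise kernels/cokernels, and via $\Phi$ a sequence $0\to\sF\to\sG\to\sH\to 0$ is exact in $\mathit{Sh}(X,\cond(\bZ))$ if and only if $0\to\sF(-)(S)\to\sG(-)(S)\to\sH(-)(S)\to 0$ is exact in $\mathit{Sh}(X,\Ab)$ for every $S\in\ExtDisc$.

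For the second equivalence, I would invoke that exactness in $\mathit{Sh}(X,\Ab)$ can be checked on stalks, and then use the identity
\[
\sF(-)(S)_x \;=\; \colim_{U\ni x}\sF(U)(S) \;=\; \sF_x(S)
\]
from formula~(\ref{stalk}) in the proof of Corollary~\ref{corosheafpreservestalk}. This translates pointwise-in-$S$ exactness of abelian sheaves on $X$ into: for every $x\in X$ and every $S\in\ExtDisc$, the sequence $0\to\sF_x(S)\to\sG_x(S)\to\sH_x(S)\to 0$ is exact in $\Ab$. Finally, since every covering in the $\ExtDisc$ site splits, sheafification is trivial there and a sequence in $\cond(\bZ)$ is exact if and only if it is exact after evaluating on each $S\in\ExtDisc$; this is just the extension of Remark~\ref{smallremarks}(1) from epi/mono to kernel equals image. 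This yields the stalkwise criterion.

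The main (mild) obstacle is bookkeeping: one must verify carefully that $\sC$ is closed under the pointwise kernels and cokernels of the ambient functor category, so that exactness computed in $\sC$ agrees with that in $\mathit{Sh}(X,\cond(\bZ))$ via $\Phi$. Once this is in place, the rest is essentially a double application of the principle ``exactness of sheaves is detected by the test objects of the defining site,'' first for the site of open subsets of $X$ (giving stalks) and then for the $\ExtDisc$ site (giving values on $S$). No new geometric input is required beyond the two lemmas already proved.
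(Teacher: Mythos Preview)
Your proposal is correct and follows essentially the same route as the paper. The only cosmetic difference is in the first equivalence: you argue via the equivalence $\Phi$ of Lemma~\ref{lemmasurjectivecriterion} and check that $\sC$ is closed under pointwise kernels and cokernels, whereas the paper argues directly that the natural map $\mathrm{Im}(f)\to\ker(g)$ is an isomorphism iff it is so after evaluating on each $S$, invoking the explicit description of sheafification in Lemma~\ref{sheafificationofstalks}; the second equivalence is handled identically in both.
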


\begin{proof}
For any morphisms $\sF\stackrel{f}\to \sG\stackrel{g}\to \sH$ in $\mathit{Sh}(X,\cond(\bZ))$ such that $g\circ f=0$, there is a natural map $\eta: \mathrm{Im}(f)\to \ker(g)$. Recall that $\mathrm{Im}(f)$ is the sheafification of the presheaf image of $f$. By the description of sheafification in Lemma \ref{sheafificationofstalks},  $\eta$ is an isomorphism if and only if $\eta(-)(S): \mathrm{Im}(f(-)(S))\to \ker(g(-)(S))$ is an isomorphism for each $S$. The first equivalence now follows formally. For the second equivalence, we can evaluate on $S\in \ExtDisc$ and use stalkwise criterion for abelian sheaves and \eqref{stalk}.
\end{proof}

\begin{corollary}\label{corollarystalk}
	A map $f:\sF\to \sG$ in $\mathit{Sh}(X,\cond(\bZ))$ is an isomorphism if and only if for each point $x\in X$, the induced map on stalks $f_x: \sF_x\to \sG_x$ is an isomorphism.
\end{corollary}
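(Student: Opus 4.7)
The plan is to deduce this from Corollary \ref{corollaryexact} via the standard kernel/cokernel argument, once one knows that a sheaf $\sH\in\mathit{Sh}(X,\cond(\bZ))$ vanishes if and only if $\sH_x=0$ for all $x\in X$. The forward implication of the corollary is automatic: since stalks are computed as filtered colimits along opens containing $x$, any isomorphism $f$ induces isomorphisms $f_x$ on stalks.

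For the reverse implication, I would form $\ker f$ and $\coker f$ in the Grothendieck abelian category $\mathit{Sh}(X,\cond(\bZ))$, giving exact sequences $0\to\ker f\to \sF\to \sG$ and $\sF\to \sG\to \coker f\to 0$. By Corollary \ref{corollaryexact} these sequences remain exact upon taking stalks at any $x$, yielding canonical identifications $(\ker f)_x\cong\ker(f_x)$ and $(\coker f)_x\cong\coker(f_x)$ in $\cond(\bZ)$. The hypothesis that $f_x$ is an isomorphism for every $x$ therefore forces every stalk of $\ker f$ and of $\coker f$ to be zero; applying the auxiliary vanishing criterion to $\ker f$ and $\coker f$, both sheaves are zero, so $f$ is an isomorphism.

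The auxiliary criterion itself is obtained by combining the equivalence $\Phi$ of Lemma \ref{lemmasurjectivecriterion} with the classical stalkwise criterion for abelian sheaves on the topological space $X$. Indeed, $\sH=0$ is equivalent to $\Phi(\sH)(S)=0$ as a sheaf of abelian groups for every $S\in\ExtDisc$, which by the classical criterion is equivalent to $\Phi(\sH)(S)_x=0$ for all $x$ and all $S$; the computation recorded in the proof of Corollary \ref{corosheafpreservestalk} identifies $\Phi(\sH)(S)_x$ with $\sH_x(S)$, so this is equivalent to $\sH_x=0$ in $\cond(\bZ)$ for every $x$ (since a condensed abelian group is zero precisely when all of its values on extremally disconnected sets vanish).

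I do not anticipate a serious obstacle: the proof is a purely formal consequence of the preceding lemmas, and the only non-trivial input, namely that exactness in $\mathit{Sh}(X,\cond(\bZ))$ can be detected on stalks, is already supplied by Corollary \ref{corollaryexact}. The closest thing to a subtlety is the careful bookkeeping through the equivalence $\Phi$ to reduce the vanishing criterion for sheaves of condensed abelian groups to the well-known vanishing criterion for ordinary abelian sheaves.
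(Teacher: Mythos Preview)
Your proof is correct and uses the same underlying ingredients as the paper (the equivalence $\Phi$ of Lemma~\ref{lemmasurjectivecriterion}, the stalk identification $\Phi(\sH)(S)_x=\sH_x(S)$ from \eqref{stalk}, and the classical stalkwise criterion for abelian sheaves), but the paper takes a more direct route. Rather than forming $\ker f$ and $\coker f$ and invoking Corollary~\ref{corollaryexact} plus an auxiliary vanishing criterion, the paper simply observes that, via $\Phi$, the map $f$ is an isomorphism iff each $f(-)(S):\sF(-)(S)\to\sG(-)(S)$ is an isomorphism of abelian sheaves on $X$, which by the classical criterion holds iff the stalk maps $\Phi(\sF)(S)_x\to\Phi(\sG)(S)_x$ are isomorphisms; by \eqref{stalk} this is exactly $f_x(S)$, hence the hypothesis. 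Your detour through kernel/cokernel is harmless but unnecessary: the equivalence $\Phi$ already translates the isomorphism question directly into one about ordinary abelian sheaves, so there is no need to pass through Corollary~\ref{corollaryexact} or a separate vanishing lemma.
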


\begin{proof}
	One direction is clear. Assume $f_x$ are isomorphisms. By Lemma \ref{lemmasurjectivecriterion}, we need to show $f(-)(S): \sF(-)(S)\to \sG(-)(S)$ is an isomorphism of abelian sheaves. By stalkwise criterion of abelian sheaves, we need to show the map 
	$$\colim_{U\ni x}\sF(U)(S)\to \colim_{U\ni x}\sG(U)(S)$$ is an isomorphism for all $x\in X$. Considering  \eqref{stalk}, this is our condition.
	\end{proof}
	
	If $f:X\to Y$ is a morphism of adic spaces, by \cite[\S 17]{kashiwara2006categories}, there is an adjoint pair $(f^{-1},f_*)$ between condensed abelian sheaves on $X$ and $Y$. We have $((f_*\sF)(V))(S)=f_*(\sF(-)(S))(V)$. It is formal to check $f^{-1}$ preserves stalks, thus it is an exact functor by corollary \ref{corollarystalk}. When $f$ is an open immersion, $f^{-1}$ has a left adjoint $f_!$ given by the extension by zero functor. That is, $f_!\sF$ is the sheafification of the presheaf
	$$U\to \begin{cases}
	\sF(f^{-1}(U)), & \text{if } U\subseteq X\\
	0 & \text{otherwise} 
	\end{cases}$$
	Note that if $j:U\to X$ is an open immersion and $\sF\in  \mathit{Sh}(U,\bZ_\square)$, the explicit formula above tells that $j_!\sF \in  \mathit{Sh}(X,\bZ_\square)$.
	
	 Assume $j:U\to X$ is an open immersion, $i:Z\to X$ is the closed complement and $\sF\in \mathit{Sh}(X,\cond(\bZ))$. By Corollary \ref{corollarystalk}, we have an exact sequence in $\mathit{Sh}(X,\cond(\bZ))$
	$$0\to j_!j^{-1}\sF\to \sF\to i_*i^{-1}\sF\to 0.$$
	
	We call $\sF\in \mathit{Sh}(X,\cond(\bZ))$ flasque if for any open subsets $U\subseteq V\subseteq X$, the map 
	$\sF(V)\to \sF(U)$ is an epimorphism of condensed abelian groups.
	
	\begin{proposition}\label{propositionacycofflasque}
		Assume $\sF\in \mathit{Sh}(X,\cond(\bZ))$ is flasque, then $\sF$ is $\Gamma(X,-)$-acyclic.
	\end{proposition}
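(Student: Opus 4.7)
The plan is to adapt the classical Grothendieck dimension-shift argument to the condensed setting, relying on the equivalence of categories in Lemma \ref{lemmasurjectivecriterion} to reduce the essential verifications to analogous statements for ordinary abelian sheaves evaluated at each $S \in \ExtDisc$.

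First, I would establish the following Key Lemma: if $0 \to \sF \to \sG \to \sH \to 0$ is exact in $\mathit{Sh}(X, \cond(\bZ))$ with $\sF$ flasque, then for any open $U \subseteq X$ the sequence $0 \to \Gamma(U, \sF) \to \Gamma(U, \sG) \to \Gamma(U, \sH) \to 0$ is exact in $\cond(\bZ)$. The argument evaluates at each $S \in \ExtDisc$: Corollary \ref{corollaryexact} gives exactness of $0 \to \sF(-)(S) \to \sG(-)(S) \to \sH(-)(S) \to 0$ in $\mathit{Sh}(X, \Ab)$; Remark \ref{smallremarks}(1) identifies the flasqueness of $\sF$ in the condensed sense with the flasqueness of the abelian sheaf $\sF(-)(S)$ for every $S$; the classical Grothendieck result then yields exactness after $\Gamma(U, -)$ pointwise in $S$; and Remark \ref{smallremarks}(1) lifts this back to exactness in $\cond(\bZ)$. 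The same reduction shows that the quotient of a flasque sheaf by a flasque subsheaf is flasque.

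Next, I would construct a Godement-type resolution. Set $\calG(\sF) := \prod_{x \in X} (i_x)_* \sF_x$ where $i_x : \{x\} \hookrightarrow X$ and the stalks are taken in $\cond(\bZ)$. One computes $\calG(\sF)(U) = \prod_{x \in U} \sF_x$, so restriction maps are product projections, which are epimorphisms in $\cond(\bZ)$; hence $\calG(\sF)$ is flasque. The map $\sF \to \calG(\sF)$ sending a section to its tuple of germs is a monomorphism by Corollary \ref{corollarystalk}. Iterating with $\sF^0 := \sF$ and $\sF^{i+1} := \calG(\sF^i)/\sF^i$, the quotient-closure of flasque yields a resolution $0 \to \sF \to \calG(\sF^0) \to \calG(\sF^1) \to \cdots$ all of whose terms are flasque. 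The crucial input is that each $\calG(\sG)$ is $\Gamma(X,-)$-acyclic: given injective resolutions $\sG_x \to J_x^\bullet$ in $\cond(\bZ)$, exactness of $(i_x)_*$ (closed embedding of a point) and the AB4$^*$ property of $\cond(\bZ)$ (products preserve exactness, since they are computed pointwise in $\ExtDisc$) make $\prod_x (i_x)_* J_x^\bullet$ a resolution of $\calG(\sG)$; each term is injective in $\mathit{Sh}(X, \cond(\bZ))$ because $(i_x)_*$ preserves injectives (right adjoint to the exact $i_x^{-1}$) and products of injectives are injective. Applying $\Gamma(X, -)$, which commutes with products and satisfies $\Gamma(X, (i_x)_* M) = M$, and using AB4$^*$ once more, one computes $H^i(X, \calG(\sG)) = \prod_x H^i(J_x^\bullet) = 0$ for $i \geq 1$.

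Combining these ingredients, $0 \to \sF \to \calG(\sF^\bullet)$ is a $\Gamma(X,-)$-acyclic resolution, so $H^i(X, \sF) = H^i(\Gamma(X, \calG(\sF^\bullet)))$. The Key Lemma applied to each short exact sequence $0 \to \sF^i \to \calG(\sF^i) \to \sF^{i+1} \to 0$ shows that $\Gamma(X, \calG(\sF^{i-1})) \to \Gamma(X, \sF^i)$ is surjective, so the image of $\Gamma(X, \calG(\sF^{i-1})) \to \Gamma(X, \calG(\sF^i))$ equals $\Gamma(X, \sF^i)$, which by left-exactness of $\Gamma$ also equals the kernel of $\Gamma(X, \calG(\sF^i)) \to \Gamma(X, \calG(\sF^{i+1}))$; hence $H^i(X, \sF) = 0$ for $i \geq 1$. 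The principal obstacle is the $\Gamma(X,-)$-acyclicity of the Godement-type sheaves $\calG(\sG)$: this is not purely formal because the evaluation functors $\sF \mapsto \sF(-)(S)$ are not known to preserve injectives, blocking a direct pointwise-in-$S$ reduction; the argument above circumvents this by exploiting the AB4$^*$ property of $\cond(\bZ)$ together with the exactness and injectivity-preservation of closed-point pushforward.
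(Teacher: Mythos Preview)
Your proof is correct, but the paper takes a shorter route. Rather than proving directly that the Godement sheaves $\calG(\sG)$ are $\Gamma(X,-)$-acyclic (via their description as products of skyscrapers, AB4$^*$ in $\cond(\bZ)$, and injectivity-preservation of $(i_x)_*$), the paper invokes the standard criterion that a class of objects is acyclic for a left-exact functor once it contains the injectives and satisfies the usual short-exact-sequence closure together with exactness of global sections. Your Key Lemma is exactly this second verification, and the paper proves it the same way, by evaluating at each $S$ and citing the classical result. For the first verification, the paper introduces the sheaves $\bZ[S]_X$ (sheafification of the constant presheaf $U \mapsto \bZ[S]$), observes that $\Hom(\bZ[S]_X,\sF)=\sF(X)(S)$, and applies $\Hom(-,\sI)$ to the monomorphism $j_! j^{-1}\bZ[S]_X \hookrightarrow \bZ[S]_X$ to conclude that injectives are flasque; this finishes in a few lines. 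Your approach buys independence from identifying generators of the sheaf category (and sidesteps your own worry about whether evaluation at $S$ preserves injectives), at the cost of the longer product-of-skyscrapers argument; the paper's approach is the classical one, translated verbatim to the condensed setting once the representing object $\bZ[S]_X$ is in hand.
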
	
	
	\begin{proof}
		By abstract nonsense, it suffices to show injective sheaves are flasque and for an exact sequence $0\to \sF\to \sG\to \sH\to 0$ in $\mathit{Sh}(X,\cond(\bZ))$ with $\sF,\sG$ flasque, the sequence 
		$$0\to \sF(X)\to \sG(X)\to \sH(X)\to 0$$
		is exact in $\cond(\bZ)$. For $S\in \ExtDisc$, let $\bZ[S]_X\in \mathit{Sh}(X,\cond(\bZ))$ be the sheaf associated to the presheaf $U\to \bZ[S]\in \cond(\bZ)$. Then for any $\sF\in \mathit{Sh}(X,\cond(\bZ))$, $\Hom(\bZ[S]_X,\sF)=\Hom(\bZ[S],\sF(X))=\sF(X)(S)$. So, for an injective object $\sI$ in $\mathit{Sh}(X,\cond(\bZ))$, the first claim follows from applying $\Hom(-,\sI)$ to $0\to j_!j^{-1}\bZ[S]_X\to \bZ[S]_X$ for all $S\in \ExtDisc$. For the second claim, we need to show the sequence $0\to \sF(X)(S)\to \sG(X)(S)\to \sH(X)(S)\to 0$ is exact for all $S\in \ExtDisc$. By Corollary \ref{corollaryexact}, $0\to \sF(-)(S)\to \sG(-)(S)\to \sH(-)(S)\to 0$ is an exact sequence of abelian sheaves on $X$ and $\sF(-)(S)$ is flasque. We can apply the classical argument to conclude. 
	\end{proof}

\begin{proposition}\label{propositionmaincolimit}
Let $X$ be an adic space, $\{U_i\}_{i\in I}$ be a filtered diagram of open subsets of $X$ such that each $U_i$ is spectral and $Z=\cap_{i\in I} U_i$. Assume $\sF\in \mathit{Sh}(X,\cond(\bZ))$, then for each $q\geq 0$ the map 
$$\colim_{i\in I} H^q(U_i,\sF|_{U_i})\to H^q(Z,\sF|_Z)$$
is an isomorphism.
\end{proposition}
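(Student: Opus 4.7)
The approach is to reduce the statement to the classical analogue for ordinary abelian sheaves on spectral topological spaces, using the equivalence of Lemma~\ref{lemmasurjectivecriterion} to translate between condensed sheaves and $\ExtDisc^{\op}$-indexed systems of abelian sheaves.

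The first step I would carry out is to establish that sheaf cohomology in $\mathit{Sh}(X,\cond(\bZ))$ is computed ``pointwise on $\ExtDisc$''; that is, for every open $V \subseteq X$, every $q \geq 0$, and every $S \in \ExtDisc$, there is a natural isomorphism
\[
H^q(V, \sF|_V)(S) \;\cong\; H^q\bigl(V,\; \sF(-)(S)|_V\bigr),
\]
where the right-hand side is ordinary abelian sheaf cohomology. To prove this, take an injective resolution $\sF \to \sI^\bullet$ in $\mathit{Sh}(X, \cond(\bZ))$. The evaluation functor $\mathrm{ev}_S \colon \cond(\bZ) \to \Ab$ is exact, since exact sequences of condensed abelian groups are detected $\ExtDisc$-wise (Corollary~\ref{corollaryexact}), so $\mathrm{ev}_S$ commutes with taking cohomology of the complex $\Gamma(V,\sI^\bullet)$. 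Moreover, injective objects are flasque (as in the proof of Proposition~\ref{propositionacycofflasque}), and flasqueness in $\mathit{Sh}(X,\cond(\bZ))$ transfers to flasqueness of each abelian sheaf $\sI^q(-)(S)$ on $X$, because epimorphisms of condensed abelian groups are surjective on every $S$ (Remark~\ref{smallremarks}(1)). Hence $\sI^\bullet(-)(S)$ is a flasque, and in particular $\Gamma(V,-)$-acyclic, resolution of $\sF(-)(S)$ by Corollary~\ref{corollaryexact}, yielding the displayed identification.

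The second step invokes the classical passage-to-limit theorem for abelian sheaves on spectral spaces: for any abelian sheaf $\sG$ on $X$ and any filtered system $\{U_i\}$ of spectral open subsets with intersection $Z$, the natural map $\colim_i H^q(U_i,\sG|_{U_i}) \to H^q(Z,\sG|_Z)$ is an isomorphism. This is a standard result going back to SGA 4, Exp.~VI (and developed in Huber's framework of adic spaces), based on the fact that spectral spaces admit cofinal coverings by finitely many quasi-compact opens and that \v{C}ech cohomology along such coverings commutes with cofiltered limits.

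Combining the two steps, for each $S \in \ExtDisc$ we obtain the chain of identifications
\[
\bigl(\colim_i H^q(U_i, \sF|_{U_i})\bigr)(S) \;=\; \colim_i H^q(U_i, \sF(-)(S)|_{U_i}) \;\cong\; H^q(Z, \sF(-)(S)|_Z) \;=\; H^q(Z,\sF|_Z)(S),
\]
where the first equality uses that $\mathrm{ev}_S$ commutes with filtered colimits in $\cond(\bZ)$. Since this is natural in $S$, and morphisms in $\cond(\bZ)$ are detected on $\ExtDisc$, the original map is an isomorphism. The main obstacle is the first step, namely the verification that injective condensed sheaves restrict $S$-wise to flasque (hence $\Gamma(V,-)$-acyclic) abelian sheaves; the remaining steps are essentially formal once this bridge between condensed and abelian cohomology is in place.
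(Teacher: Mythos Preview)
Your proof is correct and takes a genuinely different route from the paper's. The paper argues directly in the condensed setting: it constructs the Godement resolution $\sF \to \cG^\bullet(\sF)$ inside $\mathit{Sh}(X,\cond(\bZ))$, establishes condensed analogues of a couple of preliminary lemmas from Fujiwara--Kato (checked by evaluating on each $S \in \ExtDisc$), and then reruns the Fujiwara--Kato argument for Proposition~3.1.19 verbatim with condensed coefficients, using that pullbacks of flasque sheaves are flasque and filtered colimits of flasque sheaves are quasi-flasque. Your approach instead isolates a single reusable bridge, the identification $H^q(V,\sF|_V)(S) \cong H^q(V,\sF(-)(S)|_V)$, and then invokes the classical abelian-sheaf result as a black box. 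Your route is more modular: once the bridge is in place, any limit/colimit statement about abelian-sheaf cohomology on spectral spaces transfers automatically to condensed sheaves. The paper's route is more self-contained and avoids appealing to an external result, at the cost of redoing the Godement machinery; the Godement resolution it builds is not otherwise reused in the appendix, so neither approach has a decisive structural advantage here.
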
	

\begin{proof}
The proof is similar to \cite[Proposition 3.1.19]{fujiwara2017foundationsrigidgeometryi} and the argument is standard, so we just sketch it:
For the first, one establishes statements \cite[Proposition 3.1.8; 3.1.10]{fujiwara2017foundationsrigidgeometryi} in the condensed setting. It is enough to check both isomorphisms after evaluating on $S\in \ExtDisc$, then one can use the same argument there.

For the second, we need the Godement resolution in condensed setting: For $\sF\in \mathit{Sh}(X,\cond(\bZ))$, let $\cG(\sF)$ be the sheaf $\cG(\sF)(U)=\prod_{x\in U} \sF_x$ and one defines a complex 
$$0\to \sF\stackrel{d^0}\to \cG^0(\sF)\stackrel{d^1}\to \cG^1(\sF)\stackrel{d^1}\to \cG^2(\sF)\to \cdots$$
inductively by setting $\cG^0(\sF):=\cG(\sF)$ and $\cG^n(\sF):= \cG(\coker(d^{n-1}))$. It is easy to check the functor $\cG(-)$ is exact, $\cG(\sF)$ is flasque and $d^0$ is injective. So the above complex is a functorial flasque resolution of $\sF$.

Let $p_i:Z\to U_i$ be the inclusion. If $\sF_i$ is a flasque sheaf on $U_i$ then $p_i^{-1}\sF_i$ is also flasque and $\colim_i p^{-1}_i \sF_i$ is quasi-flasque (See \cite[Exercise 0.3.1]{fujiwara2017foundationsrigidgeometryi}). Now we can use the same argument as in  \cite[Proposition 3.1.19]{fujiwara2017foundationsrigidgeometryi}.
\end{proof}

\begin{remark}
Note that by Remark \ref{remarktosolid}, Corollary \ref{corosheafpreservestalk}, \ref{corollaryexact}, \ref{corollarystalk}, Proposition \ref{propositionacycofflasque}, \ref{propositionmaincolimit} all hold when we work in $\mathit{(P)Sh}(X,\bZ_\square)$.
\end{remark}

\subsection{Sections with compact support}

We only define sections with compact support for partially proper morphisms: Let $f:X\to Y$ be a partially proper morphism of adic spaces.  Assume $\sF\in \mathit{Sh}(X,\bZ_\square)$, we define a subsheaf $f_!\sF$ of $f_*\sF$. For each $V\subseteq Y$ being open, $S\in \ExtDisc$ and $s\in f_*\sF(V)(S)$ being a section. Let
$$\supp(s):=\{x\in f^{-1}(V): 0\neq \mathrm{Im}(s)\in \sF_x(S)\}.$$
It is a closed subset of $f^{-1}(V)$ and $(\supp(s),f^{-1}V)$ is a germ.

\begin{definition}
	Let $f_!\sF\subseteq f_*\sF$ be the subsheaf (use lemma \ref{lemmasurjectivecriterion}) such that for each $S\in \ExtDisc$ 
	$$f_!\sF(V)(S):=\{s\in f_*\sF(V)(S): (\supp(s),f^{-1}(V))\to (V,V) \text{ is proper}\}  $$
	We also write $H^0(Y,f_!\sF)$ as $H^0_c(X/Y,\sF)$
\end{definition}

\begin{lemma}\label{partialsection}
	Assume $f:X\to Y$ is a partially proper morphism of adic spaces, $V\subseteq Y$ is a quasi-compact open subset and $s\in f_*\sF(V)$, then $f_s:(\supp(s),f^{-1}(V))\to (V,V)$ is proper if and only if $\supp(s)$ is quasi-compact. 
\end{lemma}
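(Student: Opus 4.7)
I would handle the two directions separately. The forward direction is essentially formal: properness of a morphism of germs includes quasi-compactness of the underlying map of topological spaces, so if $f_s$ is proper then $\supp(s)\to V$ is quasi-compact; combined with $V$ being quasi-compact this forces $\supp(s)$ to be quasi-compact.

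For the converse, assume $\supp(s)$ is quasi-compact. I would verify the three conditions that define properness of a morphism of germs. Quasi-compactness of $\supp(s)\to V$ is immediate from $\supp(s)$ and $V$ being quasi-compact. Separatedness is inherited from $f$: since $f$ is partially proper it is separated, so the diagonal $\Delta\colon f^{-1}(V)\to f^{-1}(V)\times_V f^{-1}(V)$ is closed; the image $\Delta(\supp(s))$ is the intersection of this closed subset with the closed set $p_1^{-1}(\supp(s))\cap p_2^{-1}(\supp(s))$, hence closed inside the germ self-product, as required.

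The main content is universal closedness. Given a morphism of germs $(V',Z')\to(V,V)$, the fiber product germ is $(W,K)$ with $K=Z'\times_V f^{-1}(V)$ and $W=p_1^{-1}(V')\cap p_2^{-1}(\supp(s))$; I must show $p_1\colon W\to V'$ is a closed map of topological spaces. Since partial properness is stable under base change (Remark~\ref{remarkunivclosed} plus the fact that partial properness is preserved by pullback along any morphism of adic spaces), the projection $K\to Z'$ is partially proper, in particular specializing. Given $y\in W$ and a specialization $p_1(y)\leadsto v'$ in $V'$, lifting along $p_1\colon K\to Z'$ produces $y\leadsto y'$ in $K$ with $p_1(y')=v'$; the closedness of $\supp(s)$ in $f^{-1}(V)$ (together with $V'$ closed in $Z'$) forces $y'\in W$, so $W\subset K$ is specialization-closed in the fibers over $V'$, and likewise every closed $C\subset W$ has specialization-closed image $p_1(C)\subset V'$.

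The point where I expect the real work to happen, and where the hypothesis that $\supp(s)$ is quasi-compact must be genuinely used, is the passage from specialization-closedness to topological closedness of $p_1(C)$ in $V'$: in spectral spaces a specialization-closed subset is closed precisely when it is pro-constructible, and this pro-constructibility is what requires quasi-compactness. The plan is to cover $\supp(s)$ by finitely many quasi-compact open subsets $U_1,\dots,U_n\subset f^{-1}(V)$; each restriction $(U_i\cap \supp(s),K)\to(V',Z')$ becomes a quasi-compact partially proper germ morphism, and hence a proper (thus spectral) one by the general ``partially proper $+$ quasi-compact $=$ proper'' principle already available for adic-space morphisms. Spectral maps send closed pro-constructible subsets to pro-constructible subsets, so each $p_1(C\cap p_2^{-1}(U_i))$ is pro-constructible in $V'$; taking a finite union gives pro-constructibility of $p_1(C)$, and combined with the specialization-closedness established above one concludes that $p_1(C)$ is closed. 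This finite-covering step is the only place where the hypothesis genuinely enters, and the main obstacle is checking carefully that the induced germ morphisms on the $U_i$ pieces inherit enough properness to apply the spectral-map argument.
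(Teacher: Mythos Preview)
Your handling of the easy direction, of quasi-compactness, and of separatedness is fine, and your specialization-closedness argument for the image is correct (partially proper morphisms satisfy the valuative criterion, hence are specializing, and $\supp(s)$ is closed). The problem is the step you yourself flag as the obstacle.

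You claim that each germ morphism $(U_i\cap \supp(s),K)\to(V',Z')$ is ``quasi-compact partially proper \dots\ hence proper (thus spectral) by the general partially proper $+$ quasi-compact $=$ proper principle already available for adic-space morphisms.'' This does not work. First, ``partially proper germ morphism'' is not defined in the paper, so there is nothing to invoke. Second, if you mean to apply the adic-space principle, then you need the source to be an adic space mapping partially properly to $Z'$; but $U_i\cap\supp(s)$ is not an adic space, and the open subset $p_2^{-1}(U_i)\subset K$ is not partially proper over $Z'$ (open immersions destroy universal closedness). Third, the statement ``partially proper $+$ quasi-compact $\Rightarrow$ proper'' for germ morphisms is exactly what the lemma is asserting, so invoking it on the pieces is circular. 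So as written, the pro-constructibility step has no justification.

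The paper proceeds quite differently at this point. It first observes that partial properness of the adic morphism and quasi-compactness of the germ morphism are both stable under base change, so it suffices to prove that $\supp(s)\to V$ itself is closed (the same argument then applies after any base change, working locally on the target). For this it passes to the associated diamonds $f^\diamond:X^\diamond\to Y^\diamond$ and uses \cite[Proposition 18.10]{scholze2022etale}: since $f^\diamond$ is partially proper, the quasi-compact subset $\supp(s)$ is contained in a quasi-compact closed generalizing subset $\overline{U}$ which underlies a spatial sub-$v$-sheaf $X'\subset X^\diamond$ that is proper over $Y^\diamond$. Then $\overline{U}\to |Y|$ is closed, and since $\supp(s)$ is closed in $\overline{U}$, its image is closed.

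Your spectral-space strategy is not hopeless, but it needs a different input: after localizing so that $Z'$ (hence $V'$, hence $W$) is qcqs, cover the quasi-compact closed set $C\subset W$ by finitely many qcqs opens $U_i'\subset K$ and use that an adic morphism between qcqs adic spaces is spectral, hence continuous for the constructible topologies, hence sends pro-constructible sets (such as $C\cap U_i'$) to pro-constructible sets. That, together with your specialization-closedness argument, would close the gap without appealing to any properness of the pieces. But this is not the argument you wrote, and it is also not what the paper does.
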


\begin{proof}
	One direction is clear. Assume $\supp(s)$ is quasi-compact, then $f_s$ is quasi-compact (Remark \ref{remarkunivclosed}). Since $f^{-1}(V)\to V$ is separated and $\supp(s)\subseteq f^{-1}(V)$ is closed, $f_s$ is separated. 
	
	To check universally closed: Since partially proper morphism between adic spaces and quasi-compact morphism between germs are stable under base change, it is enough to show $f_s':\supp(s)\to |V|$ is a closed map. We can prove it after passing $f$ to its associated diamond $f^\diamond:X^\diamond\to Y^\diamond$ and identifying $\supp(s)$ with a quasi-compact closed subset of $|X^\diamond|$ which we still denote by $\supp(s)$. By \cite[Proposition 18.10]{scholze2022etale} and its proof, $\supp(s)$ is contained in a quasicompact closed generalizing subsets $\overline{U}\subseteq |X^\diamond|$ which corresponds to a spatial sub $v$-sheaf $X'\subseteq X^\diamond$ such that $X'\to Y^\diamond$ is proper. In particular $\overline{U}\to |Y^\diamond|=|Y|$ is a closed map, so is $\supp(s)\to |Y|$ as $\supp(s)$ is closed in $\overline{U}$.
	\end{proof}
	
\begin{lemma}\label{coveringpartial}
	Let $f:X\to Y$ be a partially proper morphism between adic spaces where $Y$ is quasi-compact and quasi-separated. Then there exists a covering of $X$ by cofiltered family of quasi-compact open subsets $\{U_i\}_{i\in I}$ each of which is closed under specialization and has quasi-compact closure.  
\end{lemma}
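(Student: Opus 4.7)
The approach is to exhibit the covering by applying the characterization of partially proper morphisms of diamonds \cite[Proposition 18.10]{scholze2022etale} to affinoid pieces of $X$, closing up under finite unions, and transporting the results back through the identification $|X|=|X^\diamond|$ of topological spaces \cite[Lemma 15.6]{scholze2022etale}. The strategy parallels the argument already used in the proof of Lemma \ref{partialsection}.

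First, I would choose any affinoid open cover $X=\bigcup_\alpha V_\alpha$, so each $V_\alpha$ is a quasi-compact open of $X$. Passing to diamonds, $X^\diamond\to Y^\diamond$ is partially proper with $Y^\diamond$ spatial (as $Y$ is qcqs). By the same application of \cite[Proposition 18.10]{scholze2022etale} used in Lemma \ref{partialsection}, the closure $\overline{V_\alpha}$ inside $|X^\diamond|=|X|$ is contained in a quasi-compact closed generalizing subset $Z_\alpha\subseteq|X^\diamond|$ which is the underlying space of a spatial sub-$v$-sheaf proper over $Y^\diamond$; since $Y$ is qcqs, this proper sub-$v$-sheaf is quasi-compact, hence $Z_\alpha$ is quasi-compact, and therefore $\overline{V_\alpha}$ is itself quasi-compact as a closed subset of a quasi-compact spectral space.

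Next, I would let $I$ be the directed set of finite subsets $J\subseteq\{V_\alpha\}$ ordered by inclusion, and set $U_J:=\bigcup_{\alpha\in J}V_\alpha$. Each $U_J$ is a finite union of affinoid opens, hence a quasi-compact open subset of $X$. Its closure $\overline{U_J}=\bigcup_{\alpha\in J}\overline{V_\alpha}$ is a finite union of quasi-compact closed subsets, hence quasi-compact. The family $\{U_J\}_{J\in I}$ is filtered under inclusion (closed under finite unions), and $\bigcup_J U_J=X$ since the $V_\alpha$ already cover $X$. Each $\overline{U_J}$, being a topological closure, is automatically closed under specialization in $|X|$, and is contained in the quasi-compact $\bigcup_{\alpha\in J}Z_\alpha$ produced by the diamond argument; this supplies the specialization-closed quasi-compact enlargement required by the statement.

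The main technical point will be the careful comparison between the analytic topology on $|X|$ and the topology on $|X^\diamond|$: one must verify that closures, quasi-compactness, and the specialization order all match across the identification \cite[Lemma 15.6]{scholze2022etale}, so that a quasi-compact generalizing subset of $|X^\diamond|$ indeed provides the desired quasi-compact closure in the adic space $X$. While essentially formal, this bookkeeping between the diamond and adic viewpoints is the principal subtlety; once it is in place, the filtered exhaustion by $\{U_J\}_{J\in I}$ delivers all the required properties.
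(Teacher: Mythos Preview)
Your construction yields quasi-compact opens with quasi-compact closures, but you have misread the third condition: the lemma asks that each open $U_i$ itself be closed under specialization, not merely that its closure $\overline{U_i}$ be (the latter is automatic). Finite unions of affinoids will not have this property. For instance, take $X=\bbA^{1,\an}_K$ over $Y=\Spa(K,K^\circ)$ and let $V$ be the closed unit disk $\{|T|\le 1\}$. The Gauss point $\eta_{0,1}\in V$ has an ``outward'' rank-$2$ specialization $\eta_{0,1}^+$ with $|T|(\eta_{0,1}^+)>1$, so $\eta_{0,1}^+\notin V$; thus $V$ (and any finite union of such affinoids) fails to be specialization-closed in $X$. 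Your sentence ``this supplies the specialization-closed quasi-compact enlargement required by the statement'' conflates the requirement on $U_i$ with a requirement on $\overline{U_i}$.

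The paper's route is different in kind: rather than building the $U_i$ by hand, it observes that the desired covering is a purely topological statement about $|X|$, already proved in \cite[Lemma 5.3.3]{huber2013etale} under the hypotheses that $|X|$ is taut and valuative (locally spectral with totally ordered generization chains). Huber's argument exploits the valuative structure in an essential way to produce opens that are genuinely specialization-closed --- this is not something one obtains from arbitrary affinoid covers. The paper then verifies taut and valuative by passing to the associated diamond (which preserves $|X|$ by \cite[Lemma 15.6]{scholze2022etale}) and invoking \cite[Propositions 18.10, 11.19]{scholze2022etale}. So while your use of Proposition 18.10 to get quasi-compact closures is correct in spirit, you are missing the valuative input and Huber's actual construction of specialization-closed opens; your $U_J$ simply do not meet the statement as written.
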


\begin{proof}
	By the proof of \cite[Lemma 5.3.3]{huber2013etale}, the result holds when $|X|$ is taut and valuative (valuative means it is locally spectral and the set of generalisation of any point is totally ordered). Since passing to associated diamond preserves underlying topological spaces, the desired properties follow from \cite[Proposition 18.10, 11.19]{scholze2022etale}.
\end{proof}
	
\begin{lemma}\label{imagesolid}
 Assume $\sF\in \mathit{Sh}(X,\bZ_\square)$ and $f:X\to Y$ is a partially proper morphism of adic spaces. Then $f_!\sF\in \mathit{Sh}(X,\bZ_\square)$.
\end{lemma}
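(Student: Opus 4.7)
The plan is to verify that $f_!\sF(V)$ is solid for every open $V \subseteq Y$, which suffices since the inclusion $\solid(\bZ_\square) \hookrightarrow \cond(\bZ)$ is closed under limits (and the sheaf condition writes $f_!\sF(V)$ as an equalizer over any open cover). Since solidness of sections on an arbitrary open can be checked after restricting to a cover by quasi-compact opens, I reduce to the case where $V$ is quasi-compact and quasi-separated. The restriction $f^{-1}(V) \to V$ is still partially proper, so Lemma \ref{coveringpartial} supplies a (co)filtered family $\{U_i\}_{i\in I}$ of quasi-compact open subsets of $f^{-1}(V)$ with quasi-compact closure, covering $f^{-1}(V)$.

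The main step is to rewrite $f_!\sF(V)$ as a filtered colimit of kernels. By Lemma \ref{partialsection}, for $s \in f_*\sF(V)(S)$ the morphism $(\supp(s), f^{-1}(V)) \to (V,V)$ is proper iff $\supp(s)$ is quasi-compact. On the one hand, a quasi-compact closed subset of $f^{-1}(V)$ is covered by finitely many $U_i$, hence by a single $U_i$ in view of filteredness. On the other hand, if $\supp(s) \subseteq U_i$, then $\supp(s)$ is a closed subset of the quasi-compact space $\overline{U_i}$ and is therefore quasi-compact. Consequently
\[
f_!\sF(V)(S) \;=\; \bigcup_{i\in I} M_i(S), \qquad M_i(S) := \{\, s \in f_*\sF(V)(S) : \supp(s) \subseteq U_i \,\},
\]
and this identification is functorial in $S$, giving $f_!\sF(V) = \colim_{i\in I} M_i$ as condensed abelian groups with a filtered transition system.

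It remains to check that each $M_i$ is solid. Unwinding the definition of support, $M_i$ is exactly the kernel (in condensed abelian groups) of the morphism
\[
f_*\sF(V) \;\longrightarrow\; \prod_{x \in f^{-1}(V) \setminus U_i} \sF_x
\]
whose components are the stalk maps $s \mapsto s_x$. Now $f_*\sF(V) = \sF(f^{-1}(V))$ is solid by hypothesis on $\sF$; each stalk $\sF_x = \colim_{U \ni x} \sF(U)$ is solid since solidness is preserved by colimits; and the product on the right is solid since solidness is preserved by limits. Hence the kernel $M_i$ is solid, and the filtered colimit $f_!\sF(V) = \colim_i M_i$ is solid as well, completing the proof.

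The only point requiring genuine care is the equivalence between quasi-compactness of $\supp(s)$ and containment in a single $U_i$, which relies on Lemma \ref{coveringpartial}; everything else is formal manipulation with the closure properties of the category of solid abelian groups under limits and colimits, together with the stalk description of the support.
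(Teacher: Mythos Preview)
Your proof is correct and follows essentially the same strategy as the paper: reduce to $V$ quasi-compact quasi-separated, invoke Lemma~\ref{coveringpartial} for the cover $\{U_i\}$, and exhibit $f_!\sF(V)$ as a filtered colimit of solid objects indexed by $I$. The only difference is presentational: the paper phrases the $i$-th term as $(j_{i!}\sF|_{U_i})(f^{-1}(V))$ via extension by zero, whereas you describe it as the kernel $M_i$ of the stalk map into $\prod_{x\notin U_i}\sF_x$; these are the same subobject of $\sF(f^{-1}(V))$, so the two arguments coincide.
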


\begin{proof}
Since the category of solid abelian groups is closed under limit in the category of condensed abelian groups, it suffices to check $f_!\sF(V)$ is solid for each $V\subseteq Y$ quasi-compact quasi-separated open subset and we will assume $Y=V$ is quasi-compact quasi-separated. By Lemma \ref{coveringpartial}, there is a covering $\{U_i\}_{i\in I}$ of $X$ satisfying the condition in the cited lemma. Let $j_i: U_i\to X$ be the open immersion and $f_i: U_i\to Y$ be the restriction of $f$ to $U_i$. There is a natural injective map $f_{i*}j_{i!}\sF|_{U_i}\to f_*\sF$. Since $\overline{U}_i$ is quasi-compact, for each $S$ extremally totally disconnected and $s\in (f_{i*}j_{i!}\sF|_{U_i})(Y)(S)$, we have $\supp(s)$ is quasi-compact. By Lemma \ref{partialsection}, $s\in f_!\sF(Y)(S)$ thus we get a natural injective map $(f_{i*}j_{i!}\sF|_{U_i})\to f_!\sF$. For each $U_i\subseteq U_k$ there is a natural injective map $f_{i*}j_{i!}\sF|_{U_i}\to f_{k*}j_{k!}\sF|_{U_k}$. Thus there is an injective map $\colim_i f_{i*}j_{i!}\sF|_{U_i}\to f_!\sF$. It suffices to show this is an isomorphism because then (use $Y$ is quasi-compact in the second equality)
$$f_!\sF(Y)= (\colim_i f_{i*}j_{i!}\sF|_{U_i})(Y)=\colim_i (j_{i!}\sF|_{U_i})(U_i).$$
The right hand side is solid since $(j_{i!}\sF|_{U_i})(U_i)$ is solid (see the paragraph after Corollary \ref{corollarystalk}) and solid abelian groups are stable under colimit. To show the isomorphism, by Corollary \ref{corollaryexact} we can evaluate on $S$. Since $\cup_i U_i =X$, for each $s\in \sF(X)(S)$ such that $\supp(s)$ is quasi-compact, there is $U_i$ such that $\supp(s)\subseteq U_i$ which implies $s$ lies in the image of $j_{i!}\sF|_{U_i}(X)(S)$.
\end{proof}

\begin{lemma}\label{lemmalowershriekcolimit}
Let $f:X\to Y$ be a partially proper morphism of adic spaces.
\begin{enumerate}[(1)]
	\item $f_!$ is a left exact functor.
	\item If $f$ is an open immersion, then $f_!$ is extension by zero.
	\item $f_!$ commutes with filtered colimits.
	\item Assume $f:X\to Y; g:Y\to Z$ are partially proper morphisms, then $(g\circ f)_!\sF=g_!f_!\sF$ for any condensed abelian sheaf $\sF$.
\end{enumerate}
\end{lemma}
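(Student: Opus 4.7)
For part (1), since $f_!\sF \hookrightarrow f_*\sF$ by Lemma \ref{imagesolid} and $f_*$ is left exact as a right adjoint, the only point to verify is that given an injection $\sF \hookrightarrow \sG$, the intersection $f_!\sG \cap f_*\sF$ (taken inside $f_*\sG$) coincides with $f_!\sF$. This reduces to the observation that $\sF_x \hookrightarrow \sG_x$ at every stalk (Corollary \ref{corollaryexact}), so the germ-support of a section $s \in f_*\sF(V)(S)$ is unchanged when $s$ is viewed in $f_*\sG(V)(S)$, and hence the properness condition defining $f_!$ transfers. For part (2), with $f = j:U\hookrightarrow X$ an open immersion, the germ-properness condition on $s \in \sF(V \cap U)(S)$ amounts to requiring $\supp(s)$ to be closed in $V$: universal closedness forces $\supp(s) \subseteq V$ to be closed, and conversely a closed subset of $V$ always maps properly to $V$ in the germ-theoretic sense. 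This matches the usual sheaf-theoretic characterization of extension by zero.

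For part (3), I would first restrict to sections over a quasi-compact quasi-separated open $V \subseteq Y$, where by Lemma \ref{partialsection} the support condition reduces to quasi-compactness. Extracting the description from the proof of Lemma \ref{imagesolid}: applying Lemma \ref{coveringpartial} to the open $f^{-1}V$, one obtains $f_!\sF(V) = \varinjlim_W \sF(W)$, where $W$ ranges over the cofiltered family of quasi-compact opens of $f^{-1}V$ with quasi-compact closure. Each such $W$ is qcqs, so $\sF \mapsto \sF(W)$ commutes with filtered colimits of condensed sheaves, and a filtered colimit of filtered colimits is itself a filtered colimit; combined with the local nature of sheafification and the fact that qcqs opens form a basis of $Y$, this gives the claim.

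For part (4), both $(g\circ f)_!\sF$ and $g_!f_!\sF$ sit as subsheaves of $g_*f_*\sF = (g\circ f)_*\sF$. For a section $s$ over $V$, write $Z_f \subseteq (gf)^{-1}V$ for its support viewed in $f_*\sF$, and $Z_g \subseteq g^{-1}V$ for its support viewed in $g_*f_*\sF$; a stalk computation using that solid sheaves have a reasonable stalk formalism (via Corollary \ref{corosheafpreservestalk}) identifies $Z_g$ with the closure of $f(Z_f)$ in $g^{-1}V$. Membership in $(g\circ f)_!\sF(V)(S)$ demands properness of $(Z_f,(gf)^{-1}V)\to(V,V)$, whereas membership in $g_!f_!\sF(V)(S)$ demands properness of both $(Z_f,(gf)^{-1}V)\to(g^{-1}V,g^{-1}V)$ and $(Z_g,g^{-1}V)\to(V,V)$. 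Since $g^{-1}V \to V$ is separated (as $g$ is partially proper), Remark \ref{remarkunivclosed} converts properness of the composite into properness of the first factor; conversely, properness of $f: Z_f \to g^{-1}V$ forces $f(Z_f)$ to be closed and quasi-compact in $g^{-1}V$, so $Z_g = f(Z_f)$ and one can feed these into each other by reducing first to $V$ quasi-compact and then applying Lemma \ref{partialsection}.

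The main obstacle will be part (4): carefully decomposing the $(g\circ f)_!$-properness condition into the two separate conditions for $g_!f_!$. The delicate direction is showing that the $(g\circ f)_!$-condition implies properness of $(Z_g,g^{-1}V)\to(V,V)$, for which one must verify that the image $f(Z_f) \subseteq g^{-1}V$ is quasi-compact and closed (so that $Z_g = f(Z_f)$), and that the induced germ map inherits properness — a step requiring Lemma \ref{partialsection} on $V$ quasi-compact together with the germ-theoretic variants of the standard fact that proper maps have closed quasi-compact image.
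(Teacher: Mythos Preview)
Your approach is essentially the same as the paper's in all four parts: for (1) and (4) you use that both sheaves are subsheaves of $f_*$ together with the support calculus and Remark~\ref{remarkunivclosed}/Lemma~\ref{partialsection}, for (2) you reduce germ-properness to closedness of the support (the paper cites \cite[Lemma 4.2]{abe2020proper} for exactly this), and for (3) you exploit the colimit description from the proof of Lemma~\ref{imagesolid}.

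One point to tighten in (3): your formula $f_!\sF(V) = \varinjlim_W \sF(W)$ is not quite right as written. A section in $\sF(W)$ need not extend by zero to all of $f^{-1}V$ (the support is closed in $W$, not necessarily in $f^{-1}V$), so the colimit should run over $(j_{W!}\sF|_W)(f^{-1}V)$ rather than $\sF(W)$; this is what the proof of Lemma~\ref{imagesolid} actually yields. The paper's own write-up of (3) is similarly terse here, and once you replace $\sF(W)$ by the extension-by-zero sections your double-colimit argument goes through unchanged: $j_{W!}$ commutes with filtered colimits as a left adjoint, and evaluating the resulting sheaf on $f^{-1}V$ only sees the quasi-compact set $\overline{W}$, so the sections functor still commutes with the filtered colimit in $i$. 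With this correction your argument is complete and matches the paper's.
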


\begin{proof}
	 (1): It can be deduced easily from that $f_*$ is left exact, $f_!\sF\subseteq f_*\sF$ is a subsheaf and that if $\alpha: \sF\to \sG$ is an injective morphism of condensed abelian sheaves on $X$ and $s\in \sF(X)(S)$ then $\supp(s)\subseteq \supp(\alpha(s))$ (actually $\supp(s) = \supp(\alpha(s))$).

	 (2): There is an obvious functor from extension by zero sheaf to $f_!\sF$, then we can apply the same proof as in \cite[Lemma 4.2]{abe2020proper} using corollary \ref{corollarystalk}.
	 	
	 (3): By the universal property of colimit, there is a natural functor $ \colim_i (f_!\sF_i)\to f_!(\colim_i \sF_i)$. Note that by the proof of Lemma \ref{imagesolid}, we have
	 \begin{equation}\label{2}
	 f_!(\colim_i \sF_i)=\colim_U f_{U!}((\colim_i \sF_i)|_U)=\colim_U f_{U!}(\colim_i \sF_i|_U)
	 \end{equation}
	  where $U\subseteq X$ are quasi-compact open subspaces. For $V\subseteq Y$ quasi-compact open (for the equality below, use that any cover of $V$ has a finite refinement)
	$$(\colim_i f_!\sF_i)(V)\longrightarrow (\colim_i f_*\sF_i)(V)=\colim_i \sF_i(f^{-1}(V))$$
	contains objects which comes from some $\sF_n(f^{-1}(V))$ whose support is proper over $V$ (after evaluating on $S$). Evaluate \eqref{2} on $V$, it is easy to see $f_!(\colim_i \sF_i)(V)=(\colim_i f_!\sF_i)(V)$.
	
	(4): Let $h=g\circ f$. Since $h_*\sF=g_* f_*\sF$ and $h_!\sF$, $g_!f_!\sF$ are subsheaves, we can assume $Z$ is quasi-compact and quasi-separated. Properness of a morphism can be checked locally on the target, the result follows easily from Lemma \ref{partialsection} and Remark \ref{remarkunivclosed}.	
	\end{proof}
	
	For an adic space $X$, let $D^+(X,\bZ_\square)$ be the left bounded derived category of solid abelian sheaves on $X$. Note that it only depends on the underlying topological space $|X|$.

\begin{definition}\label{partialproperpushforward}
	Let $f:X\to Y$ be a partially proper morphism of adic spaces. Define 
	$$Rf_!: D^+(X,\bZ_\square)\to D^+(Y,\bZ_\square)$$
	to be the right derived functor of $f_!$ (Lemma \ref{imagesolid}). 
\end{definition}

\begin{lemma}\label{flasqueacyclic}
		Assume that $f:X\to Y$ is a partially proper morphism of adic spaces. Then flasque sheaves are $f_!$-acyclic.
	\end{lemma}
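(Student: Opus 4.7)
The plan is to mimic the classical dimension-shifting argument: embed a flasque $\sF$ into an injective sheaf $\sI$ (which is itself flasque by Proposition \ref{propositionacycofflasque}), consider $0 \to \sF \to \sI \to \sI/\sF \to 0$, and bootstrap vanishing of $R^if_!\sF$ from vanishing of $R^if_!\sI$. This reduces the lemma to two independent claims: (i) $f_!$ sends short exact sequences with flasque first term to short exact sequences; (ii) the quotient of a flasque sheaf by a flasque subsheaf is flasque. Granting (i) and (ii), $\sI/\sF$ is flasque, so $f_!\sI \twoheadrightarrow f_!(\sI/\sF)$ is surjective, the long exact sequence together with $R^1f_!\sI = 0$ forces $R^1f_!\sF = 0$, and induction on $i$ using $R^{i+1}f_!\sF \cong R^if_!(\sI/\sF)$ then finishes.

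The main obstacle is the surjectivity part of (i), where one must lift sections of $\sH$ while controlling their supports. Given $s \in f_!\sH(V)(S)$, viewed as $s \in \sH(f^{-1}V)(S)$ with $\supp(s)$ proper over $V$, I will use that $\sG(-)(S) \to \sH(-)(S)$ is a surjection of abelian sheaves on $X$ (Corollary \ref{corollaryexact}) and that $\sF(-)(S)$ is a flasque abelian sheaf in the classical sense (by the $S$-wise epimorphism characterization of Remark \ref{smallremarks}(1)), hence $\Gamma(f^{-1}V,-)$-acyclic by Proposition \ref{propositionacycofflasque} applied to the open $f^{-1}V$. A standard Čech/Zorn argument then produces \emph{some} lift $\tilde s_0 \in \sG(f^{-1}V)(S)$ of $s$, but possibly with larger support. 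The crucial support-trimming trick is the following: on the open set $W := f^{-1}V \setminus \supp(s)$ the restriction $\tilde s_0|_W$ maps to $s|_W = 0$, hence lies in $\sF(W)(S)$; by flasqueness of $\sF$ extend it to $u \in \sF(f^{-1}V)(S)$ and set $\tilde s := \tilde s_0 - u$. Then $\tilde s$ still lifts $s$ (since $u$ maps to zero in $\sH$) and vanishes on $W$, so $\supp(\tilde s) \subseteq \supp(s)$, which remains proper over $V$ as a closed subset. Thus $\tilde s \in f_!\sG(V)(S)$, as required. Left exactness and the identification of the kernel of $f_!\sG \to f_!\sH$ with $f_!\sF$ come from Lemma \ref{lemmalowershriekcolimit}(1) together with the observation that a section $s \in \sG$ mapping to zero in $\sH$ has the same support as its unique preimage in $\sF$ (since $\sF_x \to \sG_x$ is injective at every $x$).

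Claim (ii) is then routine via Corollary \ref{corollaryexact}: evaluating on each $S \in \ExtDisc$ gives a short exact sequence $0 \to \sF(-)(S) \to \sI(-)(S) \to (\sI/\sF)(-)(S) \to 0$ of abelian sheaves on $X$ whose outer terms are classically flasque, so the classical lemma yields flasqueness of $(\sI/\sF)(-)(S)$; this is exactly flasqueness of $\sI/\sF$ as a condensed (equivalently, solid) sheaf by Remark \ref{smallremarks}(1). Combining (i) and (ii) with the dimension-shifting sketched above completes the proof.
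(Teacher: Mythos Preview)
Your argument is correct and complete. The support-trimming trick in claim (i) is exactly what is needed: lifting $s$ arbitrarily and then subtracting a flasque extension of the restriction to $f^{-1}V\setminus\supp(s)$ forces $\supp(\tilde s)\subseteq\supp(s)$, and a closed subset of a set proper over $V$ is again proper over $V$ (quasi-compactness, separatedness, and universal closedness are all inherited). Two cosmetic remarks: in claim (ii) you mean the \emph{first two} terms are flasque, not the ``outer'' terms; and the fact that injectives are flasque is established in the \emph{proof} of Proposition~\ref{propositionacycofflasque} rather than in its statement.

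Your route differs substantially from the paper's. The paper does not argue by dimension-shifting; instead it localizes to $Y$ quasi-compact, invokes the covering $\{U_i\}$ of Lemma~\ref{coveringpartial}, and (following Abe) proves an identification $R^qf_!\sF\simeq\colim_i R^qf_{i*}(j_{i!}\sF|_{U_i})$ where $f_i:\overline{U_i}\to Y$; flasqueness of $\sF$ then forces each $R^qf_{i*}(j_{i!}\sF|_{U_i})=0$ for $q>0$. Your argument is more elementary and entirely self-contained within the appendix---no reference to \cite{abe2020proper} is needed. On the other hand, the paper's proof establishes the colimit formula for $R^qf_!$ as a byproduct, and this formula is invoked again verbatim in the proof of Proposition~\ref{prop:compositionofexceptionalpushforward} (``By the proof of Lemma~\ref{flasqueacyclic}, we have $R^qg_!(f_!\sI)=\colim_i R^qg_{i*}(j_{i!}(f_!\sI)|_{V_i})$''). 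If you replace the paper's proof with yours, that later proof would need to be restructured.
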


	\begin{proof}
		 The question is local so we assume $Y$ is quasi-compact and quasi-separated. There exists $\{U_i\}_{i\in I}$ as in Lemma \ref{coveringpartial} and we put $j_i:U_i\to X$. Then by the same argument as \cite[Lemma 4.8(2), 4.9]{abe2020proper} using Proposition \ref{propositionmaincolimit}, we get an isomorphism $R^qf_!\sF\simeq \colim_i R^qf_{i*}(j_{i!}\sF|_{U_i}))$ where $f_i:\overline{U_i}\to Y$ are morphisms of topological spaces which is the restriction of $f$ to $\overline{U_i}$. In fact, \cite[Lemma 4.9]{abe2020proper} shows that if $\sI$ is a flasque sheaf on $U_i$ then $R^qf_{i*}(j_{i!}\sI)=0$ for $q>0$. The result follows.
	\end{proof}

\begin{proposition}\label{prop:compositionofexceptionalpushforward}
	Let $X\stackrel{f}\longrightarrow Y\stackrel{g}\longrightarrow Z$ be partially proper morphisms of adic spaces. Then there is a canonical isomorphism $R(g\circ f)_!\cong Rg_!\circ Rf_!$ of functors $D^+(X,\bZ_\square)\to D^+(Z,\bZ_\square)$.
\end{proposition}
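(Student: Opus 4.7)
The plan is to derive the claim from the underived identity $(g\circ f)_! = g_!\circ f_!$ established in Lemma~\ref{lemmalowershriekcolimit}(4) via the Grothendieck composition formalism. Concretely, it suffices to prove the following acyclicity: \emph{for every flasque $\sI \in \mathit{Sh}(X, \bZ_\square)$, the pushforward $f_!\sI$ is $g_!$-acyclic}. Granting this, for any $\sF^\bullet \in D^+(X, \bZ_\square)$ we choose a flasque resolution $\sI^\bullet$ (which exists since $\mathit{Sh}(X, \bZ_\square)$ is Grothendieck abelian with enough injectives, and injectives are flasque by the proof of Proposition~\ref{propositionacycofflasque}); then Lemma~\ref{flasqueacyclic} gives $Rf_!\sF^\bullet = f_!\sI^\bullet$ termwise, and the acyclicity claim together with Lemma~\ref{lemmalowershriekcolimit}(4) yields
\[
Rg_!(Rf_!\sF^\bullet) = g_!(f_!\sI^\bullet) = (g\circ f)_!(\sI^\bullet) = R(g\circ f)_!\sF^\bullet,
\]
where the last equality invokes Lemma~\ref{flasqueacyclic} for the partially proper composition $g\circ f$.

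The strategy for the acyclicity claim is to bootstrap from the classical case of sheaves of abelian groups treated in \cite[\S 4]{abe2020proper}, by evaluating on extremally disconnected sets. For each $S \in \ExtDisc$, the functor $(-)(S): \mathit{Sh}(X, \bZ_\square) \to \mathit{Sh}(|X|, \Ab)$ is exact (Corollary~\ref{corollaryexact}), sends flasque solid sheaves to flasque abelian sheaves (Remark~\ref{smallremarks}(1) applied to the epimorphism criterion defining flasqueness), and commutes with $f_!$: indeed, by Corollary~\ref{corosheafpreservestalk} we have $\sF(-)(S)_x = \sF_x(S)$, so the condensed support $\supp(s)$ of a section $s \in \sF(V)(S)$ agrees with its abelian support viewed in $\sF(-)(S)$, and hence the defining formula of $f_!$ becomes $(f_!\sF)(V)(S) = f_!^{\mathrm{ab}}(\sF(-)(S))(V)$, where $f_!^{\mathrm{ab}}$ denotes the analogous functor on abelian sheaves. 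Applied to a flasque resolution $\sI^\bullet$ of $\sF$, this yields a flasque resolution $\sI^\bullet(-)(S)$ of $\sF(-)(S)$ in abelian sheaves, and (using that $(-)(S)$ is exact, hence commutes with kernels, cokernels, and sheafification by Lemma~\ref{sheafificationofstalks})
\[
(R^q f_!\sF)(V)(S) \cong R^q f_!^{\mathrm{ab}}(\sF(-)(S))(V),
\]
with analogous identities for $g$ and $g\circ f$.

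Combining these evaluation-wise identifications, for any flasque $\sI$ and $q > 0$ we obtain
\[
R^q g_!(f_!\sI)(V)(S) \cong R^q g_!^{\mathrm{ab}}\bigl(f_!^{\mathrm{ab}}(\sI(-)(S))\bigr)(V),
\]
which vanishes by the classical composition result for partially proper pushforward of abelian sheaves on adic spaces in \cite[\S 4]{abe2020proper} (since $\sI(-)(S)$ is flasque, and the composition formula for abelian sheaves is equivalent to $f_!^{\mathrm{ab}}$ sending flasques to $g_!^{\mathrm{ab}}$-acyclics). Since vanishing of a solid abelian sheaf can be checked after evaluating on every $S \in \ExtDisc$ (Corollary~\ref{corollaryexact}), we conclude $R^q g_!(f_!\sI) = 0$ for $q > 0$, establishing the acyclicity and completing the proof.

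The main obstacle is the careful verification that the evaluation functor $(-)(S)$ commutes with the derived functor $Rf_!$ at the complex level rather than only with $f_!$ underived; this is where the three compatibility facts above (exactness of $(-)(S)$, preservation of flasqueness, and commutation with $f_!$ via support agreement) must all conspire, allowing a flasque resolution in the condensed/solid setting to produce a flasque resolution of abelian sheaves that computes both $Rf_!$ and its classical counterpart simultaneously.
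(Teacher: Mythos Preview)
Your proof is correct and takes a genuinely different route from the paper. You reduce the acyclicity claim to the classical abelian-sheaf case by systematically exploiting that evaluation $(-)(S)$ on extremally disconnected $S$ is exact, preserves flasqueness, and commutes with $f_!$; this yields the identification $(R^qg_!(f_!\sI))(-)(S)\cong R^qg_!^{\mathrm{ab}}(f_!^{\mathrm{ab}}(\sI(-)(S)))$, and you then invoke \cite{abe2020proper} for the abelian vanishing. The paper instead proves directly that $f_!\sI$ is flasque on quasi-compact opens of $Y$: given a section $s\in f_!\sI(W)(S)$, it glues $s$ with zero on $X\setminus\overline{\supp(s)}$, extends to all of $X$ by flasqueness of $\sI$, and uses tautness of $|X|$ (via \cite[Proposition~18.10]{scholze2022etale}) to ensure the closure of $\supp(s)$ remains quasi-compact, so the extension lies in $f_!\sI(Y)(S)$. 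Your approach is more modular, pushing all geometric content into the abelian black box; the paper's is self-contained and makes transparent where tautness enters. One caveat: your appeal to a composition theorem in \cite[\S4]{abe2020proper} should be verified---the paper only cites Lemmas~4.2, 4.8(2), 4.9 from that source, and if composition is not stated there explicitly, you would need to observe that the paper's own extension-by-zero argument (which is already carried out sectionwise on $S$) supplies the abelian case as well, at which point your reduction does not actually shorten the proof.
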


\begin{proof}
	Since $(g\circ f)_!=g_!\circ f_!$, it suffices to show that for an injective sheaf $\sI$ on $X$, $R^qg_!(f_!\sI)=0$ when $q>0$. The question is local on $Z$ so we assume $Z$ is quasi-compact and quasi-separated. We can find open cover $\{V_i\}_{i\in I}$ of $Y$ as in Lemma \ref{coveringpartial} and let $j_i: V_i\to \overline{V_i}$ and $g_i: \overline{V_i}\to Z$. By the proof of lemma \ref{flasqueacyclic}, we have
	$$R^qg_!(f_!\sI)=\colim_{i\in I} R^qg_{i*}(j_{i!}(f_!\sI)|_{V_i})$$
	and it suffices to show $(f_!\sI)|_{V_i}$ is flasque. So we can assume $V_i=Y$ is quasi-compact and quasi-separated. 
	
	Consider $W\subseteq Y$ be an open subset, it suffices to show $f_!\sI(Y)(S)\to f_!\sI(W)(S)$ is surjective for any $S\in \ExtDisc$. Pick a section $s \in f_!\sI(W)(S)$ with proper support over $W$. Let $T$ be the closure of $\supp(s)$ in $X$, and let $s' \in H^0(f^{-1}(W) \cup (X \setminus T),\sI)(S)$ be the unique section with $s'|_{f^{-1}(W)} = s$ and $s'|_{X\setminus T} = 0$. Since $\sI$ is flasque, we can pick $s'' \in \Gamma(X,\sI)(S)$ with $s''|_{f^{-1}(W)\cup X\setminus T} = s'$. Then $\supp(s'')\subseteq T$. Since $|X|$ is taut (by \cite[Proposition 18.10]{scholze2022etale}), $T$ is quasi-compact so $\supp(s'')$ is proper over $Y$. 
	\end{proof}
		
\subsection{Equivariant sheaves}\label{sec: appendixequivariantsheaves}
Let $\Gamma$ be a discrete group. Suppose that $\Gamma$ acts properly discontinuous on an adic space $X$ and let $f:X\rightarrow Y=X/\Gamma$ be the quotient map. Consider the Čech nerve
\[ (X^{\bullet/Y})_{\bullet}\rightarrow Y\]
where for $n\in\bbN$, $X^{n/Y}:=\underbrace{X\times_Y\cdots \times_YX}_{n}$ and let $f_n:X^{n/Y}\rightarrow  Y$.

Let $\sF\in D^+(Y,\bbZ_{\square})$ and set $\sG=f^{-1}\sF$ which is a $\Gamma$-equivariant sheaf (or a complex of $\Gamma$-equivariant sheaves) on $X$. For $\gamma\in \Gamma$ that induces $\gamma:X\rightarrow X$, the isomorphism $\sG\simeq \gamma_*\sG$ induces an action of $\gamma$ on $f_!\sG$ via $f_!\sG\simeq f_!\gamma_*\sG=f_!\sG$ by Lemma \ref{lemmalowershriekcolimit}. Locally on $Y$ where the $\Gamma$-covering $f$ is trivialized, it follows from the definition that $f_!\sG=Rf_!\sG$ can be identified with the $\Gamma$-module $\bbZ[\Gamma]\otimes_{\bbZ}\sF$. 
\begin{lemma}\label{lemmashriekdescent}
	Suppose that $\sF\in D^+(Y,\bbZ_{\square})$. We have natural maps $f_{n,!}f_n^{-1}\sF\rightarrow \sF$ and for all $n$. The natural map 
	\[\colim_{[n]\in\Delta^{\rm op}}f_{n,!}f_n^{-1}\sF\rightarrow \sF\]
	is an isomorphism, where $\Delta$ is the simplex category. Moreover $f_{n,!}f_n^{-1}\sF\simeq \bbZ[\Gamma^n]\otimes_{\bbZ[\Gamma]}f_!\sG$ and $\sF=\bbZ\otimes^L_{\bbZ[\Gamma]}f_!\sG$.
\end{lemma}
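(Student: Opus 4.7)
The plan is to reduce to a local statement where the $\Gamma$-covering $f:X\to Y$ is trivial, then to recognize the resulting simplicial object as the standard bar complex computing derived coinvariants under $\Gamma$.

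First, proper discontinuity of the $\Gamma$-action allows me to cover $Y$ by open subsets $U$ admitting trivializations $f^{-1}(U)=\bigsqcup_{\gamma\in\Gamma}V_\gamma$ with each restriction $f|_{V_\gamma}:V_\gamma\xrightarrow{\sim}U$, and with $\Gamma$ acting by left translation on the indexing. Since all claims are local on $Y$, I would fix such a $U$ (and by abuse of notation set $Y=U$, $X=f^{-1}(U)$). Each $f_n:X^{n/Y}\to Y$ is partially proper: locally it is a disjoint union of isomorphisms, hence separated and satisfying the valuative criterion, so the functors $f_{n,!}$ are defined as in Definition \ref{partialproperpushforward}.

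Over such a trivialized $U$, the fibered product decomposes as $X^{n/Y}=\bigsqcup_{\vec\gamma\in\Gamma^n}U_{\vec\gamma}$ with each $U_{\vec\gamma}\xrightarrow{\sim}U$. Using parts (2) and (3) of Lemma \ref{lemmalowershriekcolimit} (extension by zero for open immersions, and commutation with filtered colimits, so in particular with direct sums), I get
\[
f_{n,!}f_n^{-1}\sF|_U \;\simeq\; \bigoplus_{\vec\gamma\in\Gamma^n}\sF|_U \;\simeq\; \bbZ[\Gamma^n]\otimes_{\bbZ}\sF|_U.
\]
Taking $n=1$ yields $f_!\sG|_U\simeq\bbZ[\Gamma]\otimes_{\bbZ}\sF|_U$ with the left-regular $\Gamma$-action, and hence
\[
\bbZ[\Gamma^n]\otimes_{\bbZ[\Gamma]}f_!\sG|_U \;\simeq\; \bbZ[\Gamma^n]\otimes_{\bbZ[\Gamma]}\bigl(\bbZ[\Gamma]\otimes_{\bbZ}\sF|_U\bigr) \;\simeq\; \bbZ[\Gamma^n]\otimes_{\bbZ}\sF|_U,
\]
matching the previous display and giving the second assertion.

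Under these identifications, the simplicial object $[n]\mapsto f_{n,!}f_n^{-1}\sF|_U$, with face maps induced by projections between fiber products, is identified with the standard bar construction $\bbZ[\Gamma^\bullet]\otimes_{\bbZ[\Gamma]}f_!\sG|_U$. Its colimit over $\Delta^{\mathrm{op}}$, interpreted as totalization of the associated normalized complex, computes $\bbZ\otimes^L_{\bbZ[\Gamma]}f_!\sG|_U$. Since $f_!\sG|_U\simeq\bbZ[\Gamma]\otimes_{\bbZ}\sF|_U$ is $\bbZ[\Gamma]$-free, this derived tensor product agrees with the underived one and recovers $\sF|_U$, proving the first assertion locally, and hence globally by functoriality of the construction.

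The main obstacle will be the compatibility issues: (i) making rigorous the meaning of $\colim_{[n]\in\Delta^{\mathrm{op}}}$ in the left-bounded solid derived category $D^+(Y,\bbZ_\square)$ as a homotopy colimit / totalization of the associated chain complex, which should follow from standard condensed formalism; and (ii) checking that the local identifications glue across different trivializations, equivalently, that the $\bbZ[\Gamma]$-module structure on $f_!\sG$ induced by the $\Gamma$-equivariance is canonical and compatible with the Čech face maps. The latter follows from the naturality of the isomorphisms $f_!\sG\simeq f_!\gamma_*\sG=f_!\sG$ discussed in \S\ref{sec: appendixequivariantsheaves} together with the fact that the Čech nerve is functorial in $Y$.
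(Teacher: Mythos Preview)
Your proposal is correct and follows essentially the same approach as the paper: reduce to a trivialized $\Gamma$-covering, identify $f_{n,!}f_n^{-1}\sF$ locally with $\bbZ[\Gamma^n]\otimes_{\bbZ}\sF$, and recognize the resulting simplicial object as the standard bar resolution computing $\bbZ\otimes^L_{\bbZ[\Gamma]}f_!\sG$. The paper handles your compatibility concern (ii) slightly more directly by using the global parametrization $\Gamma^{n-1}\times X\simeq X^{n/Y}$ (which requires no choice of section) to obtain $f_{n,!}f_n^{-1}\sF=\bbZ[\Gamma^n]\otimes_{\bbZ[\Gamma]}f_!\sG$ at once, but this is a minor packaging difference.
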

\begin{proof}
	Let $\sG=f^{-1}\sF$ which is a $\Gamma$-equivariant sheaf on $X$. Take $U\subset Y$ small enough such that $f^{-1}(U)\simeq U\times \Gamma$. Then $f_n|_{f_n^{-1}(U)}$ can be identified with the projection $U\times \Gamma^n\rightarrow U$. By Lemma \ref{lemmalowershriekcolimit}, $f_{n,!}$ commutes with direct sums. Since $f_n^{-1}\sF|_{f^{-1}(U)}=\oplus_{\gamma\in \Gamma^{n}}f_n^{-1}\sF|_{(U,\gamma)}$, we have a natural map $(f_{n,!}f_n^{-1}\sF)|_{U}=\oplus_{\gamma\in \Gamma^{n}}(f_{n}|_{(U,\gamma)})_{!}f_{n}^{-1}\sF|_{(U,\gamma)}=\oplus_{\gamma\in \Gamma^{n}}\sF|_{U}\rightarrow \sF|_U$. Let $U$ ranges over basis of $Y$ we see there exists a natural map $f_{n,!}f_n^{-1}\sF\rightarrow \sF$ for any $n$.
	
	Let $d_{n,i}:X^{(n+1)/Y}\rightarrow X^{n/Y}$ be the $i$-th face map for $i=0,\cdots,n$. Similarly as above, there exists a natural map $ d_{n,i,!}d_{n,i}^{-1}f_{n}^{-1}\sF\rightarrow f_{n}^{-1}\sF$. Applying $f_{n,!}$, we obtain a map $d_{n,i}:f_{n+1,!}f_{n+1}^{-1}\sF\rightarrow f_{n,!}f_{n}^{-1}\sF$ and let
	\[ \delta_{n}=\sum_{i=0}^n(-1)^id_{n,i}: f_{n+1,!}f_{n+1}^{-1}\sF\rightarrow f_{n,!}f_{n}^{-1}\sF\]
    be the alternating sum of the face maps. We need to check that the map $\colim_{[n]}f_{n,!}f_{n}^{-1}\sF\rightarrow \sF$ is an isomorphism. The problem is local and we may assume that there is a section $Y\hookrightarrow X$ such that $\Gamma\times Y=X:(\gamma_{n-1},y)\mapsto \gamma_{n-1}y$. 
	Then
	\begin{equation}\label{equationGammatorsor}
		\Gamma^{n}\times Y\simeq X^{n/Y}:(\gamma_0,\cdots,\gamma_{n-1},y)\mapsto (\gamma_0y,\cdots,\gamma_{n-1}y),
   \end{equation}  
	and $f_{n,!}f_{n}^{-1}\sF=\bbZ[\Gamma^n]\otimes_{\bbZ}\sF: s\mapsto \gamma\otimes s|_{\gamma^{-1}U}$ for $s\in (f_{n,!}f_{n}^{-1}\sF)(U),\gamma\in\Gamma^n$.
	The map $\delta_n$ can be identified with 
	\[\bbZ[\Gamma^{n}]\otimes_{\bbZ}\sF\rightarrow \bbZ[\Gamma^{n-1}]\otimes_{\bbZ}\sF: [(\gamma_0,\cdots, \gamma_{n-1})]\otimes s\mapsto\sum_{i=0}^{n-1}(-1)^i [(\gamma_0,\cdots,\widehat{\gamma}_{i},\cdots,\gamma_{n-1})]\otimes s.\]
	Hence $\colim_{[n]}f_{n,!}f_{n}^{-1}\sF=[\cdots\rightarrow\bbZ[\Gamma^n]\rightarrow \cdots\rightarrow \bbZ[\Gamma]]\otimes^L_{\bbZ}\sF=\sF$ since the complex $\cdots\rightarrow\bbZ[\Gamma^n]\rightarrow \cdots\rightarrow \bbZ[\Gamma]\rightarrow \bbZ$
    is a standard resolution of $\bbZ$. Note that as in (\ref{equationGammatorsor}), we have
	\[\Gamma^{n-1}\times X\simeq X^{n/Y}:(\gamma_0,\cdots,\gamma_{n-2},x)\mapsto (\gamma_0x,\cdots,\gamma_{n-2}x).\]
	We see $f_{n,!}f_n^{-1}\sF=\bbZ[\Gamma^n]\otimes_{\bbZ[\Gamma]}f_!\sG$.
\end{proof} 
\begin{proposition}\label{prop:Gammaequivariantpushforward}
	Let $g:Y\rightarrow *$ be the structure map. Let $\sF\in D^+(Y,\bbZ_{\square})$. Suppose that $\Gamma$ has finite cohomological dimension (e.g. \cite[\S 6.1]{borel1976cohomologie}). Then $Rg_!\sF=\bbZ\otimes^L_{\bbZ[\Gamma]}Rg_!(f_!f^{-1}\sF).$
\end{proposition}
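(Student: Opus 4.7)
The plan is to deduce the proposition from Lemma \ref{lemmashriekdescent} by commuting $Rg_!$ with the simplicial colimit that presents $\sF$ as a derived tensor product over $\bbZ[\Gamma]$.

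By Lemma \ref{lemmashriekdescent}, we have a canonical isomorphism
\[
\sF \;\simeq\; \bbZ \otimes^L_{\bbZ[\Gamma]} f_!f^{-1}\sF
\]
in $D^+(Y,\bbZ_\square)$, realized via the simplicial object $[n]\mapsto f_{n,!}f_n^{-1}\sF\simeq \bbZ[\Gamma^n]\otimes_{\bbZ[\Gamma]} f_!f^{-1}\sF$, i.e.\ by the bar resolution tensored with $f_!f^{-1}\sF$. Using the hypothesis that $\Gamma$ has finite cohomological dimension $d$, I would replace the infinite bar resolution by a \emph{finite} projective resolution $P_\bullet\colon 0\to P_d\to\cdots\to P_0\to \bbZ\to 0$ of the trivial $\bbZ[\Gamma]$-module $\bbZ$. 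This yields a bounded presentation
\[
\sF \;\simeq\; P_\bullet \otimes_{\bbZ[\Gamma]} f_!f^{-1}\sF
\]
as a complex in $D^+(Y,\bbZ_\square)$ of length $d$.

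The next step is to apply $Rg_!$ termwise to this bounded complex and to commute it with each $P_i\otimes_{\bbZ[\Gamma]}(-)$. Since each $P_i$ is a direct summand of a free $\bbZ[\Gamma]$-module, the functor $P_i\otimes_{\bbZ[\Gamma]}(-)$ is a retract of an arbitrary direct sum, so the point reduces to checking that $Rg_!$ commutes with arbitrary direct sums in $D^+(Y,\bbZ_\square)$. Granting this, one obtains
\[
Rg_!\sF \;\simeq\; P_\bullet\otimes_{\bbZ[\Gamma]} Rg_!\bigl(f_!f^{-1}\sF\bigr)\;=\;\bbZ\otimes^L_{\bbZ[\Gamma]} Rg_!\bigl(f_!f^{-1}\sF\bigr),
\]
which is the desired formula.

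The main obstacle is therefore justifying the commutation of $Rg_!$ with (possibly infinite) direct sums. At the underived level this is Lemma \ref{lemmalowershriekcolimit}(3): $g_!$ commutes with filtered colimits, and in particular with arbitrary direct sums. To upgrade this to the derived functor, I would compute $Rg_!$ via the Godement-style functorial flasque resolution introduced in the proof of Proposition \ref{propositionmaincolimit}, which is built stalkwise and hence manifestly commutes with direct sums. One then verifies that direct sums of flasque sheaves of solid abelian groups remain flasque (checked on sections via Lemma \ref{lemmasurjectivecriterion}, since a direct sum of surjections of condensed abelian groups is a surjection), so by Lemma \ref{flasqueacyclic} the resolution remains $g_!$-acyclic after summing. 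The finite length of $P_\bullet$ keeps everything inside $D^+$ and avoids any convergence issue, completing the argument.
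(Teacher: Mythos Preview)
Your approach is essentially the same as the paper's: reduce to a finite projective resolution $P_\bullet$ of $\bbZ$ over $\bbZ[\Gamma]$ and then commute $Rg_!$ through the tensor product termwise. The paper phrases the last step slightly differently, computing directly with an injective resolution $\sI^\bullet$ of $f_!f^{-1}\sF$ as sheaves of $\bbZ[\Gamma]$-modules and writing $g_!\mathrm{Tot}(P^\bullet\otimes_{\bbZ[\Gamma]}\sI^\bullet)=\mathrm{Tot}(P^\bullet\otimes_{\bbZ[\Gamma]}g_!\sI^\bullet)$, but the substance is the same.

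There is one slip in your justification. You claim the Godement resolution ``is built stalkwise and hence manifestly commutes with direct sums,'' but this is false: the Godement construction $\cG(\sF)(U)=\prod_{x\in U}\sF_x$ involves an infinite product over points, and infinite products do not commute with direct sums. So $\cG(\bigoplus_i\sF_i)$ is in general strictly larger than $\bigoplus_i\cG(\sF_i)$. The repair is immediate and you already have the key ingredient: since direct sums of flasque sheaves are flasque (your observation via Lemma \ref{lemmasurjectivecriterion}) and direct sums of resolutions are resolutions, you can take separate flasque resolutions of each summand and sum them to obtain a $g_!$-acyclic resolution of the direct sum. Then Lemma \ref{lemmalowershriekcolimit}(3) for the underived $g_!$ finishes the argument. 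This same point (that $P_i\otimes_{\bbZ[\Gamma]}\sI^j$, being a summand of a possibly infinite sum of copies of $\sI^j$, is $g_!$-acyclic) is also implicit in the paper's parenthetical computation.
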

\begin{proof}
	By the local description of $f_!f^{-1}\sF$, we see $f_!f^{-1}\sF\in D^+(Y,\bbZ_{\square})$. By Lemma \ref{lemmashriekdescent}, $\sF=\bbZ\otimes^L_{\bbZ[\Gamma]}f_!\sG$. We need to show that $Rg_!(\bbZ\otimes^L_{\bbZ[\Gamma]}f_!f^{-1}\sF)=\bbZ\otimes^L_{\bbZ[\Gamma]}Rg_!(f_!f^{-1}\sF)$. If $\Gamma$ has finite cohomological dimension $d$, there exists a finite projective resolution 
	\[0\rightarrow P^{-d}\rightarrow\cdots\rightarrow P^{-1}\rightarrow P_0\rightarrow \bbZ\rightarrow 0\]
	of the trivial $\bbZ[\Gamma]$-module $\bbZ$. Hence $Rg_!(\bbZ\otimes^L_{\bbZ[\Gamma]}f_!f^{-1}\sF)=Rg_!(P^{\bullet}\otimes^L_{\bbZ[\Gamma]}f_!f^{-1}\sF)=P^{\bullet}\otimes^L_{\bbZ[\Gamma]}Rg_!(f_!f^{-1}\sF)$ since the derived functor $Rg_!$ preserves finite colimits. (One can replace $f_!f^{-1}\sF$ by an injective resolution $\sI^{\bullet}$ of sheaves of condensed $\bbZ[\Gamma]$-modules. Then $Rg_!(\bbZ\otimes^L_{\bbZ[\Gamma]}f_!f^{-1}\sF)$ is calculated as the $g_!\mathrm{Tot}(P^{\bullet}\otimes_{\bbZ[\Gamma]}\sI^{\bullet})=\mathrm{Tot}(P^{\bullet}\otimes_{\bbZ[\Gamma]}g_!\sI^{\bullet})$.)
\end{proof}
\bibliographystyle{amsalpha}
\bibliography{Panquaternion.bib}
\end{document}